\documentclass[11pt]{article}
\usepackage[T1]{fontenc}
\usepackage{lmodern,mathptmx}
\usepackage{amsmath,amssymb,amsthm,mathtools}
\usepackage[margin=0.95in]{geometry}
\usepackage{microtype,booktabs,longtable,array,enumitem,listings,xcolor}
\usepackage[hidelinks]{hyperref}
\usepackage{bookmark}
\hypersetup{pdftitle={Fuglede's conjecture holds in F\string_p\string^d for p=2,d=8 and p=11,d=3},
 pdfauthor={Samuel J. Ferguson and Joshua Iosevich},linktoc=all}
\newtheorem{theorem}{Theorem}[section]
\newtheorem{lemma}[theorem]{Lemma}
\newtheorem{proposition}[theorem]{Proposition}
\newtheorem{corollary}[theorem]{Corollary}
\theoremstyle{definition}

\theoremstyle{remark}

\newcommand{\F}{\mathbb F}
\newcommand{\Z}{\mathbb Z}

\newcommand{\R}{\mathbb R}

\DeclareMathOperator{\supp}{supp}
\DeclareMathOperator{\rank}{rank}
\DeclareMathOperator{\rad}{rad}
\DeclareMathOperator{\wt}{wt}

\DeclareMathOperator{\PG}{PG}
\newcommand{\D}{\mathcal D}
\newcommand{\U}{\mathcal U}

\newcommand{\pathfile}[1]{\path{#1}}

\numberwithin{equation}{section}
\setlist{itemsep=2pt,topsep=5pt}
\title{Fuglede's conjecture holds in $\mathbb{F}_{p}^{d}$\\
for $p=2,\,d=8$ and $p=11,\,d=3$}
\author{Samuel J. Ferguson\thanks{Independent researcher.
Email: \href{mailto:sjf370@nyu.edu}{\texttt{sjf370@nyu.edu}}.}
\and Joshua Iosevich\thanks{School of Mathematics and Statistics,
Rochester Institute of Technology, 1 Lomb Memorial Drive,
Rochester, NY 14623, USA.
Email: \href{mailto:ji1964@rit.edu}{\texttt{ji1964@rit.edu}}.}}
\date{}
\begin{document}
\maketitle
\begin{abstract}
Fuglede's conjecture in $\F_p^d$ says that a nonempty set tiles by
translation if and only if it is spectral. We prove the conjecture in
$\F_2^8$ and $\F_{11}^3$. Over $\F_2$ we show more generally that
the Fourier support of a nonconstant indicator of density different from
one half contains a plane. Consequently every spectral set of density one
quarter has a subspace tiling complement. This supplies one of the
dimension-eight reductions left by Kadowaki; the others are completed
using classified Hadamard matrices and perfect binary codes. In $\F_2^9$
we prove that every tile is spectral and that every spectral set has
power-of-two cardinality. All spectral cardinalities except $64$ are
shown to tile, and we give further reductions of the remaining case.
We also construct a spectral set of density one eighth whose Fourier
support contains no three-dimensional subspace. For $\F_{11}^3$ we extend
the geometric method of Fallon, Mayeli, and Villano by using moment
identities, polynomial plane profiles, weighted incidence counts, and
exact finite computations. The computations and their verification
programs are described in the appendices.
\end{abstract}
\clearpage
\setcounter{tocdepth}{1}
\tableofcontents
\clearpage

\section{Introduction}

In this paper we prove Fuglede's conjecture for the vector spaces
$\F_2^8$ and $\F_{11}^3$, and reduce the conjecture in $\F_2^9$ to one
spectral cardinality. Here $\F_p$ denotes the field with $p$ elements;
thus $\F_p^d$ is the group also denoted by $\Z_p^d$.

We briefly recall the original problem. A measurable set $E\subset\R^d$
of finite, positive measure is called spectral if $L^2(E)$ has an
orthogonal basis of exponential functions
$x\mapsto e^{2\pi i\lambda\cdot x}$, with $\lambda$ in a set
$\Lambda\subset\R^d$. It tiles by translation if translates of $E$
partition $\R^d$ up to null sets. Fuglede \cite{Fuglede} conjectured
that these two properties are equivalent, in connection with commuting
self-adjoint realizations of the partial differential operators. Tao
\cite{Tao} disproved the conjecture in dimension five and higher using
finite-group constructions. Subsequent work lowered the dimension of
counterexamples and established failure in both directions; see
Matolcsi \cite{Matolcsi}, Kolountzakis and Matolcsi \cite{KM,KMtiles},
Farkas and R\'ev\'esz \cite{FR}, and Farkas, Matolcsi, and M\'ora
\cite{FMM}. These developments make the finite-group version of the
problem of interest in its own right. The relation with factorizations of finite abelian groups and the use of universal spectra were developed further by Lagarias and Wang~\cite{LW} and Lagarias and Szab\'o~\cite{LS}.

Let $G=\F_p^d$. A nonempty subset $E$ tiles $G$ if there is a set
$T\subseteq G$ such that every element of $G$ has exactly one
representation $e+t$, where $e\in E$ and $t\in T$. We write
$E\oplus T=G$. We call $E$ spectral if there is a set
$\Lambda\subseteq G$ for which the characters
\[
 \chi_\lambda(x)=\exp(2\pi i\lambda\cdot x/p),\qquad\lambda\in\Lambda,
\]
form an orthogonal basis for the space of complex functions on $E$,
equipped with counting measure. Such a $\Lambda$ is called a spectrum
of $E$. Necessarily $|\Lambda|=|E|$, and we call $(E,\Lambda)$ a
spectral pair. Fuglede's conjecture in $G$ asserts that the nonempty
spectral sets are exactly the translational tiles.

Iosevich, Mayeli, and Pakianathan \cite{IMP} proved the conjecture in
$\F_p^2$ for every prime $p$. Another approach appears in Kiss,
Malikiosis, Somlai, and Vizer \cite{KMSV}. Aten et al.\ \cite{Aten}
developed the finite-field theory, including the correspondence with
log-Hadamard matrices, cardinality restrictions, and the tiling-to-spectral
implication in dimension three. They also constructed counterexamples in
dimension five for every odd prime and in dimension four for primes
congruent to three modulo four. Ferguson and Sothanaphan \cite{FS}
extended the latter construction to every odd prime, proved the case over $\F_2$
in dimension four, and gave computational proofs in dimensions five
and six, with the computations explained further in \cite{FSadd}.
Their dephasing argument also places a counterexample over $\F_2$ in
dimension ten.

Two positive results immediately precede the cases considered here.
Fallon, Mayeli, and Villano \cite{FMV} proved the conjecture in
$\F_p^3$ for $p=5,7$. Their proof turns Fourier zeros into equal
occupancies of parallel planes. Bounds for directions determined by
small planar sets then force a spectral set into a small number of
parallel planes, where additional counting arguments give a
contradiction. For $p=7$ the three intermediate cardinalities are
$14,21,28$. The $21$-point case requires a more detailed analysis of
projected multiplicities. We retain this geometric starting point at
$p=11$, but some of the planar lemmas used at seven are false at eleven.
Our replacement arguments use polynomial moments and the incidence
geometry of the projective plane. Malikiosis's application of Delsarte's
linear programming method \cite{Delsarte,Malikiosis} first removes the three largest intermediate
cardinalities, leaving six cases for us to consider.

Kadowaki \cite{Kadowaki} proved the conjecture in $\F_2^7$. His proof
provides three particularly useful ingredients for the next dimension:
a common-spectrum construction for the tiling complements of sets of
size at most eight; a Smith normal form obstruction to non-power-of-two
spectral cardinalities; and a projection onto canonical point sets of
dephased Hadamard matrices. The Smith obstruction already settles the
cardinality question in dimension eight. His Remark~7.2 identifies the
remaining dimension-eight cases as $16$-point tiles and spectral sets of
sizes $32$ and $64$, and records the $166$ extended Hadamard classes
of order $32$ and dephased rank at most eight. Our proof over $\F_2$ starts
from precisely these reductions. The $16$-point tile case is completed
by passing to the translation kernels of perfect codes. The $32$-point
spectral case is completed by subgroup-complement certificates for both
orientations of the $166$ classes. The $64$-point spectral case follows
from a short convolution identity, without a classification of
Hadamard matrices of order $64$.

We state our principal conclusions as follows.

\begin{theorem}\label{main:binary8}
For a nonempty subset $E$ of $\F_2^8$, the following are equivalent:
$E$ is spectral; $E$ tiles $\F_2^8$; and $E$ has a tiling complement
which is a linear subspace.
\end{theorem}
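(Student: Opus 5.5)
The plan is to prove the chain of implications (subspace complement) $\Rightarrow$ (tile) $\Rightarrow$ (spectral) $\Rightarrow$ (subspace complement). The first is trivial. For the rest we split according to $|E|=2^k$, using Kadowaki's reductions. His Smith normal form obstruction shows that every spectral set in $\F_2^8$ has power-of-two cardinality; tiles have this property trivially. Cases with $|E|\le 8$ or $|E|\ge 2^8/8=32$ on the tiling side, and the trivial cardinalities $1,2,2^7,2^8$ on the spectral side, are covered by his common-spectrum construction and by the standard duality that exchanges a set with its complement. His Remark 7.2 then leaves three cases: tiles of size $16$ (to be shown spectral with a subspace complement), and spectral sets of sizes $32$ and $64$ (to be shown to tile with a subspace complement). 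Spectral sets of size $16$ reduce to the size-$32$ or size-$64$ analysis via $(E,\Lambda)\leftrightarrow(\Lambda,E)$ and complement duality, in the way Kadowaki handles the analogous cases. Throughout, we use the standard fact over $\F_2$: if $V$ is a subspace and $E$ meets each coset of $V$ in one point, then $E\oplus V=\F_2^8$, and $E$ is spectral with spectrum any complement of $V^\perp$.

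\emph{Size $64$ (density one quarter).} We use the result announced in the abstract: the Fourier support (zero set complement) of a nonconstant indicator of density $\ne 1/2$ contains a plane, i.e.\ a $2$-dimensional subspace $W$. Apply this to $\Lambda-\Lambda\subseteq$ the zero set of $\widehat{1_E}$ together with $0$. Rather, the plan is to show that $\widehat{1_E}$ vanishes on $W\setminus\{0\}$ for some plane $W$. The convolution identity $\sum_{w\in W}|\widehat{1_E}(w)|^2=|W|\cdot\#\{\text{pairs in }E\text{ with difference in }W^\perp\}$-type Parseval on cosets then shows that $E$ is equidistributed over the four cosets of $W^\perp$. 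That alone is not a tiling, so the precise step is different: we need $E$ to meet each coset of a $6$-dimensional subspace $V$ exactly once. Equivalently, $\widehat{1_E}$ vanishes on $V^\perp\setminus\{0\}$. The density-one-quarter structure theorem should provide exactly this after applying it to the spectrum side. I expect this identification to be the main obstacle and would first try to get it from the short convolution identity mentioned in the introduction.

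\emph{Size $32$.} Dephasing a spectral pair gives a log-Hadamard matrix of order $32$ with dephased rank at most $8$. By Kadowaki's list, it lies in one of $166$ extended Hadamard classes. For each class, and for both orientations (the $E$ side and the $\Lambda$ side), we verify computationally a subgroup-complement certificate: a $3$-dimensional subspace $V$ such that the canonical point set meets every coset of $V$ once. We then check that this property transfers back along the projection to the canonical point sets, since the projection is linear on the span.

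\emph{Tiles of size $16$.} Suppose $E\oplus T=\F_2^8$. By translating we may assume $0\in E\cap T$. Replacing $T$ by its periodization, we pass to the perfect-code picture: $T$ is a code whose translates by $E$ partition the space. The translation kernel $K=\{k: T+k=T\}$ is a subspace. We would use the classification or known structure of perfect codes of this size to show that $E$ also tiles with a linear complement of dimension $4$, giving both a subspace complement and spectrality. The main difficulty here is to show that the kernel is large enough, namely dimension $4$ after quotienting. I would attempt this by induction on the quotient $\F_2^8/K$, using the $\F_2^7$ theorem when $K\neq 0$.
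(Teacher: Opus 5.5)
Your architecture matches the paper's. The size-$32$ plan is essentially Lemmas~\ref{b8:canonical}, \ref{b8:equivalence}, \ref{b8:h32} and \ref{b8:lifting}; note that the certified complement has dimension $r-5$, not always $3$. However, two of the three new cases have genuine gaps.

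\emph{Size $64$.} Your target is misstated. A $64$-point set cannot meet each of the four cosets of a $6$-dimensional subspace exactly once. What is needed is a plane $K$ with $E\oplus K=\F_2^8$, equivalently $W_E=0$ on $K^\perp\setminus\{0\}$. More importantly, you never find the mechanism, which is short and works in physical space:
\begin{enumerate}
\item Apply the support-plane lemma to the spectrum $\Lambda$ (density $1/4\ne1/2$), not to $E$. It gives a plane $K\subseteq\supp W_\Lambda$.
\item Spectrality puts $(E+E)\setminus\{0\}$ inside the zero set of $W_\Lambda$, so $K\cap(E+E)=\{0\}$.
\item Hence the four translates $E+k$, $k\in K$, are disjoint, and $4\cdot64=256$ forces $E\oplus K=\F_2^8$.
\end{enumerate}
Searching for Fourier zeros of $1_E$ on a plane, as you propose, only gives equidistribution, as you noticed yourself. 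You would also have to prove the support-plane lemma rather than quote the abstract. The paper derives it from $a*a=a$: isolating the terms $t=0,s$ gives $(1-2\delta)a(s)=\sum_{t\notin\{0,s\}}a(t)a(s+t)$, whose left side is nonzero.

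\emph{Sixteen-point tiles.} Induction on the period group of $T$ breaks down because $T$ need not have a nonzero period, and the aperiodic case is exactly the hard one. It really occurs. Let $H:\F_2^{15}\to V=\langle E\rangle$ have the fifteen nonzero points of $E$ as columns, and put $C=H^{-1}(T)$. Then the periods of $T$ form $H(M)\cong M/\ker H$, where $M$ is the translation kernel of $C$. For the $48$ perfect-code classes with $\dim M=7$, take $E=S/M\subset\F_2^{15}/M\cong\F_2^8$. This is a full-rank $16$-point tile whose complement $C/M$ has no nonzero period.

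The missing idea is to work upstairs, where periodicity is automatic:
\begin{enumerate}
\item $C$ is a perfect length-$15$ code, and $M\supseteq\ker H$ has dimension at least $15-r\ge7$.
\item The \"Osterg\aa rd--Pottonen classification leaves $69$ such codes.
\item For each, a subgroup complement of $S/M$ in $\F_2^{15}/M$ is certified.
\item It lifts to $E$ through $V\to\F_2^{15}/M$, which is injective on $E$.
\end{enumerate}

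\emph{Smaller points.} Transposing $(E,\Lambda)$ keeps the size at $16$, and complements of spectral sets need not be spectral. So $16$-point spectral sets do not reduce to sizes $32$ or $64$. The paper treats them with Hall's five order-$16$ classes and explicit projection certificates (Lemma~\ref{b8:h16}). Also, the universal-spectrum construction only covers tiles whose \emph{partner} has at most eight points. Tiles of size at most eight need the small-tile theorem of Cohen, Litsyn, Vardy, and Z\'emor.
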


\begin{theorem}\label{main:binary9}
Every tile of $\F_2^9$ is spectral. Every spectral subset of $\F_2^9$
has power-of-two cardinality. If a spectral set has cardinality
different from $64$, it tiles $\F_2^9$ by a linear subspace.
\end{theorem}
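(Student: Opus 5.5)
We split Theorem~\ref{main:binary9} into its three assertions and treat each by cardinality, following the dimension-eight strategy summarized above.

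\emph{Power-of-two cardinality.} Let $(E,\Lambda)$ be a spectral pair in $\F_2^9$ with $|E|=m$. The matrix $H=((-1)^{\lambda\cdot x})_{\lambda\in\Lambda,x\in E}$ is a Hadamard matrix of order $m$. Its log-Hadamard matrix over $\F_2$ is the product of an $m\times 9$ and a $9\times m$ matrix, so after dephasing it has rank at most $9$. Kadowaki's Smith normal form obstruction rules out non-power-of-two orders whenever the dephased $\F_2$-rank is small relative to $m$. We would check that his inequality, which he applied for rank $\le 8$, still forbids every $m\le 512$ that is not a power of two when the rank is $9$. Trivial orders $m\le 2$ and the order $4\mid m$ condition come first. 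The sizes $m>256$ are handled by passing to the complement: for $p=2$, $E$ is spectral of size $m$ exactly when a dual set of size $512/m$ is, through the standard $|E||\Lambda|\le|G|$ duality. The main risk in this step is that rank $9$ admits borderline orders, such as $m=12\cdot 2^k$, that Kadowaki's bound no longer excludes. If so, we would add a direct computation on the Smith invariants of the Gram matrix $HH^{T}=mI$, using the fact that $m$ must have $2$-adic structure compatible with elementary divisors $2^{a_i}$ with at most $9$ nonunit entries.

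\emph{Tiles are spectral.} Let $E\oplus T=\F_2^9$. One of $|E|,|T|$ is at most $2^4$, and the spectrality of $E$ is symmetric under the tiling duality over $\F_2$. So it suffices to produce a common spectrum whenever $|E|\le 16$ or $|T|\le 16$. For size at most $8$ we invoke Kadowaki's common-spectrum construction directly. For size $16$ we follow the dimension-eight argument: pass to perfect-code translation kernels, reducing to tiles whose complement is a perfect binary code (or a union of cosets of its kernel), where the classification of perfect codes of length $15$ yields a subspace complement and hence a spectrum. We must check that the extra dimension only adds a direct-product factor, which the kernel reduction should absorb.

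\emph{Spectral of size $\ne 64$ tiles by a subspace.} By the first part, $|E|=2^k$. The cases $k\le 3$ and $k\ge 6$ with $k\ne 6$ reduce by spectral-pair duality ($\Lambda$ and $E$ swap roles, and $2^k\cdot 2^{9-k}$) to small sizes handled by Kadowaki, except for $k=4,5$ and their duals. For density $1/4$ and $3/4$ and other sizes we use the paper's plane theorem: the Fourier support of $1_E$ contains a plane $V$. Then $E$ meets each coset of $V^\perp$ equally, giving a subspace complement. Iterating this inside quotients handles $|E|=32$ and $|E|=16$. Size $32$ can alternatively be handled with the $166$-class Hadamard certificates extended to rank $9$. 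We expect this iteration to be the main obstacle: each application of the plane theorem must stay within densities different from $1/2$ after quotienting. Size $64$ in dimension nine is exactly where the argument breaks, since the density becomes $1/8$ and the paper's counterexample shows that no $3$-dimensional subspace need occur. For this reason the theorem excludes $64$.
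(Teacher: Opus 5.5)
Your decomposition by cardinality matches the paper's, but the two genuinely new difficulties in dimension nine are left open.

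\textbf{Cardinality and spectral sets of size 32.} The cardinality step fails exactly where you feared. Large sizes are handled by Theorem~\ref{b8:quarter}, which caps a spectral set at $128$ points apart from $256$ and $512$. They are not handled by any duality $m\leftrightarrow 512/m$: a spectral pair has $|E|=|\Lambda|$, and nothing produces a spectral set of size $512/m$. Below $128$, Kadowaki's bound $(m/2)\nu_2(m)\ge s_9(m)$ still admits $m=48$ and $m=96$. Your proposed repair cannot close this. The Smith invariants of $HH^{\mathsf T}=mI$ carry no information. Moreover, a Hadamard matrix of order $48$ with dephased rank $r$ has $2$-adic Smith multiplicities $(1,r,46-2r,r,1)$, so nothing limits it to nine nonunit invariants.

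The paper excludes $96$ by a mod-$8$ refinement. Submatrices of the Walsh matrix factor modulo $8$ through $1+9+\binom92=46$ features, whereas order $96$ forces $48$ nonzero Smith components modulo $8$. Excluding $48$ takes much more: the quadratic Schur-complement analysis ($r=9$, $c_2=28$, quadratic defects at most $8$); the cubic moment tensor; the classification of maximal triply even codes of length $48$; the joint cubic-rank bound forcing contraction rank three on one side; and an exhaustive extension enumeration ending in 490,816 explicit five-space certificates.

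For $32$-point spectral sets, iterating the plane lemma in quotients is unjustified. Nothing makes the image of $E$ modulo a plane avoiding $E+E$ spectral again. Note also that the plane in Theorem~\ref{b8:quarter} lies in $\supp W_\Lambda$ and avoids $E+E$. It is not a plane in $\supp W_E$ giving equidistribution. The route that works is the one you list as an alternative. It needs the $1{,}874$ new rank-nine classes of order $32$, that is, $4{,}080$ oriented canonical sets, each with a checked subgroup complement.

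\textbf{Tiles of size 32.} In the tiling direction the case $|E|=32$, $|T|=16$ is missing. There is no principle that spectrality transfers between the two factors of a tiling. The perfect-code argument gives the sixteen-point set $T$ a subspace complement $K$, hence the spectrum $K^\perp$ of $T$. But $K^\perp$ has only sixteen elements, and its nonzero elements are zeros of $W_T$, so it says nothing about $E$. The same false principle underlies your small-size step. Kadowaki's construction for an at most eight-point set makes its \emph{complements} spectral. Tiles of size at most eight are themselves handled by the theorem of Cohen, Litsyn, Vardy, and Z\'emor.

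What is actually needed is a $32$-point $B$ with $B+B\subseteq\supp W_T$, or a common subgroup complement of the two sixteen-point fibers of $E$, valid for every complement of $T$. Individual spectrality of the fibers does not provide this. The paper obtains it by cases on the affine rank $r\le8$ of $T$:
\begin{itemize}
\item $r\le6$ follows from Lemma~\ref{b8:supportplane}.
\item $r=7$ uses an enumeration of the $494$ $\operatorname{AGL}(4,2)$-orbits of graphs $T_f$, each with an explicit eight-point support certificate.
\item $r=8$ uses the $134{,}688$ perfect-code kernel configurations. The $290$ exceptions, all equivalent to $T_*$, are resolved by enumerating the $12{,}096$ complements of $T_*$ and covering all pairs of their $1{,}788$ difference profiles with $75$ common subgroup complements.
\item A separate argument handles the case where both fibers are affine flats.
\end{itemize}
Finally, your sixteen-point tile step is the paper's, but it is not a direct-product extension. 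Kernel dimension $6$ is now possible, contributing $305$ further perfect-code classes that need their own certificates.
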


\begin{theorem}\label{main:p11}
A nonempty subset of $\F_{11}^3$ is spectral if and only if it tiles
$\F_{11}^3$ by translation.
\end{theorem}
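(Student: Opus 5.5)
The plan splits the two directions of Theorem~\ref{main:p11}. For tiling $\Rightarrow$ spectral I will use the three-dimensional result of Aten et al.\ \cite{Aten}, which gives this implication in $\F_p^3$ for every prime $p$; so all the work is in spectral $\Rightarrow$ tiling. Let $(E,\Lambda)$ be a spectral pair in $\F_{11}^3$ and set $n=|E|$. By the standard divisibility argument, $n=|\Lambda|$ divides $11^3$ or else lies strictly between $11$ and $11^2$ with $11\mid n$. In the first case, $n\in\{1,11,121,1331\}$, and spectral sets of these sizes tile: for $n=11$ and $n=121$, $E$ meets every coset of a line (respectively plane) of a suitable direction exactly once, which is what spectrality forces. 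So the remaining sizes are $n=11k$ with $2\le k\le 10$. Malikiosis's Delsarte bound \cite{Malikiosis} removes the three largest values of $k$, leaving the six cases $k=2,\dots,7$, which I must show are not spectral.

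For each such $n$, I will apply the Fallon--Mayeli--Villano reduction \cite{FMV}. Since $(\Lambda-\Lambda)\setminus\{0\}$ lies in the zero set of $\widehat{1_E}$, and $\widehat{1_E}(\xi)=0$ over $\F_{11}$ means that $E$ occupies the $11$ parallel planes orthogonal to $\xi$ equally, each direction determined by $\Lambda$ gives a partition of $E$ into $11$ planes of size $k$ each. Dually, each direction determined by $E$ gives such a partition of $\Lambda$. Since $|\Lambda|=11k$ is large, $\Lambda$ determines many projective directions $[\xi]\in\PG(2,11)$. I will combine this with the moment identities: if $E$ is equidistributed on the planes $\xi\cdot x=c$, then $\sum_{x\in E}(\xi\cdot x)^j$ is fixed for $1\le j\le 10$. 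This lets me encode the plane profiles of $E$ as polynomials in $\xi$ that must vanish on the set $D\subset\PG(2,11)$ of directions of $\Lambda$.

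The key steps, in order, are as follows. First, bound $|D|$ from below using R\'edei-type direction bounds for a set of size $11k$, with the plane-fibre structure of $\Lambda$ itself as extra input. Second, show that a low-degree polynomial identity on a large subset of $\PG(2,11)$ forces $E$ to be contained in few parallel planes or lines. Third, use weighted incidence counts between the points of $E$ and the lines of $\PG(2,11)$ to rule out each small configuration by counting.

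I expect the hardest step to be the middle cardinalities $k=4,5,6$. There the direction bounds leave slack, and the planar lemmas used at $p=7$ fail at $p=11$. For these I will first try to pin down the possible occupancy profiles of $E$ along a fixed direction using the degree-$\le 10$ moment polynomials. Whatever configurations survive I will dispatch by exact finite computation: enumerate the candidate $\Lambda$-fibre patterns up to $\mathrm{AGL}(3,11)$ symmetry and check the orthogonality equations, with the verification programs recorded in the appendix.
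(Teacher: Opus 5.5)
Your reduction is the paper's. Aten et al.\ give tiling $\Rightarrow$ spectral and the list of possible sizes, and Malikiosis removes $88,99,110$, leaving $22,33,44,55,66,77$. (You have lines and planes swapped: an $11$-point spectral set meets each of eleven parallel planes once and tiles by a plane, while a $121$-point one is a transversal of parallel lines.)

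The gap is that everything after this reduction is a list of tools, not an argument, and the one concrete step you give is not the one needed. To pass from $M_j(\xi)=\sum_{x\in E}(\xi\cdot x)^j=0$ on $D$ to $M_j\equiv0$, you must show that $D$ lies on no curve of degree $j$. A count of $|D|$ would have to exceed $11j+1$, the size of $j$ concurrent lines, and nothing in your plan provides that.

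For $k\ge3$ the paper instead proves, size by size, a lower bound on $|D\cap L|$ for \emph{every} projective line $L$, treating exceptional collinear configurations directly. These bounds come from a plane slice of $\Lambda$ with at least $k$ points in each parallel family, plus finite planar direction lemmas such as ``six noncollinear points of $\F_{11}^2$ determine six directions.'' Together with the absence of full lines in $\U$, they are fed into $\sum_L(n_L-a)(n_L-d)$ via $\sum_L n_L=12N$ and $\sum_L n_L^2=N^2+11N$. The resulting rigidity, in degree $5$ to $8$ depending on the size, makes all mixed moments vanish. Hence every plane profile of $E$, in every direction, is a low-degree polynomial modulo $11$.

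Separately, many uniform normals in one pencil put $E$ into $k$ parallel planes; this is essentially your step two. But that is where the real work starts. Your step three, ``rule out each small configuration by counting,'' does not say how the layered and profile-restricted configurations are excluded.

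Those exclusions are the substance of the theorem, and several need ideas outside your toolkit.
\begin{itemize}
\item At $22$ the sets are caps, and the proof ends with an equal-chord count $231\le222$.
\item At $33$, three parallel layers are excluded because block orthogonality produces a unitary $X$ over $\mathbb Q(\zeta_{11})$ with $X^2+X+I=0$ on an eleven-dimensional space. This is impossible since $t^2+t+1$ is irreducible there. Once the set is forced to be a graph over $B_{33}$ with odd height, central symmetry gives $(\det H)^2=33^{33}$, putting $\sqrt{33}$ in the degree-five real subfield of $\mathbb Q(\zeta_{11})$.
\item At $44$ the four-layer case needs the moment-flat graph classification and Gauss sums. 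The proof ends with a profile energy gap and weighted incidence divisibility, $14\nmid936$.
\item At $55$ the key step is a cross-slice moment argument over $\F_{121}$.
\item At $66$ the argument uses the degree-nine form $M_9$, whose zero set is exactly $\U$ together with the normals of profile $(11^6,0^5)$, and a line-factor count.
\item At $77$, affine profiles plus a pencil count allow only $m=5,6$.
\end{itemize}

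Your fallback of enumerating $\Lambda$-fibre patterns modulo $\mathrm{AGL}(3,11)$ cannot stand in for these. The group has about $3\times10^{12}$ elements, against roughly $10^{66}$ subsets of size $33$. Even after reducing to graphs over one fixed $33$-point base, the paper's height search runs through about $2.4\times10^8$ cases, and it still relies on the determinant lemma to avoid a further classification of odd heights. Relatedly, $33$ rather than $44$--$66$ is the hardest case, and $77$ falls immediately once moments through degree eight vanish.
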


The analytic observation behind the $64$-point case of
Theorem~\ref{main:binary8} applies in every dimension over $\F_2$. If $B$
is a nonempty proper subset of $\F_2^d$ of density other than one
half, the support of its Fourier transform contains a two-dimensional
linear subspace. Applying this to a spectrum shows that every spectral
set of density one quarter tiles by a plane. One might hope that the
support of the Fourier transform of a spectral set of density one eighth
must similarly contain a three-dimensional subspace. We give an
explicit example showing that this is false in $\F_2^9$. The example
and its spectrum both tile; what fails is the asserted structure of
the Fourier support.

The remaining $64$-point problem in dimension nine is not settled
here. We prove tiling in the periodic case, give projection criteria
that lower the ambient dimension, and settle the seven unequal
four-fiber patterns in a normalized cubic-rank-three branch. The
uniform four-fiber pattern and the other possible cubic ranks remain
outside that argument. Thus the additional reductions do not assert
the full conjecture in $\F_2^9$.

Several of the proofs use finite classifications. We use the published
classifications of Hadamard matrices of orders $16$ and $32$, perfect
binary codes of length $15$, maximal triply even codes of length $48$,
and small binary self-dual codes. Each use is separated into a
mathematical reduction, a finite statement, and an exact verification
procedure. The odd-prime proof also uses explicit enumerations of
small planar configurations and one mixed-parity height calculation.
The appendices give the computational specifications and executable
code; the accompanying source package contains the complete programs,
data, and verification records. No numerical tolerance is used in
these finite verifications.

We begin with the Fourier lemmas over $\F_2$ and the proof in dimension
eight. We then treat dimension nine, including the failure of the
eighth-density support assertion. The last part proves the result
over $\F_{11}$ and compares the arguments with those at $p=7$.


\section{\texorpdfstring{Fourier transforms over $\F_2$ and the quarter-density lemma}{Fourier transforms over F2 and the quarter-density lemma}}
\label{b8:preliminaries}

We first consider vector spaces over $\F_2$. Put $G=\F_2^d$ and $N=2^d$, and
identify $G$ with its character group by
$\chi_u(x)=(-1)^{u\cdot x}$. For $A\subseteq G$, we use the unnormalized
Walsh transform
\[
 W_A(u)=\sum_{x\in A}(-1)^{u\cdot x},
 \qquad \supp W_A=\{u\in G:W_A(u)\ne0\}.
\]
These characters give the finite form over $\F_2$ of Walsh's
orthogonal system~\cite{Walsh}.
All sums of points in a vector space over $\F_2$ are taken in that space. Thus the
difference set of $A$ is $A+A$.

If $(E,\Lambda)$ is a spectral pair, its character matrix $H$ satisfies
$HH^{\mathsf T}=|E|I$. Since $H$ is square, also
$H^{\mathsf T}H=|E|I$. In particular, $(\Lambda,E)$ is a spectral pair,
and
\begin{equation}\label{b8:orthogonality}
 (E+E)\setminus\{0\}\subseteq\{u:W_\Lambda(u)=0\},\qquad
 (\Lambda+\Lambda)\setminus\{0\}\subseteq\{u:W_E(u)=0\}.
\end{equation}
For a tiling $E\oplus T=G$, the identity $1_E*1_T=1_G$ gives
\begin{equation}\label{b8:tilingfourier}
 W_E(u)W_T(u)=0\qquad(u\ne0).
\end{equation}
We will use these two observations in the following form.

\begin{lemma}\label{b8:certificate}
Let $E,T,\Lambda,B\subseteq G$.
\begin{enumerate}
\item If $(E,\Lambda)$ is a spectral pair,
$|T|=N/|E|$, and $T+T\subseteq\supp W_\Lambda$, then $E\oplus T=G$.
\item If $E\oplus T=G$, $|B|=|E|$, and
$B+B\subseteq\supp W_T$, then $B$ is a spectrum of $E$.
\end{enumerate}
\end{lemma}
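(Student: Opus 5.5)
For part (1) the plan is to show that the translates $E+t$, $t\in T$, are pairwise disjoint. Since $|E||T|=N$, disjointness forces $E\oplus T=G$.

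Suppose $e+t=e'+t'$ with $t\ne t'$. Then $e+e'=t+t'\ne0$, so $t+t'$ lies in $(E+E)\setminus\{0\}$. By \eqref{b8:orthogonality} this gives $W_\Lambda(t+t')=0$. On the other hand the hypothesis $T+T\subseteq\supp W_\Lambda$ gives $W_\Lambda(t+t')\ne0$, a contradiction. Hence $t=t'$, and then $e=e'$. So every element of $G$ has at most one representation $e+t$. Counting gives $|E+T|=|E||T|=N$, and every element has exactly one representation. Note that $0\in T+T$ is harmless, since $W_\Lambda(0)=|\Lambda|\ne0$.

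For part (2) the goal is to show that the characters $\chi_b$, $b\in B$, are pairwise orthogonal on $E$. For $b\ne b'$ the inner product on $E$ is $W_E(b+b')$, so it suffices to show $W_E(u)=0$ for every nonzero $u\in B+B$. By hypothesis such a $u$ lies in $\supp W_T$, so $W_T(u)\ne0$. The tiling identity \eqref{b8:tilingfourier} then forces $W_E(u)=0$. Hence the characters indexed by $B$ are pairwise orthogonal, and in particular distinct. They are nonzero, so they are linearly independent. There are $|B|=|E|$ of them in the $|E|$-dimensional space of functions on $E$, so they form an orthogonal basis and $B$ is a spectrum of $E$.

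Neither step is a real obstacle. The only points needing care are the counting step in (1), which upgrades disjointness to a partition, and the dimension count in (2). Each follows directly from the cardinality hypotheses.
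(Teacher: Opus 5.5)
Your proposal is correct and is essentially the paper's proof. In (1), a coincidence $e+t=e'+t'$ yields a nonzero element of $(E+E)\cap(T+T)$, contradicting \eqref{b8:orthogonality}, and the cardinality hypothesis turns the resulting injectivity into a tiling. In (2), the tiling identity \eqref{b8:tilingfourier} makes the characters indexed by $B$ pairwise orthogonal on $E$, and since there are $|B|=|E|$ of them, the dimension count makes them a basis.
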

\begin{proof}
In the first assertion, a repeated sum $e+t=e'+t'$ with $e\ne e'$
would give a nonzero element of $(E+E)\cap(T+T)$, contrary to
\eqref{b8:orthogonality}. Hence all $|E||T|=N$ sums are distinct.
In the second assertion, \eqref{b8:tilingfourier} makes the characters
indexed by distinct elements of $B$ orthogonal on $E$. Their number is
$\dim L^2(E)$, so they form a basis.
\end{proof}

The second assertion is the universal-spectrum sufficient condition of
Lagarias and Szab\'o~\cite{LS}, in the form of
Farkas, Matolcsi, and M\'ora~\cite[Proposition~4]{FMM}.
Neither of the sets $T$ and $B$ in this lemma is required to be a
subspace. This distinction will be useful in dimension nine.

\begin{lemma}[Projection and lifting]\label{b8:lifting}
Let $q:G\to V$ be a surjective linear map whose restriction to $E$ is
injective. If $q(E)\oplus K=V$, then $E\oplus q^{-1}(K)=G$.
In particular, a linear-subspace tiling complement of $q(E)$ lifts to
one of $E$. If $E$ has a linear-subspace tiling complement $K$ in $G$,
then $K^\perp$ is a spectrum of $E$.
\end{lemma}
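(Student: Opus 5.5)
The lemma has two parts, and I will prove them separately and directly from the definitions.

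\emph{Lifting.} Set $T=q^{-1}(K)$ and show that every $g\in G$ has exactly one representation $g=e+t$ with $e\in E$ and $t\in T$.

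For existence, write $q(g)=q(e)+k$ with $e\in E$ and $k\in K$, using $q(E)\oplus K=V$. Put $t=g+e$; then $q(t)=q(g)+q(e)=k\in K$, so $t\in T$ and $g=e+t$.

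For uniqueness, suppose $e+t=e'+t'$. Applying $q$ gives $q(e)+q(t)=q(e')+q(t')$ with $q(t),q(t')\in K$. Uniqueness in $V$ gives $q(e)=q(e')$, and injectivity of $q$ on $E$ then gives $e=e'$, hence $t=t'$.

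If $K$ is a linear subspace, then $q^{-1}(K)$ is a linear subspace of $G$, because the preimage of a subspace under a linear map is a subspace. This gives the "in particular" clause.

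\emph{Spectrum.} Suppose $E\oplus K=G$ with $K$ a linear subspace, so $|E|=N/|K|=|K^\perp|$. I will use Lemma~\ref{b8:certificate}(2) with $T=K$ and $B=K^\perp$.

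First compute the Walsh transform of $K$: $W_K(u)=|K|$ if $u\in K^\perp$, and $W_K(u)=0$ otherwise. So $\supp W_K=K^\perp$.

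Since $K^\perp$ is a subspace, $K^\perp+K^\perp=K^\perp\subseteq\supp W_K$. The cardinality condition $|K^\perp|=|E|$ holds, so Lemma~\ref{b8:certificate}(2) shows that $K^\perp$ is a spectrum of $E$.

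Alternatively, one can argue directly. For distinct $\lambda,\lambda'\in K^\perp$, the element $u=\lambda+\lambda'$ is a nonzero point of $K^\perp$, so $W_K(u)\neq0$. The tiling identity \eqref{b8:tilingfourier} then forces $W_E(u)=0$, which is exactly orthogonality of $\chi_\lambda$ and $\chi_{\lambda'}$ on $E$. Counting, $|K^\perp|=|E|=\dim L^2(E)$, so these characters form a basis.

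No step is a real obstacle. The only point needing care is the uniqueness argument in the lifting part, where injectivity of $q$ on $E$ is essential.
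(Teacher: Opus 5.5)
Your proof is correct. The lifting half is the paper's argument: the decomposition of $q(g)$ in $q(E)\oplus K$ determines a unique $e$, and injectivity of $q$ on $E$ does the rest. You have only written existence and uniqueness out separately.

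For the spectrum half you take a Fourier-side route. You note that $\supp W_K=K^\perp$ and apply Lemma~\ref{b8:certificate}(2) with $T=K$ and $B=K^\perp$, so orthogonality comes from the tiling identity $W_E(u)W_K(u)=0$. The paper argues on the physical side instead. There, $E$ is a transversal of the cosets of $K$, and characters in $K^\perp$ are constant on those cosets. Their restrictions to $E$ are therefore exactly the characters of $G/K$, whose orthogonality is the standard one for a finite abelian group.

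The paper's version is self-contained, and it identifies the resulting character matrix with the character table of $G/K$, that is, a Sylvester matrix. Yours shows the statement to be the subgroup case of the universal-spectrum criterion. The paper later relies on that same criterion with non-subgroup complements and support certificates.

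Both arguments reduce to the same computation:
\[
\sum_{e\in E}(-1)^{u\cdot e}=|K|^{-1}\sum_{g\in G}(-1)^{u\cdot g}=0
\qquad\text{for } 0\neq u\in K^\perp .
\]
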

\begin{proof}
For $x\in G$, the decomposition of $q(x)$ specifies a unique point
$q(e)\in q(E)$, and hence a unique $e\in E$, such that
$q(x-e)\in K$. This gives the required unique decomposition of $x$.
For the last assertion, $E$ contains exactly one representative of each
coset of $K$. Characters in $K^\perp$ are constant on these cosets, and
their restrictions to $E$ have the same orthogonality relations as the
characters of $G/K$.
\end{proof}

The projection argument appears in Szegedy's work~\cite{Szegedy} and in the finite
Fuglede literature; we use it in the form of
\cite[Lemma~2.2]{Kadowaki}. Our first new ingredient is a restriction on
the nonzero Fourier support of a Boolean function.

\begin{lemma}[The support-plane lemma]\label{b8:supportplane}
If $\varnothing\ne A\ne G$ and $|A|\ne N/2$, then $\supp W_A$
contains a two-dimensional linear subspace.
\end{lemma}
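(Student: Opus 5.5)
The plan is to use the idempotence of the indicator function, $1_A^2=1_A$, on the Fourier side. Under the unnormalized Walsh transform, pointwise multiplication becomes convolution, which gives the identity
\begin{equation*}
 \sum_{v\in G} W_A(v)\,W_A(u+v)=N\,W_A(u)\qquad(u\in G).
\end{equation*}
To derive it, expand the left side as $\sum_{x,y\in A}(-1)^{u\cdot y}\sum_{v}(-1)^{v\cdot(x+y)}$. The inner sum equals $N$ when $x=y$ and $0$ otherwise, so the left side reduces to $N\sum_{x\in A}(-1)^{u\cdot x}=N\,W_A(u)$. Setting $u=0$ recovers Parseval, but the identity at nonzero $u$ is what we need.

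\textbf{Step 1.} We note that $0\in\supp W_A$, since $W_A(0)=|A|>0$. We also find a nonzero point of the support. Because $\varnothing\ne A\ne G$, the function $1_A$ is nonconstant. If $W_A$ vanished at every nonzero $u$, Fourier inversion would make $1_A$ constant. Hence there is some $u\ne0$ with $W_A(u)\ne0$.

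\textbf{Step 2.} We argue by contradiction. A two-dimensional subspace $\{0,u,v,u+v\}$ lies in $\supp W_A$ exactly when $u,v$ are distinct nonzero elements with $W_A(u),W_A(v),W_A(u+v)$ all nonzero. Suppose no such pair exists, and fix $u\ne0$ in $\supp W_A$ as in Step 1. We examine each term of the convolution identity at $u$:
\begin{itemize}
\item for $v\notin\{0,u\}$, either $W_A(v)=0$, or else $v\in\supp W_A$, in which case $W_A(u+v)=0$ by the assumption;
\item the terms $v=0$ and $v=u$ each contribute $W_A(0)W_A(u)=|A|\,W_A(u)$.
\end{itemize}
The identity therefore collapses to $2|A|\,W_A(u)=N\,W_A(u)$. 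Since $W_A(u)\ne0$, this forces $|A|=N/2$, contradicting the hypothesis. Hence some pair $u,v$ as above exists, and $\{0,u,v,u+v\}\subseteq\supp W_A$.

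The only step needing care is establishing the convolution identity with the correct normalization, since the factor $N$ against $2|A|$ is exactly what produces the exclusion $|A|\ne N/2$. The remaining steps are bookkeeping of which terms vanish. The argument is sharp, as it should be: subsets of density one half, such as affine hyperplanes, can have support $\{0,u\}$.
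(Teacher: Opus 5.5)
Your proof is correct and is essentially the paper's argument: the paper also Fourier-transforms $1_A^2=1_A$ into a convolution identity, normalized as $a*a=a$ with $a=N^{-1}W_A$. It then separates the terms $t=0,s$ to obtain $(1-2\delta)a(s)=\sum_{t\notin\{0,s\}}a(t)a(s+t)$ and reads off a nonzero summand, which gives the plane $\{0,s,t,s+t\}$; your unnormalized version with the final step phrased as a contradiction is the same argument.
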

\begin{proof}
Put $a(u)=N^{-1}W_A(u)$ and $\delta=a(0)=|A|/N$.
Fourier transformation of $1_A^2=1_A$ gives $a*a=a$, where convolution
on the frequency group is an unnormalized sum. For $s\ne0$, separating
the terms $t=0,s$ gives
\begin{equation}\label{b8:idempotence}
 (1-2\delta)a(s)=\sum_{t\notin\{0,s\}}a(t)a(s+t).
\end{equation}
By Fourier inversion, $a(s)\ne0$ for some $s\ne0$; otherwise $1_A$
would be constant. The left side of \eqref{b8:idempotence} is nonzero,
so at least one summand on the right is nonzero. For this $t$, all four
points $0,s,t,s+t$ belong to $\supp W_A$. Since $t\notin\{0,s\}$,
they form a plane.
\end{proof}

\begin{theorem}[The quarter-density lemma]\label{b8:quarter}
Let $E\subseteq\F_2^d$ be spectral. Then
\[
 |E|\le 2^{d-2},\qquad |E|=2^{d-1},\qquad\text{or}\qquad E=\F_2^d.
\]
If $|E|=2^{d-2}$, then $E$ has a two-dimensional linear-subspace
tiling complement. If $|E|=2^{d-1}$, it has a one-dimensional
linear-subspace tiling complement.
\end{theorem}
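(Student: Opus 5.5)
The plan is to apply the support-plane lemma (Lemma~\ref{b8:supportplane}) to a spectrum $\Lambda$ of $E$, and to use the orthogonality relation \eqref{b8:orthogonality} to turn planes in $\supp W_\Lambda$ into tiling complements through Lemma~\ref{b8:certificate}(1).

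\textbf{The size restriction.} Let $(E,\Lambda)$ be a spectral pair, so $|\Lambda|=|E|$. The cases $|E|=2^{d-1}$ and $E=\F_2^d$ are allowed by the statement. Suppose therefore that $E\ne G$ and $|E|\ne N/2$. Then $\Lambda$ is nonempty, proper, and not of density one half, so by Lemma~\ref{b8:supportplane} the set $\supp W_\Lambda$ contains a plane $P=\{0,s,t,s+t\}$.

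By \eqref{b8:orthogonality}, every nonzero element of $E+E$ lies outside $\supp W_\Lambda$, and in particular outside $P$. Hence distinct $e,e'\in E$ never satisfy $e+e'\in P$. Thus $E$ meets each coset of $P$ in at most one point. Since there are $N/4$ cosets, $|E|\le N/4=2^{d-2}$. This gives the trichotomy.

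\textbf{Tiling complements.} If $|E|=2^{d-2}$, take the same plane $P$. It has $|P|=N/|E|$ and $P+P=P\subseteq\supp W_\Lambda$, so Lemma~\ref{b8:certificate}(1) gives $E\oplus P=G$. Equivalently, by the counting above, $E$ meets each of the $N/4$ cosets of $P$ exactly once.

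If $|E|=2^{d-1}$, we look for a single nonzero $s\in\supp W_\Lambda$ and take $K=\{0,s\}$. Then $K+K=K\subseteq\supp W_\Lambda$ and $|K|=2=N/|E|$, so Lemma~\ref{b8:certificate}(1) gives $E\oplus K=G$. Such an $s$ exists when $\Lambda\ne G$: then $1_\Lambda$ is nonconstant, and by Fourier inversion $W_\Lambda$ cannot vanish on all of $G\setminus\{0\}$. Here $\Lambda\ne G$ holds because $|\Lambda|=N/2<N$ for $d\ge1$.

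\textbf{Main obstacle.} The only real difficulty is the existence of the plane, and Lemma~\ref{b8:supportplane} already handles it. What remains is bookkeeping:
\begin{itemize}
\item checking that the hypotheses of Lemma~\ref{b8:supportplane} hold for $\Lambda$, namely that $\Lambda$ is nonempty, proper, and of density different from one half;
\item noting that a linear plane satisfies $P+P=P$;
\item handling the degenerate small cases $d\le 1$, where $E=G$ or $|E|=2^{d-1}$ are the only possibilities.
\end{itemize}
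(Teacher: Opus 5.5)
Your proposal is correct and follows the paper's proof essentially step for step. You apply Lemma~\ref{b8:supportplane} to the spectrum, use \eqref{b8:orthogonality} to make $E$ meet each coset of the plane at most once (the paper phrases this equivalently as the four translates $E+k$ being disjoint), and use Fourier inversion to obtain a support line $\{0,s\}$ in the half-density case.
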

\begin{proof}
Let $\Lambda$ be a spectrum of $E$. Suppose first that $\Lambda$ is
proper and $|\Lambda|\ne N/2$. By Lemma~\ref{b8:supportplane}, there
is a plane $K\subseteq\supp W_\Lambda$. Equation
\eqref{b8:orthogonality} implies
$K\cap(E+E)=\{0\}$, so the four translates $E+k$, $k\in K$, are
disjoint. Thus $4|E|\le N$, with equality giving a tiling.
If $|\Lambda|=N/2$, Fourier inversion supplies a nonzero $s$ with
$W_\Lambda(s)\ne0$. The same argument with $K=\{0,s\}$ gives a
tiling. Finally, $|\Lambda|=N$ implies $|E|=N$.
\end{proof}

For $d=8$, the quarter-density case has $64$ points. The theorem
therefore settles this cardinality without a classification of
Hadamard matrices of order $64$. This is the reason the Fourier
identity above is particularly useful here. Kadowaki's proof in
dimension seven already reduces the dimension-eight problem to
sixteen-point tiles and spectral sets of sizes $32$ and $64$
\cite[Remark~7.2]{Kadowaki}. We shall settle the first two cases by
finite classifications and the last one by Theorem~\ref{b8:quarter}.

\section{\texorpdfstring{Fuglede's conjecture in $\F_2^8$}{Fuglede's conjecture in F2 to the eighth power}}\label{b8:section}

\begin{theorem}\label{b8:main}
For a nonempty subset $E$ of $\F_2^8$, the following are equivalent:
\begin{enumerate}
\item $E$ is spectral;
\item $E$ tiles $\F_2^8$ by translations;
\item $E$ has a linear-subspace tiling complement.
\end{enumerate}
\end{theorem}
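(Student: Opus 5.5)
The implications (3)$\Rightarrow$(2) and (3)$\Rightarrow$(1) are immediate. The first is trivial, and the second is the last assertion of Lemma~\ref{b8:lifting}, with $K^\perp$ as a spectrum. The substance therefore lies in (1)$\Rightarrow$(3) and (2)$\Rightarrow$(3). The plan is to sort both directions by cardinality, following Kadowaki's reductions \cite[Remark~7.2]{Kadowaki}. Over $\F_2^8$, the Smith normal form obstruction forces every spectral set to have size $2^k$. Tiles have size $2^k$ simply because $|E|$ divides $256$. Sizes $1,2,256$ are trivial. Theorem~\ref{b8:quarter} handles spectral sets of sizes $128$ and $64$, and excludes spectral sets of sizes strictly between $64$ and $256$ other than $128$. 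For tiles of size $64$ or $128$, I would first show they are spectral and then apply the same theorem.

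For tiles of size at most $8$, I would use Kadowaki's common-spectrum construction. For tiles of size $\ge 32$, I would use the fact that the complement $T$ has size $\le 8$. Kadowaki's machinery then gives $T$ a subspace complement, or a spectrum. Lemma~\ref{b8:certificate}(2) then passes to a spectrum of $E$, after which the spectral case finishes. This is exactly the content of Kadowaki's reduction, leaving three cases: $16$-point tiles, and spectral sets of sizes $32$ and $64$. The $64$-point spectral case is Theorem~\ref{b8:quarter}.

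For spectral sets of size $4$, $8$ and $16$, I would dualize. If $(E,\Lambda)$ is a spectral pair, then so is $(\Lambda,E)$. When $|E|\le 16$, the spectrum has the same small size, so the problem is symmetric. Here I would lean on Kadowaki's projection onto canonical point sets of dephased Hadamard matrices. The set $E$ projects injectively into a space of dimension equal to its dephased rank. There Kadowaki's dimension-seven result, or a direct check, gives a subspace complement, which Lemma~\ref{b8:lifting} lifts back to $\F_2^8$.

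For the $32$-point spectral case, I would normalize by translation so that $0\in E$ and $0\in\Lambda$. The character matrix is then a dephased Hadamard matrix of order $32$. The linear span of $E$ projects $E$ injectively into $\F_2^r$, where $r$ is the dephased rank and $r\le8$. By Kadowaki's list, the matrix lies in one of the $166$ extended classes. For each class and each orientation (rows versus columns, that is, $E$ versus $\Lambda$), I would exhibit a subgroup $K$ of order $2^r/32$ in $\F_2^r$ with $K\subseteq\supp W_{\Lambda'}$ for the image $\Lambda'$ of the spectrum. By Lemma~\ref{b8:certificate}(1), $K$ is then a complement. Lemma~\ref{b8:lifting} lifts it to $\F_2^8$. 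The certificates would be found and verified by exact computation.

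For $16$-point tiles $E\oplus T=\F_2^8$, both factors have size $16$. I would reduce to perfect codes as follows. By projection I can assume $E$ spans and is normalized. Each class of tiling factor of order $16$ corresponds, after a standard reduction, to a perfect binary code of length $15$, or to its translation kernel. The known classification of these codes then lets me find a subspace among the translation kernels that serves as a complement. Alternatively, I would exhibit a $B$ with $B+B\subseteq\supp W_T$, which gives spectrality, and then use the spectral argument for size $16$.

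I expect this tile case to be the main obstacle, because the link between arbitrary $16$-point tiles and perfect codes is the least formal step. The $32$-point case is laborious but mechanical once the classification is trusted.
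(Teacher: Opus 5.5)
Your architecture is the same as the paper's: the Smith bound and Theorem~\ref{b8:quarter} for cardinalities, Hadamard classifications for spectral sets of size at most $32$, the quarter-density lemma for $64$ and $128$, perfect codes for $16$-point tiles, and small partners for large tiles. On the tile side, however, you misapply one tool and omit a key idea.

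\textbf{The common-spectrum construction is on the wrong side.} Lemma~\ref{b8:universal}, applied to a set $T$ with at most eight points, produces $B$ of size $2^8/|T|$ with $B+B\subseteq\supp W_T$. Lemma~\ref{b8:certificate}(2) then makes every \emph{complement} of $T$ spectral. That is exactly what tiles of size at least $32$ need. Knowing that $T$ has a spectrum or a subspace complement, as you write, would not suffice, since both concern the zeros of $W_T$ rather than its support. The construction says nothing about a tile $E$ with $|E|\le 8$ itself. For those the paper uses the small-tile theorem of Cohen, Litsyn, Vardy, and Z\'emor \cite{CLVZ}, which gives subgroup complements.

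\textbf{The $16$-point tile case is missing its key step.} Your phrase ``a subspace among the translation kernels'' points the wrong way. Translate so that $0\in E$, put $V=\langle E\rangle$ with $r=\dim V\le 8$, and let $H:\F_2^{15}\to V$ have the nonzero points of $E$ as columns. Then $C=H^{-1}(T\cap V)$ is a perfect code, translated so that $0\in C$, tiled by $S=\{0,e_1,\ldots,e_{15}\}$.
\begin{enumerate}
\item The translation kernel $M$ of $C$ contains $\ker H$, so $\dim M\ge 15-r\ge 7$.
\item Since $M\subseteq C$, its nonzero words have weight at least three, so the induced map $V\to\F_2^{15}/M$ is injective on $E$ with image $S/M$.
\item Hence only the $69$ code classes with kernel dimension at least seven matter. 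In each, one finds a subgroup complement of $S/M$ in the quotient and pulls it back by Lemma~\ref{b8:lifting}.
\end{enumerate}
Without the containment $\ker H\subseteq M$, an arbitrary $T$ is not tied to a finite list. Your alternative, finding $B$ with $B+B\subseteq\supp W_T$ for an arbitrary $16$-point $T$, has no finite reduction at all.

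\textbf{The $32$-point certificate condition is stronger than needed.} Requiring $K\subseteq\supp W_{\Lambda'}$ is more than necessary; Lemma~\ref{b8:h32} only checks $K\cap(A+A)=\{0\}$ for the canonical set $A$. The paper imposes your stronger condition on the same $332$ oriented models in Theorem~\ref{b9:64:periodic}. There it records only $304$ subgroup certificates, with the other $28$ being non-subgroup sets. This suggests your search would yield only tilings, not subspace complements, for some classes.
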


We separate the proof into cardinality, projection, and tiling
arguments. The cardinality bound and the canonical-column reduction
are due to Kadowaki \cite{Kadowaki}. We include their proofs because
they also organize the work in dimension nine.

\subsection{Spectral cardinalities}

Let $S_d=((-1)^{x\cdot y})_{x,y\in\F_2^d}$. For $1\le m\le 2^d$,
let $s_d(m)$ be the sum of the $m$ smallest elements of the multiset
in which $j$ occurs $\binom dj$ times, for $0\le j\le d$.

\begin{lemma}[Kadowaki's Smith-form bound]\label{b8:smith}
If $E\subseteq\F_2^d$ is spectral of size $m\ge2$, then
\begin{equation}\label{b8:smithbound}
 \frac m2\nu_2(m)\ge s_d(m),
\end{equation}
where $\nu_2$ is the $2$-adic valuation.
\end{lemma}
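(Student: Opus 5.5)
The plan is to compare two computations of the $2$-adic valuation of one integer determinant. Let $(E,\Lambda)$ be a spectral pair with $|E|=m$, and let $H=((-1)^{\lambda\cdot x})_{\lambda\in\Lambda,\,x\in E}$ be its character matrix.

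The first computation is the upper side. As recalled before Lemma~\ref{b8:certificate}, $HH^{\mathsf T}=mI$. Hence $(\det H)^2=m^m$, so $\det H$ is a nonzero integer with
\[
\nu_2(\det H)=\tfrac m2\,\nu_2(m).
\]

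The second computation is the lower side, and it uses the Smith normal form of $S_d$. The matrix $H$ is an $m\times m$ submatrix of $S_d$, namely its rows $\Lambda$ and columns $E$. Its determinant is therefore an $m\times m$ minor of $S_d$. Such a minor is divisible by the $m$-th determinantal divisor $D_m(S_d)$, defined as the gcd of all $m\times m$ minors. This divisor equals the product of the first $m$ invariant factors of $S_d$.

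To compute these invariant factors, I would write $S_d=S_1^{\otimes d}$, where $S_1=\begin{pmatrix}1&1\\1&-1\end{pmatrix}$. Unimodular row and column operations give $P S_1 Q=\operatorname{diag}(1,2)$; for instance, subtract the first row from the second and then clear the off-diagonal entry. Kronecker products of unimodular matrices are unimodular, so
\[
P^{\otimes d}S_dQ^{\otimes d}=\operatorname{diag}(1,2)^{\otimes d}=\operatorname{diag}\bigl(2^{\wt(x)}\bigr)_{x\in\F_2^d}.
\]
After permuting the diagonal into nondecreasing order, the entries are powers of $2$, so each divides the next. This is therefore the Smith normal form, in which $2^j$ occurs $\binom dj$ times. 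It follows that $D_m(S_d)=2^{s_d(m)}$.

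Combining the two sides, $2^{s_d(m)}$ divides the nonzero integer $\det H$. Taking valuations gives $\tfrac m2\nu_2(m)\ge s_d(m)$, which is \eqref{b8:smithbound}.

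The step needing the most care is the lower side. There I need to justify two standard facts: that determinantal divisors are invariant under unimodular equivalence, and that every minor is divisible by the corresponding determinantal divisor. The latter holds by definition. The former follows from the Cauchy–Binet formula, because minors of $PAQ$ are integer combinations of minors of $A$. I also need the tensor-product step, which requires only that $\det(P^{\otimes d})=\pm1$. The hypothesis $m\ge2$ plays no real role beyond excluding the trivial case.
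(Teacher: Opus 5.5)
Your proposal is correct and matches the paper's proof. Both arguments obtain the Smith form of $S_d$ as the $d$-fold tensor power of $\operatorname{diag}(1,2)$, use Cauchy--Binet to show that $2^{s_d(m)}$ divides every $m\times m$ minor, and compare this with $\nu_2(\det H)=\tfrac m2\nu_2(m)$, which follows from $HH^{\mathsf T}=mI$. Your added justifications, namely the invariance of determinantal divisors and the unimodularity of Kronecker products, fill in standard details that the paper leaves to its references.
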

\begin{proof}
The integral Smith form of $S_1$ has diagonal entries $1,2$.
Taking $d$-fold tensor products of its unimodular equivalences shows
that the Smith form of $S_d$ has entries $2^j$ with multiplicity
$\binom dj$. For Smith normal form and determinantal divisors, see
Smith~\cite{Smith} and Newman~\cite{Newman}.
By Cauchy--Binet, every $m\times m$ minor of $S_d$ is
divisible by $2^{s_d(m)}$.

A spectral pair of size $m$ gives an $m\times m$ submatrix $H$ of
$S_d$ with $HH^{\mathsf T}=mI$. Consequently
$(\det H)^2=m^m$, so
$\nu_2(\det H)=\tfrac m2\nu_2(m)$. This proves the inequality.
\end{proof}

\begin{corollary}\label{b8:cardinalities}
Every spectral subset of $\F_2^8$ has power-of-two cardinality.
\end{corollary}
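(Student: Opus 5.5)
The plan is to apply Lemma~\ref{b8:smith} with $d=8$ to every size $m$ with $2\le m\le 256$ that is not a power of two. Size $m=1$ is a power of two and needs nothing. The first step is a reduction using Theorem~\ref{b8:quarter}: a spectral set has $|E|\le 64$, $|E|=128$, or $|E|=256$. Hence only the non-powers of two with $3\le m\le 63$ remain.

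For odd $m$ the left side of \eqref{b8:smithbound} is $0$. The right side $s_8(m)$ is positive as soon as $m\ge2$, because only one element of the multiset equals $0$. So every odd $m$ is excluded at once.

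For even $m$ I would write $m=2^k r$ with $r\ge3$ odd, so that $k\le 4$ in the range $m\le 63$, and compare $\tfrac m2 k$ with $s_8(m)$. The multiset consists of one $0$, eight $1$'s, twenty-eight $2$'s, fifty-six $3$'s, and so on. Therefore
\begin{itemize}
\item $s_8(m)=m-1$ for $m\le 9$;
\item $s_8(m)=8+2(m-9)=2m-10$ for $9\le m\le 37$;
\item $s_8(m)=64+3(m-37)$ for $37\le m\le 93$.
\end{itemize}

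The cases then go as follows.
\begin{itemize}
\item $k=1$, so $m\in\{6,10,14,\dots,62\}$: the left side is $m/2$, which is smaller than $s_8(m)$ in every piece of the formula.
\item $k=2$, so $m\in\{12,20,28,36,44,52,60\}$: the left side is $m$, while $s_8(m)\ge 2m-10>m$ for $m\ge12$, and in the third range $s_8(m)=3m-47>m$.
\item $k=3$, so $m\in\{24,40,56\}$: the left side is $\tfrac32 m$, giving $36<38$, $60<73$, and $84<121$.
\item $k=4$, so $m=48$: the left side is $96<97=s_8(48)$.
\end{itemize}
In each case \eqref{b8:smithbound} fails, which proves the corollary.

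The only delicate point is the borderline case $m=48$, where the margin is a single unit, together with the exact value of $s_8$ near the breakpoints. A short table of $s_8(m)$, or an exact check by machine over all $m\le 256$ (which would even make the appeal to Theorem~\ref{b8:quarter} unnecessary), would confirm the arithmetic.
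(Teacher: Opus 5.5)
Your proof is correct and follows the paper's route. The quarter-density theorem reduces to $m\le 64$ (together with $128,256$), and Kadowaki's Smith bound \eqref{b8:smithbound} then excludes every remaining non-power of two; your piecewise values of $s_8(m)$ agree with the paper's table, including the one-unit margin at $m=48$. The only difference is that the paper first discards odd $m$ and $m\equiv 2\pmod 4$ using the elementary fact that a real Hadamard matrix has order $1$, $2$, or a multiple of four, whereas you let the Smith bound handle those cases too, which works because the left side is then $0$ or $m/2$.
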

\begin{proof}
A real Hadamard matrix has order $1,2$, or a multiple of four.
Indeed, after normalizing its first row, a second row splits the
columns into equal halves. Orthogonality of a third row to both of
these rows makes that third row balanced on each half. The order is
therefore divisible by four.

By Theorem~\ref{b8:quarter}, only cardinalities at most $64$, and
$128,256$, remain. For the non-power-of-two multiples of four at
most $64$, the two sides of \eqref{b8:smithbound} are as follows:
\begin{center}
\small
\begin{tabular}{c|rrrrrrrrrrr}
$m$&12&20&24&28&36&40&44&48&52&56&60\\ \hline
$\tfrac m2\nu_2(m)$&12&20&36&28&36&60&44&96&52&84&60\\
$s_8(m)$&14&30&38&46&62&73&85&97&109&121&133
\end{tabular}
\end{center}
Each column contradicts \eqref{b8:smithbound}.
\end{proof}

Kadowaki obtained the same cardinality conclusion by checking the
Smith bound up to $128$ \cite[Remark~4.5]{Kadowaki}. The quarter-density
lemma shortens that check, but the cardinality conclusion itself is
already part of his work.

\subsection{Canonical point sets and small Hadamard matrices}

A matrix $L$ over $\F_2$ is log-Hadamard if $((-1)^{L_{ij}})$ is a real
Hadamard matrix, and it is dephased if its first row and first column
are zero. A canonical column set of a dephased matrix $L$ of rank $r$
is the set of columns of a matrix $R$ whose rows form a basis of the
row space of $L$. Changing this basis applies an invertible linear
map to the column set. The columns are distinct, since equal
columns would contradict Hadamard orthogonality.

\begin{lemma}[Kadowaki]\label{b8:canonical}
Let $(E,\Lambda)$ be a spectral pair in $\F_2^d$. If its dephased
log-Hadamard matrix has rank $r$, then $r\le d$, and $E$ projects
bijectively under a surjective linear map onto a canonical column
set in $\F_2^r$.
\end{lemma}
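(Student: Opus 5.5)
We realize the spectral pair as a log-Hadamard matrix and factor it through its row space. Enumerate $\Lambda=\{\lambda_1,\dots,\lambda_m\}$ and $E=\{e_1,\dots,e_m\}$. After translating $E$ so that $0\in E$ (spectrality is translation invariant), and translating $\Lambda$ so that $0\in\Lambda$ (legitimate because $(\Lambda,E)$ is also a spectral pair), the matrix $L_{ij}=\lambda_i\cdot e_j$ over $\F_2$ is log-Hadamard. Putting $e_1=0$ and $\lambda_1=0$ makes it dephased. Translation of $E$ changes the projected set only by a translate, so we first treat this normalized case and then translate back; a translate of a canonical set is harmless for the tiling applications.

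Next we factor $L$. Let $M$ be the $m\times d$ matrix with rows $\lambda_i$ and $P$ the $d\times m$ matrix with columns $e_j$, so that $L=MP$. Then $r=\rank L\le\rank P\le d$.

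For the projection, choose a basis $\rho_1,\dots,\rho_r$ of the row space of $L$. Each row of $L$ is the functional $x\mapsto\lambda_i\cdot x$ evaluated on the columns $e_j$. Thus the row space of $L$ consists of the restrictions to $E$ of the functionals $x\mapsto v\cdot x$ with $v\in\operatorname{span}\Lambda$. Pick $v_1,\dots,v_r\in\operatorname{span}\Lambda$ whose restrictions are $\rho_1,\dots,\rho_r$. These restrictions are linearly independent, so the $v_k$ are too, and $q(x)=(v_1\cdot x,\dots,v_r\cdot x)$ is a surjective linear map $G\to\F_2^r$. By construction $q(e_j)$ is the $j$-th column of the matrix $R$ with rows $\rho_k$, so $q(E)$ is exactly the canonical column set.

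It remains to show that $q$ is injective on $E$. If $q(e_j)=q(e_{j'})$ with $j\ne j'$, then columns $j$ and $j'$ of $R$ agree. Every row of $L$ is a combination of the rows of $R$, so columns $j$ and $j'$ of $L$ also agree. That contradicts orthogonality of distinct columns of the Hadamard matrix, which is the distinctness remark made before the lemma. Since changing the basis applies an invertible linear map, this gives the stated form of the conclusion.

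The only delicate point is the normalization. Dephasing requires translating both $E$ and $\Lambda$, and we must check that the resulting factorization $L=MP$ still lives in $\F_2^d$. It does, because translating $E$ by $e_1$ and $\Lambda$ by $\lambda_1$ changes $L$ only by adding row and column constants. With that checked, the rest is linear algebra.
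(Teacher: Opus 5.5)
Your proof is correct and follows the paper's argument: you translate both sets to contain zero, bound the rank through the factorization via $\F_2^d$, and use a row-space basis to define a surjective linear map whose values on $E$ are the distinct canonical columns. The only difference is cosmetic: the paper takes the basis from actual rows $\lambda_i$, whereas you allow any basis and lift it to $\operatorname{span}\Lambda$, and you spell out injectivity explicitly through Hadamard column orthogonality.
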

\begin{proof}
Translate both sets to contain zero. The matrix
$L=(\lambda\cdot e)_{\lambda\in\Lambda,e\in E}$ is dephased and has
rank at most $d$. Choose $\lambda_1,\ldots,\lambda_r$ whose rows
form a row-space basis, and put
\[
 q(x)=(\lambda_1\cdot x,\ldots,\lambda_r\cdot x).
\]
The $\lambda_i$ are linearly independent: any relation between them
would give the same relation between their rows. Hence $q$ is
surjective. Its values on $E$ are precisely the distinct columns
of the chosen row-basis matrix.
\end{proof}

\begin{lemma}[Equivalence of canonical point sets]\label{b8:equivalence}
Equivalent real Hadamard matrices have the same dephased rank
over $\F_2$, and their canonical column sets are affinely equivalent.
When equivalence classes are identified with their transposes,
both orientations must be considered.
\end{lemma}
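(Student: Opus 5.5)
We have to show three things: equivalent real Hadamard matrices have the same dephased $\F_2$-rank, their canonical column sets are affinely equivalent, and transposition has to be handled as a separate orientation. Two Hadamard matrices $H,H'$ of order $m$ are equivalent if $H'=PDHD'Q$ with $P,Q$ permutation matrices and $D,D'$ diagonal $\pm1$ matrices. In log form, write $L_{ij}\in\F_2$ with $H_{ij}=(-1)^{L_{ij}}$. Then the sign changes become $L'_{ij}=L_{\pi(i)\sigma(j)}+a_i+b_j$ for some vectors $a\in\F_2^m$, $b\in\F_2^m$ and permutations $\pi,\sigma$. We will check the effect of each generator on the dephased matrix.

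\textbf{Dephasing and rank.} Dephasing $L$ with respect to the row $i_0$ and column $j_0$ gives
\[
\tilde L_{ij}=L_{ij}+L_{i_0j}+L_{ij_0}+L_{i_0j_0}.
\]
The terms $a_i+b_j$ cancel in this expression, so sign changes do not alter the dephased matrix. Permutations only move which row and column play the role of the base ones. The key step is the case where the base row changes from $i_0$ to $i_1$ (columns are symmetric). The new dephased matrix has entries $\tilde L_{ij}+\tilde L_{i_1j}$. So each row of the new matrix is an old dephased row plus the fixed row $\rho=\tilde L_{i_1\cdot}$, which itself lies in the old row space.
\begin{itemize}
\item The new row space is contained in the old row space, and by symmetry the two are equal. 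Hence the rank is unchanged.
\item The column set is determined by the row space. The column of $j$ is obtained by evaluating a basis of the row space at $j$; call this vector $c_j\in\F_2^r$.
\end{itemize}

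\textbf{Affine equivalence of column sets.} Choosing another basis of the row space composes the map $j\mapsto c_j$ with an invertible linear map, as already noted in the paper. Permuting the columns does not change the set of columns. It remains to change the base column from $j_0$ to $j_1$. The new entries are $\tilde L_{ij}+\tilde L_{ij_1}$, so every column of $\tilde L$ is shifted by the fixed column $\tilde L_{\cdot j_1}$. Expressed in a row basis, $c_j$ becomes $c_j+c_{j_1}$, which is a translation. Combining these cases, the column sets of equivalent matrices differ by an invertible affine map of $\F_2^r$, and they have the same rank $r$.

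\textbf{Transposes.} The row and column spaces of the dephased matrix have equal dimension, so $H$ and $H^{\mathsf T}$ have the same dephased rank. However, the canonical column set of $H^{\mathsf T}$ is built from the rows of $\tilde L$, and it need not be affinely equivalent to the column set of $H$. A classification up to equivalence combined with transposition therefore lists only one representative per class. Both orientations must then be examined, because the canonical column set of $E$ may come from either one. This last assertion is a remark about bookkeeping rather than a claim requiring proof, and I do not expect it to be an obstacle.

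The main point needing care is the base-change step. One must verify that the dephased matrices at different bases differ exactly by adding a fixed row (or a fixed column) that already lies in the row space (or is itself a column). The rest is routine.
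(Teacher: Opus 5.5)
Your proposal is correct and follows essentially the same route as the paper: sign changes cancel, permutations only move the dephasing base, and a base change adds a fixed row or column. Because row $i_0$ and column $j_0$ of the dephased matrix vanish, that shift preserves the rank and translates the canonical column set. The paper performs both base changes at once through a rank factorization $L=PR$, writing $L'_{ij}=(p_i+p_{i_0})\cdot(c_j+c_{j_0})$, whereas you do the row step and the column step separately. In your column step, the shifted basis rows still form a basis of the new row space precisely because the rank is preserved, and this is worth saying explicitly.
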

\begin{proof}
Row and column sign changes cancel in dephasing. Permutations
replace the dephasing base by some $(i_0,j_0)$, giving
\[
 L'_{ij}=L_{ij}+L_{ij_0}+L_{i_0j}+L_{i_0j_0}.
\]
Write $L=PR$, with both factors of rank $r$, and let $p_i$ be the
rows of $P$ and $c_j$ the columns of $R$. Since $L$ is dephased,
$p_0=c_0=0$. Thus
\[
 L'_{ij}=(p_i+p_{i_0})\cdot(c_j+c_{j_0}).
\]
The translated $p_i$ span $\F_2^r$, as do the translated $c_j$:
the translated family contains the translation vector and hence
spans the original family. Both new factors therefore have rank
$r$, as does their product. A basis of the new row space differs
from the translated column factor by an invertible change of
coordinates. This proves affine equivalence.

Transposition exchanges the two factors. It preserves the rank
but can exchange inequivalent point sets, which proves the last
assertion. The rank assertion also follows from the dephasing
theorem of Ferguson and Sothanaphan \cite[Proposition~4.3]{FS}; the
point-set assertion is \cite[Lemma~5.3]{Kadowaki}.
\end{proof}

\begin{lemma}\label{b8:h16}
Every spectral set of size $1,2,4,8$, or $16$ in a vector space over $\F_2$
has a linear-subspace tiling complement.
\end{lemma}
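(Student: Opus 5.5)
The approach is to pass to the canonical column set of Lemma~\ref{b8:canonical} and then lift a subspace complement back with Lemma~\ref{b8:lifting}. Let $(E,\Lambda)$ be a spectral pair of size $m\in\{1,2,4,8,16\}$ in $\F_2^d$, and let $r$ be the rank of the dephased log-Hadamard matrix. Lemma~\ref{b8:canonical} gives a surjective linear map $q:\F_2^d\to\F_2^r$ that is injective on $E$, with $q(E)$ a canonical column set $C$ of a dephased Hadamard matrix of order $m$. Note that $r\le m-1$, since the first row of $L$ is zero. By Lemma~\ref{b8:lifting}, it suffices to show that $C$ has a linear-subspace tiling complement in $\F_2^r$; that complement then lifts to $q^{-1}(K)$, which is again a subspace. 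By Lemma~\ref{b8:equivalence}, $C$ depends, up to affine equivalence, only on the equivalence class of the Hadamard matrix and its orientation. Affine maps carry subspace tiling complements to subspace tiling complements (translation is harmless, and a linear map carries $K$ to its image). So it suffices to check one representative per class and orientation.

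The finite list is short. For $m=1,2,4,8$ there is a single equivalence class, the Sylvester matrix. For $m=16$ there are five classes, Hall's classification, and together with transposes this gives a handful of oriented cases. For the Sylvester matrix of order $2^k$, the dephased log-Hadamard matrix is $(x\cdot y)$ on $\F_2^k$. It has rank $k$, and the canonical column set is all of $\F_2^k$, which tiles with $K=\{0\}$. This settles $m\le8$, and the Sylvester class at $16$.

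For the other order-$16$ classes I would compute the dephased rank $r$ and the column set $C\subseteq\F_2^r$ with $|C|=16$. I would then look for a subspace $K$ of dimension $r-4$ with $(C+C)\cap K=\{0\}$, since any $K$ of the right size avoiding the nonzero differences is a complement. Two observations shrink the search. Rank $r=4$ forces $C=\F_2^4$, which is trivial. If $r=5$, then $|C|=2^{r-1}$, so Theorem~\ref{b8:quarter} (or directly Lemma~\ref{b8:supportplane} with a spectrum) gives a one-dimensional complement. Similarly, when $r=6$ the density is one quarter and Theorem~\ref{b8:quarter} applies in $\F_2^r$. For this I need $C$ to be spectral in $\F_2^r$, which holds because $C$ is spectral with the row vectors as spectrum, using $HH^{\mathsf T}=mI$ after the coordinate change.

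The main obstacle is the case of larger rank, $r\ge7$. Here the density $16/2^r\le1/8$, and the plane lemma is not enough. For this case I would rely on an exact computer check over the oriented classes, exhibiting $K$ explicitly as a certificate. The verification step is then just to confirm that $|K|=2^{r-4}$ and $(C+C)\cap K=\{0\}$. I expect the known ranks of the order-$16$ classes to be small, around $5$ to $7$, so this check is brief.
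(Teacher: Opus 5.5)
Your proposal is correct and follows the paper's route: you reduce to a canonical column set by Lemma~\ref{b8:canonical}, transfer across equivalence classes with Lemma~\ref{b8:equivalence}, use the Sylvester class for orders up to $8$ and Hall's five classes at order $16$, and lift back with Lemma~\ref{b8:lifting}. The only difference is in the order-$16$ case. You settle dephased ranks $5$ and $6$ analytically by Theorem~\ref{b8:quarter}, correctly observing that the row vectors give a spectrum of $C$ in $\F_2^r$, and leave a finite complement search only for rank $7$. The paper instead certifies all five classes uniformly: it gives four frequency differences whose linear projection is injective on the point set, and the kernel of that projection is the complement.
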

\begin{proof}
For orders $2,4,8$, the unique Hadamard equivalence class is the
Sylvester class. Its canonical point set is the whole space of
dimension $1,2,3$, respectively, so Lemmas~\ref{b8:canonical} and
\ref{b8:lifting} apply. The singleton case is immediate.

For order $16$, use Hall's five equivalence classes \cite{Hall}.
For each representative, there are five column indices
$c_0,c_1,c_2,c_3,c_4$ for which the sixteen relative sign patterns
\[
 (H_{i,c_j}H_{i,c_0})_{j=1}^4\qquad(0\le i<16)
\]
are distinct. For the representatives used in Appendix~\ref{b8:appendix},
these five-tuples, in order, are
\[
 (0,1,2,4,8),\quad(0,1,2,8,12),\quad
 (0,1,8,10,12),\quad(0,1,8,10,12),\quad(0,8,9,10,12).
\]
The check is a direct evaluation of sixteen patterns for each
matrix; its code and the five input matrices are included there.
Taking rows as points and columns as frequencies, the four
frequency differences give a linear projection bijective on the
point set. Its kernel is a tiling complement. Hall's classes are
ordinary equivalence classes, so transposes are covered as well.
Lemma~\ref{b8:equivalence} transfers this conclusion to every
matrix in the five classes, and Lemma~\ref{b8:lifting} transfers it
to the original spectral set. This is the subgroup form of
Kadowaki's order-$16$ argument \cite[Proposition~6.1]{Kadowaki}.
\end{proof}

\begin{lemma}[The order-$32$ finite calculation]\label{b8:h32}
Every canonical column set of an order-$32$ Hadamard matrix with
dephased rank over $\F_2$ at most eight has a linear-subspace tiling
complement.
\end{lemma}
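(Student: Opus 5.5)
The proof reduces the statement to a finite check over the classified Hadamard matrices of order $32$, and then certifies each case with an explicit subgroup. Start from the known classification of Hadamard matrices of order $32$ into equivalence classes. By Lemma~\ref{b8:equivalence}, the dephased rank over $\F_2$ is an invariant of the class. The canonical column set is determined up to affine equivalence by the class together with an orientation. Having a linear-subspace tiling complement is invariant under invertible linear maps and translations: a translate of a tile tiles by the same complement, and $A(K)$ is a subspace complement of $A(E)$. So it suffices to treat one representative per class and per orientation. Following Kadowaki's Remark~7.2, the classes of dephased rank at most eight are the $166$ extended classes. For each of them, and for each of its two orientations (rows as points, or columns as points), I would compute the canonical column set $C\subseteq\F_2^r$ with $r\le 8$ and $|C|=32$.

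For each such $C$, I would exhibit a certificate in the form used in Lemma~\ref{b8:h16}. The certificate is a surjective linear map $q:\F_2^r\to\F_2^5$ whose restriction to $C$ is injective. Then $q(C)=\F_2^5$, which trivially tiles by $\{0\}$, and Lemma~\ref{b8:lifting} shows that $\ker q$ is a linear-subspace tiling complement of $C$. Equivalently, I would find an $(r-5)$-dimensional subspace $K$ with $K\cap(C+C)=\{0\}$. Concretely, one searches for five frequency vectors $\lambda_1,\dots,\lambda_5\in\F_2^r$ whose value patterns $(\lambda_i\cdot c)_i$ on the $32$ points are pairwise distinct. In matrix terms, these are five columns of the representative, taken relative to a base column as in the order-$16$ case, whose relative sign patterns on the $32$ rows are distinct. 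Such a $5$-tuple is the exact, easily verified certificate. The search can be organized incrementally: choose $\lambda_i$ one at a time so that each splits every current fiber of size $2^{5-i+1}$ into equal halves. This prunes heavily, since the $\F_2$-space has at most $2^8$ candidate vectors at each step.

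Completeness for every matrix, not just the representatives, then follows as in Lemma~\ref{b8:h16}: certificates for both orientations are transferred by Lemma~\ref{b8:equivalence}. The verification itself is a direct evaluation of $32$ patterns per certificate, with no tolerance.

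The main obstacle is that no theoretical argument guarantees a certificate exists. A priori some canonical set might be a tile with no subspace complement, or not a tile at all. If the greedy search fails for some class, I would first run an exhaustive search over all subspaces $K$ of dimension $r-5$ avoiding $C+C$; this is feasible because there are few such subspaces for $r\le 8$. If even that fails, I would look for a projection of $C$ to a smaller space whose image is already known to have a subspace complement, for instance via Lemma~\ref{b8:h16} applied to halves. The final proof, however, should simply consist of the table of $2\times166$ explicit $5$-tuples together with the verification program.
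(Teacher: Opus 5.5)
Your plan is essentially the paper's proof: the classification together with Lemma~\ref{b8:equivalence} reduces the claim to the $166$ classes of dephased rank at most eight, taken in both orientations ($332$ canonical sets), and each of these is certified by an explicitly verified $(r-5)$-dimensional subspace $K$ with $K\cap(C+C)=\{0\}$. Your five functionals are just a basis of $K^\perp$; the paper finds $K$ by enumerating every reduced row-echelon basis, and all $332$ certificates exist.

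Two cautions. First, restricting the $\lambda_i$ to actual columns taken relative to a base column, as in Lemma~\ref{b8:h16}, is not equivalent to this search: it uses only the $32$ points of the dual canonical set among the $2^r$ functionals. Second, your last-resort projection fallback is vacuous. If no $(r-5)$-dimensional subspace avoided $C+C$, then $C$ would have no subspace complement at all, and no projection could produce one.
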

\begin{proof}
We use the complete classification of Kharaghani and
Tayfeh-Rezaie \cite{KTR}, in McKay's representation \cite{McKay}.
The input consists of $6{,}857{,}010$ classes up to equivalence
and transposition. Dephase each representative, row-reduce over
$\F_2$, and retain ranks at most eight. The resulting counts are
\begin{center}
\begin{tabular}{c|rrrr|r}
Rank&5&6&7&8&Total\\ \hline
Classes&1&2&13&150&166.
\end{tabular}
\end{center}
These counts, including the $166$ classes, already occur in
\cite[Table~6.2]{Kadowaki}. The additional calculation here
certifies both orientations of all $166$ classes.

For each of the $332$ oriented canonical sets
$A\subseteq\F_2^r$, the supplied certificate is a list of $r-5$
independent vectors spanning a subspace $K$ such that
\begin{equation}\label{b8:directsumcertificate}
 \bigl|\{a+k:a\in A,\ k\in K\}\bigr|=2^r.
\end{equation}
The verifier checks the Hadamard equations, reconstructs the
canonical set from the selected matrix, and checks
\eqref{b8:directsumcertificate} by forming every sum. All $332$
certificates satisfy the equality. Since $|A|=32$ and
$|K|=2^{r-5}$, it proves $A\oplus K=\F_2^r$.

For completeness, the search for a certificate enumerates
every subspace of dimension $r-5$ by its unique reduced
row-echelon basis. It accepts a subspace precisely when its
nonzero elements avoid $A+A$. The full-source extraction and
the independent extraction described in Appendix~\ref{b8:appendix}
ensure that the selected records cover every class allowed by
the rank bound. Lemma~\ref{b8:equivalence} then covers every
order-$32$ matrix in the assertion.
\end{proof}

\begin{corollary}\label{b8:spectraltosubgroup}
Every spectral subset of $\F_2^8$ has a linear-subspace tiling
complement.
\end{corollary}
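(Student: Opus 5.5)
The proof is a case split on $|E|$. Let $E\subseteq\F_2^8$ be spectral with spectrum $\Lambda$. By Corollary~\ref{b8:cardinalities}, $|E|=2^k$ for some $0\le k\le 8$. Theorem~\ref{b8:quarter} excludes $|E|=128$ only in the sense of settling it: if $|E|=2^{d-1}=128$, that theorem already gives a one-dimensional subspace complement. If $|E|=256$, then $E=\F_2^8$ and the complement $\{0\}$ works. If $|E|=64=2^{d-2}$, Theorem~\ref{b8:quarter} gives a two-dimensional subspace complement. If $|E|\in\{1,2,4,8,16\}$, Lemma~\ref{b8:h16} applies directly. So only $|E|=32$ remains.

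For $|E|=32$, translate $E$ and $\Lambda$ to contain $0$; translation preserves both spectrality and subspace tiling complements. The dephased log-Hadamard matrix $L=(\lambda\cdot e)$ then corresponds to a real Hadamard matrix $H$ of order $32$, with rows indexed by $\Lambda$ and columns by $E$. By Lemma~\ref{b8:canonical}, its rank $r$ satisfies $r\le 8$. The same lemma gives a surjective linear map $q:\F_2^8\to\F_2^r$ that is injective on $E$, with $q(E)$ the canonical column set of $H$ in the column orientation. Lemma~\ref{b8:equivalence} says canonical column sets of equivalent matrices, taken in the same orientation, are affinely equivalent. So $q(E)$ is affinely equivalent to the canonical set of one of the $166$ class representatives of rank at most $8$, in one of its two orientations.

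Lemma~\ref{b8:h32} gives that representative set a subspace complement. Tiling by a linear subspace $K$ is invariant under affine maps: if $x\mapsto Mx+b$ with $M$ invertible, then $A\oplus K=\F_2^r$ implies $(MA+b)\oplus MK=\F_2^r$. Hence $q(E)$ has a subspace complement $K$. Lemma~\ref{b8:lifting} then lifts it to the subspace complement $q^{-1}(K)$ of $E$ in $\F_2^8$.

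The main point needing care is the orientation bookkeeping. Our columns are indexed by $E$, while the classification lists classes up to transposition. This is handled by Lemma~\ref{b8:h32} certifying both orientations, combined with the last clause of Lemma~\ref{b8:equivalence}. The rest is assembling the cases.
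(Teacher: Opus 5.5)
Your proof is correct and follows the paper's proof exactly: power-of-two sizes by Corollary~\ref{b8:cardinalities}, Lemma~\ref{b8:h16} for sizes at most $16$, Lemmas~\ref{b8:canonical}, \ref{b8:h32} and \ref{b8:lifting} for size $32$, Theorem~\ref{b8:quarter} for sizes $64$ and $128$, and the whole group trivially. The orientation and affine-equivalence bookkeeping you spell out is already built into the statement of Lemma~\ref{b8:h32}, which covers every order-$32$ matrix of dephased rank at most eight, so that part is redundant but harmless.
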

\begin{proof}
Its size is a power of two by Corollary~\ref{b8:cardinalities}.
Sizes at most $16$ follow from Lemma~\ref{b8:h16}. Size $32$
follows from Lemmas~\ref{b8:canonical}, \ref{b8:h32}, and
\ref{b8:lifting}. Theorem~\ref{b8:quarter} gives sizes $64$ and
$128$, and the whole-group case is immediate.
\end{proof}

\subsection{Sixteen-point tiles and perfect codes}

The remaining tiling case is naturally expressed in terms of
perfect codes. We do not first assume that the tile is a graph:
the graph property will follow from the quotient certificates.

\begin{lemma}\label{b8:tile16}
Every sixteen-point tile of affine dimension at most eight in a
vector space over $\F_2$ has a linear-subspace tiling complement.
\end{lemma}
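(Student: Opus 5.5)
We reduce to perfect codes. Let $E$ be a sixteen-point tile, translated so that $0\in E$, and pass to the subspace it spans. This subspace is $\F_2^r$ with $r\le 8$, and $E$ still tiles it: intersect a complement $T$ with the subspace after adjusting by cosets. If $r=4$, then $E=\F_2^4$ and there is nothing to prove. If $r\ge 5$, fix a tiling $E\oplus T=\F_2^r$. The basic tool is the \emph{translation kernel}
\[
 P(T)=\{v: T+v=T\},
\]
which is a linear subspace. Every coset tiling descends to the quotient: $E$ tiles $\F_2^r/P(T)$ by $T/P(T)$ whenever $E$ injects into the quotient. More usefully, we may replace $T$ by a periodic complement. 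Since $16\mid 2^r$ and $E\oplus T$ tiles, a Sands/Tijdeman-type argument in elementary abelian $2$-groups shows that $T$ can be replaced by $T+$(something) with large period. If that fails, we instead encode $E$ as a generalized perfect code directly.

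The concrete route follows the paper's hint about perfect codes. A tile of size $16$ in dimension at most $8$ has complement of size $2^{r-4}\le 16$. We exchange roles, viewing $T$ as the tile and $E$ as the complement when $|T|$ is small. For $|T|\le 16$ with $|T|\le 8$ this uses Kadowaki's common-spectrum result together with Lemma~\ref{b8:h16}-type reasoning. The dual situation is then: if $T$ has a subspace complement $K$, then $T$ is spectral with spectrum $K^\perp$ by Lemma~\ref{b8:lifting}. By Lemma~\ref{b8:certificate}(2) applied to $E\oplus T$, a $16$-element set $B$ with $B+B\subseteq\supp W_T$ is a spectrum of $E$. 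The spectral $E$ then has a subspace complement by Lemma~\ref{b8:h16}. So in every case we aim to reach spectrality of $E$ and conclude from Lemma~\ref{b8:h16}.

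The hard case is $r=8$, where $|T|=16$ and both factors have sixteen points. Here we use perfect codes. Choose a basis in which $E$ contains $0$ and the $r$ basis vectors, which is possible after an affine change when $E$ spans. A tiling by a set containing a Hamming-ball-like structure corresponds to a perfect $1$-error-correcting code of length $15$. More precisely, we map $\F_2^{15}\to\F_2^r$ by sending $e_i$ to the fifteen nonzero points of a suitable $E$-labelling. The preimage of $T$ becomes a union of cosets of the kernel, giving a perfect code $C$ of length $15$ whose translation kernel $\ker(C)$ maps into a periodicity of $T$. Using the Östergård--Pottonen classification of the $5983$ perfect binary codes of length $15$, we compute for each code its kernel and check the quotient certificate. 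This means exhibiting a subspace $K$ of the image of the kernel with $|K|=2^{r-4}$ and $K\cap(E+E)=\{0\}$, which gives $E\oplus K$ by counting as in Lemma~\ref{b8:certificate}. Applied to the quotient, this also yields that $E$ is a graph of a function over a $4$-dimensional coordinate space.

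The main obstacle is making the reduction from an arbitrary sixteen-point tile with $r\le 8$ to a length-$15$ perfect code rigorous. The image of $\F_2^{15}$ must cover all relevant configurations, and the preimage of a general complement $T$ must be a genuine perfect code rather than merely a code tiling with a non-ball shape. I would first try to handle this by lifting $E$ itself to the Hamming ball of radius one through a linear map that is injective on $E$. If $E$ is not affinely independent enough for this, I would treat the degenerate cases, those with small affine rank, by Lemma~\ref{b8:lifting} and projection to lower dimension, where Kadowaki's dimension-seven result applies.
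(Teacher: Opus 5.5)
Your treatment of $r\le 7$ is sound and differs from the paper's: there $|T_V|=2^{r-4}\le 8$, so Lemma~\ref{b8:universal} and Lemma~\ref{b8:certificate}(2) make $E$ spectral, and Lemma~\ref{b8:h16} then gives the subspace complement. The Sands/Tijdeman periodicity paragraph is unsupported and not needed. The genuine gap is the case $r=8$, which is the real content of the lemma.

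First, the step you call the main obstacle is immediate and needs no basis or labelling choice. Let $H\colon\F_2^{15}\to V$ have the fifteen nonzero points of $E$ as columns. Then $H$ is surjective and maps $S=\{0,e_1,\ldots,e_{15}\}$ bijectively onto $E$. Uniqueness in $E\oplus T_V=V$ directly gives $S\oplus H^{-1}(T_V)=\F_2^{15}$, so $C=H^{-1}(T_V)$ is a perfect code. The map runs from $\F_2^{15}$ onto $V$ and is injective on $S$; it is not a lift of $E$.

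What is actually missing is the idea that turns the classification into a finite check independent of $E$ and $T$:
\begin{itemize}
\item Since $C$ is a union of cosets of $\ker H$, its translation kernel $M$ contains $\ker H$. Hence $\dim M\ge 15-r\ge 7$, and only the $69$ classes with kernel dimension at least seven can occur.
\item After translating so that $0\in C$, one has $M\subseteq C$. So nonzero words of $M$ have weight at least three, and $S$ injects into $\F_2^{15}/M$.
\item The induced map $V=\F_2^{15}/\ker H\to\F_2^{15}/M$ is therefore injective on $E$, with image $S/M$.
\item The configuration $S/M$ is determined by the equivalence class of $C$ alone: coordinate permutations fix $S$, and translations fix $M$.
\end{itemize}
A subgroup complement of $S/M$ in $\F_2^{15}/M$ then pulls back to one of $E$ by Lemma~\ref{b8:lifting}.

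Second, the certificate you propose is the wrong object. The image $H(M)$ is exactly the period group $\{v:T_V+v=T_V\}$, and $T_V$ is a union of its cosets. A subspace $K\subseteq H(M)$ with $|K|=2^{r-4}=|T_V|$ therefore exists only when $T_V$ is a single coset, i.e.\ already an affine subspace, in which case there was nothing to prove. In the correct argument, the pulled-back complement contains $H(M)$ as the kernel of $V\to\F_2^{15}/M$; it does not lie inside it.

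Similarly, ``check each of the $5983$ codes'' is not the right finite statement. Without the bound $\dim\ker(C)\ge 7$ and the code-intrinsic quotient $S/\ker(C)$, you have not said which configuration is tested for a given code. Nor have you said why the tested configurations cover every sixteen-point tile of affine dimension at most eight. Codes with smaller kernels give configurations in dimension above eight, which are irrelevant here.

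Finally, your $r\le 7$ shortcut saves no computation. For $r=8$ the kernel $M$ may strictly contain $\ker H$, so all $69$ classes still arise.
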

\begin{proof}
Translate the tile $E$ to contain zero, and put
$V=\langle E\rangle$, $r=\dim V\le8$. If $E\oplus T=G$, then
$E\oplus(T\cap V)=V$: in the unique representation of a point
of $V$, the summand from $E$ lies in $V$, so the other summand
does also. Set $T_V=T\cap V$.

Let $H:\F_2^{15}\to V$ have as its columns the fifteen nonzero
points of $E$, and put $S=\{0,e_1,\ldots,e_{15}\}$. The map $H$
is surjective and sends $S$ bijectively onto $E$. We claim that
\begin{equation}\label{b8:perfectlift}
 S\oplus C=\F_2^{15},\qquad C=H^{-1}(T_V).
\end{equation}
Indeed, the unique decomposition of $Hx$ in $E\oplus T_V$
specifies a unique $s\in S$ with $x+s\in C$. Thus $C$ is a
perfect binary one-error-correcting code of length $15$: its
radius-one Hamming balls partition the space. It has $2048$
words, and distinct words are at distance at least three.
Translate $T_V$, if necessary, so that $0\in C$.

Let $K_0=\ker H$ and let
\[
 M=\{u\in\F_2^{15}:C+u=C\}
\]
be the translation kernel of $C$. It is a linear subspace,
$K_0\subseteq M$, and
$\dim M\ge\dim K_0=15-r\ge7$. Since $0\in C$, we have
$M\subseteq C$; its nonzero vectors therefore have weight at
least three. In particular, the quotient map is injective on
$S$. Under the identification $V=\F_2^{15}/K_0$, the induced
map
\begin{equation}\label{b8:kernelquotient}
 V\longrightarrow\F_2^{15}/M
\end{equation}
is injective on $E$ and sends $E$ onto $S/M$.

The classification of \"Osterg\aa rd and Pottonen \cite{OP}
contains $5983$ equivalence classes of perfect length-$15$
codes. Its kernel distribution, also recorded in \cite{OPP},
has exactly the following classes with kernel dimension at
least seven:
\begin{center}
\begin{tabular}{c|rrrr|r}
$\dim M$&7&8&9&11&Total\\ \hline
Classes&48&17&3&1&69.
\end{tabular}
\end{center}
For each of these $69$ codes, the supplied certificate gives
a subgroup complement of $S/M$ in $\F_2^{15}/M$. The verifier
checks all $32768$ radius-one-ball sums, computes the kernel
by testing its translations, constructs the quotient map
from a basis of $M^\perp$, and checks every sum in the
certified quotient tiling. The complement dimension is
$11-\dim M$ in every case.

These certificates cover the code in \eqref{b8:perfectlift}.
Indeed, translating a code leaves its kernel unchanged,
and permuting coordinates sends its kernel to the permuted
kernel while preserving $S$. Equivalent codes therefore
give linearly equivalent quotient point sets $S/M$.
Lift the subgroup certificate through
\eqref{b8:kernelquotient} by Lemma~\ref{b8:lifting}. It gives
a subgroup complement of $E$ in $V$. Adding any linear
complement of $V$ in $G$ gives one in $G$ as well.
\end{proof}

\subsection{Small tiling partners}

The following construction is Kadowaki's universal-spectrum
argument \cite[Theorem~3.1 and Lemma~3.3]{Kadowaki}. We give
the proof to indicate exactly which part of his dimension-seven
result continues to apply in higher dimensions.

\begin{lemma}\label{b8:universal}
If $T\subseteq\F_2^d$ has $1,2,4$, or $8$ points, there is a
set $B$ of size $2^d/|T|$ with $B+B\subseteq\supp W_T$.
Consequently every tiling complement of $T$ is spectral.
\end{lemma}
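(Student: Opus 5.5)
Translate $T$ so that $0\in T$. Let $V=\langle T\rangle$ be its linear span, with $r=\dim V$. Since $|T|\le 8$, we have $r\le 7$. The plan is to work in $V$ and reduce the statement to finitely many affine types of $T$. The reduction rests on one observation. If $B_V\subseteq V^*$ (characters of $V$) has $B_V+B_V\subseteq\supp W_T$, we can lift it to $G$. Identify $\widehat G/V^\perp\cong\widehat V$. Then $W_T(u)$ depends only on the class $u+V^\perp$, so $\supp W_T$ is a union of $V^\perp$-cosets. Choose any subspace $U$ of $\widehat G$ with $U\cap V^\perp$ suitably placed, take coset representatives $\tilde B_V$ of $B_V$, and set $B=\tilde B_V+V^\perp$. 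Then $B+B=\tilde B_V+\tilde B_V+V^\perp\subseteq\supp W_T$ and $|B|=|B_V|\,2^{d-r}$. So it suffices to find $B_V\subseteq\F_2^r$ of size $2^r/|T|$. Equivalently, we need $2^r/|T|$ points of $\F_2^r$ whose pairwise sums avoid the zero set of $W_T$.

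Now split by $|T|$. For $|T|=1$, $\supp W_T$ is everything, so take $B=G$. For $|T|=2$, $T=\{0,v\}$ and $\supp W_T=\{u:u\cdot v=0\}$ is a hyperplane; take $B$ equal to it. For $|T|=4$, $W_T(u)=1+(-1)^{u\cdot a}+(-1)^{u\cdot b}+(-1)^{u\cdot c}$. This vanishes exactly when an odd number of the three signs is $-1$. If $c=a+b$, then $T$ is a plane and $B=T^\perp$ works. Otherwise $a,b,c$ are independent, and the zeros are the $u$ with $(u\cdot a,u\cdot b,u\cdot c)$ of weight $1$ or $3$, i.e. $u\cdot(a+b+c)=1$. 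The support is then the hyperplane $(a+b+c)^\perp$, which has size $2^{d-1}\ge 2^d/4$. Any subspace of it of index $4$ in $G$ works.

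The main case is $|T|=8$, where $r\le 7$. The affine classification of $8$-point sets in $\F_2^r$ up to $GL$ is finite but sizeable. I would organize it by rank and by the structure of $T+T$. The aim is to exhibit a subspace $B_V$ of codimension $3$ in $\widehat V$ (or, failing that, an arbitrary set) inside $\supp W_T$. For a subspace $B$ the condition is just $B\subseteq\supp W_T$. The first tool is Lemma~\ref{b8:supportplane}, applied repeatedly. A more promising route is a direct averaging argument, as in Theorem~\ref{b8:quarter}. Zeros of $W_T$ force balanced splittings of $T$, and balanced splittings are rare for $8$ points. For instance, $W_T(u)=0$ means $u$ splits $T$ as $4+4$. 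I would show that some codimension-$3$ subspace avoids every such $u$ via a counting or case analysis in dimension $r\le7$. If a uniform argument resists, the fallback is Kadowaki's finite enumeration: list the $GL_r$-orbits of $8$-subsets containing $0$ and spanning $\F_2^r$ for $r\le7$, and certify $B_V$ by computer for each. This step is the expected obstacle.

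Finally, the consequence follows from Lemma~\ref{b8:certificate}(2). If $T\oplus E=G$, then $|E|=2^d/|T|=|B|$ and $B+B\subseteq\supp W_T$. Hence $B$ is a spectrum of $E$.
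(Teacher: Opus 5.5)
Your reduction to $V=\langle T\rangle$ and the lift by $V^\perp$ are fine; the clause about a subspace $U$ ``suitably placed'' is superfluous. The cases $|T|=1,2$ and the final appeal to Lemma~\ref{b8:certificate}(2) are also fine.

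\textbf{The case $|T|=4$ with $a,b,c$ independent is wrong.} Let $k$ be the number of $t\in\{a,b,c\}$ with $u\cdot t=1$. Then $W_T(u)=4-2k$, so $W_T$ vanishes exactly when $k=2$, not when $k$ is odd. Points of $(a+b+c)^\perp$ have $k\in\{0,2\}$, so $(a+b+c)^\perp\cap\supp W_T=\{a,b,c\}^\perp$, which has index $8$. Every index-$4$ subspace of $(a+b+c)^\perp$ is therefore too large to fit in $\{a,b,c\}^\perp$, and so it contains a zero of $W_T$. For example, with $T=\{0,e_1,e_2,e_3\}\subset\F_2^3$, the three nonzero points of $(111)^\perp$ are exactly the zeros of $W_T$. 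The correct choice is intrinsically $\{0,x\}$, where $x$ is the functional equal to one on $a,b,c$ (so $W_T(x)=-2$), lifted by $V^\perp$; this is what the paper does. In the plane case your conclusion $\supp W_T=T^\perp$ is right, but the criterion you state for it is not.

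\textbf{The more serious gap is $|T|=8$.} This is the real content of the lemma, and you leave it open. Moreover, the route you favour cannot work in general, because a codimension-three subspace of $V^*$ inside $\supp W_T$ need not exist. For $T=\{0,e_1,\ldots,e_7\}\subset\F_2^7$ one has $W_T(u)=8-2\wt(u)$. No four-dimensional binary code of length seven avoids weight four: write it as $\{(x,xA)\}$ with rows $r_i$ of $A$. The required conditions $\wt(r_i)\ne3$, $\wt(r_i+r_j)\ne2$, $\wt(r_i+r_j+r_k)\ne1$ and $\sum_i r_i\ne0$ are incompatible. So a non-subgroup $B$ is forced.

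No classification is needed, however. The missing idea is a single uniform construction:
\begin{enumerate}
\item The evaluation map $\phi\colon V^*\to\F_2^7$, $\phi(\alpha)=(\alpha(t_1),\ldots,\alpha(t_7))$, is injective and satisfies $W_T(\alpha)=8-2\wt(\phi(\alpha))$.
\item Let $S_7=\{0,e_1,\ldots,e_7\}$ and $j=(1,\ldots,1)$. The sixteen-point set $D=S_7\cup(j+S_7)$ has nonzero differences only of weights $1,2,5,6,7$.
\item Pigeonholing $D$ over the $2^{7-r}$ cosets of $\phi(V^*)$ gives $2^{r-3}$ points in one coset.
\item Translating these into $\phi(V^*)$ and applying $\phi^{-1}$ gives the required intrinsic set of size $2^r/8$, which then lifts exactly as in your first paragraph.
\end{enumerate}
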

\begin{proof}
Suppose $|T|=8$. Translate $T$ to contain zero and write
$T=\{0,t_1,\ldots,t_7\}$, $V=\langle T\rangle$,
$r=\dim V$. The evaluation map
\[
 \phi:V^*\longrightarrow\F_2^7,\qquad
 \phi(\alpha)=(\alpha(t_1),\ldots,\alpha(t_7))
\]
is injective. Let $U$ be its image. For $\alpha\in V^*$,
$W_T(\alpha)=8-2\operatorname{wt}(\phi(\alpha))$.
Put $S_7=\{0,e_1,\ldots,e_7\}$ and $j=(1,\ldots,1)$.
The sixteen-point set $D=S_7\cup(j+S_7)$ has no difference
of weight four: within either part the nonzero weights are
one or two, and between parts they are five, six, or seven.

There are $2^{7-r}$ cosets of $U$, so some coset contains
at least $16/2^{7-r}=2^{r-3}$ points of $D$. Choose that many
and translate them into $U$. Their differences still avoid
weight four. Apply $\phi^{-1}$ and take the full preimage
under restriction $G^*\to V^*$. This gives $2^{d-3}$
frequencies with the asserted nonvanishing property.
Differences whose restriction is zero have Walsh value $8$.

If $|T|=4$, its normalized span has dimension two or
three. In dimension two, $T=V$ and the singleton intrinsic
frequency set suffices. In dimension three, $T$ is an
affine basis; use zero and the functional taking value one
on its three nonzero points. Their Walsh difference is
$4-2\cdot3=-2$. Lift by restriction as above. For sizes one
and two the intrinsic spectrum-support set is again a
singleton, and restriction gives the required cardinality.
The last assertion is Lemma~\ref{b8:certificate}(2).
\end{proof}

\begin{proof}[Proof of Theorem~\ref{b8:main}]
Corollary~\ref{b8:spectraltosubgroup} proves that spectral
sets have subgroup complements. Lemma~\ref{b8:lifting}
shows that sets with subgroup complements are spectral,
and they are plainly tiles.

It remains to show that every tile is spectral. Its size
divides $256$. Tiles over $\F_2$ of sizes at most eight have
subgroup complements by the small-tile theorem of
Cohen, Litsyn, Vardy, and Z\'emor \cite{CLVZ}. A
sixteen-point tile has one by Lemma~\ref{b8:tile16}.
For sizes $32,64,128,256$, the tiling partner has at most
eight points, so Lemma~\ref{b8:universal} applies.
\end{proof}

We may now describe the extension of Kadowaki's result precisely.
His Smith-form argument excludes all non-power-of-two spectral
cardinalities in dimension eight, his order-$16$ argument treats
all sixteen-point spectral sets, and his universal spectra treat
all tiles of size at least $32$. His rank scan also identifies
the $166$ relevant extended order-$32$ classes. Our additions are
the $332$ subgroup certificates for those classes, the perfect-code
kernel reduction and its $69$ quotient certificates for sixteen-point
tiles, and the quarter-density lemma for sixty-four-point spectral
sets. Together they settle exactly the cases left in
\cite[Remark~7.2]{Kadowaki}.


\section{\texorpdfstring{Spectral cardinalities in $\F_2^9$}{Spectral cardinalities in F2 to the ninth power}}\label{b9:cardinalities}
We now consider dimension nine. The quarter-density argument still settles
spectral sets of size $128$ and larger. Below quarter density, the
Smith bound of Lemma~\ref{b8:smith} leaves just two possible cardinalities
which are not powers of two, namely $48$ and $96$. We exclude both.
The exclusion of $96$ is elementary. The exclusion of $48$ uses the
classification of triply even codes and a finite collection of positive
obstruction certificates.

\begin{theorem}\label{b9:main}
Every spectral subset of $\F_2^9$ has power-of-two cardinality. Every
spectral subset whose cardinality is different from $64$ has a linear
subspace tiling complement. Every translational tile in $\F_2^9$ is spectral.
\end{theorem}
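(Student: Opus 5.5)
Three assertions are to be proved: cardinality, subspace complements for spectral sets of size other than $64$, and spectrality of tiles. I will handle them in that order, reusing the dimension-eight machinery wherever it still applies.

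\emph{Cardinality.} Let $E\subseteq\F_2^9$ be spectral with $|E|=m\ge2$. By Theorem~\ref{b8:quarter}, either $m\in\{256,512\}$ or $m\le128$. In the latter case $m$ is $1$, $2$, or a multiple of four, by the Hadamard order argument in Corollary~\ref{b8:cardinalities}. Tabulating $\tfrac m2\nu_2(m)$ against $s_9(m)$ for the non-power-of-two multiples of four up to $128$, Lemma~\ref{b8:smith} should exclude all except $48$ and $96$.

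For $m=96$, I would use the dephased rank. By Lemma~\ref{b8:canonical}, $E$ projects onto a canonical column set in $\F_2^r$ with $r\le9$. A set of $96$ points needs $r\ge7$. I would then run a Smith-form or quarter-density argument intrinsically in dimension $r$: if $r=7$, then $96>2^{5}$ and $96\ne 2^6$, which contradicts Theorem~\ref{b8:quarter} in $\F_2^7$. For $r=8$ or $9$, I would apply the refined Smith bound in $\F_2^r$ with the rank constraint, which I expect to close the case.

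The case $m=48$ is the main obstacle. Here I would follow the paper's announced route through triply even codes. The row space of the dephased log-Hadamard matrix, together with the all-ones word, should generate a binary code of length $48$ with dimension at most $10$. The Hadamard orthogonality, combined with the $\pm1$ entries squaring to $1$, should force its weights to be divisible by $8$. Such a code embeds in a maximal triply even code of length $48$, and these are classified. For each maximal code I would run a finite search showing that no $48\times48$ Hadamard matrix has rows in the code, and record positive obstruction certificates. I expect this reduction to be the delicate point: one must verify that the weight-divisibility really follows, and that the certificates cover every embedding.

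\emph{Subspace complements.} Take spectral $E$ with $|E|=2^k\ne64$. Sizes at most $16$ follow from Lemma~\ref{b8:h16}. Sizes $128$ and above follow from Theorem~\ref{b8:quarter}, together with the whole-space case. For size $32$, Lemma~\ref{b8:canonical} projects $E$ to a canonical set of rank $r\le9$. Ranks up to $8$ are covered by Lemma~\ref{b8:h32} and lifted by Lemma~\ref{b8:lifting}. Rank $9$ requires an extension of the order-$32$ scan to the rank-$9$ classes, with the same subgroup-complement certificate search; I would present this as a further finite calculation.

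\emph{Tiles are spectral.} Let $E\oplus T=\F_2^9$, with $|E|$ dividing $512$. If $|E|\ge64$, then $|T|\le8$, and Lemma~\ref{b8:universal} gives a spectrum. If $|E|\le8$, the theorem of Cohen, Litsyn, Vardy, and Z\'emor \cite{CLVZ} gives a subgroup complement, and Lemma~\ref{b8:lifting} then gives spectrality. If $|E|=16$, the affine dimension may be $9$, so Lemma~\ref{b8:tile16} needs its kernel table extended. The kernel now satisfies $\dim M\ge15-9=6$, so the perfect codes with kernel dimension $6$ must be added to the certificates. The remaining size is $32$, where $|T|=16$. Here I would apply the perfect-code argument to $T$ in place of $E$: $T$ is a sixteen-point tile, so it has a subgroup complement $K$ by the extended Lemma~\ref{b8:tile16}. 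Then $K^\perp$ is a spectrum of $T$, and the universal-spectrum condition of Lemma~\ref{b8:certificate}(2) should produce a set $B$ of size $32$ with $B+B\subseteq\supp W_T$. If that step fails, I would instead use the dual tiling $T\oplus K$ and Lemma~\ref{b8:lifting} to exhibit such a $B$.
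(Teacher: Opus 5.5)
Your reductions to the cardinalities $48$ and $96$ follow the paper, as do your treatments of spectral sets of size at most $32$ or at least $128$ and of tiles of size at most $16$ or at least $64$. The rank-nine order-$32$ scan and the perfect codes of kernel dimension six are exactly the finite extensions the paper performs. Two steps, however, have genuine gaps.

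\textbf{Order $48$.} Hadamard orthogonality does not make the evaluation code triply even. Take three dephased rows $a,b,c$, each of weight $24$ with pairwise intersections $12$. Then $\wt(a+b+c)=4t$ with $t=|a\cap b\cap c|$, and orthogonality does not fix the parity of $t$. It forces only divisibility by four. Divisibility by eight is equivalent to $T_E=0$ (Lemma~\ref{b9:cubic:deriv}). The paper proves in Lemma~\ref{b9:cubic:nonzero} that neither $C_E$ nor $C_\Lambda$ can be triply even. That proof uses the classification of maximal triply even codes together with Lemmas~\ref{b9:cubic:flat} and~\ref{b9:cubic:period}. So your search over maximal triply even codes only covers a case that is already impossible, and every hypothetical pair lies outside it. Nor is there a classification of doubly even codes to search instead.

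The missing idea is a reduction that makes the non-triply-even case finite. The paper uses the quadratic Schur factorization, which forces $r=9$, $c_2=28$ and $d_E,d_\Lambda\le8$. Orthogonality of the two cubic contraction images then gives $\rho_E+\rho_\Lambda\le8$, hence contraction rank three on one side. The gradient zero set then yields a projective triply even \emph{hyperplane} subcode. Finally the paper exhaustively enumerates extensions of the $26$ relevant classified dimension-nine codes, and kills every survivor with a five-space in $E+E$ (Lemma~\ref{b9:finite:five-obstruction}).

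\textbf{Tiles of size $32$.} A subgroup complement $K$ of the $16$-point partner $T$ gives a spectrum $K^\perp$ of $T$, not of $E$. It tells you only that $W_T$ vanishes on $K^\perp\setminus\{0\}$, and nothing about where $W_T$ is nonzero. Lemma~\ref{b8:certificate}(2) needs a $32$-point $B$ with $B+B\subseteq\supp W_T$, that is, a universal spectrum for all complements of $T$. Nothing forces such a $B$ to exist. Your fallback through Lemma~\ref{b8:lifting} does not produce one either, since that lemma lifts complements of injective projections of $E$.

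This is where the paper does its real work. It reduces to $E$ of full span and $T$ of affine rank $r\le8$, and disposes of $r\le6$ directly. For $r=7$ it enumerates orbits ($494$ of them) and gives explicit support certificates. For $r=8$ the support-certificate search fails for $290$ configurations, all equivalent to $T_*$. There the paper writes $E=A\cup(v+B)$ with $A,B$ complements of $T_*$ in $\F_2^8$. Over all $12{,}096$ such complements, it shows that every pair has a \emph{common} subgroup complement. Individual spectrality of the fibers would not suffice.

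\textbf{Order $96$.} Your sketch here is completable but only asserted. For $r\le8$, Theorem~\ref{b8:quarter} in $\F_2^r$ already excludes $96$. For $r=9$ the plain Smith bound holds ($240\ge231$), so the refinement must be made explicit. One option is the paper's observation that Walsh submatrices factor modulo $8$ through $46$ features, whereas an order-$96$ Hadamard matrix has $48$ Smith factors not divisible by $8$. Another is the determinantal-divisor count $94-r<87=s_9(48)$.
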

The last assertion is proved in Section~\ref{b9:tiles}. Additional sufficient
conditions for a spectral set of size $64$ to tile are proved in
Section~\ref{b9:size64}. The general $64$-point spectral-to-tiling implication
is not asserted.

\begin{lemma}\label{b9:smithremaining}
If a spectral subset of $\F_2^9$ does not have power-of-two cardinality,
its cardinality is $48$ or $96$.
\end{lemma}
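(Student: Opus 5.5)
The plan is to repeat the proof of Corollary~\ref{b8:cardinalities} in dimension nine: first reduce to a finite list of candidate cardinalities, then check Kadowaki's inequality \eqref{b8:smithbound} against that list.

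\textbf{Reducing to a finite list.} Let $E\subseteq\F_2^9$ be spectral with $|E|=m$, where $m$ is not a power of two. By Theorem~\ref{b8:quarter}, either $m\le 2^{7}=128$, or $m\in\{256,512\}$. The second alternative consists of powers of two, so $m\le128$. The character matrix of a spectral pair is a real Hadamard matrix. As in the proof of Corollary~\ref{b8:cardinalities}, its order is $1$, $2$, or a multiple of four. Hence $m$ is a non-power-of-two multiple of four with $12\le m\le124$.

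\textbf{The inequality to be checked.} For each such $m$, Lemma~\ref{b8:smith} requires $\tfrac m2\nu_2(m)\ge s_9(m)$. I will compute $s_9(m)$ from the multiset with one $0$, nine $1$'s, thirty-six $2$'s and eighty-four $3$'s. These cover the first $130$ elements, which is all we need. The cumulative sums are therefore explicit:
\begin{itemize}
\item for $10\le m\le46$, $s_9(m)=9+2(m-10)$;
\item for $46\le m\le130$, $s_9(m)=81+3(m-46)$.
\end{itemize}

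\textbf{Working through the cases.} The odd-times-four values of $m$, that is $\nu_2(m)=2$, have left side $m$. The right side is far larger in every such case, for example $s_9(12)=13$ and $s_9(28)=45$. For $\nu_2(m)=3$, the candidates are $24,40,56,72,88,104,120$. They give the pairs $(36,37)$, $(60,69)$, $(84,111)$, $(108,159)$, $(132,207)$, $(156,255)$ and $(180,303)$, and each is excluded. For $\nu_2(m)=4$, the candidates are $48,80,112$. We get $96\ge85$ for $m=48$, which survives; $160<183$ for $m=80$; and $224<279$ for $m=112$. For $\nu_2(m)=5$, the only candidate is $m=96$, with $240\ge231$, which survives. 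Thus only $48$ and $96$ remain, as the lemma asserts.

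The main obstacle is bookkeeping rather than mathematics: the cumulative sums of $s_9$ must be evaluated correctly near the surviving cases, where the margins are small ($96$ versus $85$, and $240$ versus $231$). I would present the computation as a table like the one in Corollary~\ref{b8:cardinalities}, listing $m$, $\tfrac m2\nu_2(m)$ and $s_9(m)$ for all twenty-four candidates.
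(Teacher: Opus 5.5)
Your proof is correct and follows the same route as the paper: Theorem~\ref{b8:quarter} reduces to $m\le128$, the Hadamard order condition reduces to non-power-of-two multiples of four, and the piecewise-linear $s_9(m)$ is then checked against $\tfrac m2\nu_2(m)$. Two small bookkeeping slips do not affect the conclusion: $s_9(48)=3\cdot48-57=87$, not $85$ (so the survivor is $96\ge87$), and there are $26$ candidates in $[12,124]$, not $24$.
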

\begin{proof}
A real Hadamard matrix has order $1$, $2$, or a multiple of four:
after normalizing its first row, orthogonality forces half the entries
of each other row to be negative; comparing two such rows gives four
equal sign classes. Theorem~\ref{b8:quarter} reduces the order to at most
$128$, apart from the powers $256$ and $512$.
For $m\le128$, the $m$ smallest Smith exponents of the dimension-nine
Walsh matrix have sum
\[
 s_9(m)=\begin{cases}
 m-1,&1\le m\le10,\\
 2m-11,&10\le m\le46,\\
 3m-57,&46\le m\le128.
 \end{cases}
\]
Substitution of $m=4,8,\ldots,128$ in
$(m/2)\nu_2(m)\ge s_9(m)$ leaves only powers of two and $48,96$.
This is the Smith bound with multiplicities $1,9,36,84$ for exponents
$0,1,2,3$; thus the substitution is a finite integer inequality, not a
matrix-classification assumption.
\end{proof}

\subsection{Smith factors and quadratic evaluation}
Let $H$ be a real Hadamard matrix of order $n$, with Smith factors
$s_1\mid\cdots\mid s_n$. Since $H^{-1}=H^T/n$, comparison of the
Smith forms gives the standard invariant-factor relation
$s_is_{n+1-i}=n$; see Newman~\cite[p.~127]{Newman1971}.
Write $\alpha_i=\nu_2(s_i)$.
For a dephased phase matrix $L$ over $\F_2$, deleting its zero row and column
gives $L'$ and integer row and column operations give
\[
 H\sim 1\oplus(-2L').
\]
Thus the multiplicities of exponents zero and one are $1$ and
$r=\operatorname{rank}_{\F_2}L$. In orders $48$ and $96$ the multiplicities
are respectively
\[
 (1,r,46-2r,r,1),\qquad(1,r,47-r,47-r,r,1).
\]
Here only the powers of two in the Smith factors are displayed. Write $c_2$ for the multiplicity of exponent two.

\begin{lemma}\label{b9:96}
There is no spectral set of cardinality $96$ in $\F_2^9$.
\end{lemma}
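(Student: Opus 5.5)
The plan is to derive a contradiction from the Smith factors displayed just above, compared with those of the full Walsh matrix $S_9$, using only an interlacing property of submatrices. Suppose $E\subseteq\F_2^9$ is spectral with $|E|=96$ and spectrum $\Lambda$. Then $H=((-1)^{\lambda\cdot e})$ is a $96\times96$ submatrix of $S_9$ and a real Hadamard matrix. By the relation $s_is_{97-i}=96$ and the dephasing equivalence $H\sim1\oplus(-2L')$, the $2$-parts of its Smith factors have exponents $0,1,2,3,4,5$ with multiplicities $(1,r,47-r,47-r,r,1)$. In particular, exactly $1+r+(47-r)=48$ invariant factors of $H$ are not divisible by $8$, whatever the rank $r$ is. By contrast, the Smith form of $S_9$ from the proof of Lemma~\ref{b8:smith} has exponents $0,1,2$ with multiplicities $1,9,36$. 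So only $46$ of its invariant factors are not divisible by $8$.

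The key step is to show that a submatrix cannot have more invariant factors prime to $8$ (in the sense of $\nu_2<3$) than the matrix it sits in. One route is to cite Thompson's interlacing theorem for invariant factors of submatrices: the $i$-th invariant factor of $S_9$ divides the $i$-th invariant factor of any submatrix. This would give $\nu_2(s_{47}(H))\ge3$, contradicting $\nu_2(s_{47}(H))=2$.

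To keep the argument self-contained, I would instead argue over the local ring $R=\Z/8\Z$. For an integer matrix $M$ with Smith factors $s_i$, the $R$-module $\operatorname{im}(M\otimes R)$ is isomorphic to $\bigoplus_i R/(8/\gcd(8,s_i))$. Hence its minimal number of generators $\mu$ equals the number of $i$ with $\nu_2(s_i)<3$. This number is also $\dim_{\F_2}$ of the socle $\{x:2x=0\}$, since each nonzero cyclic summand contributes one dimension. Passing to the column submatrix of $S_9$ indexed by $E$ replaces the image by a submodule, which can only shrink the socle. Restricting to the rows indexed by $\Lambda$ replaces the module by a quotient, which cannot increase $\mu$. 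Hence $\mu(H)\le\mu(S_9)=46$, whereas $\mu(H)=48$. This contradiction proves the lemma.

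The main point needing care is this monotonicity step. A submodule of a finite $R$-module needs no more generators than the module, because for a finitely generated module over the principal local ring $R$ the generator count equals the socle dimension, and socles restrict to submodules. Quotients satisfy the corresponding bound trivially. The remaining checks are routine bookkeeping: verify the multiplicity list for order $96$ (that $\alpha_i+\alpha_{97-i}=5$ together with the counts $1$ and $r$ forces it), and confirm $1+9+36=46<48$. Notably, the argument does not need the bound $r\le9$ from Lemma~\ref{b8:canonical}, since the count $48$ is independent of $r$.
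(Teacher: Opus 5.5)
Your proof is correct and is essentially the paper's argument: both compare the $48$ Smith factors of an order-$96$ Hadamard matrix with $\nu_2\le2$ against a bound of $46$ on the number of nonzero Smith components modulo $8$ of any submatrix of the Walsh matrix. The only difference is how the $46$ is obtained: the paper uses the explicit factorization $(-1)^{x\cdot y}\equiv1-2\sum_i x_iy_i+4\sum_{i<j}x_ix_jy_iy_j \pmod 8$ through $1+9+36$ features, while you read it off the Smith form of $S_9$ from Lemma~\ref{b8:smith} and pass to the submatrix using monotonicity of generator counts under submodules and quotients over $\Z/8\Z$.
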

\begin{proof}
For vectors $x,y$ over $\F_2$ the parity expansion gives
\[
 (-1)^{x\cdot y}\equiv
 1-2\sum_{i=1}^9x_iy_i+4\sum_{i<j}x_ix_jy_iy_j\pmod8.
\]
Thus every submatrix of the Walsh matrix factors, over $\Z/8\Z$,
through $1+9+\binom92=46$ features. It has at most $46$ nonzero Smith
components modulo eight: put the left factor in Smith form, leaving at
most $46$ nonzero rows, and multiply by the right factor.
For a Hadamard matrix of order $96$, however,
$\alpha_i+\alpha_{97-i}=5$. Exactly one exponent in each of the $48$
pairs is at most two, so there are $48$ nonzero components modulo eight.
This is a contradiction.
\end{proof}

For a set $X\subset\F_2^r$ let $h_X(2)$ be the dimension of the
restrictions to $X$ of Boolean polynomials of degree at most two.
When $r=9$, put $d_X=46-h_X(2)$. All polynomial spaces and code ranks
in the following argument are over $\F_2$.

\subsection{The quadratic Schur factorization}
Factor the dephased matrix over $\F_2$ as
\[
 L_{x,y}=x\cdot y\pmod2,\qquad x\in X\subseteq\F_2^r,\ y\in Y\subseteq\F_2^r.
\]
For a spectral pair in $\F_2^9$, one has $r\le9$. Both canonical sets have
$n$ distinct points, contain zero, and span $\F_2^r$. Select $r$ basis
points of $X$ and change dual coordinates so they become $e_1,\ldots,e_r$.
Choose $r$ basis points of $Y$. Write
\[
 B=[y_1\ \cdots\ y_r],
\]
an invertible matrix over $\F_2$. It is invertible over the ring of rationals
with odd denominator as well. Order the nonzero points of each set with
these basis points first. The integer $0/1$ matrix $L'$ then has block form
\[
 L'=\begin{pmatrix}B&Y_*^T\\ C&D\end{pmatrix}.
\]
Its Schur complement is $S=D-CB^{-1}Y_*^T$. It is entrywise even, since
$L'$ has rank $r$ over $\F_2$. Eliminating $B$ shows
\begin{equation}\label{b9:quad:smith}
 c_2=\rank_2\bigl((S/2)\bmod2\bigr).                 
\end{equation}

Let $q=\binom r2$ and $Q(x)=(x_ix_j)_{i<j}\in\F_2^q$. Lifting the dot product over $\F_2$ to the integers gives
\begin{equation}\label{b9:quad:parity}
 L_{x,y}\equiv\sum_i x_i y_i
       -2\sum_{i<j}x_ix_jy_iy_j\pmod4.             
\end{equation}
Let $Q_X$ be the quadratic-feature matrix on the nonbasis points of $X$,
$Q_Y$ the same matrix on nonbasis points of $Y$, and $Q_B$ the quadratic
features on the $r$ chosen basis points of $Y$. Equation~\eqref{b9:quad:parity} gives
\begin{equation}\label{b9:quad:schur}
 \frac S2\equiv
 -Q_X\underbrace{\bigl(Q_Y^T-Q_B^TB^{-1}Y_*^T\bigr)}_{R_Y}
 \pmod2.                                          
\end{equation}
The sign disappears modulo two. All inverses in~\eqref{b9:quad:schur} may therefore be
taken over $\F_2$.

The constant and linear evaluation columns have rank $r+1$ on either
set. Quadratic features vanish on the zero and unit-vector basis of $X$.
On $Y$, the subtraction in $R_Y$ removes the unique linear interpolant
at zero and the selected basis points. Therefore, with $h_X(2),h_Y(2)$
defined in dimension $r$,
\[
 u:=\rank Q_X=h_X(2)-r-1,\qquad
 v:=\rank R_Y=h_Y(2)-r-1.
\]
Combining~\eqref{b9:quad:smith}, \eqref{b9:quad:schur}, and Sylvester's rank inequality yields
\begin{equation}\label{b9:quad:key}
 u+v-q\ \le\ c_2\ \le\ \min(u,v)\ \le\ q.
\end{equation}
This is the central restriction; it is stronger than merely summing
Smith exponents to estimate a determinant.

\subsection{Deductions for 48 and 96}
For order 48, (\ref{b9:quad:key}) implies
\[
 46-2r\le\binom r2.
\]
For $r\le8$ this fails (at eight, $30>28$). Hence in dimension nine one
must have $r=9$, and then $q=36$ and $c_2=28$. Both actual sets span
dimension nine, since their bilinear phase matrix has rank nine.
Now $u=36-d_E$, $v=36-d_\Lambda$, so (\ref{b9:quad:key}) becomes
\[
 36-d_E-d_\Lambda\le28\le36-d_E,\ 36-d_\Lambda.
\]
We have proved $0\le d_E,d_\Lambda\le8$ and $d_E+d_\Lambda\ge8$.

For order 96, the rightmost inequality in (\ref{b9:quad:key}) gives
\[
 47-r\le\binom r2,
 \quad\text{or}\quad r(r+1)/2\ge47.
\]
This requires $r\ge10$, excluding every ambient dimension at most nine.
This also gives a second proof of Lemma~\ref{b9:96}.

\subsection{Two geometric obstructions}
\begin{lemma}[Odd quotient obstruction]\label{b9:cubic:flat}
If a spectral set $E$ of size $m$ properly contains an affine flat
$a+V$ of size $k$, then $m/k$ is an even integer.
\end{lemma}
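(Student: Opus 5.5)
The plan is to translate the flat to a subspace and read off two Fourier identities on the spectrum: one makes the spectrum equidistributed over the cosets of $V^\perp$, and one makes it sign-balanced inside $V^\perp$. Let $\Lambda$ be a spectrum of $E$. Translating $E$ does not affect spectrality, nor does it change $E+E$, so we may assume $a=0$. Then $V\subseteq E$ is a linear subspace of size $k$. The only input needed is the orthogonality relation \eqref{b8:orthogonality}: $W_\Lambda(u)=0$ for every nonzero $u\in E+E$.

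First I will show that $k$ divides $m$. Since $V=V+V\subseteq E+E$, we have $W_\Lambda(v)=0$ for all $v\in V\setminus\{0\}$. Fix $c\in G$ and sum the twisted values over $V$. On one hand, only the term $v=0$ survives, so
\[
 \sum_{v\in V}(-1)^{c\cdot v}W_\Lambda(v)=W_\Lambda(0)=m .
\]
On the other hand, exchanging the order of summation and using $\sum_{v\in V}(-1)^{v\cdot w}=k\,1_{V^\perp}(w)$ gives
\[
 \sum_{v\in V}(-1)^{c\cdot v}W_\Lambda(v)=k\,\bigl|\Lambda\cap(c+V^\perp)\bigr| .
\]
Hence every coset of $V^\perp$ contains exactly $m/k$ points of $\Lambda$. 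In particular $m/k$ is an integer. Only the case $c=0$ is needed below, but the equidistribution is worth recording.

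For evenness I will use that the containment is proper. Choose $e\in E\setminus V$. For every $v\in V$, the point $e+v$ lies in $E+E$, and it is nonzero because $e\notin V$. Therefore $W_\Lambda(e+v)=0$ for all $v\in V$. Summing over $v\in V$ and exchanging sums as before gives
\[
 0=\sum_{v\in V}W_\Lambda(e+v)
 =\sum_{\lambda\in\Lambda}(-1)^{e\cdot\lambda}\sum_{v\in V}(-1)^{v\cdot\lambda}
 =k\sum_{\lambda\in\Lambda\cap V^\perp}(-1)^{e\cdot\lambda}.
\]
This is a sum of $|\Lambda\cap V^\perp|=m/k$ signs $\pm1$ that vanishes. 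So half of these signs are $+1$ and half are $-1$, and $m/k$ is even.

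There is no real obstacle here. The only points needing care are that translating $E$ leaves $E+E$ unchanged, and that the element $e+v$ is nonzero; the latter is exactly where properness of the containment is used. Without properness the conclusion fails, since for $E=V$ one has $m/k=1$.
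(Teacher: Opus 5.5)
Your proof is correct and follows essentially the same route as the paper. Both use the vanishing of $W_\Lambda$ on $V\setminus\{0\}$ to equidistribute $\Lambda$ over the restriction fibers (the cosets of $V^\perp$), and then the vanishing on $e+V$ for some $e\in E\setminus V$ to force a balanced sum of $m/k$ signs; the only difference is that the paper applies Fourier inversion on $V$ to make every fiber sum vanish, whereas you sum without a twist and use just the fiber $\Lambda\cap V^\perp$, which already suffices.
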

\begin{proof}
Let $\Lambda$ be a spectrum. Orthogonality between points of $a+V$
gives $\sum_{\lambda\in\Lambda}(-1)^{\lambda\cdot v}=0$ for all
$0\ne v\in V$. Fourier inversion on $V$ says that every restriction
fiber $\Lambda_b=\{\lambda:\lambda|_V=b\}$ has size $m/k$.
Choose $x\in E\setminus(a+V)$. Orthogonality of $x$ against every
$a+v$ gives
\[
 0=\sum_{b\in V^*}(-1)^{b(v)}
       \sum_{\lambda\in\Lambda_b}(-1)^{\lambda\cdot(x+a)}
 \qquad(v\in V).
\]
Fourier inversion again makes each inner sum zero. A zero sum of
$m/k$ signs requires $m/k$ even.
\end{proof}
Thus a spectral set of size 48 cannot contain an affine four-flat:
its size would be 16 and its quotient would be 3.

\begin{lemma}\label{b9:cubic:period}
A 48-point spectral set in $\F_2^9$ has no nonzero translation period.
\end{lemma}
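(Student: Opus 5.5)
The plan is to reduce a period to a spectral set of size $24$ in dimension eight, which Corollary~\ref{b8:cardinalities} forbids.

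Suppose $E\subseteq\F_2^9$ is spectral with $|E|=48$, has spectrum $\Lambda$, and satisfies $E+t=E$ for some $t\ne0$. Then $E$ is a disjoint union of $24$ cosets $\{x,x+t\}$. Let $q:\F_2^9\to\F_2^9/\langle t\rangle\cong\F_2^8$ be the quotient map. The image $A=q(E)$ has exactly $24$ points.

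\textbf{Splitting the spectrum.} Since $x$ and $x+t$ both lie in $E$, orthogonality of their characters gives $W_\Lambda(t)=0$, by \eqref{b8:orthogonality}. Hence
\[
 \Lambda_0=\{\lambda\in\Lambda:\lambda\cdot t=0\}
\]
has exactly $24$ elements. Each $\lambda\in\Lambda_0$ lies in $t^\perp$, which is the dual of $\F_2^9/\langle t\rangle$. So $\Lambda_0$ is a set of $24$ distinct characters of the quotient, and $\chi_\lambda(x)=\chi_\lambda(q(x))$ is well defined.

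\textbf{Orthogonality on $A$.} Take $x,y\in E$ with $x+y\notin\{0,t\}$, so that $q(x)\ne q(y)$. Then both $x,y$ and $x,y+t$ are distinct pairs of points of $E$. Orthogonality therefore gives
\[
 \sum_{\lambda\in\Lambda}(-1)^{\lambda\cdot(x+y)}=0
 \quad\text{and}\quad
 \sum_{\lambda\in\Lambda}(-1)^{\lambda\cdot(x+y)}(-1)^{\lambda\cdot t}=0.
\]
Adding the two identities and dividing by $2$ yields
\[
 \sum_{\lambda\in\Lambda_0}(-1)^{\lambda\cdot(x+y)}=0.
\]
Thus the $24$ characters indexed by $\Lambda_0$ are pairwise orthogonal on the $24$-point set $A$. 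By dimension count they form a basis of the functions on $A$, so $(A,\Lambda_0)$ is a spectral pair in $\F_2^8$.

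\textbf{Conclusion.} Since $24$ is not a power of two, this contradicts Corollary~\ref{b8:cardinalities}. Alternatively, it contradicts the Smith bound directly: $s_8(24)=38>36$.

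There is no serious obstacle. The only care needed is that $\Lambda_0$ descends to distinct characters of the quotient, and that distinct points of $A$ come from pairs $x,y$ with $x+y\notin\{0,t\}$. Both hold by construction.
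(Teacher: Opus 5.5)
Your proof is correct, and the reduction is the same as the paper's: a period $t$ gives $W_\Lambda(t)=0$, so $\Lambda_0=\Lambda\cap t^\perp$ has $24$ elements and is a spectrum of the $24$-point quotient in $\F_2^8$.

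One small wording point. Your displayed identity is orthogonality of the \emph{point} rows of the $24\times24$ matrix, so it directly shows that $A$ is a spectrum of $\Lambda_0$. Orthogonality of the characters $\Lambda_0$ on $A$ then follows from squareness ($HH^{\mathsf T}=24I$ implies $H^{\mathsf T}H=24I$), as remarked at the start of Section~\ref{b8:preliminaries}. The paper gets it directly instead, from $W_E(\lambda+\mu)=2W_A(\lambda+\mu)$ for $\lambda,\mu$ in one fiber.

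The routes genuinely differ only at the final contradiction.
\begin{itemize}
\item You invoke Corollary~\ref{b8:cardinalities}, or directly Lemma~\ref{b8:smith} with $\tfrac{24}{2}\nu_2(24)=36<38=s_8(24)$. This is legitimate and non-circular, since that corollary is proved in dimension eight independently of Section~\ref{b9:cardinalities}.
\item The paper instead uses the pairing $\alpha_i+\alpha_{25-i}=3$ of $2$-primary Smith exponents together with $H\sim1\oplus(-2L')$. This shows every order-$24$ Hadamard matrix has exponent multiplicities $(1,r,r,1)$, hence dephased $\F_2$-rank exactly $11$. That is impossible for a phase matrix factoring through $\F_2^8$ (Lemma~\ref{b8:canonical}).
\end{itemize}

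Your version is shorter, given what is already proved. The paper's is self-contained in the Smith-factor language of the surrounding subsection, and it is sharper. It excludes $24$-point spectral sets in $\F_2^d$ for every $d\le10$, whereas the Cauchy--Binet bound of Lemma~\ref{b8:smith} stops working at $d=10$, where $s_{10}(24)=36$.
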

\begin{proof}
If $E+p=E$, quotienting by $\langle p\rangle$ produces a 24-point set
in $\F_2^8$. Orthogonality between $x$ and $x+p$ splits its spectrum
into two restriction fibers of size 24. Either fiber, after translating
one frequency to zero, supplies a spectrum of the quotient set:
the original orthogonality sums are twice the quotient sums.

But a normalized order-24 Hadamard matrix has phase rank 11 over $\F_2$.
Indeed its 2-primary Smith exponents pair to sum $v_2(24)=3$.
The multiplicities are $(1,r,r,1)$, since normalization gives
$H\sim1\oplus(-2L')$ and $r=\rank_{\F_2} L'$. Counting 24 factors gives
$2+2r=24$. A phase matrix factored through $\F_2^8$ has rank at most
eight, a contradiction.
\end{proof}

\subsection{The cubic moment and its quadratic derivatives}
For a full-span set $E$ in a spectral pair define
\[
 T_E(u,v,w)=\sum_{x\in E}(u\cdot x)(v\cdot x)(w\cdot x)\in\F_2.
\]
Let $C_E$ be the length-48 binary code formed by evaluating all affine
functions on $E$. It has dimension ten and contains the all-ones vector.
Choose nine linearly independent nonzero frequencies in $\Lambda$.
Their evaluation words have weight 24, and every pair intersects in
12 positions, because their sum also has weight 24. Inclusion-exclusion
shows that their span is doubly even; adding the all-ones word preserves
this property. A doubly even binary code is self-orthogonal, since
$\wt(c+d)=\wt(c)+\wt(d)-2\wt(c*d)$.
Consequently $T_E(u,u,v)=0$, so $T_E$ is an alternating trilinear form.

Write $\rho_E$ for the rank of its contraction map
$u\mapsto T_E(u,\cdot,\cdot)$, a map from a nine-dimensional space
to the 36-dimensional space of alternating bilinear forms.

\begin{lemma}\label{b9:cubic:deriv}
For a spectral pair of size 48,
$\rho_E\le d_\Lambda$ and $\rho_\Lambda\le d_E$.
Also $T_E=0$ if and only if $C_E$ is triply even.
\end{lemma}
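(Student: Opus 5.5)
The plan is to prove the two halves separately: the triply-even criterion by an inclusion--exclusion computation modulo $8$, and the rank bounds by turning each contraction $T_E(u,\cdot,\cdot)$ into a quadratic polynomial relation on $\Lambda$ using the mod-$8$ parity expansion from the proof of Lemma~\ref{b9:96}. Throughout, translate $E$ and $\Lambda$ to contain zero, so that $L$ is dephased.

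\emph{Triply even criterion.} For binary words, inclusion--exclusion gives
\[
 \wt\Bigl(\sum_{i\in I}c_i\Bigr)=\sum_{\varnothing\ne S\subseteq I}(-2)^{|S|-1}\wt\Bigl(\prod_{i\in S}c_i\Bigr).
\]
Modulo $8$, only the terms with $|S|\le3$ survive. Take as generators of $C_E$ the all-ones word and the nine evaluation words $g_i$ of independent frequencies in $\Lambda$. These have the following properties.
\begin{enumerate}
\item The weights are $24$ and $48$, both $\equiv0\pmod 8$.
\item The pairwise intersections are $12$, or $24$ when one factor is the all-ones word, so they are $\equiv0\pmod4$.
\item A triple intersection involving the all-ones word reduces to a pairwise intersection, which is even.
\end{enumerate}
Hence $\wt(c)\equiv4\sum_{\{i,j,k\}}\wt(g_ig_jg_k)\pmod 8$ for every $c\in C_E$. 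The parity of $\wt(g_ig_jg_k)$ is exactly $T_E(\lambda_i,\lambda_j,\lambda_k)$. So if $T_E=0$, every weight is $\equiv0\pmod8$. Conversely, if $C_E$ is triply even, apply the formula to $g_i+g_j+g_k$; this forces every basis value of $T_E$ to vanish, and trilinearity then gives $T_E=0$.

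\emph{Rank bound.} Fix $u=\lambda+\lambda'$ with $\lambda,\lambda'\in\Lambda$; such vectors span $\F_2^9$, since $0\in\Lambda$. Let $w=(u\cdot x)_{x\in E}$ and consider, for $\mu\in\Lambda$,
\[
 F_u(\mu)=\sum_{x\in E,\ u\cdot x=1}(-1)^{\mu\cdot x}=\tfrac12\bigl(W_E(\mu)-W_E(\mu+u)\bigr).
\]
By \eqref{b8:orthogonality}, $W_E$ takes values in $\{0,48\}$ on $\Lambda+\Lambda$. Hence $F_u(\mu)\in\{0,\pm24\}$, and in particular $F_u(\mu)\equiv0\pmod8$.

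On the other hand, the parity expansion $(-1)^{\mu\cdot x}\equiv1-2\sum\mu_ix_i+4\sum_{i<j}\mu_i\mu_jx_ix_j\pmod 8$ gives
\[
 F_u(\mu)\equiv\wt(w)-2\sum_i\mu_i\,\wt(w\,x_i)+4\sum_{i<j}\mu_i\mu_j\,T_E(u,e_i,e_j)\pmod8 .
\]
Here $\wt(w)=24$, and $\wt(wx_i)\equiv0\pmod 2$ by self-orthogonality. Dividing by $4$, we obtain a Boolean polynomial of degree at most two,
\[
 q_u(\mu)=\sum_{i<j}T_E(u,e_i,e_j)\mu_i\mu_j+\ell_u(\mu),\qquad \ell_u\ \text{affine},
\]
which vanishes identically on $\Lambda$.

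The point that needs care is that the affine part must also depend linearly on $u$, so that $u\mapsto q_u$ extends from the spanning set to a linear map into the relation space. I would handle this by noting that $\ell_u$ is determined by the terms $\tfrac12\wt(wx_i)\bmod2$ and $\tfrac14\wt(w)\bmod2$. I would then check that these are linear in $u$ modulo $2$, again using the doubly even property. Alternatively, one can pass to the quotient by affine functions, which suffices because the quadratic part alone determines the contraction.

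Granting this, $u\mapsto q_u$ is a linear map into the $d_\Lambda$-dimensional space of degree-$\le2$ relations on $\Lambda$. Its composition with ``quadratic part'' is the contraction map. Since a relation with zero quadratic part is an affine relation, and affine relations are trivial because $\Lambda$ spans and contains $0$, we get $\rho_E\le d_\Lambda$. The inequality $\rho_\Lambda\le d_E$ follows by symmetry, since $(\Lambda,E)$ is also a spectral pair.
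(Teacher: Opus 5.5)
There is a genuine gap in the rank half, although it is easy to repair. Apart from it, your argument is the paper's in another guise. Your triply-even half is the paper's inclusion--exclusion computation. In the rank half, $W_E(s)=48-2\wt(s)$ gives $F_u(\mu)=\wt(\mu+u)-\wt(\mu)$, so $q_u(\mu)=P_E(\mu+u)+P_E(\mu)$, the Boolean derivative of the weight cubic $P_E(s)=\wt((s\cdot x)_{x\in E})/4\bmod2$.

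\textbf{The failing step.} For $u=\lambda+\lambda'$ with $\lambda'\ne0$ and general $\mu\in\Lambda$, the vector $\mu+u=\mu+\lambda+\lambda'$ is a sum of three spectrum elements. It need not lie in $\Lambda+\Lambda$, so \eqref{b8:orthogonality} says nothing about $W_E(\mu+u)$, and $F_u(\mu)\in\{0,\pm24\}$ does not follow. Since $P_E$ vanishes on $\Lambda$, your claim for all $u\in\Lambda+\Lambda$ amounts to $P_E$ vanishing on $\Lambda+\Lambda+\Lambda$, which nothing supplies. You must take $u\in\Lambda$, the case $\lambda'=0$. Then $\mu+u\in\Lambda+\Lambda$, and these $u$ still span because $0\in\Lambda$ and $\Lambda$ spans. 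This is why the paper draws its coordinate units $e_i$ from $\Lambda$ and differentiates only in those directions.

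\textbf{The ``point that needs care''.} The check you propose would fail, because $\ell_u$ is not linear in $u$.
\begin{itemize}
\item Its constant term is $\wt(w)/4\bmod2=P_E(u)$, a cubic function of $u$.
\item The coefficient of $\mu_i$ is $\wt(wx_i)/2\bmod 2=P_E(u)+P_E(e_i)+P_E(u+e_i)$, which is in general quadratic in $u$. For example, if $P_E=s_1s_2s_3$, the coefficient of $\mu_1$ in $P_E(\mu+u)+P_E(\mu)$ is $u_2u_3$.
\end{itemize}
So $u\mapsto q_u$ is not a linear map, and your final paragraph cannot grant it.

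Linearity is not needed, however, and your alternative works. The principal-part map is injective on the $d_\Lambda$-dimensional space of degree-$\le2$ relations on $\Lambda$, so the principal parts of these relations form a $d_\Lambda$-dimensional subspace. That subspace contains $\iota_uT_E$ for each $u$ in a basis drawn from $\Lambda$. It therefore contains the whole image of the contraction map, and $\rho_E\le d_\Lambda$.

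More simply, take coordinates dual to a basis $\{e_1,\ldots,e_9\}\subseteq\Lambda$ and $u=e_k$. Then $\wt(w)=24$ and $\wt(wx_i)\in\{12,24\}$, so $\ell_{e_k}=0$. Thus $q_{e_k}$ is exactly the homogeneous quadratic $\iota_{e_k}T_E$ vanishing on $\Lambda$, which is the paper's proof. With these two repairs your argument is correct.
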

\begin{proof}
Use the chosen frequency basis as coordinate units. For $s\in\F_2^9$ let
\[
 P_E(s)=\frac{\wt((s\cdot x)_{x\in E})}{4}\pmod2.
\]
The numerator is divisible by four. Expanding the integer weight of a
sum of binary words by inclusion-exclusion modulo eight, the linear
coefficients of $P_E$ are $24/4=6=0\pmod2$, the quadratic coefficients
are $-12/2=-6=0\pmod2$, and the cubic coefficients are triple
intersection parities. Higher terms vanish. Hence
\[
 P_E(s)=\sum_{i<j<k}T_E(e_i,e_j,e_k)s_i s_j s_k.
\]
For every $\lambda\in\Lambda$, both $P_E(\lambda)$ and
$P_E(\lambda+e_i)$ vanish: the corresponding word has weight zero
or 24. Thus $P_E(s+e_i)+P_E(s)$ is a homogeneous quadratic vanishing
on $\Lambda$. Its coefficient vector is exactly the contraction of
$T_E$ with $e_i$. The span of these nine quadratics has dimension
$\rho_E$, which proves the inequality. Reverse the pair for the other.

If $T_E=0$, then $P_E=0$, so every linear evaluation word has weight
divisible by eight. Its complement has weight $48-\wt(c)$, also
divisible by eight, so $C_E$ is triply even. Conversely if all its
weights are divisible by eight, the zero function $P_E$ has zero
cubic coefficients, giving $T_E=0$.
\end{proof}

\subsection{Excluding the zero cubic by the length-48 classification}
\begin{lemma}\label{b9:cubic:nonzero}
Neither $C_E$ nor $C_\Lambda$ is triply even.
\end{lemma}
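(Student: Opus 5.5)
By symmetry of the spectral pair (Section~\ref{b8:preliminaries}), it suffices to show that $C_E$ is not triply even; the same argument with the roles of $E$ and $\Lambda$ exchanged handles $C_\Lambda$. Suppose, then, that $C_E$ is triply even. It is a binary code of length $48$ and dimension ten containing the all-ones word. It is self-orthogonal by the doubly even argument above, and triply even by assumption. The plan is to embed $C_E$ in a maximal triply even code of length $48$ and use the classification of these codes, cited in the introduction, to read off structure that contradicts Lemmas~\ref{b9:cubic:flat} and~\ref{b9:cubic:period}.

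\textbf{Reduction to a maximal code.} The first step is to note that every triply even code lies in a maximal triply even code of the same length. By the classification of maximal triply even codes of length $48$, there are finitely many candidates $M$, each with a known description. Most of them are direct sums of doubled self-dual or triply even codes of shorter length, or are extended doublings.

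\textbf{Finite search inside each $M$.} For each $M$, I would enumerate the ten-dimensional subcodes $C\ni\mathbf 1$, up to the automorphism group of $M$, that could equal $C_E$. The constraints come from $E$ having $48$ distinct points that span $\F_2^9$. Concretely, the coordinate columns of a basis of $C$ must be distinct and must generate an affine $9$-space, so $C$ is projective in the affine sense. The weights are also restricted: the nine frequency words have weight exactly $24$, and every nonconstant word that is a difference of spectrum elements has weight $24$. The aim is to show that every admissible $C$ forces one of two configurations. Either some $16$ coordinates correspond to an affine $4$-flat in $E$, which Lemma~\ref{b9:cubic:flat} excludes because $48/16=3$ is odd. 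Or there is a nonzero vector $p$ with $E+p=E$, which Lemma~\ref{b9:cubic:period} excludes.

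The mechanism I expect is this. In the doubling constructions, coordinates come in pairs $\{i,i'\}$ on which every codeword agrees. Pairs of columns that agree on all of $C$ are equal points, and distinctness of the points of $E$ forbids this. So the doubling structure should instead be realized through a translation, $x\mapsto x+p$, and that gives a period. The indecomposable exceptional maximal codes should contain $16$-point blocks on which $C$ restricts to a first-order Reed--Muller code. Such a block is an affine $4$-flat.

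\textbf{Main obstacle.} The hard step is this case analysis over the maximal codes. In particular, the subcode enumeration may not reduce cleanly to the flat and period obstructions. I would first try to handle it by exact computation. For each maximal $M$, enumerate the orbits of admissible $C$ and, for each one, produce a positive certificate: either a $16$-subset on which $C$ restricts to $\mathrm{RM}(1,4)$, or a fixed-point-free involution of coordinates preserving $C$ and acting as a translation. This matches the \emph{finite collection of positive obstruction certificates} announced at the start of this section.

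If some $C$ avoids both obstructions, I would fall back on the Smith constraint from the Deductions for 48 and 96: $r=9$, $c_2=28$, and $0\le d_E,d_\Lambda\le 8$ with $d_E+d_\Lambda\ge8$. Since $T_E=0$ gives $\rho_E=0$, the quadratic evaluation rank $h_E(2)$ can be computed directly from $C$, and I would check it against these inequalities. Transferring a conclusion from a representative to every equivalent code needs only that coordinate permutations preserve the flat and period properties, which is immediate.
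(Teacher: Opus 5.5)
Your outline has the right skeleton: pass to a maximal triply even code of length $48$, use the classification, and land in Lemma~\ref{b9:cubic:flat} or Lemma~\ref{b9:cubic:period}. However, the step that actually proves the lemma is only announced. You say the doubling structure \emph{should} be realized by a translation and that some codes \emph{should} carry $\mathrm{RM}(1,4)$ blocks, and then you defer to an enumeration of all ten-dimensional subcodes up to automorphism. That is a plan, not a proof.

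The Smith fallback cannot rescue it. $T_E=0$ is compatible with $0\le d_E,d_\Lambda\le8$ and $d_E+d_\Lambda\ge8$; in the paper, $d_\Lambda\ge3$ is a \emph{consequence} of this lemma via $\rho_E\le d_\Lambda$.

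Your picture of the list is also off. The ten maximal codes are seven extended doublings of indecomposable doubly even self-dual codes of length $24$, the two decomposable codes $\mathrm{RM}(1,4)\oplus D(d_{16}^+)$ and $\mathrm{RM}(1,4)^{\oplus3}$, and one exceptional code of dimension nine. The last is excluded simply because $\dim C_E=10$.

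The missing idea is that both obstructions hold uniformly for \emph{every} subcode, so no enumeration is needed.

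\textbf{Doublings.} An extended doubling is spanned by words $(c,c)$ and $(\mathbf1,0)$. Hence for each of the $24$ coordinate pairs $\{i,i'\}$, the functional $w\mapsto w_i+w_{i'}$ equals one fixed functional $\varepsilon$ (the coefficient of $(\mathbf1,0)$) on the whole code, and therefore on $C_E$. Write the words of $C_E$ as $(a\cdot(1,x))_{x\in E}$ with $a\in\F_2^{10}$. Then $a\cdot(0,x_i+x_{i'})=\varepsilon(a)$ for all $a$, so $x_i+x_{i'}=p$ for one fixed $p$. If $p=0$, two points of $E$ coincide. Otherwise the pairing gives $E+p=E$, contradicting Lemma~\ref{b9:cubic:period}.

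\textbf{Codes with an $\mathrm{RM}(1,4)$ summand.} Restrict the generators of $C_E$ to those $16$ coordinates; the restrictions are affine functions on $\F_2^4$. So the corresponding $16$ points of $E$ are the image of an affine map $\F_2^4\to\F_2^9$, which is injective because the points are distinct. This gives an affine four-flat properly contained in $E$. Since $48/16=3$ is odd, this contradicts Lemma~\ref{b9:cubic:flat}.

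Exchanging the roles of $E$ and $\Lambda$ handles $C_\Lambda$.
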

\begin{proof}
Every triply even code extends to a maximal one. Theorem 29 and
Example 12 of~\cite{BM} give ten maximal codes of length 48: seven
ordinary extended doublings of indecomposable doubly even self-dual
length-24 codes; one $\mathrm{RM}(1,4)\oplus D(d_{16}^+)$; one
$\mathrm{RM}(1,4)^{\oplus3}$; and one exceptional code of dimension nine.

The dimension-nine code cannot contain $C_E$, of dimension ten.
An ordinary doubling is generated by $(c,c)$ and $(\mathbf1,0)$.
On any subcode, the two coordinate functionals in each paired position
differ by the same functional $p$. Choose a generator basis of $C_E$
starting with its all-ones word. Its 48 columns have form $(1,x)$,
$x\in E$. The functional $p$ has zero first coordinate. If $p=0$,
paired columns coincide, contrary to the distinct points of $E$;
otherwise the paired columns say $E+p=E$, contrary to
Lemma~\ref{b9:cubic:period}.

In either remaining maximal code, select its 16-coordinate
$\mathrm{RM}(1,4)$ component. Restricting the generators of $C_E$ to
these coordinates gives affine functions on $\F_2^4$. Hence the
corresponding 16 columns $(1,x)$ are the image of an affine map from
$\F_2^4$. Distinctness of the points of $E$ makes this map injective.
Its image therefore gives an affine four-flat inside $E$, contrary to
Lemma~\ref{b9:cubic:flat}. Interchange the two sides for $C_\Lambda$.
\end{proof}

It follows that $3\le d_E,d_\Lambda\le8$. Indeed a nonzero
alternating trilinear form has contraction rank at least three, since it
descends to a nonzero alternating three-form on its radical quotient.
We next use both tensors together to obtain a stronger restriction.

\subsection{Pairing of the two cubic moment tensors}
Let $V=\mathbb F_2^r$, and let $X\subseteq V$ and $Y\subseteq V^*$
be a spectral pair of order $n$, containing zero and spanning their
respective ambient spaces. Assume $16\mid n$. Their cubic moments are
the alternating tensors
\[
 T_X\in\bigwedge^3 V,\qquad T_Y\in\bigwedge^3 V^*.
\]
Thus $T_X(u,v,w)=\sum_{x\in X}u(x)v(x)w(x)$, for $u,v,w\in V^*$,
and similarly for $T_Y$. Write $\rho_X,\rho_Y$ for their contraction
ranks. The alternating assertion follows from the doubly even evaluation
codes, exactly as in the preceding cubic-moment lemma.

\begin{lemma}[Orthogonality of the contraction images]\label{b9:pair:pairorth}
Under the natural perfect pairing between $\bigwedge^2V$ and
$\bigwedge^2V^*$, one has
\[
 \langle \iota_uT_X,\iota_vT_Y\rangle=0
 \qquad(u\in V^*,\ v\in V).
\]
\end{lemma}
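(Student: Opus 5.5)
The plan is to realise the pairing as a weighted sum over $Y$ of a quadratic form attached to $\iota_uT_X$. I would then use the vanishing of the cubic weight polynomial of $X$ on $Y$, as in the proof of Lemma~\ref{b9:cubic:deriv}, to replace that quadratic by an affine function on $Y$. Finally, divisibility $16\mid n$ shows that the resulting sum is even.

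\textbf{Step 1 (the pairing as a sum over $Y$).} Fix coordinates on $V$ and dual coordinates on $V^*$, so that $\langle\alpha,\beta\rangle=\sum_{i<j}\alpha_{ij}\beta_{ij}$ for alternating forms. For $u\in V^*$ write $\alpha_u=\iota_uT_X$, with coefficients $(\alpha_u)_{ij}=\sum_{x\in X}u(x)x_ix_j$ for $i<j$. Let $q_u(s)=\sum_{i<j}(\alpha_u)_{ij}s_is_j$ be the associated homogeneous quadratic on $V^*$. Similarly, $\iota_vT_Y$ has coefficients $\sum_{y\in Y}v(y)y_iy_j$. Expanding the pairing gives
\[
 \langle\iota_uT_X,\iota_vT_Y\rangle=\sum_{y\in Y}v(y)\,q_u(y)\pmod 2 .
\]
Both sides are linear in $u$, and $Y$ spans $V^*$. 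It therefore suffices to treat $u\in Y$.

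\textbf{Step 2 (the derivative of the cubic weight polynomial).} Define $P_X(s)=\wt((s(x))_{x\in X})/4 \bmod 2$ on $V^*$. As in Lemma~\ref{b9:cubic:deriv}, inclusion--exclusion modulo eight shows that $P_X$ is a polynomial of degree at most three whose cubic part is $\sum_{i<j<k}T_X(e_i,e_j,e_k)s_is_js_k$. Here $16\mid n$ makes the evaluation weights $n/2$ divisible by eight, and the pairwise intersections are $n/4$. I need to check that the lower-order coefficients cause no trouble: they are constant or affine multiples that only contribute to lower-degree terms after differentiation. The derivative $D_uP_X(s)=P_X(s+u)+P_X(s)$ then has quadratic part exactly $q_u(s)$, and its remaining terms are affine. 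For $u,y\in Y$, the words for $y$ and $y+u$ (both nonzero or zero) have weight $0$ or $n/2$, which is $\equiv0\pmod8$. Hence $P_X(y)=P_X(y+u)=0$, so on $Y$ we get $q_u(y)=\ell(y)+c$ for some linear $\ell$ and constant $c$.

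\textbf{Step 3 (parity of the resulting sum).} Substituting,
\[
 \sum_{y\in Y}v(y)q_u(y)=\sum_{y\in Y}v(y)\ell(y)+c\sum_{y\in Y}v(y).
\]
Each sum counts the support of an evaluation word of $Y$, or the intersection of two such words. By Hadamard orthogonality these counts are $0$, $n/2$, or $n/4$. With $16\mid n$, all of them are even, so the pairing vanishes.

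The main obstacle is Step 2: checking carefully that the quadratic part of $D_uP_X$ is precisely $\iota_uT_X$, including the coordinate-free meaning of ``quadratic part'' for $u$ not a basis vector. I would handle this by choosing a basis of $V^*$ with $u=e_1$, which is allowed since the statement is basis-independent. Then the derivative of $s_1s_js_k$ in direction $e_1$ is $s_js_k$, and the coefficient of $s_js_k$ is $T_X(e_1,e_j,e_k)$.
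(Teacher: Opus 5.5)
Your proof is correct, but it takes a different route from the paper. The paper never uses the weight cubic. It lifts the pairing to the integer $F=\sum_{x,y}u(x)v(y)\sum_{i<j}x_ix_jy_iy_j$. The mod-$4$ expansion $L_{xy}\equiv\sum_ix_iy_i-2\sum_{i<j}x_ix_jy_iy_j$ then writes $2F$ as a sum of products of two even intersection numbers, minus $\sum_{x,y}u(x)v(y)L_{xy}$. After reducing to $u=y_0\in Y$ and $v=x_0\in X$, the paper reads this last sum off the identity $M^{\mathsf T}MM^{\mathsf T}=\frac{n^2}{8}J+\frac n4M^{\mathsf T}$.

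You instead reuse the mechanism of Lemma~\ref{b9:cubic:deriv}: $\iota_uT_X$ is the principal part of $D_uP_X$, which is affine on $Y$. You then show that any quadratic affine on $Y$ pairs to zero with every $\iota_vT_Y$. Your Step~3 therefore proves something more structural, namely $\operatorname{im}(\iota T_Y)\perp K_Y$, using only self-orthogonality of the evaluation code of $Y$. The lemma then becomes a corollary of the inclusion $\operatorname{im}(\iota T_X)\subseteq K_Y$, which Theorem~\ref{b9:pair:rhobound} already uses. The paper's argument is a single self-contained matrix identity and avoids any bookkeeping about quadratic parts under change of basis.

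One justification in Step~3 is wrong as stated. For arbitrary $v,\ell\in V$, the words $(v(y))_{y\in Y}$ and $(\ell(y))_{y\in Y}$ need not have weights or intersection in $\{0,n/2,n/4\}$. Only words indexed by points of $X$ have this property, and your $\ell$ is an arbitrary functional. What you actually need is evenness. This follows from self-orthogonality of the doubly even evaluation code of $Y$, or equivalently by expanding $v$ and $\ell$ modulo $2$ in a basis drawn from $X$.

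You can also remove $\ell$ and $c$ entirely. For $u\ne0$, extend $u$ to a basis of $V^*$ drawn from $Y$. In that basis the linear and quadratic coefficients of $P_X$ are $\pm n/8\equiv0$, since $16\mid n$. Hence $D_uP_X=q_u$ exactly, $q_u$ vanishes identically on $Y$, and Step~3 becomes trivial.
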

\begin{proof}
Choose dual coordinate bases. All scalar evaluations inside the integer
sums below are lifted to $\{0,1\}$. Put $L_{xy}=x\cdot y\pmod2$,
$a_x=u(x)$, $b_y=v(y)$, and write
\[
 F=\sum_{x\in X,y\in Y}a_xb_y
             \sum_{i<j}x_ix_jy_iy_j\in\mathbb Z.
\]
The parity of $F$ is the desired tensor pairing. The elementary parity
expansion gives
\[
 2F\equiv
 \sum_i\left(\sum_xa_xx_i\right)
       \left(\sum_yb_yy_i\right)
       -\sum_{x,y}a_xb_yL_{xy}\pmod4.
\]
Every factor in the first sum is even: it is the intersection parity
of two words in a self-orthogonal evaluation code. Thus the first sum
is divisible by four.

It is enough to treat $u=y_0\in Y$ and $v=x_0\in X$, because those
sets span and the tensor pairing is bilinear in $(u,v)$. If either is
zero, the assertion is immediate. Otherwise $a_x=L_{x,y_0}$ and
$b_y=L_{x_0,y}$. Remove the zero row and column of $L$, obtaining $M$.
Its row and column weights are $n/2$, and distinct row or column
intersections are $n/4$. Therefore
\[
 M^TM=\frac n4(J+I),\qquad
 M^TMM^T=\frac{n^2}{8}J+\frac n4M^T.
\]
The last weighted sum is the $(y_0,x_0)$ entry of $M^TMM^T$.
Both coefficients are divisible by four when $16\mid n$. Hence
$2F=0\pmod4$, so $F=0\pmod2$.
\end{proof}

For a set containing zero and of full affine span, let $K_X$ be the
space of quadratic principal parts of its vanishing polynomials of
degree at most two. It lies in $\bigwedge^2V^*$ and has dimension $d_X$.
Likewise $K_Y\subseteq\bigwedge^2V$ has dimension $d_Y$. The principal
part map is injective here because there is no nonzero vanishing affine
polynomial. Under an invertible linear coordinate change, the quadratic
principal part transforms by the appropriate exterior-square map:
terms with repeated indices only change the linear part of a Boolean
polynomial. Thus the pairing of $K_X$ and $K_Y$ is coordinate invariant.

\begin{lemma}[Exact rank of the relation pairing]\label{b9:pair:relationpair}
Let $q=\binom r2$, and let $c_2$ be the multiplicity of 2-primary Smith
exponent two of the corresponding Hadamard matrix. The natural pairing
$K_X\times K_Y\longrightarrow\mathbb F_2$ has rank
\[
 d_X+d_Y-q+c_2.
\]
\end{lemma}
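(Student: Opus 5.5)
We will deduce the statement from the Schur factorization \eqref{b9:quad:schur} and \eqref{b9:quad:smith}, reading the product $Q_XR_Y$ as a composite of linear maps through the $q$-dimensional space $\bigwedge^2$.

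\emph{Step 1: the factor maps.} Regard $Q_X$ as the map $\alpha_X:\bigwedge^2V^*\to\F_2^{X_*}$, $\omega\mapsto(\omega(x))_{x}$. Here $\omega$ is a homogeneous quadratic and $X_*$ is the set of nonbasis points. Regard $R_Y$ as a map $\beta_Y:\F_2^{Y_*}\to\bigwedge^2V^*$. We will identify the transpose $\beta_Y^T:\bigwedge^2V\to\F_2^{Y_*}$ with evaluation of the quadratic form on $Y$, after subtracting its affine interpolant at $0$ and the chosen basis points.

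\emph{Step 2: the kernels are $K_X$ and $K_Y$.} A quadratic principal part $\omega$ lies in $K_X$ exactly when some affine correction makes $\omega$ vanish on $X$. On $0,e_1,\dots,e_r$ the homogeneous quadratic already vanishes, so the correction must be zero on these points. Hence $\ker\alpha_X=K_X$, of dimension $d_X$. By the interpolation description of $R_Y$, we get $\ker\beta_Y^T=K_Y$ in the same way. This gives $\operatorname{im}\beta_Y=K_Y^{\perp}$ under the natural pairing of $\bigwedge^2V^*$ with $\bigwedge^2V$. It is consistent with $u=q-d_X$ and $v=q-d_Y$, as recorded before \eqref{b9:quad:key}.

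\emph{Step 3: rank of the composite.} By \eqref{b9:quad:smith} and \eqref{b9:quad:schur},
\[
 c_2=\rank(\alpha_X\beta_Y)=\dim\operatorname{im}\beta_Y-\dim(\operatorname{im}\beta_Y\cap\ker\alpha_X)=(q-d_Y)-\dim(K_Y^\perp\cap K_X).
\]
Now $K_Y^\perp\cap K_X$ is the left radical of the pairing $K_X\times K_Y\to\F_2$. The rank of that pairing is therefore
\[
 d_X-\dim(K_X\cap K_Y^\perp)=d_X-(q-d_Y-c_2)=d_X+d_Y-q+c_2,
\]
which is the claimed rank. Coordinate invariance, noted just before the lemma, lets us use the adapted coordinates of the Schur factorization, where $X$ contains the unit vectors.

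\emph{Main obstacle.} The delicate point is Step 1 for $Y$. We must check that the pairing of $\bigwedge^2V^*$ with $\bigwedge^2V$ induced by $\sum_{i<j}x_ix_jy_iy_j$ agrees with the matrix product in \eqref{b9:quad:schur}. We must also check that $R_Y^T$ evaluates exactly the principal parts, modulo vanishing affine parts, so that its kernel is $K_Y$ and not something larger. Both come down to this: subtracting $Q_B^TB^{-1}Y_*^T$ replaces the quadratic evaluation on $Y_*$ by its residual after linear interpolation through $0$ and $y_1,\dots,y_r$. A residual vanishes identically exactly when some affine polynomial with that principal part vanishes on $Y$. Since $Y$ spans, that affine polynomial is unique, by the injectivity remark preceding the lemma.
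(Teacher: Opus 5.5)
Your proposal is correct and follows essentially the same route as the paper: $c_2=\rank(Q_XR_Y)$, $\ker Q_X=K_X$, and $\operatorname{im}R_Y=K_Y^\perp$ via $\ker R_Y^T=K_Y$. Rank--nullity then gives the left radical $K_X\cap K_Y^\perp$ of dimension $q-d_Y-c_2$, and subtracting from $d_X$ finishes. Your explicit check that $R_Y^T$ is the residual after linear interpolation through $0$ and the chosen basis points of $Y$ is the justification the paper compresses into one sentence.
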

\begin{proof}
Normalize an actual point basis of $X$ to the coordinate units.
In the notation of the preceding quadratic Schur-complement argument,
\[
 c_2=\rank(Q_XR_Y),\quad
 \ker Q_X=K_X,\quad \operatorname{im}R_Y=K_Y^\perp.
\]
For the last equality, a proposed quadratic coefficient vector defines
a vanishing polynomial on $Y$ precisely when its evaluations on the
nonbasis points agree with the unique linear interpolant through the
chosen point basis; this is exactly $\ker R_Y^T=K_Y$.
Since $\dim\operatorname{im}R_Y=q-d_Y$, rank-nullity yields
\[
 \dim(K_X\cap K_Y^\perp)=q-d_Y-c_2.
\]
This intersection is the left radical of the pairing in question.
Subtracting its dimension from $d_X$ proves the formula.
\end{proof}

\begin{theorem}[Joint cubic-rank bound]\label{b9:pair:rhobound}
In the above situation,
\[
 \rho_X+\rho_Y\le q-c_2.
\]
Consequently, for a hypothetical order-48/rank-nine spectral pair,
\[
 \rho_X+\rho_Y\le8.
\]
Combining with the previously proved nonzero cubic condition gives,
up to transposition, only $(\rho_X,\rho_Y)=(3,3)$ or $(3,5)$.
In particular at least one side always has cubic contraction rank three.
\end{theorem}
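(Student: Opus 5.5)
The plan is to combine the two preceding lemmas. Lemma~\ref{b9:pair:pairorth} shows that the contraction images cannot be large: $\operatorname{im}(\iota T_X)\subseteq\bigwedge^2V$ and $\operatorname{im}(\iota T_Y)\subseteq\bigwedge^2V^*$ are orthogonal under the perfect pairing. Lemma~\ref{b9:pair:relationpair} controls how the relation spaces $K_X,K_Y$ pair. The bridge between them is the derivative argument of Lemma~\ref{b9:cubic:deriv}.

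\textbf{Step 1: place the contraction images inside the relation spaces.} Redo the derivative argument of Lemma~\ref{b9:cubic:deriv} in general order $n$ with $16\mid n$. The function $P_X(s)=\wt((s\cdot x)_{x\in X})/4 \bmod 2$ is well defined, because the evaluation code is doubly even. Its cubic part is $T_X$, and its lower-degree coefficients are determined by the weights $n/2$ and intersections $n/4$. The main obstacle is checking that these lower terms vanish, or at least contribute only linear terms after differencing, for general $16\mid n$. The linear coefficient is $n/8$ and the quadratic one is $-n/8$, both even when $16\mid n$, so this step should go through.

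Since $P_X$ vanishes on $Y$, each derivative $P_X(s+e_i)+P_X(s)$ is a polynomial of degree at most two vanishing on $Y$. Its quadratic principal part is $\iota_{e_i}T_X$. Hence $\operatorname{im}(\iota T_X)\subseteq K_Y$, of dimension $\rho_X$, and symmetrically $\operatorname{im}(\iota T_Y)\subseteq K_X$, of dimension $\rho_Y$. I need to verify the dual/placement convention: $T_X\in\bigwedge^3V$ contracts to $\bigwedge^2V$, which is where $K_Y$ lives, so this is consistent.

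\textbf{Step 2: the linear-algebra bound.} Put $A=\operatorname{im}(\iota T_Y)\subseteq K_X$ and $B=\operatorname{im}(\iota T_X)\subseteq K_Y$. By Lemma~\ref{b9:pair:pairorth}, $A\perp B$. Let $\beta$ be the pairing $K_X\times K_Y\to\F_2$, whose rank is $\rho=d_X+d_Y-q+c_2$. We have $\dim A\le d_X$, and $B$ is contained in the annihilator of $A$ within $K_Y$. This annihilator has dimension $d_Y-\operatorname{rank}(\beta|_{A\times K_Y})$.

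Bound that restricted rank below by $\dim A-\dim(\text{left radical})=\rho_Y-(d_X-\rho)$. This gives
\[
\rho_X\le d_Y-\rho_Y+d_X-\rho=q-c_2-\rho_Y,
\]
which is the claimed inequality $\rho_X+\rho_Y\le q-c_2$.

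\textbf{Step 3: specialize to order 48.} For order 48 with $r=9$, the earlier deductions give $q=36$ and $c_2=28$, so $\rho_X+\rho_Y\le 8$. By Lemma~\ref{b9:cubic:nonzero}, both tensors are nonzero, so each rank is at least $3$.

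I then need the extra fact that contraction rank $4$ is impossible. A nonzero alternating trilinear form descends to a nondegenerate $3$-form on its radical quotient of dimension $\rho$. For $\rho=4$, every $3$-form on a $4$-dimensional space is decomposable, via the duality $\bigwedge^3\cong(\bigwedge^1)^*$, and therefore has a $1$-dimensional radical, which is a contradiction.

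With $\rho\in\{3\}\cup[5,\infty)$ and sum at most $8$, the only pairs up to transposition are $(3,3)$ and $(3,5)$. In particular at least one side always has cubic contraction rank three.
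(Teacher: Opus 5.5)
Your proposal is correct and follows essentially the paper's proof. The derivative lemma (which you rightly check extends to any $16\mid n$) gives $\operatorname{im}(\iota T_Y)\subseteq K_X$ and $\operatorname{im}(\iota T_X)\subseteq K_Y$, and Lemma~\ref{b9:pair:pairorth} makes the relation pairing vanish on these subspaces. Your annihilator count is then equivalent to the paper's zero-block estimate $\operatorname{rank}\le(d_X-\rho_Y)+(d_Y-\rho_X)$ compared against Lemma~\ref{b9:pair:relationpair}, and the order-48 specialization and rank-four exclusion are identical to the paper's.

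One wording fix in Step~1: the derivative $P_X(s+e_i)+P_X(s)$ vanishes on $Y$ because $P_X$ vanishes on all of $Y+Y\supseteq Y+e_i$, not merely because $P_X$ vanishes on $Y$. The reason is that each nonzero difference of spectrum elements has an evaluation word of weight $n/2$.
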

\begin{proof}
The quadratic-derivative lemma gives
$\operatorname{im}(\iota T_Y)\subseteq K_X$ and
$\operatorname{im}(\iota T_X)\subseteq K_Y$.
This formulation uses principal parts, so the two inclusions remain
valid when the separate point-basis normalizations in that lemma are
undone. By Lemma~\ref{b9:pair:pairorth}, the restriction of the relation pairing
to these two subspaces is zero. Choose bases starting with their
$\rho_Y$ and $\rho_X$ independent elements, respectively. Its matrix
has a zero $\rho_Y\times\rho_X$ block. Its rank is consequently at most
\[
 (d_X-\rho_Y)+(d_Y-\rho_X).
\]
Compare with Lemma~\ref{b9:pair:relationpair} and cancel $d_X+d_Y$.
For order 48 and rank nine, $q=36$ and $c_2=28$.

Each nonzero alternating trilinear form has contraction rank at least
three. Rank four is impossible: the induced form on its radical quotient
would be a nondegenerate alternating 3-form in dimension four; but every
nonzero 3-form in dimension four is decomposable and has a one-dimensional
radical. Thus the only allowed ordered ranks with positive entries and
sum at most eight are $(3,3)$, $(3,5)$ and $(5,3)$.
\end{proof}

All statements below are over $\F_2$. Let $(E,\Lambda)$ be a hypothetical
spectral pair of size 48 in $\F_2^9$. We use the already proved properties
that the affine evaluation codes have dimension ten and are doubly even,
that $3\le d_E,d_\Lambda\le8$, and that the cubic moment tensor $T_E$
has contraction rank at most $d_\Lambda$. Here $d_A=46-h_A(2)$.

\begin{theorem}\label{b9:rank3:hyperplane}
If the contraction rank of $T_E$ is three, then the affine evaluation
code $C_E$ contains a triply even subcode of dimension nine containing
the all-ones word. If additionally $d_\Lambda=3$, then in a basis drawn
from $\Lambda$, the weight cubic $\wt((s\cdot x)_{x\in E})/4\pmod2$
is a product of three independent linear forms with disjoint supports.
\end{theorem}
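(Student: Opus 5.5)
We work throughout with the weight cubic
\[
 P_E(s)=\sum_{i<j<k}T_E(e_i,e_j,e_k)s_is_js_k ,
\]
written in a coordinate basis of nine independent frequencies from $\Lambda$, as in the proof of Lemma~\ref{b9:cubic:deriv}. Since $\rho_E=3$, the radical $R=\{u:T_E(u,\cdot,\cdot)=0\}$ of the alternating form $T_E$ has codimension three. The form descends to a nonzero alternating $3$-form on the three-dimensional quotient $V^*/R$. Such a form is unique up to scalar, namely the determinant. So after a linear change of frequency coordinates, $T_E$ becomes $\ell_1\wedge\ell_2\wedge\ell_3$ for three independent linear forms $\ell_i$ vanishing on $R$, and then
\[
 P_E(s)=\ell_1(s)\ell_2(s)\ell_3(s)\pmod2 .
\]
Here one must check that $P_E$ is the cubic attached to $T_E$ in \emph{any} linear basis, not only the chosen one. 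This holds because $P_E(s)$ is defined intrinsically as $\wt/4 \bmod 2$. It is a cubic form with no quadratic or linear part in every linear coordinate system, by the same inclusion--exclusion computation, and that computation only uses that the code is doubly even.

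For the first assertion, take the subcode $C'$ spanned by the all-ones word and the evaluation words $(s\cdot x)_{x\in E}$ with $s$ in a hyperplane $H$ on which $P_E$ vanishes identically. For example, take $H=\ker\ell_1$. Then $\ell_1\ell_2\ell_3$ vanishes on $H$, so every word $s\cdot x$ with $s\in H$ has weight divisible by $8$. The complements $\mathbf 1+c$ have weight $48-\wt(c)$, also divisible by $8$. Hence $C'$ has all weights divisible by eight, so it is triply even (all weights $\equiv0\bmod8$, as in the proof of Lemma~\ref{b9:cubic:deriv}). Its dimension is $8+1=9$, because the evaluation map on frequencies is injective ($E$ spans). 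So $C'$ is the required subcode.

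For the second assertion, assume $d_\Lambda=3$. By Lemma~\ref{b9:cubic:deriv}, the derivatives $\partial_{e_i}P_E$ span a $3$-dimensional space of homogeneous quadratics vanishing on $\Lambda$. Their principal parts lie in $K_\Lambda$, which has dimension $d_\Lambda=3$. Hence
\[
 K_\Lambda=\langle \ell_2\ell_3,\ \ell_1\ell_3,\ \ell_1\ell_2\rangle
\]
as principal parts. The remaining claim is that, in the original basis drawn from $\Lambda$ (the unit vectors lie in $\Lambda$), the forms $\ell_1,\ell_2,\ell_3$ can be chosen with pairwise disjoint coordinate supports. To get this, I would exploit that each $\ell_a\ell_b$, adjusted by a linear part, vanishes at $0$ and at the basis points $e_i\in\Lambda$. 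For a product of linear forms,
\[
 \ell_a(e_i)\ell_b(e_i)=[i\in\supp\ell_a\cap\supp\ell_b].
\]
A relation of degree at most two vanishing on $\Lambda$ with principal part $\ell_a\ell_b$ must therefore have linear part $\sum_{i\in \supp\ell_a\cap\supp\ell_b}s_i$. One then has freedom in choosing $\ell_1,\ell_2,\ell_3$, since any basis of $R^\perp$ with the same wedge product will do. The goal is to use this freedom, together with vanishing on the other $38$ points of $\Lambda$, to force the overlaps to be empty.

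The main obstacle is this disjointness step. A possible approach is as follows: restrict to $\Lambda$ the three vanishing polynomials $\ell_a\ell_b+\lambda_{ab}$ and deduce that for every $y\in\Lambda$ at most one $\ell_a(y)$ is $1$, or else all three are. This describes $\Lambda$ as a union of restriction fibers of the map $(\ell_1,\ell_2,\ell_3)$, with controlled occupancies. Evaluating at the basis points $e_i$ then shows that each $e_i$ lies in at most one $\supp\ell_a$, unless $e_i$ lies in all three. The all-three case I would rule out with a change of basis among the $\ell_a$ preserving $\ell_1\wedge\ell_2\wedge\ell_3$ (for instance $\ell_1\mapsto\ell_1+\ell_2$), combined with a weight count using fiber sizes of $48$, or else with Lemma~\ref{b9:cubic:flat}.
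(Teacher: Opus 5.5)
\textbf{There is a genuine gap at your very first step.} You claim that $P_E$ has no linear or quadratic part in every linear coordinate system, and that this needs only double evenness. That is false.

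For a frequency basis $b_1,\ldots,b_9$ with evaluation words $c_i$, the coefficient of $s_i$ in $s\mapsto P_E(\sum_i s_ib_i)$ is $P_E(b_i)=\wt(c_i)/4\bmod2$. The coefficient of $s_is_j$ is $|c_i\cap c_j|/2\bmod2$. Double evenness only makes these integers. They vanish in a basis drawn from $\Lambda$ because the weights are $24$ and the intersections are $12$. Since $P_E\ne0$, a basis containing a frequency with $P_E=1$ has a nonzero linear term.

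What contraction rank three actually gives is $P_E\equiv\ell_1\ell_2\ell_3$ modulo Boolean polynomials of degree at most two. The product depends on the chosen basis of $R^\perp$, whereas the wedge does not; for instance $(\ell_1+\ell_2)\ell_2\ell_3=\ell_1\ell_2\ell_3+\ell_2\ell_3$. So your ``freedom in choosing $\ell_1,\ell_2,\ell_3$'' is inconsistent with $P_E=\ell_1\ell_2\ell_3$.

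Here is a concrete failure. Use the group-parity variables $g_p$ of Section~\ref{b9:rank3:gradientsection} with labels $e_1,e_2,e_3,e_1+e_2$. The rows of $A$ are $\ell_1=g_1+g_4$, $\ell_2=g_2+g_4$, $\ell_3=g_3$, and
\[
 P_E=g_3(g_1g_2+g_1g_4+g_2g_4)=\ell_1\ell_2\ell_3+g_3g_4 .
\]
Its restriction to $\ker\ell_1$ is $g_1g_3\neq0$. With labels $e_1,e_2,e_3,e_1+e_2+e_3$ the cubic has type $x(yz+w)$. That vanishes on only one hyperplane, so it is not a product of three independent linear forms at all.

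If all seven nonzero labels occur, $P_E(g)$ is the parity of the number of noncollinear triples in $\{p:g_p=1\}$. A direct check shows it vanishes on no hyperplane. Hence the first assertion does not follow from contraction rank three alone.

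\textbf{The missing ingredient, for both assertions, is the count of quadratic relations through the common zero set of the derivatives.} You need
\[
\Lambda\subseteq S=\{s:Bs=q(As)\}\cong K_{\mathcal P}\times\F_2^{9-m},
\]
so every quadratic vanishing on $S$ vanishes on $\Lambda$. This gives $d_\Lambda\ge d_S$, with $d_S=3,6,10,14,14$ for $m=3,\ldots,7$. Then the bound $d_\Lambda\le8$, which you never use, forces $m\le4$. There the explicit cubics $g_1g_2g_3$, $g_3(g_1g_2+g_1g_4+g_2g_4)$ and $(g_1+g_2+g_3+g_4)(g_1g_2+g_3g_4)$ each vanish on a hyperplane. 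Likewise $d_\Lambda=3$ forces $m=3$, which gives $P_E=g_1g_2g_3$ with disjoint supports.

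\textbf{Your sketched disjointness argument cannot replace this.} By your own formula for the linear parts, the relations $\ell_a\ell_b+\lambda_{ab}$ vanish at $0$ and at every basis point $e_i$, whatever the labels $p_i$ are. So evaluating at the $e_i$ yields nothing.

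The proposed dichotomy (at most one $\ell_a(y)=1$, or all three) is not implied by these relations either. A basis point with label $e_1+e_2$ satisfies them and has exactly two $\ell_a$ equal to one.

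The content of $d_\Lambda=3$ is that $\Lambda$ satisfies no \emph{further} quadratic relations. That only yields something once you show that $S$ itself satisfies extra quadratic relations when $m\ge4$.
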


\subsection{The common derivative zero set}\label{b9:rank3:gradientsection}
Choose a frequency basis $b_1,\ldots,b_9$ from $\Lambda$ and use it as
coordinate units. The earlier cubic identity gives the pure squarefree
cubic
\[
 P(s)=\sum_{i<j<k}T_E(b_i,b_j,b_k)s_i s_j s_k.
\]
Every $\lambda\in\Lambda$ is a common zero of its nine first Boolean
derivatives $\partial_iP(s)=P(s+e_i)+P(s)$.

Since $T_E$ has contraction rank three, its radical quotient has
dimension three and carries the unique nonzero alternating volume form.
Thus there is a surjection $A:\F_2^9\to\F_2^3$ such that
\[
 T_E(u,v,w)=\det(Au,Av,Aw).
\]
Put $p_i=Ae_i$, let $\mathcal P$ be the set of distinct nonzero values
among these nine columns, and put $m=|\mathcal P|$. The columns span
$\F_2^3$, so $3\le m\le7$. Write
\[
 q(w)=(w_2w_3,w_1w_3,w_1w_2),\qquad
 Bs=\sum_i q(p_i)s_i.
\]
Expansion of $q(As)$ shows
\begin{equation}\label{b9:rank3:gradient}
 \partial_iP(s)=p_i\cdot\bigl(q(As)+Bs\bigr).
\end{equation}
Indeed the diagonal terms of $q(As)$ are exactly $Bs$, and the
coefficient of $s_js_k$ after pairing with $p_i$ is the indicated
determinant. Because the $p_i$ span, the common derivative zero set is
\[
 S=\{s:Bs=q(As)\}.
\]
For each $p\in\mathcal P$ introduce the group-parity variable
$g_p=\sum_{i:p_i=p}s_i$. The map $s\mapsto(g_p)_{p\in\mathcal P}$ is
surjective and has kernel dimension $9-m$. Consequently $S$ is linearly
equivalent to $K_{\mathcal P}\times\F_2^{9-m}$, where
\[
 K_{\mathcal P}=\left\{g:
   \sum_{p\in\mathcal P}q(p)g_p
    =q\left(\sum_{p\in\mathcal P}pg_p\right)\right\}.
\]

\begin{lemma}\label{b9:rank3:circuit}
The seven vectors $(p,q(p))$, $0\ne p\in\F_2^3$, have rank six and
their unique linear dependence is their sum. In particular every six
are independent.
\end{lemma}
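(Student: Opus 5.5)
The plan is to compute everything in explicit coordinates on $\F_2^3\times\F_2^3$, so that the claim reduces to a $6\times7$ matrix over $\F_2$. Here $q(p)=(p_2p_3,p_1p_3,p_1p_2)$.

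First tabulate $q(p)$ for the seven nonzero $p$. For the three unit vectors $p=e_i$ we have $q(p)=0$. For the three weight-two vectors $p=e_i+e_j$ we have $q(p)=e_k$, where $\{i,j,k\}=\{1,2,3\}$. For $p=(1,1,1)$ we have $q(p)=(1,1,1)$. So the seven vectors $(p,q(p))\in\F_2^6$ are
\[
(e_1,0),\ (e_2,0),\ (e_3,0),\ (e_2+e_3,e_1),\ (e_1+e_3,e_2),\ (e_1+e_2,e_3),\ (\mathbf 1,\mathbf 1).
\]

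Next check that the sum of all seven vectors is zero. In the first three coordinates each coordinate appears in exactly four nonzero $p$, an even number. In the last three coordinates each $e_k$ appears once among the weight-two vectors and once in $(\mathbf 1,\mathbf 1)$, again an even number. Hence the sum is a linear dependence.

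Then show this dependence is the only one, which means the space of dependences is one-dimensional. Take an arbitrary relation $\sum_p c_p(p,q(p))=0$. The second block of coordinates gives $c_{e_2+e_3}+c_{\mathbf 1}=0$, $c_{e_1+e_3}+c_{\mathbf 1}=0$ and $c_{e_1+e_2}+c_{\mathbf 1}=0$, so the three weight-two coefficients all equal $c_{\mathbf 1}=:c$. The first block then gives
\[
c_{e_1}e_1+c_{e_2}e_2+c_{e_3}e_3+c\bigl[(e_2+e_3)+(e_1+e_3)+(e_1+e_2)+\mathbf 1\bigr]=0.
\]
The bracketed vector is $\mathbf 1$, so $c_{e_i}=c$ for every $i$. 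Therefore every relation is a multiple of the all-ones relation. The kernel of the $6\times7$ matrix is one-dimensional, and its rank is $7-1=6$.

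Finally, deduce the statement about six vectors. Suppose some six of the vectors satisfied a nontrivial dependence. Extending it by a zero coefficient gives a nontrivial dependence among all seven, and by the previous step it must be the full sum. But the full sum has every coefficient nonzero, including the coefficient we set to zero, which is a contradiction. So any six of the vectors are independent.

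No step is a real obstacle; the only care needed is the bookkeeping for $q(e_i+e_j)=e_k$.
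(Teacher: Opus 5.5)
Your proof is correct and follows essentially the same route as the paper. You check that the sum vanishes, eliminate the quadratic coordinates and then the linear ones to show the kernel is spanned by the all-ones vector, and deduce that any six are independent because that vector has no zero coordinate; the only cosmetic difference is that the paper exhibits six specific independent columns to get rank six, while you solve for the whole kernel directly.
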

\begin{proof}
Their sum is zero: each linear coordinate takes the value one four
times, and each squarefree quadratic takes the value one twice.
The six columns indexed by $e_1,e_2,e_3,e_1+e_2,e_1+e_3,e_2+e_3$
are independent by first eliminating the three quadratic coordinates,
then the three linear coordinates. Hence the full kernel is generated
by the all-ones vector, which has no zero coordinates.
\end{proof}
For all seven labels, the solutions to
$\sum_p(p,q(p))g_p=(w,q(w))$ are the singleton $g=e_w$ (or $g=0$
when $w=0$) and its complement. Restricting to $\mathcal P$ means
that all omitted coordinates of $g$ must vanish. Therefore, in their
natural $m$ coordinates,
\[
 K_{\mathcal P}=\begin{cases}
 \{0,e_1,\ldots,e_m\},&3\le m\le5,\\
 \{0,e_1,\ldots,e_6,\mathbf1\},&m=6,\\
 \{0,e_1,\ldots,e_7,\mathbf1,
       \mathbf1+e_1,\ldots,\mathbf1+e_7\},&m=7.
 \end{cases}
\]
No assumption about the particular spanning subset of labels is needed
for this zero-set description.

\subsection{The quadratic defects}
For any finite set $K$ and $k\ge0$, separating the free variables into
degrees zero, one, and two gives
\[
 h_{K\times\F_2^k}(2)=h_K(2)+k\,h_K(1)+\binom{k}{2}.
\]
These spaces are independent by their distinct free-variable monomials.
For $m\le5$, the set $K$ is an affine basis, so both $h_K(1)$ and
$h_K(2)$ equal $m+1$. For $m=6$ the respective values are seven and
eight. For $m=7$ they are eight and fifteen. For the last assertion,
the affine functions give dimension eight. Quadratic monomials vanish
at $0,e_1,\ldots,e_7$ and their values at $\mathbf1,\mathbf1+e_i$
are $(\sigma,\sigma+\deg_i)$, where $\sigma$ and $\deg_i$ are the
edge sum and vertex degrees of the corresponding edge labeling of
$K_7$. The degree map has rank six and an odd triangle makes the edge
sum independent of that map. Thus the quadratics add seven dimensions.

We obtain the entirely explicit table
\begin{center}\begin{tabular}{rrrr}
\toprule
$m$&$|S|$&$h_S(2)$&$d_S=46-h_S(2)$\\\midrule
3&256&43&3\\
4&160&40&6\\
5&96&36&10\\
6&64&32&14\\
7&64&32&14\\\bottomrule
\end{tabular}\end{center}
Since $\Lambda\subseteq S$, every quadratic vanishing on $S$ vanishes
on $\Lambda$, so $d_\Lambda\ge d_S$. Thus $d_\Lambda\le8$ forces
$m\le4$, and $d_\Lambda=3$ forces $m=3$.

\subsection{A linear factor and a triply even hyperplane}
The pure cubic depends only on the group-parity variables $g_p$:
triples with repeated labels have determinant zero, while for three
distinct labels the coordinate sums factor as the product of the three
group parities. If $m=3$, the labels are independent, and hence
$P=g_1g_2g_3$.

If $m=4$, choose three independent labels as $e_1,e_2,e_3$. Up to
permuting these, the remaining label is either $e_1+e_2$ or
$e_1+e_2+e_3$. The respective cubics are
\[
 g_3(g_1g_2+g_1g_4+g_2g_4)
\]
and
\[
 g_1g_2g_3+g_1g_2g_4+g_1g_3g_4+g_2g_3g_4
  =(g_1+g_2+g_3+g_4)(g_1g_2+g_3g_4).
\]
The equality is in the Boolean polynomial ring, $g_i^2=g_i$.
In every allowed case $P$ vanishes on a linear hyperplane $H$ of
$\F_2^9$. Its eight-dimensional linear evaluation code has all weights
divisible by eight. Adding the all-ones word yields a triply even
dimension-nine subcode of $C_E$. The all-ones word is independent
because $0\in E$. This proves the assertion.

In fact the two cases give precisely two linear-equivalence types for
the weight cubic:
\[
 P=xyz\qquad\hbox{or}\qquad P=x(yz+w),
\]
with the displayed variables independent. For the first $m=4$ case,
use $ab+ac+bc=(a+c)(b+c)+c$. For the second case set
$x=g_1+g_2+g_3+g_4$; direct expansion gives
$P=x(g_1g_2+g_1g_3+g_2g_3)$, followed by the same substitution.
The respective numbers of frequencies $s\in\F_2^9$ with $P(s)=1$
are 64 and 128. The first type has four hyperplanes on which $P$
vanishes (the three factor hyperplanes and $x+y+z=0$); the second
type has only the hyperplane $x=0$.

\subsection{A projective triply even hyperplane is unavoidable}
Call a binary code containing the all-ones word projective when its
generator columns are distinct. For an affine evaluation code this is
equivalent to its point set having no repetitions.

\begin{theorem}\label{b9:rank3:projective}
If $T_E$ has contraction rank three, then $C_E$ contains a projective
triply even dimension-nine subcode containing the all-ones word.
\end{theorem}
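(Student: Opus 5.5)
The subcode produced in the preceding subsection is already the right kind of object. The only thing left to control is projectivity. Recall the setup. For a linear hyperplane $H$ of the frequency space on which the weight cubic $P$ vanishes, the code $D_H$ spanned by the all-ones word and the evaluation words of $s\in H$ is triply even of dimension nine. Write $H=\{s:s\cdot p=0\}$ with $0\ne p\in\F_2^9$ on the point side. Then the generator column of $x\in E$ in $D_H$ is $(1,x\bmod\langle p\rangle)$. So $D_H$ is projective exactly when no two points of $E$ differ by $p$, that is, when $p\notin E+E$.

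The proof is therefore reduced to the following claim: among the hyperplanes on which $P$ vanishes, at least one has normal $p\notin E+E$. Since $d_\Lambda\le8$, the quadratic-defect table gives $m\le4$. So $P$ has one of the two normal forms $xyz$ or $x(yz+w)$, with four, respectively one, vanishing hyperplanes. I would argue by contradiction. Suppose every such normal $p$ lies in $E+E$. Then \eqref{b8:orthogonality} gives $W_\Lambda(p)=0$, so exactly $24$ frequencies of $\Lambda$ lie off $H$. Combine this with $P|_\Lambda=0$, which holds because every word of $C_E$ has weight $0$, $24$ or $48$.

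For the form $xyz$, the four normals correspond to the linear forms $x,y,z,x+y+z$. Put $n_v=|\{\lambda:(x,y,z)(\lambda)=v\}|$. Then $n_{111}=0$, $\sum n_v=48$, and the four Walsh values vanish. These conditions fix the distribution up to a small family. I would then add two further constraints. First, $E\cap(E+p)$ is nonempty for each $p$. Second, $E$ has no period (Lemma~\ref{b9:cubic:period}) and contains no $4$-flat (Lemma~\ref{b9:cubic:flat}). The aim is a contradiction, most likely through a quotient argument as in Lemma~\ref{b9:cubic:period}. Here is the quotient step. Collapsing the pairs $\{x,x+p\}$ turns $D_H$ into a code on fewer, weighted coordinates. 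The paired coordinates contribute doubled weight, and doubling a doubly even contribution preserves divisibility by eight. So the unpaired points together with one representative of each pair should form a smaller set whose affine evaluation code is again triply even. I would then feed this into the length-$<48$ classification of maximal triply even codes (\cite{BM}) to locate an affine $4$-flat or a period, contradicting the two lemmas.

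For the form $x(yz+w)$ there is only one hyperplane, so it is the delicate case. Here I would first try to show directly that $x(yz+w)$ is incompatible with $p\in E+E$. The approach is to use the bound $\rho_E\le d_\Lambda$ together with the joint bound of Theorem~\ref{b9:pair:rhobound}, which forces $(\rho_E,\rho_\Lambda)=(3,3)$ or $(3,5)$. This should restrict how $\Lambda$ meets $x=1$. On the $24$ frequencies with $x=1$ we need $yz+w=0$, so these frequencies lie in a quadric of a hyperplane coset. I would then check whether such a configuration can still be orthogonal to the pair $\{x,x+p\}$. I expect this last case to be the main obstacle, because the counting alone may not exclude it. If that happens, the fallback is an exact finite computation over the possible distributions of $\Lambda$ on the $(x,y,z,w)$-classes.
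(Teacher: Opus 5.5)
Your reduction is right: $D_H$ is projective exactly when the normal $p$ of $H$ is not in $E+E$. Otherwise $W_\Lambda(p)=0$ splits $\Lambda$ into halves of size $24$ on and off $H$. The split into $P=xyz$ (four vanishing hyperplanes) and $P=x(yz+w)$ (one vanishing hyperplane) is also correct.

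The gap is the $x(yz+w)$ case. You offer only a plan and an unspecified fallback computation, and the constraint you impose on $\Lambda$, namely $P|_\Lambda=0$, cannot settle it. Off $H=\{x=0\}$ the zero set of $x(yz+w)$ has $128$ points, so $24$ frequencies fit easily. The contraction-rank bounds say nothing about where $\Lambda$ sits. (Also, $P|_\Lambda=0$ holds because the evaluation words of elements of $\Lambda$ have weight $0$ or $24$. It is not true that every word of $C_E$ has weight $0,24,48$; that would force $P\equiv0$.)

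The missing idea is the stronger containment proved in the gradient subsection. $\Lambda$ lies in the common zero set $S$ of all nine Boolean derivatives $\partial_iP$, and for $m=4$ one has $S\cong\{0,e_1,e_2,e_3,e_4\}\times\F_2^5$ in the group-parity coordinates. The vanishing hyperplane is $g_3=0$ or $g_1+g_2+g_3+g_4=0$. In either case one of its two sides meets $S$ in a single coset, a $32$-point affine five-flat.

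Hence one $24$-point half of $\Lambda$ lies in that flat. Each direction partitions the flat into $16$ parallel pairs, so that half has the whole direction five-space $V$ in its difference set. Then $W_E$ vanishes on $V\setminus\{0\}$, and the $48$ points of $E$ would be equidistributed among $32$ restriction classes, which is impossible since $32\nmid 48$.

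The same containment disposes of $m=3$ directly. There $S$ is four parallel six-flats over $0,e_1,e_2,e_3$, and each balanced factor hyperplane forces a different fiber to have size $24$, so at most one can be balanced.

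Your $xyz$ case does close with your weaker data, but not by the quotient-and-classification detour you sketch. That detour's divisibility step is unjustified: nothing controls the weight on the pair representatives modulo eight. Instead, sum the four balance conditions and use $n_{111}=0$. This gives $2(48-n_{000})=96$, so $n_{000}=0$, contradicting $0\in\Lambda$.
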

\begin{proof}
Consider a hyperplane $H$ on which the weight cubic vanishes, and the
corresponding triply even subcode $D_H$. If $D_H$ is not projective,
two points of $E$ restrict identically to $H$, so their difference is
the unique nonzero vector $p\in H^\perp$. Orthogonality of these
points against the spectrum says
\[
 \sum_{\lambda\in\Lambda}(-1)^{\lambda\cdot p}=0,
 \quad\hbox{hence}\quad
 |\Lambda\cap H|=|\Lambda\setminus H|=24.
\]

In the $m=4$ case the common gradient zero set is an affine basis
$\{0,e_1,e_2,e_3,e_4\}$ times $\F_2^5$. In the three-circuit case the
factor hyperplane is $g_3=0$; its complement meets this zero set in
just the coset indexed by $e_3$. In the four-circuit case the factor
hyperplane is $g_1+g_2+g_3+g_4=0$; it meets the zero set in just the
coset indexed by zero. Thus one of the two 24-point halves of
$\Lambda$ would lie in a single affine five-flat. A 24-point subset
of a 32-point affine five-flat has the entire direction five-space
as its difference set, by the pigeonhole principle. Orthogonality
then makes $E$ uniformly distributed on the 32 characters of this
five-space, impossible because $32\nmid48$. Therefore this
triply even subcode is projective.

In the $m=3$ case the four factor hyperplanes have equations
$g_1=0,g_2=0,g_3=0,g_1+g_2+g_3=0$. The gradient zero set consists
of four parallel six-flats indexed by $0,e_1,e_2,e_3$. Balancing
across each factor hyperplane forces a different one of these four
fibers to have size 24. All four fibers are nonempty, since
$\Lambda$ has full affine span. Thus at most one factor hyperplane
can be balanced: two fibers of size 24 would leave the other two
empty. At least three of the four subcodes are consequently
projective.
\end{proof}

\subsection{Finite extension constraints}
Suppose $D=\langle\mathbf1,h_1,\ldots,h_8\rangle$ is such a triply
even dimension-nine subcode and $C=D+\langle g\rangle$. The code $C$
is doubly even precisely when $g\in D^\perp$ and $\wt(g)\equiv0\pmod4$.
Define the alternating bilinear form on $D/\langle\mathbf1\rangle$
by
\[
 B_g(i,j)=\sum_{a=1}^{48}g_a(h_i)_a(h_j)_a.
\]
Alternation follows from $g\perp D$. On $C/\langle\mathbf1\rangle$,
the cubic moment is $\epsilon\wedge B_g$, where $\epsilon(g)=1$ and
$\epsilon(D)=0$. If $B_g\ne0$, its contraction rank is
$1+\rank B_g$: a radical vector must have zero $g$ coefficient and
its component in $D/\langle\mathbf1\rangle$ must belong to $\rad B_g$.
Thus rank-three cubic moment is equivalent to $\rank B_g=2$.

There are $\binom{8}{2}_2=(255\cdot254)/6=10,795$ such forms.
For each one, its entries impose linear equations on the 48 bits of
$g$. Coordinate pairs with equal columns in $D$ must have opposite
$g$ bits, and a column multiplicity exceeding two makes projective
extension impossible. We use $C*D$ for the linear span of componentwise
products of codewords; see~\cite{Randriam} for this formalism. Moreover
\[
 h_C(2)=\dim(C*C)\le\dim(D*D)+9.
\]
Here $C*C=D*D+gD$ and $g^2=g$ is included because $\mathbf1\in D$.
The bound $d_E\le8$ therefore forces $\dim(D*D)\ge29$.

A direct scan of the 1,960 dimension-nine triply even codes in~\cite{BMdata}
found 519 with column multiplicities at most two. Only 112 also satisfy
$\dim(D*D)\ge29$: their square-code dimension counts are
$54,31,18,5,3,1$ in dimensions $29,30,31,32,33,34$, respectively.
By Theorem~\ref{b9:rank3:projective} it suffices instead to use the 26 projective
base codes with square-code dimension at least 29: the counts in
dimensions $29,30,31,32,33,34$ are $7,6,9,2,1,1$, respectively.
These counts are checks of the published list, not a reproof of its
completeness. For fixed $B_g$, the solution kernel contains $D$, so
enumerating $g$ modulo $D$ removes nine redundant bits. Before the
extra duplicate-column equations, there are at most
$2^{48-29-9}=1024$ extensions per feasible prescribed bilinear form.
The exhaustive test of these extension families is given next.

\subsection{A positive obstruction certificate}
\begin{lemma}\label{b9:finite:five-obstruction}
There is no 48-point spectral set $E\subseteq\F_2^9$ for which
$E+E$ contains a five-dimensional linear subspace.
\end{lemma}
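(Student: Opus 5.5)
The plan is to argue exactly as in the proof of Theorem~\ref{b9:rank3:projective}. Suppose $E$ is a $48$-point spectral set with spectrum $\Lambda$, and suppose $W\subseteq E+E$ is a five-dimensional linear subspace. By \eqref{b8:orthogonality}, every nonzero $w\in W$ satisfies $W_\Lambda(w)=0$. Fourier inversion on the quotient $G^*/W^\perp\cong W^*$ then shows that $\Lambda$ is equidistributed over the $32$ cosets of $W^\perp$. That would give $32\mid 48$, which is false. So the argument should reduce to this one counting step, with no finite search at all.

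In detail, the steps are as follows.
\begin{enumerate}
\item For each character $b$ of $W$, let $\Lambda_b=\{\lambda\in\Lambda:\lambda|_W=b\}$.
\item For $0\ne w\in W$ we have
\[
 0=W_\Lambda(w)=\sum_{b\in W^*}(-1)^{b(w)}|\Lambda_b|.
\]
\item These $31$ vanishing conditions, together with the total $\sum_b|\Lambda_b|=48$, force $|\Lambda_b|=48/32$ for every $b$. This is the same Fourier-inversion step used in Lemma~\ref{b9:cubic:flat}.
\item Since $48/32$ is not an integer, this is a contradiction.
\end{enumerate}

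I would double-check the one subtle point: whether $E+E$ containing $W$ means containing it as a set of differences. Over $\F_2$, sums and differences coincide, so \eqref{b8:orthogonality} applies pointwise to every nonzero $w\in W$. No linear structure of $E$ itself is needed.

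The title ``positive obstruction certificate'' suggests the authors intend a computational use. Presumably the lemma is applied later to the extension families of the previous subsection, where one exhibits a five-dimensional subspace inside $E+E$ as a certificate. The lemma itself, however, should follow from the divisibility argument above.

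The main obstacle I anticipate is only making sure the equidistribution step is stated correctly. The identity must be inverted on the $32$-element group $W^*$, giving
\[
 |\Lambda_b|=\frac1{32}\sum_{w\in W}(-1)^{b(w)}W_\Lambda(w)=\frac{48}{32}.
\]
Beyond that, nothing from the cubic-moment machinery is needed.
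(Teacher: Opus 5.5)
Your proposal is correct and is essentially the paper's proof: orthogonality makes $W_\Lambda$ vanish on the nonzero elements of the five-dimensional subspace, and Fourier inversion on its character group then forces every restriction class of $\Lambda$ to have the non-integral size $48/32$. Your explicit inversion formula correctly spells out the step the paper states in one line. Your guess about the lemma's role is also right: its only later use is as a check on the five-vector certificates for the retained extension candidates.
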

\begin{proof}
If $U\subseteq E+E$ has dimension five, orthogonality gives
$\sum_{\lambda\in\Lambda}(-1)^{u\cdot\lambda}=0$ for every
$0\ne u\in U$. Fourier inversion on $U$ makes every restriction
class of $\Lambda$ in $U^*$ have cardinality $48/32$, impossible.
\end{proof}
The certificate is just five vectors: check that their 32 subset sums
are distinct and all belong to $E+E$. No negative clique-search result
is needed for this obstruction.

\subsection{Complete enumeration and independent checks}
For each of the 26 classified base codes, the program lists all
10,795 nonzero alternating rank-two bilinear forms on an
eight-dimensional space. It prescribes their 28 upper-triangle entries
as the parities $g\cdot(h_i*h_j)$, together with the nine equations
$g\perp D$. Exact Gaussian elimination over $\F_2$ gives either inconsistency or an
affine solution space. Its homogeneous kernel is $(D*D)^\perp$ and
contains $D$. Choosing a complement to $D$ in that kernel enumerates
one representative of every extension word modulo $D$. Adding a word
of $D$ leaves the extended code unchanged and preserves all required
conditions. A nonzero prescribed bilinear form ensures $g\notin D$.
On column points, $g\mapsto g+d$ is an invertible affine shearing of
the last coordinate by an affine function of the first eight. Thus it
preserves spectrality and the five-space obstruction as well.

Across the 26 bases there are 26,582 feasible prescribed forms and
10,294,816 extension representatives. The only rejection filters in
the final pass are $\wt(g)\equiv0\pmod4$ and
$38\le\dim(C*C)\le43$, the latter being exactly $3\le d_E\le8$.
Every retained code has 48 distinct columns because its subcode $D$
is already projective. Extracting the column points $E$ requires
only reading the nine generator bits after the all-ones row.
\begin{center}\begin{tabular}{lr}
\toprule
Stage & Complete count\\\midrule
Projective triply even starting codes & 26\\
Extension representatives modulo $D$ & 10,294,816\\
$\wt(g)$ divisible by four & 5,748,704\\
Quadratic Hilbert dimension between 38 and 43 & 490,816\\
Explicit five-space obstruction certificates & 490,816\\
Unresolved retained candidates & 0\\\bottomrule
\end{tabular}\end{center}
The separate certificate verifier reconstructs every 48-point set,
forms its difference set directly, and checks every one of the 32 span
vectors of its recorded basis. It also requires exact one-to-one
coverage of the retained candidate file, with no duplicate or extra
certificates. All 490,816 pass. The subspace finder may stop at a node
cap, but an unresolved candidate is retained rather than rejected;
in the recorded complete run there are no such candidates.

A second family generator starts directly from the published matrices,
reverses the generator order, uses low-pivot full row reduction instead
of the first implementation's high-pivot elimination, and constructs
the quotient in free-parameter coordinates. An independent enumerator
reproduces the complete family and the retained-code counts. Code
cosets are compared after canonical reduction modulo their base code,
so the comparison is independent of the chosen particular solutions
and quotient bases. The resulting sets of all 490,816 retained cosets
agree exactly. Source, logs, input checksums, and positive certificates
are included in the accompanying code archive; see Appendix~\ref{b9:appendix}.

\begin{corollary}
No spectral subset of $\F_2^9$ has cardinality 48.
\end{corollary}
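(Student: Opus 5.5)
The plan is to assemble the chain of reductions proved above into a contradiction for a hypothetical spectral pair $(E,\Lambda)$ of size $48$ in $\F_2^9$. First, the deductions for order $48$ from \eqref{b9:quad:key} force the dephased rank to be $r=9$ and $c_2=28$. Together with Lemma~\ref{b9:cubic:deriv} and Lemma~\ref{b9:cubic:nonzero}, this gives $3\le d_E,d_\Lambda\le 8$, with both cubic moment tensors nonzero. Theorem~\ref{b9:pair:rhobound} then shows that, after possibly exchanging $E$ and $\Lambda$ (both are spectral, with the same hypotheses), $T_E$ has contraction rank three. The exchange is harmless, since the roles of $E$ and $\Lambda$ are symmetric in every hypothesis used.

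Next, with $\rho_E=3$, Theorem~\ref{b9:rank3:projective} supplies a projective triply even subcode $D\subseteq C_E$ of dimension nine containing the all-ones word. Write $C_E=D+\langle g\rangle$. Then $g\perp D$, $\wt(g)\equiv0\pmod4$ by double evenness, and $B_g$ is an alternating form of rank exactly two, because $\rho_E=1+\rank B_g$. Every triply even code lies in a maximal one, and by the classification \cite{BM,BMdata} $D$ is equivalent to one of the listed dimension-nine triply even codes. Projectivity and the bound $d_E\le8$ force $\dim(D*D)\ge29$, which leaves the $26$ base codes. Coordinate permutation preserves spectrality of the column point set, so we may assume $D$ is one of these $26$ codes exactly.

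For each base code and each of the $10{,}795$ rank-two forms, $g$ modulo $D$ ranges over the enumerated affine solution family, and replacing $g$ by $g+d$ is an affine shearing of the point set that preserves spectrality. Hence $E$ is affinely equivalent to the column set of one of the retained candidates satisfying $\wt(g)\equiv0\pmod4$ and $38\le\dim(C*C)\le43$. Every one of the $490{,}816$ candidates carries a certified five-dimensional subspace inside $E+E$. Since an affine map carries difference subspaces linearly, $E$ itself has such a subspace, contradicting Lemma~\ref{b9:finite:five-obstruction}.

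The main obstacle is bookkeeping rather than new mathematics: we must ensure that every normalization made along the way (choosing the frequency basis, choosing $D$ among several factor hyperplanes, reducing $g$ modulo $D$, and identifying $D$ with a published representative) is an equivalence that preserves both spectrality and the five-space obstruction. Only then is the finite enumeration genuinely exhaustive.
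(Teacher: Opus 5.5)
Your proposal is correct and follows the paper's proof. The chain is the same: the joint cubic-rank bound gives contraction rank three on one side, that side has a projective triply even dimension-nine subcode, the extension parametrization over the $26$ base codes passes the two filters, and the five-space certificates then contradict Lemma~\ref{b9:finite:five-obstruction}. Your attention to why each normalization preserves spectrality and the five-space obstruction is exactly the coverage bookkeeping the paper carries out in its enumeration section; the normalizations in question are the basis choice, the identification with a published code, and the shearing $g\mapsto g+d$.
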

\begin{proof}
The projective-subcode theorem and its extension constraints place one side of every hypothetical pair
in the complete extension parametrization. Its code must pass the two
algebraic filters. The exhaustive certificate check places a
five-dimensional subspace in its difference set, contradicting
Lemma~\ref{b9:finite:five-obstruction}.
\end{proof}

\subsection{The remaining power-of-two sizes}
The proof of Lemma~\ref{b8:canonical} applies in dimension nine without
change. The complete order-$32$ Hadamard list~\cite{KTR,McKay} has $2,040$ classes of
dephased rank over $\F_2$ at most nine. Its rank counts are
\[
\begin{array}{c|rrrrr}
r&5&6&7&8&9\\\hline
\text{classes}&1&2&13&150&1,874.
\end{array}
\]
Both orientations give $4,080$ canonical sets. Each has an explicitly
checked subgroup complement. Together with the order-$16$ calculation,
the small orders, and faithful lifting, this proves that every spectral
set of cardinality at most $32$ in $\F_2^9$ has a subgroup complement.
The endpoints $128,256,512$ follow from Theorem~\ref{b8:quarter}.
Lemmas~\ref{b9:smithremaining} and~\ref{b9:96} and the order-$48$
exclusion therefore prove the first two assertions of Theorem~\ref{b9:main}.
As in dimension eight, the finite assertion is about complements of every
canonical model, with both matrix orientations included; it is not an
assumption that the classification itself lists embedded point sets.

\section{\texorpdfstring{Every tile in $\F_2^9$ is spectral}{Every tile in F2 to the ninth power is spectral}}\label{b9:tiles}
Only tiles of size $32$ require a new argument beyond the small-partner
lemma and the perfect-code construction. We first extend the latter
construction by one dimension.

\subsection{Sixteen-point tiles}\label{b9:tile:sixteen}
\begin{proposition}\label{b9:tile:tile16}
Every sixteen-point tile of affine dimension at most nine is a linear tile.
\end{proposition}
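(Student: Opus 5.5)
The plan is to rerun the proof of Lemma~\ref{b8:tile16} verbatim with $r=\dim V\le 9$, and to enlarge only the finite part. First translate the tile $E$ to contain zero and put $V=\langle E\rangle$. Restricting the tiling complement gives $E\oplus T_V=V$. Next let $H:\F_2^{15}\to V$ send $e_i$ to the $i$th nonzero point of $E$. Exactly as in \eqref{b8:perfectlift}, $C=H^{-1}(T_V)$ is then a perfect one-error-correcting code of length $15$, which after translation contains zero. Its translation kernel $M$ is a linear subspace containing $K_0=\ker H$, so now
\[
 \dim M\ge 15-r\ge 6 .
\]
Every nonzero vector of $M$ lies in $C$ and therefore has weight at least three. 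Hence the quotient $V=\F_2^{15}/K_0\to\F_2^{15}/M$ is injective on $E$ and carries $E$ onto $S/M$.

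The only change from dimension eight is that the classes with $\dim M=6$ must now be included. From the Östergård--Pottonen classification \cite{OP}, with the kernel distribution in \cite{OPP}, I would extract all classes with kernel dimension at least six. These are the $69$ classes already certified in Lemma~\ref{b8:tile16}, together with the classes of kernel dimension exactly six. For each new class I will produce a certificate: a subspace of $\F_2^{15}/M$ of dimension $11-\dim M=5$ whose nonzero elements avoid $(S/M)+(S/M)$. The verification is the same as before:
\begin{enumerate}
\item check that the radius-one balls around the codewords partition the space;
\item recompute $M$ by testing translations;
\item form the quotient map from a basis of $M^\perp$;
\item check that all $16\cdot 32$ quotient sums are distinct.
\end{enumerate}
If the searcher finds such a subspace in every case, the argument closes exactly as before. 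Code equivalence and translation transport $M$ and $S/M$ linearly, so the certificate applies to the code in \eqref{b8:perfectlift}. Lemma~\ref{b8:lifting} lifts the quotient complement to a subgroup complement of $E$ in $V$. Adding a linear complement of $V$ in the ambient space then gives a linear tiling complement there.

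The main obstacle is the dimension-six layer of the finite step. There are more classes there, and the quotient point set $S/M$ sits in $\F_2^{9}$, so the statement being certified is genuinely new. It is not obvious a priori that every such quotient admits a \emph{linear} complement. I would first try an exhaustive search over reduced row-echelon bases of $5$-dimensional subspaces of $\F_2^9$ that avoid the difference set, as in Lemma~\ref{b8:h32}.

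If some class resisted this search, I would use a fallback. That class's quotient set $S/M$ is still a $16$-point tile of $\F_2^9$ with a complement, since $C/M$ is one. It is also spectral by Lemma~\ref{b8:universal} applied to the $32$-point partner. I would then try to handle it through its Hadamard matrix of order $16$ via Lemma~\ref{b8:h16}, which gives a linear complement for any $16$-point spectral set.
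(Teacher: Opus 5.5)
Your main argument is the paper's proof. It uses the same perfect-code lift $C=H^{-1}(T_V)$, the same bound $\dim M\ge 15-r\ge 6$, and a finite check over the classes of kernel dimension at least six. The paper carries out that check for all $374$ classes: the $305$ new ones of kernel dimension six plus the $69$ from Lemma~\ref{b8:tile16}. It finds an explicit subgroup complement of $S/M$ in every case, so your search succeeds and no fallback is needed.

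Your fallback would not have worked as written. Lemma~\ref{b8:universal} applies only when the partner has at most eight points, so it says nothing about a sixteen-point tile of $\F_2^9$ with a $32$-point partner. Spectrality of such tiles is exactly what this proposition, via Lemma~\ref{b8:lifting}, is later used to establish.
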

\begin{proof}
Normalize the tile $T$ to contain zero and restrict a tiling to its span
$V=\F_2^r$, $r\le9$. Let $H:\F_2^{15}\to V$ have columns $T\setminus\{0\}$.
If $T\oplus A=V$, the inverse image $C=H^{-1}(A)$ is a perfect length-15
code: the radius-one ball $S=\{0,e_1,\ldots,e_{15}\}$ tiles by $C$.
Let $M=\{m:C+m=C\}$. It contains $\ker H$, of dimension $15-r\ge6$.
After translation $0\in C$, so $M\subseteq C$ and $M$ contains no nonzero
word of weight one or two. Consequently $S/M$ has 16 distinct points.

Of the 5,983 codes classified in~\cite{OP}, exactly 374 have kernel dimension at least
six. The counts in dimensions $6,7,8,9,11$ are respectively
$305,48,17,3,1$. Each of the 374 configurations $S/M$ has an explicit
subgroup complement, checked by enumerating all sums. Pull it back along
$V=\F_2^{15}/\ker H\to\F_2^{15}/M$; the map is injective on $T$, so this is
a subgroup complement of $T$. Coordinate permutations preserve $S$ and
code translations preserve $M$, establishing coverage up to the published
equivalence. These 374 certificates, source hashes, the complete extraction,
and an independent Fourier computation of all code kernels are in the accompanying code archive.
\end{proof}

\subsection{The 32-point case: structural reduction}
Let $E\oplus T=G=\F_2^9$, $|E|=32$ and $|T|=16$, with both sets normalized
to contain zero. If $E$ has affine span of dimension at most eight,
restrict the tiling to that span. Its partner has size at most eight,
so Lemma~\ref{b8:universal} applies; a spectrum extends to $G$.
Assume henceforth that $E$ spans $G$.
By \cite{OV}, $T$ has affine span $V$ of dimension $r\le8$.

If $r=4$, $T=V$ and a singleton is a support certificate in $V^*$.
If $r=5$, a nonzero Fourier coefficient gives a two-point certificate.
If $r=6$, Lemma~\ref{b8:supportplane} gives a four-point certificate.
These are the sizes $2^r/16$ required in Lemma~\ref{b8:certificate}; their
full inverse images in $G^*$ all have size 32. It remains to consider
$r=7$ and $r=8$.

\subsection{Partners of affine rank seven}
By Proposition~\ref{b9:tile:tile16}, $T\subseteq\F_2^7$ has a three-dimensional
subgroup complement. In suitable coordinates it is a graph
\[
 T_f=\{(x,f(x)):x\in\F_2^4\},\qquad f:\F_2^4\to\F_2^3.
\]
Adding an affine function to $f$ is an affine change of ambient coordinates.
The space of Boolean functions on $\F_2^4$ modulo affine functions has
dimension $16-5=11$. Full affine rank seven is equivalent to independence
of the three component functions in this quotient. Changing their basis
does not change the equivalence class of the graph. Thus all relevant
graphs are covered by three-dimensional subspaces of an eleven-dimensional
space, acted on by $\operatorname{AGL}(4,2)$.

There are exactly
\[
 \genfrac{[}{]}{0pt}{}{11}{3}_2=50,955,971
\]
such subspaces. Exhaustive orbit enumeration gives 494 orbits. For every
representative, an eight-point set $B$ satisfying Lemma~\ref{b8:certificate}
was found and directly verified. Two complete orbit enumerations agree
on all 494 orbit sizes: the first uses algebraic normal form and lowest
pivots; the second uses truth tables normalized modulo affine functions,
highest pivots, and different generators. Both cover all 50,955,971
subspaces, and each orbit of the second calculation contains exactly one
representative from the first. This proves the required common-spectrum
property for every rank-seven partner $T$, and lifting gives a spectrum
of size 32 for $E$.

\subsection{Partners of affine rank eight}
Now $V=\F_2^8$ and
\[
 E=A\ \cup\ (v+B),\qquad v\notin V,
 \qquad A\oplus T=B\oplus T=V,\qquad |A|=|B|=16.
\]
Individual spectrality of $A$ and $B$ does not by itself supply a common
spectrum. The following calculations address that issue explicitly.

\subsection{Coverage by nonlinear perfect-code kernels}
Suppose at least one fiber, say $A$, is not an affine four-flat.
Let $P=\{p:A+p=A\}$. Then $\dim P\le2$: if its dimension were three,
$A$ would be two cosets of an eight-point subgroup and hence an affine
four-flat; dimension four also gives an affine four-flat.
With $H:\F_2^{15}\to V$ as above and $C=H^{-1}(A)$, one has
\[
 \ker(C)=H^{-1}(P),\qquad \dim\ker(C)=7+\dim P\in\{7,8,9\}.
\]
In the complete classification, 48, 17, and 3 codes respectively have
these kernel dimensions. For each representative kernel $M$, enumerate
\emph{every} seven-dimensional subspace $K\subseteq M$ and form $S/K$.
This covers every possible $T$, since the actual $\ker H$ is such a $K$.
The equivalence issue is handled exactly as in Proposition~\ref{b9:tile:tile16}:
coordinate permutations preserve $S$, translations preserve kernels, and
all $K$ inside the representative kernel are included.

The numbers of configurations are
\[
 48+17\genfrac{[}{]}{0pt}{}{8}{7}_2+3\genfrac{[}{]}{0pt}{}{9}{7}_2
 =48+17\cdot255+3\cdot43,435=134,688.
\]
The exhaustive computation gives:
\begin{center}
\begin{tabular}{lrrr}
\toprule
Kernel dimension & Configurations & Support certificate & Exceptional\\
\midrule
7 & 48 & 46 & 2\\
8 or 9 & 134,640 & 134,352 & 288\\
\bottomrule
\end{tabular}
\end{center}
Each positive certificate is a 16-point set $B\subseteq V^*$ whose
differences all have nonzero $W_T$. These lift to spectra of size 32
in $G$ by Lemma~\ref{b8:certificate}.

All 290 exceptional configurations are affinely equivalent to the single set
\[
 T_* = \{0,1,2,4,7,8,13,16,32,64,123,126,128,144,160,192\},
\]
where integers encode vectors over $\F_2$ with the least significant bit first.
The two exact-kernel exceptions are dataset records 686 and 4753; record
4753 is $T_*$. The linear map sending the coordinate vectors to
$1,2,8,16,4,128,32,192$ maps the former to the latter. Explicit invertible
affine maps are supplied and directly checked for all 288 other exceptions.
An absent support certificate is only a failure of this sufficient condition,
not a claim that a fiber pair is nonspectral.

\subsection{Every pair of exceptional fibers}
Exact-cover enumeration finds precisely 12,096 normalized sets $A$ with
$0\in A$ and $A\oplus T_*=\F_2^8$. An independent Python enumeration,
using integer masks and reversed branching order, recovers exactly the
same sets. Translation of a complement changes neither its difference
set nor its spectra, so normalization loses no relevant cases.
These complements have exactly 1,788 different sets
$(A+A)\setminus\{0\}$.

For every unordered pair of profiles, including equal profiles, the
calculation supplies a four-dimensional subgroup disjoint from both
nonzero difference sets. There are
\[
 \frac{1,788\cdot1,789}{2}=1,599,366
\]
pairs. A collection of just 75 subgroup certificates covers all pairs;
their dimensions and disjointness are checked directly, separately from
the subgroup search. Hence every pair of fibers $A,B$ has a common
subgroup complement $L\subseteq V$. Then $E\oplus L=G$, and
$L^\perp$ is a spectrum of $E$.

\subsection{Both fibers are affine flats}
If $A$ and $B$ are affine four-flats, their direction subspaces $U,W$
have a common four-dimensional complement in $V$, regardless of $T$.
For completeness, put $C=U\cap W$, $\dim C=c$, and choose
\[
 U=C\oplus U_0,\quad W=C\oplus W_0,\quad
 V=C\oplus U_0\oplus W_0\oplus Z,\quad\dim Z=c.
\]
Pair bases $u_i,w_i$ of $U_0,W_0$ and $c_j,z_j$ of $C,Z$.
The space spanned by the $u_i+w_i$ and $c_j+z_j$ has dimension four
and intersects both $U$ and $W$ trivially. It is a common tiling
complement of the affine flats, hence also a complement of $E$ in $G$.
This completes the 32-point case.

\begin{proof}[Completion of the proof of Theorem~\ref{b9:main}]
A tile in $\F_2^9$ has power-of-two size. Sizes at most eight are linear
tiles by~\cite{CLVZ}; size $16$ is covered by the preceding perfect-code
argument; and the preceding sections settle size $32$. A tile of size
at least $64$ has a partner of size at most eight and is spectral by
Lemma~\ref{b8:universal}.
\end{proof}

\section{\texorpdfstring{Further results for spectral sets of size $64$}{Further results for spectral sets of size 64}}\label{b9:size64}
Theorem~\ref{b9:main} leaves one cardinality. We give several sufficient
conditions for a $64$-point spectral set to tile, and then explain why a
direct eighth-density extension of the support-plane lemma fails.
First, if either member of a spectral pair has affine span at most eight,
restriction to that span and injective restriction of the opposite
characters reduce to the quarter-density or half-density argument.
Both members then have subgroup complements. Hence a remaining pair
may be assumed to span nine dimensions on both sides.

\begin{lemma}\label{b9:periodicset}
If a $64$-point spectral set $E\subset\F_2^9$ has a nonzero period, then
it has a three-dimensional subgroup tiling complement.
\end{lemma}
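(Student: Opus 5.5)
The plan is to quotient by the period, tile the quotient using the dimension-eight theorem, and lift. Let $p\ne0$ with $E+p=E$, and let $q:\F_2^9\to\F_2^9/\langle p\rangle\cong\F_2^8$ be the quotient map. Then $E$ is a union of $32$ cosets of $\langle p\rangle$, and $E'=q(E)$ is a $32$-point subset of $\F_2^8$. As in the proof of Lemma~\ref{b9:cubic:period}, orthogonality of the characters of a spectrum $\Lambda$ at $x$ and $x+p$ gives
\[
 \sum_{\lambda\in\Lambda}(-1)^{\lambda\cdot p}=0 .
\]
Hence $\Lambda$ splits into the two restriction fibers $\Lambda_0=\{\lambda:\lambda\cdot p=0\}$ and $\Lambda_1=\Lambda\setminus\Lambda_0$, each of size $32$.

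The first step is to show that a translate of $\Lambda_0$ (or of $\Lambda_1$) is a spectrum of $E'$. Translate $\Lambda_1$ by one of its elements so that it lies in $p^\perp$, which is the character group of $\F_2^8$. For two distinct $\lambda,\mu$ in the same fiber, $\lambda+\mu\in p^\perp$, so the character $(-1)^{(\lambda+\mu)\cdot x}$ is constant on each coset $\{x,x+p\}$. The orthogonality sum over $E$ is therefore twice the corresponding sum over $E'$, and the latter vanishes. This gives $32$ pairwise orthogonal characters on the $32$-point set $E'$, so $E'$ is spectral in $\F_2^8$.

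The second step applies Theorem~\ref{b8:main}, or directly Corollary~\ref{b8:spectraltosubgroup} together with Lemma~\ref{b8:h32}. It gives $E'$ a linear-subspace tiling complement $K'\subseteq\F_2^8$, and since $|E'|=32$ we have $\dim K'=3$.

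The third step lifts $K'$. Choose a three-dimensional subspace $K\subseteq\F_2^9$ that maps isomorphically onto $K'$ under $q$. Such a $K$ exists because $K'$ has a basis, and we lift each basis vector arbitrarily. The lift is injective on the span because $K\cap\langle p\rangle=0$: a nonzero element of $K$ mapping to $0$ would give a linear dependence among the images of the basis vectors.

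It remains to check $E\oplus K=\F_2^9$. The cardinalities match, since $64\cdot8=512$, so it suffices to show all sums $e+k$ are distinct. Suppose $e+k=e'+k'$. Applying $q$ gives $q(e)+q(k)=q(e')+q(k')$. By uniqueness in $E'\oplus K'$ this forces $q(k)=q(k')$, and injectivity of $q$ on $K$ then gives $k=k'$, hence $e=e'$. Note that Lemma~\ref{b8:lifting} cannot be used directly here, because $q$ is not injective on $E$. The periodicity is what makes the argument work instead: $E=q^{-1}(E')$, so every point of $\F_2^9$ is uniquely $e+k$.

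The only nontrivial input is the spectral-quotient step, and that already appears in Lemma~\ref{b9:cubic:period}. I therefore expect no serious obstacle. The point requiring care is confirming that the dimension-eight result applies to $32$-point sets regardless of affine span, which Corollary~\ref{b8:spectraltosubgroup} covers.
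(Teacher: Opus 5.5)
Your proposal is correct and follows the paper's proof: you quotient by the period line, split a spectrum into two $32$-point restriction fibers using orthogonality at $x$ and $x+p$, note that either fiber (after translation) is a spectrum of the $32$-point quotient in $\F_2^8$, apply the dimension-eight theorem to get a three-dimensional subgroup complement, and lift it to a subspace transverse to $\langle p\rangle$. Your explicit check that the lift complements $E$, using $E=q^{-1}(q(E))$ rather than Lemma~\ref{b8:lifting}, spells out the final step that the paper states in one sentence.
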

\begin{proof}
Quotient by a period line. Orthogonality splits a spectrum into its two
restriction fibers, each of size $32$, and either fiber supplies a spectrum
of the $32$-point quotient in $\F_2^8$, exactly as in the proof of
Lemma~\ref{b9:cubic:period}. That quotient has a three-dimensional
subgroup complement by the dimension-eight theorem. Lift this complement
to a subspace transverse to the period line. It complements $E$.
\end{proof}

\subsection{A common complement in two hyperplanes}
\begin{theorem}\label{b9:64:common}
Let $n\ge3$. Suppose $A,B\subset H=\F_2^n$ each have size $2^{n-3}$,
have a common spectrum $L$, and each has proper affine span.
Then they have a common three-dimensional subgroup tiling complement.
\end{theorem}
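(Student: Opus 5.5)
The plan is to reduce everything to Lemma~\ref{b8:certificate}(1), in the form used in the proof of Theorem~\ref{b8:quarter}. It suffices to find a three-dimensional subspace $K\subseteq H$ with
\[
 K\cap(A+A)=K\cap(B+B)=\{0\}.
\]
Then the eight translates $A+k$, $k\in K$, are disjoint, and $8\cdot2^{n-3}=2^n$ turns this into a tiling; likewise for $B$. By \eqref{b8:orthogonality} applied to the pairs $(A,L)$ and $(B,L)$, every nonzero element of $\supp W_L$ avoids both difference sets. The second source of avoidance is the proper affine spans. After translating, $A$ lies in a linear hyperplane $U_A$ and $B$ in a linear hyperplane $U_B$. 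Hence $A+A\subseteq U_A$ and $B+B\subseteq U_B$, and a vector outside $U_A$ automatically avoids $A+A$.

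First I would pass to the hyperplane. Orthogonality of two characters on $A$ depends only on their restrictions to $U_A$. So restriction $L\to U_A^*$ is injective, and its image $L_A$ is a spectrum of $A$ inside $U_A\cong\F_2^{n-1}$, of density one quarter. For $u\in U_A$ one has $W_{L_A}(u)=W_L(u)$. Lemma~\ref{b8:supportplane}, applied to $L_A$, gives a plane $P_A\subseteq U_A$ with $P_A\setminus\{0\}\subseteq\supp W_L$. More precisely, its proof shows the following: for every $s\in U_A\setminus\{0\}$ with $W_L(s)\ne0$, the identity \eqref{b8:idempotence} on $U_A$ has left side $(1-2\delta)a(s)$ with $\delta=1/4$, hence nonzero. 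Therefore some $t\in U_A\setminus\{0,s\}$ has $t,s+t\in\supp W_L$. The same holds with $U_B$ in place of $U_A$.

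\emph{Case $U_A=U_B=U$.} Take $K=P_A\oplus\langle w\rangle$ with $w\notin U$. Then $K\cap U=P_A$, whose nonzero elements lie in $\supp W_L$. The other nonzero elements of $K$ lie outside $U$, so they avoid both difference sets, and we are done.

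\emph{Case $U_A\ne U_B$.} Here I would look for $K=\langle s,t,r\rangle$ satisfying three conditions:
\begin{itemize}
\item $s\in (U_A\cap U_B)\setminus\{0\}$;
\item $t\in U_A\setminus U_B$;
\item $r\in U_B\setminus U_A$.
\end{itemize}
With these choices one has $K\cap U_A=\langle s,t\rangle$, $K\cap U_B=\langle s,r\rangle$, and $t+r,\ s+t+r\notin U_A\cup U_B$. The vectors $t,s+t$ lie outside $U_B$, so they avoid $B+B$ automatically; but they must still be placed in $\supp W_L$ to avoid $A+A$. The vector $s$ must avoid both difference sets. The required conditions are therefore
\[
 s,\ t,\ s+t,\ r,\ s+r\in\supp W_L .
\]

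The main obstacle is producing these with the correct location relative to $U_A\cap U_B$. The refined form of Lemma~\ref{b8:supportplane} gives, for a given $s$, some $t\in U_A$ with $t,s+t\in\supp W_L$, but this $t$ might lie in $U_A\cap U_B$.

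To handle this, I would split the convolution sum in \eqref{b8:idempotence} over $U_A$ into the parts $t\in U_A\cap U_B$ and $t\in U_A\setminus U_B$. The first part is exactly the idempotence-type sum for the restriction of $L$ to the codimension-two space $U_A\cap U_B$. I would try to show that it cannot by itself account for the left side $\tfrac12 a(s)$, using the analogous identity computed in $U_B$ to compare. The aim is to find a single $s\in U_A\cap U_B$, with $W_L(s)\ne0$, for which both cross sums (over $U_A\setminus U_B$ and over $U_B\setminus U_A$) are nonzero. Existence of a nonzero $s\in U_A\cap U_B$ in $\supp W_L$ should follow from Fourier inversion, as in Theorem~\ref{b8:quarter}: otherwise the fibers of $L$ over $(U_A\cap U_B)^*$ would be uniform, which is incompatible with $|L|=2^{n-3}$ and $n-2$ free coordinates. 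Failing a direct argument, the fallback is to choose $s$ so as to maximize $|a(s)|$ and exploit that $\delta=1/4$ makes the left side large compared with the diagonal contribution.
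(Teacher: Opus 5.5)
Your reduction to a three-space $K$ with $K\cap(A+A)=K\cap(B+B)=\{0\}$ matches the paper, and so does your treatment of the case $U_A=U_B$. The case $U_A\ne U_B$, however, has a genuine gap. It is not only that the existence of $s,t,r$ is left open: the configuration you aim for need not exist.

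Write $U_A=p^\perp$, $U_B=q^\perp$ and $Z=U_A\cap U_B$. Injectivity of $L$ on $U_A^*$ and on $U_B^*$ forbids the differences $p$ and $q$ in $L$. So every fiber of $L\to Z^*$ has at most two points, and a two-point fiber has the form $\{\lambda,\lambda+p+q\}$. Each $x\in U_A\setminus U_B$ satisfies $x\cdot(p+q)=1$, so such a fiber contributes zero to $W_L(x)$. Hence if $L+p+q=L$, then $W_L$ vanishes on all of $(U_A\setminus U_B)\cup(U_B\setminus U_A)$, and no admissible $t$ or $r$ exists.

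This occurs even when the hyperplanes are forced. In $\F_2^6$ take $L=\langle e_1+e_2,e_3,e_4\rangle$, $A=\{(0,y_1,y_2,y_3,y_1y_2,y_2y_3)\}$ and $B=\{(y_1,0,y_2,y_3,y_1y_2,y_2y_3)\}$ for $y\in\F_2^3$. Both are transversals of $L^\perp$, so $L$ is a common spectrum, and they span $e_1^\perp$ and $e_2^\perp$ respectively.

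In this subcase the paper uses a different shape of $K$. The fiber-size function is $2\cdot\mathbf{1}_D$ with $D\subseteq Z^*$ of density one quarter. Lemma~\ref{b8:supportplane} gives a plane $P\subseteq Z$ with $P\setminus\{0\}\subseteq\supp W_L$. Then $K=P\oplus\langle c\rangle$ with $c\notin U_A\cup U_B$ satisfies $K\cap U_A=K\cap U_B=P$, and its other nonzero points lie outside both hyperplanes.

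When one-point fibers exist, your proposal still supplies no mechanism producing a single $s$ that works for both hyperplanes. The missing idea is to record $L$ over $Z^*$ by:
\begin{itemize}
\item the fiber size $m$;
\item the indicator $\sigma=\mathbf{1}_{m=1}$ of one-point fibers;
\item for fixed $a\in U_A\setminus U_B$ and $b\in U_B\setminus U_A$, the signed fiber sums $f(w)=\sum_{\lambda|_Z=w}(-1)^{a\cdot\lambda}$ and $g(w)=\sum_{\lambda|_Z=w}(-1)^{b\cdot\lambda}$.
\end{itemize}
Then $W_L(z)=W_m(z)$, $W_L(a+z)=W_f(z)$ and $W_L(b+z)=W_g(z)$ for $z\in Z$. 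The cancellation above gives $f^2=g^2=\sigma$. Consequently
\[
 \sum_{z\in Z}W_L(a+z)\,W_L(a+z+s)=|Z|\,W_\sigma(s)=\sum_{z\in Z}W_L(b+z)\,W_L(b+z+s),
\]
so your two cross conditions collapse to the single condition $W_\sigma(s)\ne0$.

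Let $N_1=\sum_w\sigma(w)>0$. Parseval and $|Z|=2|L|$ give
\[
 \sum_{0\ne s\in Z}W_m(s)W_\sigma(s)=|Z|N_1-|L|N_1=|L|N_1>0,
\]
which yields $s\ne0$ with $W_L(s)W_\sigma(s)\ne0$. Splitting \eqref{b8:idempotence} into $t\in Z$ and $t\notin Z$, as you propose, would isolate exactly these cross sums. But without noticing that they coincide on the two sides, and without this Parseval count, neither the comparison nor the fallback of maximizing $|a(s)|$ produces the required $s$.
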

\begin{proof}
For any real function $h$ on a vector space over $\F_2$, write $W_h(t)=\sum_w h(w)(-1)^{t\cdot w}$ for its unnormalized Walsh transform. Translate the sets separately into linear hyperplanes $U,V$. Translations
do not change their spectra or difference sets. Put $M=2^{n-3}$.
If $U=V$, restriction of $L$ to $U^*$ is injective and has density $1/4$.
Lemma~\ref{b8:supportplane} gives a support plane $P\subset U$ avoiding both
nonzero difference sets. Adjoin any vector outside $U$.

Now suppose $U\ne V$. Write $U=p^\perp$, $V=q^\perp$ and
$W=U\cap V$, so $|W|=2M$. Each fiber of $L\to W^*$ has zero, one or
two points: injectivity of restriction to $U^*$ and $V^*$ prohibits
differences $p$ and $q$, and a double fiber must differ by $p+q$.
Let $m(w)$ be its size and $s(w)=\mathbf1_{m(w)=1}$. Thus $\sum m=M$.
Choose $a\in U\setminus W$, $b\in V\setminus W$, and define
\[
 f(w)=\sum_{\lambda|_W=w}(-1)^{a\cdot\lambda},\qquad
 g(w)=\sum_{\lambda|_W=w}(-1)^{b\cdot\lambda}.
\]
Double fibers cancel, giving $f^2=g^2=s$. Moreover
\[
 W_m(t)=W_L(t),\quad
 W_f(t)=W_L(a+t),\quad
 W_g(t)=W_L(b+t)\qquad(t\in W).
\]
If $|s|>0$, Parseval yields
\[
 \sum_{\ell\ne0}W_m(\ell)W_s(\ell)
 =|W|\sum_w m(w)s(w)-M|s|=M|s|>0.
\]
Choose a nonzero $\ell$ with both factors nonzero. Since $f^2=g^2=s$,
\[
 \sum_tW_f(t)W_f(t+\ell)
 =\sum_tW_g(t)W_g(t+\ell)
 =|W|W_s(\ell)\ne0.
\]
Choose $t,t'$ giving nonzero products and set
$u=a+t$, $v=b+t'$, $K=\langle\ell,u,v\rangle$.
These vectors are independent. The planes $K\cap U$ and $K\cap V$
belong to $\supp W_L$. The remaining two points
$u+v,u+v+\ell$ lie outside $U\cup V$, hence outside both difference sets.
Thus $K$ is a common complement.

If $s=0$, then $m=2\mathbf1_D$ with $D$ of density $1/4$ in $W^*$.
Choose a support plane $P\subset W$ by Lemma~\ref{b8:supportplane} and
$c\notin U\cup V$. Then $K=P+\langle c\rangle$ works: its points in
$W$ satisfy the support condition and all others lie outside both
hyperplanes. If $n=3$, the case $s=0$ cannot occur, since $M=1$.
\end{proof}

\subsection{Periodic spectra: the finite part is small}
\begin{theorem}\label{b9:64:periodic}
If $(E,\Lambda)$ is a 64-point spectral pair in $\F_2^9$ and
$\Lambda+p=\Lambda$ for some $p\ne0$, then $E$ tiles $\F_2^9$.
\end{theorem}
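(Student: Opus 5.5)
The plan is to use the period of $\Lambda$ to split $E$ into two $32$-point sets in a hyperplane that share a spectrum, and then to reduce the whole theorem to one Fourier-support property of $32$-point spectral sets in $\F_2^8$. That property is checked on the finitely many canonical models from Lemma~\ref{b8:h32}.

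\emph{Step 1 (splitting).} Write $\Lambda=\Lambda_0\cup(\Lambda_0+p)$ with $|\Lambda_0|=32$. Then
\[
 W_\Lambda(u)=\bigl(1+(-1)^{p\cdot u}\bigr)W_{\Lambda_0}(u).
\]
Put $H=p^\perp$, a hyperplane of $G=\F_2^9$, and let $E_0,E_1$ be the intersections of $E$ with the two cosets of $H$. By \eqref{b8:orthogonality}, every nonzero difference inside a fiber $E_i$ lies in $H$ and so satisfies $W_{\Lambda_0}(u)=0$.

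Identify $H^*$ with $\F_2^9/\langle p\rangle$ and let $\Lambda'$ be the image of $\Lambda_0$ there. The restriction $\Lambda_0\to H^*$ is injective, since two elements of $\Lambda_0$ cannot differ by $p$. Hence $\Lambda'$ has $32$ points, and its $32$ characters are orthogonal on each translated fiber $E_i\subseteq H\cong\F_2^8$. This forces $|E_i|\le32$. Since $|E_0|+|E_1|=64$, both fibers have exactly $32$ points, and $\Lambda'$ is a common spectrum of $E_0$ and $E_1$. Dually, $\Lambda'$ is a $32$-point spectral set in $\F_2^8$, with spectra $E_0$ and $E_1$.

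\emph{Step 2 (reduction to a support subspace).} Suppose $K\subseteq H$ is a three-dimensional linear subspace with $K\setminus\{0\}\subseteq\supp W_{\Lambda'}$. By Lemma~\ref{b8:certificate}(1) with $T=K$, since $K+K=K$, we get $E_i\oplus K=H$ for $i=0,1$, after translating $E_i$ into $H$. Sums coming from different fibers lie in different cosets of $H$, so $E\oplus K=G$.

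The theorem therefore follows from this claim: \emph{for every $32$-point spectral set $\Lambda'\subseteq\F_2^8$, the set $\supp W_{\Lambda'}\cup\{0\}$ contains a three-dimensional linear subspace.}

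\emph{Step 3 (finite reduction).} To prove the claim, apply Lemma~\ref{b8:canonical} to a spectral pair for $\Lambda'$. This gives a surjective linear map $q:\F_2^8\to\F_2^r$, with $r\le8$, that is injective on $\Lambda'$ (after translation) and has image a canonical set $A$. For $w\in(\F_2^r)^*$ we have $W_{\Lambda'}(q^*w)=\pm W_A(w)$. Moreover, frequencies in the annihilator of $\ker q$-free directions, namely the $(8-r)$-dimensional annihilator of the span of $\Lambda'$, have Walsh value $\pm32\neq0$.

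It therefore suffices to find, in $(\F_2^r)^*$, a subspace of dimension $\max(0,\,r-5)$ inside $\supp W_A\cup\{0\}$. Pulling it back and adjoining the annihilator would give dimension at least three, but this step needs care. A mixed element $q^*w+z$, with $z$ in the annihilator and $w\ne0$, has $|W|=|W_A(w)|$, so the combined subspace still lies in the support once the $w$-part does. The dimension count then reads $(r-5)+(8-r)=3$.

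By Lemma~\ref{b8:equivalence}, affine equivalence of canonical sets preserves the Walsh support up to a linear change of variables. So the claim becomes a finite search over the $332$ oriented canonical sets of Lemma~\ref{b8:h32} with $r\le 8$. For each one we enumerate subspaces of dimension $r-5$ in reduced row-echelon form and test whether they lie in the support.

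\emph{Main obstacle.} The difficulty is this last computation: nothing guarantees in advance that every one of the $332$ Walsh supports contains the required $(r-5)$-dimensional subspace. This is exactly the eighth-density analogue of Lemma~\ref{b8:supportplane}, and the paper warns that it can fail in general. If some canonical set fails, the fallback is to use the specific pair $(E_0,E_1)$ directly. When both fibers have proper affine span in $H$, Theorem~\ref{b9:64:common} with $n=8$ gives a common three-dimensional complement. Otherwise, we would search for a common subgroup complement of the finitely many pairs of spectra of the exceptional canonical set, in the same way as the exceptional-fiber computation for $T_*$.
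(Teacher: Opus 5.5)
\textbf{Gap 1: Step 2 reduces the theorem to a false statement.} The set $L\subseteq\F_2^8$ of Proposition~\ref{b9:counterexample} is a $32$-point spectral set with spectrum $A$, and both sets span $\F_2^8$. Its Walsh support $S$ contains no three-dimensional linear subspace. Moreover, the pair $(A\times\F_2,\,L\times\F_2)$ is an instance of this very theorem, with period $e_9$ and $\Lambda'=L$.

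So the subspace search in Step 3 must fail on the corresponding canonical model; the paper records $28$ non-subgroup certificates among its $332$. Your fallback, a common \emph{subgroup} complement for all pairs of spectra of an exceptional model, is not carried out or justified.

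The missing observation is that no subgroup is needed. Lemma~\ref{b8:certificate}(1) only requires an eight-point set $T\subseteq H$ with $T+T\subseteq\supp W_{\Lambda'}$. Such a $T$ complements every set having $\Lambda'$ as a spectrum, hence $E_0$ and $E_1$ simultaneously. That is all the theorem asks, since it claims tiling, not subgroup tiling. The paper's certificates are exactly sets $K_0$ of size $2^{r-5}$ with $K_0+K_0\subseteq\supp W_{L_0}$, lifted as $K_0\times\F_2^{8-r}$. In the example above one can take $T=\{0,25,43,52,77,87,98,126\}$, for which $T+T=S$.

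\textbf{Gap 2: the lift in Step 3.} The annihilator of $\langle\Lambda'\rangle$ has dimension $8-r$ only when $\dim\langle\Lambda'\rangle=r$. This holds when the spectrum used to build $q$ spans $H$. If neither translated fiber spans $H$, the phase-matrix rank $r$ can be smaller than $\dim\langle\Lambda'\rangle$. Then $\Lambda'$ is a possibly nonlinear graph over the canonical set. Its Walsh values outside $\operatorname{im}q^*+\langle\Lambda'\rangle^\perp$ are not determined by $W_A$, and your count $(r-5)+(8-r)=3$ falls short.

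The paper therefore splits cases before using the classification. If both fibers have proper affine span, Theorem~\ref{b9:64:common} gives a common three-dimensional complement directly. Otherwise some fiber spans, so $r=\dim\langle\Lambda'\rangle$, and $\Lambda'$ is linearly equivalent to $L_0\times\{0\}$ for one of the $332$ oriented models. You invoke Theorem~\ref{b9:64:common} only as a fallback for failed searches, not to cover this case.
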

\begin{proof}
Choose dual coordinates with $p=e_9$. Then $\Lambda=L\times\F_2$,
$|L|=32$. Write $E=(A\times\{0\})\cup(B\times\{1\})$.
Orthogonality at the last coordinate gives $|A|=|B|=32$.
Orthogonality for $(l,0),(l',0)$ and $(l,0),(l',1)$ gives respectively
the sum and difference of the character inner products on $A$ and $B$;
thus $L$ is a common spectrum for them in $\F_2^8$.

If both fibers have proper affine span, Theorem~\ref{b9:64:common} applies.
Otherwise suppose $A$ spans $\F_2^8$ after translation. Translate $L$ to
contain zero. The phase matrix $(a\cdot l)$ over $\F_2$ has rank
$r=\dim\langle L\rangle$: evaluation on the spanning set $A$ is injective
on linear functionals. Hence $L$ itself, up to affine equivalence and
embedding, is one of the canonical order-32 models of rank at most eight.
There are 166 extracted classes and 332 oriented models, with rank counts
\[
\begin{array}{c|rrrr}
r&5&6&7&8\\\hline
\text{classes}&1&2&13&150\\
\text{oriented models}&2&4&26&300.
\end{array}
\]
For every model $L_0\subset\F_2^r$, the supplied finite certificate is a set
$K_0$ of size $2^{r-5}$ satisfying
\[
 K_0+K_0\subset\supp W_{L_0}.
\]
All 332 certificates pass direct integer checks. There are 304 subgroup
certificates and 28 general certificates. Embed $L_0$ as $L_0\times\{0\}$
and use $K_0\times\F_2^{8-r}$; this is an eight-point common complement.
Transport it by the inverse-dual coordinate transformation. In either
case a common complement $K$ of $A,B$ has been found, and $K\times\{0\}$
complements $E$.
\end{proof}
The full-span distinction is essential. When both fibers have smaller
span, the actual spectrum could be a nonlinear graph lift of a canonical
model. Theorem~\ref{b9:64:common} handles that case without assuming that such
lifts are affinely equivalent to the canonical list.

\subsection{Odd unions of parallel planes can be projected away}
\begin{theorem}\label{b9:64:section}
Let $(E,\Lambda)$ be a spectral pair of cardinality $N$ in $\F_2^d$, with
$16\mid N$ and $\Lambda$ affinely spanning the dual space. Suppose an affine
hyperplane section $B$ of $E$ is invariant under a two-dimensional
translation subspace $V$ and $|B|\equiv4\pmod8$.
Then both members project injectively to a spectral pair in dimension
$d-1$.
\end{theorem}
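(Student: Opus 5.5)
The plan is to use the section $B$ to produce nonvanishing Walsh coefficients, and then pass to a quotient on one side and a restriction on the other. Write $B=\{x\in E: h\cdot x=\varepsilon\}$ for a dual vector $h$ defining the hyperplane, so that $E=B\cup B'$ splits across the two cosets of $h^\perp$. For every dual vector $u$,
\[
 W_E(u)+(-1)^{\varepsilon}W_E(u+h)=2W_B(u).
\]
Because $B+V=B$, the transform $W_B$ vanishes off $V^\perp$. On $V^\perp$ one has $W_B(u)=4\,W_{\bar B}(\bar u)$, where $\bar B=B/V$ has $|B|/4$ points, an odd number. Hence $W_B(u)\equiv 4\pmod 8$, and in particular $W_B(u)\ne0$, for every $u\in V^\perp$. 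So for each $u\in V^\perp$ at least one of $W_E(u)$, $W_E(u+h)$ is nonzero, and their signed sum is $\equiv 8\pmod{16}$.

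\textbf{Using $16\mid N$.} I would next use $16\mid N$, through the doubly even evaluation code of Lemma~\ref{b9:pair:pairorth} and the cubic-moment setup, to show that all $W_E$ values are divisible by $8$ on the relevant frequencies. The congruence then forces exactly one of $W_E(u)$, $W_E(u+h)$ to be nonzero, with a pattern that is affine in $u$ on $V^\perp$. I would take $c$ to be a nonzero frequency in $V^\perp$ or $V^\perp+h$ with $W_E(c)\ne0$. By \eqref{b8:orthogonality}, $c\notin\Lambda+\Lambda$, so $\Lambda$ is injective modulo $c$.

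\textbf{The dual direction.} I need a vector $p$ with $W_\Lambda(p)\ne0$, so that $p\notin E+E$ and $E$ is injective modulo $p$, and also $c\cdot p=0$. The vectors of $V\setminus\{0\}$ lie in $B+B\subseteq E+E$, so $W_\Lambda$ vanishes there. I would obtain $p$ by the dual version of the same count: fibering $\Lambda$ over $V^*$, the zero pattern of $W_\Lambda$ on $V$ makes those fibers equal. Then I would apply the idempotence identity \eqref{b8:idempotence} to pick $p$ in the required hyperplane.

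\textbf{Forming the projected pair.} With $c\cdot p=0$, I project $E$ to $G/\langle p\rangle$ and $\Lambda$ to the dual space modulo $c$. The character values $\lambda\cdot x$ must descend; I expect to arrange this by translating $E$ so that it lies in a coset structure compatible with $c$, using the alternating fiber pattern found above. Orthogonality and cardinality are then automatic, since injectivity preserves sizes and distinct images keep their original inner products.

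\textbf{Main obstacle.} The hardest step is this compatibility: choosing $c$ and $p$ simultaneously so that the bilinear pairing descends to the quotient and restriction. Concretely, it requires proving that $u\mapsto[W_E(u)\ne0]$ is the indicator of an affine hyperplane of $V^\perp+\langle h\rangle$. I would attack it first with the mod-$16$ Walsh congruences, and fall back on the dephased-matrix rank description of Lemma~\ref{b8:canonical} if those congruences fall short.
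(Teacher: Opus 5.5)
Your proposal stops short of a proof, and the target you set for the step you call the main obstacle cannot be met. You look for a frequency $c$ and a point direction $p$ with $c\cdot p=0$ such that the characters descend unchanged, so that orthogonality would be automatic. But when $c\cdot p=0$, the natural maps $c^\perp\to G/\langle p\rangle$ and $p^\perp\to G^*/\langle c\rangle$ both have nontrivial kernels, so the characters induce no duality between the two quotients at all. A duality exists exactly when $c\cdot p=1$. In that case the induced value on $(x,\lambda)$ is $\lambda\cdot x+(\lambda\cdot p)(c\cdot x)$.

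This correction term cannot be removed by translations. The factor $\lambda\cdot p$ is not constant on $\Lambda$, because $\Lambda$ affinely spans. The factor $c\cdot x$ is constant on $E$ only in the degenerate case where $E$ lies in an affine hyperplane. So you must prove that this particular modification of the character matrix is still Hadamard. Choosing $c$ and $p$ in the Walsh supports only gives injectivity.

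Separately, the inference that exactly one of $W_E(u)$, $W_E(u+h)$ is nonzero is invalid. All Walsh values of $E$ are divisible by $8$, and your congruence only says that exactly one of $W_E(u)/8$, $W_E(u+h)/8$ is odd. The claimed affine pattern is likewise not established.

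The missing ingredient is the paper's choice $c=h$ and $p=\ell$, where $h\cdot\ell=1$ and
\[
 \lambda\cdot\ell=(\lambda\cdot v_1)(\lambda\cdot v_2)\qquad(\lambda\in\Lambda),
\]
with $V=\langle v_1,v_2\rangle$ and $\Lambda$ translated to contain $0$. Translate a point of $E\setminus B$ to $0$, so that $B=\{x\in E:h\cdot x=1\}$. Then the map $\pi(x)=x+(h\cdot x)\ell$ sends $E$ into $h^\perp$. The inner product of $\lambda,\mu$ changes by $-2\,\mathbf 1_{\delta\cdot\ell=1}W_B(\delta)$, where $\delta=\lambda+\mu$. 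This change vanishes because $W_B$ is supported on $V^\perp$, where $\lambda$ and $\mu$ have equal $v_1,v_2$ values. Injectivity on both sides then follows from the new matrix being Hadamard.

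The vector $\ell$ comes from the cubic $q(s)=\wt((s\cdot x)_{x\in E})/4\bmod 2$, which is a pure cubic because $16\mid N$.
\begin{enumerate}
\item The quadratic $Q_h(s)=\sum_i h_i\bigl(q(s+e_i)+q(s)\bigr)$, with coordinate units $e_i\in\Lambda$, vanishes on $\Lambda$.
\item Its polar form is $\sum_{x\in B}(s\cdot x)(t\cdot x)$. Since $B$ is an odd number of $V$-cosets, this equals the polar form of $(s\cdot v_1)(s\cdot v_2)$, so $Q_h(s)=(s\cdot v_1)(s\cdot v_2)+s\cdot\ell$.
\item Since $V\subseteq h^\perp$, Euler's identity gives $h\cdot\ell=Q_h(h)=q(h)=|B|/4\equiv1$.
\end{enumerate}

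Your own identity $W_E(u)+(-1)^\varepsilon W_E(u+h)=2W_B(u)$, together with $W_E\equiv 8q\pmod{16}$, gives $q(s+h)+q(s)=\mathbf 1_{V^\perp}(s)$. That is another way to read off this quadratic. Your fibering of $\Lambda$ over $V^*$ also shows $W_\Lambda(\ell)=N/2$. Without the relation $\lambda\cdot\ell=(\lambda\cdot v_1)(\lambda\cdot v_2)$ on $\Lambda$, however, the plan does not close.
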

\begin{proof}
Translate a point of $E\setminus B$ to zero and translate $\Lambda$ to
contain zero. Write $B=\{x\in E:u\cdot x=1\}$ and
$V=\langle b,c\rangle$. Its invariance implies $u\cdot b=u\cdot c=0$.
Choose coordinate units $e_i\in\Lambda$. Their evaluation words have
weight $N/2$ and distinct pairs intersect in $N/4$ positions.
Their span over $\F_2$ is therefore doubly even and self-orthogonal.
Inclusion--exclusion modulo eight gives
\[
 q(s)=\wt((s\cdot x)_{x\in E})/4\pmod2
     =\sum_{i<j<k}T_{ijk}s_i s_j s_k,\qquad
 T_{ijk}=\sum_{x\in E}x_i x_j x_k\pmod2.
\]
The linear and quadratic terms vanish because $16\mid N$.
The homogeneous quadratic
\[
 Q_u(s)=\sum_i u_i\{q(s+e_i)+q(s)\}
\]
vanishes on $\Lambda$: the words corresponding to $\lambda$ and
$\lambda+e_i$ have weight zero or $N/2$. Its polar form is
\[
 Q_u(s+t)+Q_u(s)+Q_u(t)=\sum_{x\in B}(s\cdot x)(t\cdot x).
\]
Each coset of $V$ contributes
$(s\cdot b)(t\cdot c)+(s\cdot c)(t\cdot b)$. There are an odd number of
cosets, so for some $\ell\in\F_2^d$,
\[
 Q_u(s)=(s\cdot b)(s\cdot c)+s\cdot\ell.
\]
Euler's identity gives $Q_u(u)=q(u)=1$ (each cubic monomial is counted
three times), hence $u\cdot\ell=1$. On the spectrum this yields
\begin{equation}\label{b9:64:sectionquadratic}
 \lambda\cdot\ell=(\lambda\cdot b)(\lambda\cdot c).
\end{equation}
Replace $B$ by $B+\ell$, or equivalently apply
$\pi(x)=x+(u\cdot x)\ell$. For $\delta=\lambda+\mu$ the change in its
character inner product is
\[
 -2\mathbf1_{\delta\cdot\ell=1}W_B(\delta).
\]
Invariance under $V$ makes this Fourier coefficient zero outside $V^\perp$.
Inside $V^\perp$, the $b,c$ evaluations of $\lambda,\mu$ coincide, so
\eqref{b9:64:sectionquadratic} gives $\delta\cdot\ell=0$. Every inner product is preserved.
The resulting square matrix is Hadamard; its point columns and frequency
rows cannot repeat. Thus $\pi$ is injective on $E$, and restriction of
$\Lambda$ to $u^\perp$ is injective as well. They give the asserted
spectral pair in dimension $d-1$.
\end{proof}

\begin{corollary}\label{b9:64:four}
For a 64-point spectral pair in $\F_2^9$, if either member has such a section,
both members tile by three-dimensional subgroups. In particular a
four-point affine hyperplane section on either side suffices.
\end{corollary}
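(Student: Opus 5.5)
The plan is to apply Theorem~\ref{b9:64:section} to whichever member of the pair has the section. Then use the dimension-eight theorem on the projected pair, and lift back. By the remark at the start of this section, we may assume both $E$ and $\Lambda$ affinely span nine dimensions. Otherwise both members already have subgroup complements by the quarter- or half-density argument, and those complements are three-dimensional by cardinality. Since spectral pairs are symmetric by \eqref{b8:orthogonality}, and so is the affine-span assumption, we may assume the section lies in $E$. The hypotheses of Theorem~\ref{b9:64:section} are then $16\mid64$ and $\Lambda$ spanning. They hold, so we obtain an injective linear projection $\pi$ of $E$ and an injective restriction of $\Lambda$ forming a $64$-point spectral pair $(\pi(E),\Lambda')$ in $\F_2^8$.

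By Theorem~\ref{b8:main}, or more directly by Theorem~\ref{b8:quarter} since $64=2^{8-2}$, both $\pi(E)$ and $\Lambda'$ have two-dimensional subgroup tiling complements in $\F_2^8$. Lemma~\ref{b8:lifting} lifts the complement of $\pi(E)$ to $\pi^{-1}(K)$, a three-dimensional subgroup complementing $E$. The point needing care is that $\pi(x)=x+(u\cdot x)\ell$ is an injective shear on $E$ and not a surjective linear map. So I would compose with the quotient by $\langle\ell\rangle$, or equivalently check that the proof produces a genuine surjective linear map $\F_2^9\to\F_2^8$ injective on $E$. This uses the conclusion of the theorem that $\pi$ is injective on $E$ and lands in the dimension-eight model.

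If the shear does not literally preserve tiling, I would handle it as follows. The Hadamard inner products are preserved and $\pi$ is affine linear on $\F_2^9$, since $x\mapsto x+(u\cdot x)\ell$ is linear. Its image set, viewed in $\F_2^9$, spans at most an eight-dimensional subspace. Restricting to that span and adding a complementary line recovers a three-dimensional complement of $\pi(E)$. It then remains to transfer this to $E$ itself. Here I would use that $\pi$ is an invertible linear map of $\F_2^9$ exactly when $u\cdot\ell=0$; we have $u\cdot\ell=1$, so $\pi$ is a rank-eight projection, and lifting applies directly. On the spectrum side, restriction of $\Lambda$ to $u^\perp$ is a linear surjection injective on $\Lambda$, so Lemma~\ref{b8:lifting} lifts the complement of $\Lambda'$ as well.

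Finally, the phrase ``in particular'' needs checking. A four-point affine hyperplane section $B$ is an affine plane, since a four-point set that is a coset of a two-dimensional $V$ is needed. I would argue that a $4$-point section $B=\{x\in E:u\cdot x=1\}$ must be an affine plane, either via Lemma~\ref{b9:cubic:flat}-style orthogonality or directly: $4\equiv4\pmod 8$, and $B$ is invariant under $V=B+B$ once $B$ is a flat. The main obstacle is this step. Showing that four points of a spectral set lying in an affine hyperplane form a flat should come from the cubic-weight identity $q(u)=1$ forcing $\sum_{x\in B}x=0$, i.e.\ $B$ has zero sum, and four points with zero sum form an affine plane.
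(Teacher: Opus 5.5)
Your route is the paper's: reduce to the full-span case by the remark opening the section, apply Theorem~\ref{b9:64:section}, use the quarter-density lemma on both projected members in $\F_2^8$, and lift through the one-dimensional kernels. You can drop the paragraph worrying about $\pi$. The map $x\mapsto x+(u\cdot x)\ell$ is linear and maps into $u^\perp$ because $u\cdot\ell=1$. It fixes $u^\perp$ pointwise and has kernel $\langle\ell\rangle$, so it is a linear surjection onto $u^\perp$ that is injective on the translated $E$. Lemma~\ref{b8:lifting} then gives the three-dimensional complement $K+\langle\ell\rangle$ directly. On the frequency side, restriction to $u^\perp$ is the quotient by $\langle u\rangle$.

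The step you flag as the main obstacle is not correctly justified. The value $q(u)=1$ only records $|B|\equiv4\pmod8$, which is the other hypothesis of Theorem~\ref{b9:64:section}. It says nothing about $\sum_{x\in B}x$. Your alternatives also fail: an argument in the style of Lemma~\ref{b9:cubic:flat} starts from a flat rather than producing one, and invariance under $B+B$ presupposes that $B$ is a flat.

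What is needed is self-orthogonality of the linear evaluation code of $E$. Translate so that $0\in E\setminus B$ and $0\in\Lambda$. A frequency basis taken in $\Lambda$ then has evaluation words of weight $32$ with pairwise intersections $16$. So the code is doubly even, hence self-orthogonal. Since the word of $u$ is $1_B$, this gives
\[
 \sum_{x\in B}v\cdot x=\sum_{x\in E}(u\cdot x)(v\cdot x)\equiv0\pmod 2\qquad\text{for every }v,
\]
which is $\sum_{x\in B}x=0$. Equivalently, this is the alternation $T(u,v,v)=0$ of the cubic moment; that is the only sense in which the cubic-weight identity enters. Four distinct points with zero sum form an affine plane, so Theorem~\ref{b9:64:section} applies with $V$ its direction space. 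With this replacement your argument coincides with the paper's.
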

\begin{proof}
The projected pair has quarter density in $\F_2^8$. Apply
Lemma~\ref{b8:supportplane} to each member and lift the plane complements through
the one-dimensional kernels. Cases with smaller affine spans were settled at the start of this section. For a four-point section, self-orthogonality implies
$\sum_{x\in B}x=0$; four distinct points over $\F_2$ with sum zero form an
affine plane, so Theorem~\ref{b9:64:section} applies.
\end{proof}
This uses actual translation periods of $B$. A rank-two cubic contraction
alone does not provide the Fourier-support condition required in the proof.

\subsection{An affine-flat section projection criterion}
\begin{proposition}\label{b9:flat:flatprojection}
Let $(E,\Lambda)$ be a spectral pair in $\mathbb F_2^d$, let $u\ne0$, and
suppose one hyperplane section
\[
 B=\{x\in E:u\cdot x=\epsilon\}=a+V
\]
is an affine flat. Define
\[
 W=\operatorname{span}\{\lambda+\mu:\lambda,\mu\in\Lambda,
          \ (\lambda+\mu)|_V=0\}.
\]
If $u\notin W$, then both members of the pair admit injective linear
projections to spectral sets in dimension $d-1$.

\end{proposition}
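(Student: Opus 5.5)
The plan is to copy the shearing argument of Theorem~\ref{b9:64:section}, with the hypothesis $u\notin W$ now playing the role that \eqref{b9:64:sectionquadratic} played there. Translating $E$ changes neither the spectra nor $W$, so after a translation we may assume $\epsilon=1$; that is, $B$ is the section $u\cdot x=1$ and $E\setminus B$ lies in $u^\perp$. Since $W$ is a linear subspace of the frequency space and $u\notin W$, duality gives a point-space vector $\ell$ with
\[
 \ell\cdot w=0\quad(w\in W),\qquad u\cdot\ell=1.
\]
Define the linear map $\pi(x)=x+(u\cdot x)\ell$. Because $u\cdot\ell=1$, $\pi$ is the projection onto $u^\perp$ along $\langle\ell\rangle$. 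It is surjective onto $u^\perp\cong\F_2^{d-1}$, it fixes $E\setminus B$, and it moves $B$ to $B+\ell$.

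The main step is to check that $\Lambda$ is still a spectrum of $\pi(E)$. Take $\lambda\ne\mu$ in $\Lambda$ and put $\delta=\lambda+\mu$. The character inner product over $E$ changes by
\[
 \sum_{x\in B}(-1)^{\delta\cdot x}\bigl((-1)^{\delta\cdot\ell}-1\bigr)
 =-2\,\mathbf 1_{\delta\cdot\ell=1}\,W_B(\delta).
\]
Since $B=a+V$, we have $W_B(\delta)=(-1)^{\delta\cdot a}|V|\,\mathbf 1_{\delta\in V^\perp}$. So only those $\delta$ with $\delta|_V=0$ could cause a change, and every such $\delta$ lies in $W$ by definition, so $\delta\cdot\ell=0$. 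Hence all inner products are preserved, and they remain zero.

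It remains to get injectivity and the reduction to dimension $d-1$. The character matrix of $(\pi(E)$ as a multiset$,\Lambda)$ is square with $HH^{\mathsf T}=|E|I$, so it is Hadamard. Its columns therefore cannot repeat, which means $\pi$ is injective on $E$. This gives a spectral set $\pi(E)\subseteq u^\perp$ of size $|E|$ with spectrum $\Lambda$.

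On the frequency side, characters on $u^\perp$ are the classes of $\Lambda$ modulo $\langle u\rangle$. The restriction of $\Lambda$ to $u^\perp$ is injective, since two frequencies differing by $u$ would give equal rows of the Hadamard matrix restricted to $\pi(E)$. So $\Lambda\to\Lambda/\langle u\rangle$ is an injective linear projection, and $(\pi(E),\Lambda/\langle u\rangle)$ is a spectral pair in dimension $d-1$.

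I expect the only delicate point to be the Fourier computation above. There one must check that every difference $\delta\in V^\perp$ coming from $\Lambda$ really lies in $W$. This is immediate, because $\delta|_V=0$ is exactly the defining condition of the generators of $W$. One should also make sure that the normalisation $\epsilon=1$, obtained by translation, does not alter $W$; it does not, since $W$ depends only on $\Lambda$ and on the direction space $V$.
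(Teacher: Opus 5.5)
Your proposal is correct and follows the paper's proof essentially step for step. Both choose $\ell\in W^\perp$ with $u\cdot\ell=1$, shift $B$ by $\ell$, and note that the flat's Walsh sum vanishes off $V^\perp$, where every difference $\lambda+\mu$ lies in $W$; both then use the Hadamard property to get injectivity of the point map and of $\Lambda\to\Lambda/\langle u\rangle$. The only cosmetic difference is that you translate $E$ first so the projection is linear, whereas the paper uses the affine map $x\mapsto x+(u\cdot x+1+\epsilon)\ell$ and removes the constant afterwards.
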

\begin{proof}
Choose $\ell\in W^\perp$ with $u\cdot\ell=1$. Replace the points
of $B$ by their translates by $\ell$. For $\delta=\lambda+\mu$, the change
in the character inner product is
\[
 -2\mathbf1_{\delta\cdot\ell=1}
       \sum_{x\in B}(-1)^{\delta\cdot x}.
\]
The affine-flat sum vanishes unless $\delta\in V^\perp$; in that case
$\delta\in W$, so $\delta\cdot\ell=0$. Thus every inner product is
preserved. The square matrix remains Hadamard, so neither its point
columns nor its frequency rows can repeat. The changed set lies entirely
in the opposite affine hyperplane, and the frequencies restrict
injectively to its direction hyperplane.
The point map is the affine projection
\[
 x\longmapsto x+(u\cdot x+1+\epsilon)\ell,
\]
whose linear part has kernel $\langle\ell\rangle$; the frequency map is
the quotient by $\langle u\rangle$. Translation of the target hyperplane
removes the affine constant. Therefore both original sets are graphs over
spectral sets in one lower dimension.\end{proof}

For $|E|=64$ and $d=9$, the quarter-density lemma in dimension eight then
provides three-dimensional subgroup complements of both original sets.
The criterion is a short exact linear-algebra check for each affine-flat
hyperplane section. It does not assert that every spectral pair has such
a section, or that $u\notin W$ is automatic.

\subsection{Cubic rank three and its label sets}\label{b9:64labels}
Let $(E,\Lambda)$ be a full-span spectral pair of size $64$, translated
to contain zero. Its affine evaluation codes are doubly even, by the
same weight and intersection argument used above. In coordinates drawn
from a frequency basis in $\Lambda$, the weight function
\[
 q_E(s)=\operatorname{wt}_E(s)/4\pmod2
\]
is the pure cubic whose coefficients are $T_E(e_i,e_j,e_k)$, and its
first Boolean derivatives vanish on $\Lambda$. The proof is identical
to that for order $48$: the coordinate weights are now $32$ and the
pairwise intersections are $16$, so the linear and quadratic coefficients
again vanish modulo two.

Suppose the contraction rank of $T_E$ is three. The derivation in
Section~\ref{b9:rank3:gradientsection} uses only these cubic and derivative
identities. It therefore gives
\[
 \Lambda\subset S\cong K_m\times\F_2^{9-m},\qquad 3\le m\le7,
\]
where
\[
 K_m=\begin{cases}
 \{0,e_1,\ldots,e_m\},&3\le m\le5,\\
 \{0,e_1,\ldots,e_6,\mathbf1\},&m=6,\\
 \{0,e_1,\ldots,e_7,\mathbf1,\mathbf1+e_1,\ldots,\mathbf1+e_7\},&m=7.
 \end{cases}
\]
The sizes of $S$ are respectively $256,160,96,64,64$. For $m\le5$,
all $m+1$ label fibers of $\Lambda$ are nonempty, because the labels are
an affine basis and $\Lambda$ spans. Each fiber size is divisible by four:
an affine function isolates that label, and the affine evaluation code is
doubly even. After applying the four-point projection corollary wherever
possible, every nonempty affine hyperplane section has at least eight
points. In particular all these fiber sizes are at least eight.

\paragraph{The $m=6$ and $m=7$ branches tile directly.}
In these cases $|S|=|\Lambda|=64$, so $\Lambda=S$. Its free factor makes
it periodic, but no separate periodicity theorem is needed. For $m=6$,
\[
 W_{K_6}(u)=1+6-2\operatorname{wt}(u)+(-1)^{\operatorname{wt}(u)}
\]
is nonzero at precisely weights $0,1,2,5,6$. All differences of the
eight-point set $K_6$ have these weights. Thus $T=K_6\times\{0\}$ has
$T+T$ inside the nonzero Fourier support of $\Lambda$.
For $m=7$,
\[
 W_{K_7}(u)=
 (1+(-1)^{\operatorname{wt}(u)})(8-2\operatorname{wt}(u))
\]
is nonzero at precisely weights $0,2,6$. Set
\[
 T_7=\{0,e_1+e_2,\ldots,e_1+e_7,\mathbf1+e_1\}.
\]
Its seven nonzero elements, and all their nonzero pairwise differences,
have weight two or six. Again $T=T_7\times\{0\}$ has eight elements
and $T+T$ lies in the nonzero Fourier support of $\Lambda$.
Spectrality therefore gives $(E+E)\cap(T+T)=\{0\}$, and
$|E||T|=512$ proves that $E$ tiles. These complements need not be subgroups.

\subsection{A finite weight-only reduction inside the rank-three label model}
Suppose the label reduction puts one member of a
$64$-point spectral pair in the form
\[
 \Lambda=\bigcup_{i=0}^m\{a_i\}\times A_i
 \subset\mathbb F_2^m\times\mathbb F_2^{9-m},\qquad
 (a_0,a_1,\ldots,a_m)=(0,e_1,\ldots,e_m).
\]
Write $n_i=|A_i|$. Suppose every fiber is nonempty, $4\mid n_i$,
and both affine evaluation codes have minimum nonzero weight at least
$8$. This implies $n_i\ge8$, since every affine-basis label can
be isolated by an affine functional. Also $n_i\le2^{9-m}$ and
$\sum_i n_i=64$.

For a frequency $(u,0)$ we have
\[
 W_\Lambda(u,0)=n_0+\sum_{i=1}^m n_i(-1)^{u_i}.
\]
If a three-dimensional subspace $K\subset\mathbb F_2^m$ has this
quantity nonzero for every $u\in K$, then spectrality implies
$(E+E)\cap(K\times\{0\})=\{0\}$. Thus $E$ tiles by this subgroup.
This conclusion is independent of the detailed fibers $A_i$.

Permuting affine-basis labels is an affine automorphism, and translating
$\Lambda$ only multiplies Fourier values by signs. We can therefore sort
$(n_0,\ldots,n_m)$ without losing any case. Enumerating the integer
partitions under the three displayed constraints gives:
\[
\begin{array}{c|rrr}
 m&\text{patterns}&\text{positive subgroup certificates}&\text{remaining}\\\hline
 3&15&7&8\\
 4&10&9&1\\
 5&3&3&0
\end{array}
\]
The short standard-library script \texttt{check\_rank3\_fiber\_weights.py}
enumerates every pattern and every candidate subspace in reduced echelon
form. Its JSON output records the positive basis and the eight nonzero
Fourier values on the complement. Its negative outcomes mean only that no complement of
this particular label-space form exists, not that the set does not tile.

For $m=5$, the three sorted patterns and positive bases, in binary-mask
notation, are
\[
\begin{array}{c|c}
 (8,8,8,8,16,16)&(1,2,4)\\
 (8,8,8,12,12,16)&(1,2,4)\\
 (8,8,12,12,12,12)&(1,26,4).
\end{array}
\]
Thus the $m=5$ branch is completely settled under the stated label-model
assumptions. For $m=4$ only $(8,8,16,16,16)$ remains. Its Fourier-zero
label directions form precisely the three nonzero points of a plane.
For $m=3$ the remaining sorted patterns are
\[
\begin{gathered}
(8,8,16,32),\ (8,8,24,24),\ (8,12,12,32),\ (8,12,20,24),\\
(8,16,16,24),\ (12,12,20,20),\ (12,16,16,20),\ (16,16,16,16).
\end{gathered}
\]
This check does not establish the rank-three label reduction itself, nor
cover cubic tensors of other contraction ranks.

\subsection{Closing the remaining five-fiber pattern}
\begin{lemma}[Three halfsets]\label{b9:halfsets:threehalfsets}
Let $H$ be a finite-dimensional vector space over $\F_2$, and let $C_1,C_2,C_3\subset H$
each have size $|H|/2$. Define
\[
 D_{ij}=\{z\in H:C_i+z=H\setminus C_j\},\qquad D_0=D_{11}\cap D_{22}\cap D_{33}.
\]
In a two-dimensional space over $\F_2$ choose independent vectors $a_{12},a_{13}$
and put $a_{23}=a_{12}+a_{13}$. Then
\[
 U=(\{0\}\times D_0)\ \cup\ \bigcup_{1\le i<j\le3}(\{a_{ij}\}\times D_{ij})
\]
is contained in a linear-functional level set $\ell=1$. In particular,
$0\notin U$.

\end{lemma}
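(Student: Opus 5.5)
The plan is to produce the functional explicitly in the form $\ell(a,z)=c(a)+t\cdot z$, where $c$ is a linear functional on the two-dimensional space and $t\in H$ is a suitable nonzero character. The argument works through the Walsh transforms $W_i=W_{C_i}$. The first step translates each defining relation into a Fourier identity. If $z\in D_{ij}$, then $1_{C_i}(x+z)=1-1_{C_j}(x)$ for all $x$. Applying the unnormalized Walsh transform at $t\ne0$ then gives
\[
 (-1)^{t\cdot z}W_i(t)=-W_j(t)\qquad(t\ne0).
\]
Two consequences follow. First, if $D_{ij}\ne\varnothing$, then $|W_i|=|W_j|$ pointwise, so $W_i$ and $W_j$ have the same support. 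Second, for any $t$ in that common support, $(-1)^{t\cdot z}=-W_j(t)/W_i(t)=:s_{ij}(t)$ for every $z\in D_{ij}$. So $t\cdot z$ is constant on $D_{ij}$, say equal to $\tau_{ij}$ with $(-1)^{\tau_{ij}}=s_{ij}$. In particular, for $z\in D_{ii}$ and $W_i(t)\ne0$ we get $t\cdot z=1$.

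The second step chooses $t$. Since $|C_i|=|H|/2$, we have $W_i(0)=0$, and $C_i$ is not constant, so each $\supp W_i\setminus\{0\}$ is nonempty. By the first step, any two indices linked by a nonempty $D_{ij}$ have equal supports. If some $D_{ij}$ is nonempty, pick a nonzero $t$ in $\supp W_i=\supp W_j$, and take the third support as well if it is linked to these. If no $D_{ij}$ with $i<j$ is nonempty, pick $t\in\supp W_1\setminus\{0\}$. In every case $t$ lies in $\supp W_i$ for some $i$, and hence $t\cdot z=1$ for all $z\in D_0\subseteq D_{ii}$. It also lies in both supports for every nonempty $D_{ij}$, so the constants $\tau_{ij}$ are defined for all such pairs.

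The third step chooses $c$. The values $c(a_{12})$ and $c(a_{13})$ can be prescribed freely, since $a_{12},a_{13}$ are a basis, and then $c(a_{23})=c(a_{12})+c(a_{13})$. We require $c(a_{ij})=1+\tau_{ij}$ for each nonempty $D_{ij}$. Then $\ell=1$ on $\{a_{ij}\}\times D_{ij}$, and $\ell(0,z)=t\cdot z=1$ on $\{0\}\times D_0$.

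The only possible obstruction, and the step needing care, is the compatibility condition when all three $D_{ij}$ are nonempty. There we need $\sum_{i<j}(1+\tau_{ij})=0$, that is, an odd number of the signs $s_{ij}$ equal $-1$, so $s_{12}s_{13}s_{23}=-1$. This follows from
\[
 s_{12}s_{13}s_{23}=-\frac{W_2W_3W_3}{W_1W_1W_2}(t)=-\frac{W_3(t)^2}{W_1(t)^2}=-1,
\]
using $|W_1(t)|=|W_3(t)|$. If fewer than three of the $D_{ij}$ are nonempty, at most two values of $c$ are prescribed. These values lie on distinct basis-compatible points of the two-dimensional space, so any two of them can be prescribed consistently, and no compatibility condition arises. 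Therefore $U\subseteq\{\ell=1\}$, and in particular $0\notin U$.
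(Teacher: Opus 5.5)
Your proof is correct, and it takes a different route from the paper's. The paper argues combinatorially with period groups $R_i=\{z:C_i+z=C_i\}$. A nonempty $D_{ij}$ is a coset of $R_i=R_j$. When the graph of nonempty $D_{ij}$ is connected, it writes each $C_i$ as a translate $C+t_i$ of one fixed halfset, complemented when $\varepsilon_i=1$. It then chooses a functional $h$ vanishing on $R$ and equal to one on the antiperiod coset $d+R$, or takes $h=0$ when there is no antiperiod. It checks consistency of $v(a_{ij})=h(t_i+t_j)+\varepsilon_i+\varepsilon_j$ by summing, and treats the one-edge and no-edge graphs separately. You instead take a single nonzero frequency $t$ in the common Walsh support. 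The identity $(-1)^{t\cdot z}W_i(t)=-W_j(t)$ then does all the work at once: it makes $t\cdot z$ constant on each nonempty $D_{ij}$ and equal to one on $D_{ii}$. It also reduces the only compatibility condition to $s_{12}s_{13}s_{23}=-W_3(t)^2/W_1(t)^2=-1$. In fact your $t$ is a canonical instance of the paper's $h$, since $W_C(t)\ne0$ forces $t$ to vanish on the periods of $C$ and to be odd on its antiperiods. Your version buys uniformity: there is no case split on whether an antiperiod exists, no normalization of the $C_i$ to a common halfset, and it fits the Fourier language of the application. The paper's version is Fourier-free and makes explicit the coset structure of the sets $D_{ij}$. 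One small slip: with the paper's normalization $W_i(0)=|C_i|=|H|/2$, not $0$. This is harmless, because you only ever use nonzero $t$, and $\supp W_i\setminus\{0\}\ne\varnothing$ follows from $1_{C_i}$ being nonconstant.
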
\begin{proof} Write $R_i=\{z:C_i+z=C_i\}$. A nonempty $D_{ij}$ is a coset
of $R_i=R_j$. Form a graph on the three indices, with edge $ij$ when
$D_{ij}\ne\varnothing$.
If this graph is connected, write $C_i$ as $C+t_i$, complemented if
$\varepsilon_i=1$, for one fixed halfset $C$ and $\varepsilon_i\in\mathbb F_2$.
All periods equal $R=\operatorname{Per}(C)$. The set
$D=\{z:C+z=H\setminus C\}$ is either empty or a coset $d+R$ with $d\notin R$.
If $D=d+R$, choose a functional $h$ on $H$ vanishing on $R$ with $h(d)=1$.
For $z\in D_{ij}$,
\[
 h(z)=h(t_i+t_j)+1+\varepsilon_i+\varepsilon_j.
\]
Choose a functional $v$ on the two-dimensional label space with
$v(a_{ij})=h(t_i+t_j)+\varepsilon_i+\varepsilon_j$.
These three prescriptions are consistent because both their arguments
and values sum to zero. Then $v(a)+h(z)=1$ on $U$; on $D_0\subset D$ this
uses $h=1$. If $D$ is empty, $D_0$ is empty and an edge requires
$\varepsilon_i+\varepsilon_j=1$. Take $h=0$ and
$v(a_{ij})=\varepsilon_i+\varepsilon_j$, again a consistent prescription.

If the graph has exactly one edge, relabel it $12$ and put $R=R_1=R_2$.
When $D_0$ is nonempty, $D_{11}=d+R$ with $d\notin R$; choose $h$ vanishing
on $R$ and equal to one on $d$. When $D_0$ is empty, take $h=0$.
In either case $h$ is constant, say $k$, on $D_{12}$, and choosing
$v(a_{12})=1+k$ proves the assertion. Finally, if there are no edges,
$D_0$ is empty or an affine subspace excluding zero, being an intersection
of such cosets. A functional $h$ equal to one on $D_0$ exists, and
$v=0$ suffices. This covers every graph on three vertices.\end{proof}

\paragraph{Application to the rank-three label model.}
Suppose $(E,\Lambda)$ is a spectral pair of size $64$ in
$\mathbb F_2^4\times\mathbb F_2^5$ and
\[
 \Lambda\subset\{0,e_1,e_2,e_3,e_4\}\times\mathbb F_2^5
\]
has fiber sizes $(8,8,16,16,16)$, in that order. Any ordering of the two
small and three large fibers can be put in this order by an affine
permutation of the five affine-basis labels. Translating the spectrum
and making the corresponding invertible linear coordinate changes
preserve spectrality and tiling of $E$.

For $u\in\mathbb F_2^4$,
\[
 W_\Lambda(u,0)
 =8+8(-1)^{u_1}+16\sum_{i=2}^4(-1)^{u_i}.
\]
Its zeros are exactly the three nonzero points of
$P=\langle e_2+e_3,e_2+e_4\rangle$. Consequently, at each fixed
$y\in\mathbb F_2^5$, the first-coordinate differences of points of $E$
lie in $P$. Set
\[
 L=P^\perp=\langle e_1,e_2+e_3+e_4\rangle
 \quad\text{inside }\mathbb F_2^4.
\]
The restriction of the standard dot product to $P$ is nondegenerate,
so $L\cap P=\{0\}$. The four translates $E+l$, $l\in L$, are disjoint,
and $B=E+L$ has size $256$.

Let $C_i\subset\mathbb F_2^5$ be the free-coordinate fiber of $\Lambda$
at $e_{i+1}$, for $i=1,2,3$. Each $C_i$ has size $16$. Fourier support
of $1_B$ lies in $L^\perp\times\mathbb F_2^5=P\times\mathbb F_2^5$.
At label zero, nonzero Fourier support is contained in
$D_0=D_{11}\cap D_{22}\cap D_{33}$: spectrality forces vanishing on each
$(C_i+C_i)\setminus\{0\}$, whose complement away from zero is $D_{ii}$.
At label $a_{ij}=e_{i+1}+e_{j+1}$, it is contained in
\[
 \mathbb F_2^5\setminus(C_i+C_j)=D_{ij},
\]
where the equality follows because the two sets have half the ambient
size. The three nonzero labels $a_{12},a_{13},a_{23}$ sum to zero.
Thus every nonzero Fourier frequency of $1_B$ lies in the set $U$ of
the three-halfsets lemma. Choose a translation vector $t$ whose pairing
with these frequencies is one. Fourier inversion gives
\[
 1_B(x)+1_B(x+t)=1\qquad(x\in\mathbb F_2^9),
\]
since the constant Fourier coefficient on the left is
$2|B|=512$, and all other coefficients vanish.
Hence $E$ tiles with the eight-point subgroup $L+\langle t\rangle$.
Here $t\notin L$, since $B$ is $L$-periodic and $B+t$ is its complement.

The preceding calculations and the three-halfsets lemma settle all
label models with $m\ge4$. It remains to examine the four-fiber model,
corresponding to $m=3$. We will settle every unequal pattern, including
all seven unequal patterns left by the weight-only calculation.

\subsection{The exact setting and result}
For $A\subset\F_2^n$, use the unnormalized Fourier transform
$W_A(u)=\sum_{x\in A}(-1)^{u\cdot x}$.
For real functions use normalized coefficients
$\widehat f(u)=2^{-n}\sum_x f(x)(-1)^{u\cdot x}$.
An equally sized pair $(E,\Lambda)$ is spectral when its character matrix
is Hadamard. In particular
\[
 (E+E)\setminus\{0\}\subseteq\{u:W_\Lambda(u)=0\}.
\]
An orthogonal character family on a set need not be a full spectrum.
We use its stated cardinality explicitly below.

The three-label reduction gives the following model:
\begin{equation}\label{b9:seven:model}
 G=\F_2^3\times\F_2^6,\quad |E|=|\Lambda|=64,\qquad
 \Lambda=\bigcup_{i=0}^3\{a_i\}\times A_i,
 \quad(a_0,a_1,a_2,a_3)=(0,e_1,e_2,e_3),
\end{equation}
with the normalized weight identity
\begin{equation}\label{b9:seven:weight}
 q_E(u,z):=\frac{\operatorname{wt}_E(u,z)}4\pmod2=u_1u_2u_3.
\end{equation}
Here $\operatorname{wt}_E(s)=|\{x\in E:s\cdot x=1\}|$ and all these
weights are divisible by four. In the rank-three construction the three
distinct nonzero labels are independent. Grouping the original coordinate
parities by those labels makes its pure cubic exactly the product in the displayed cubic identity;
zero-labeled coordinates do not occur. Thus the displayed weight identity has no hidden lower terms.

\begin{theorem}\label{b9:seven:main}
For every spectral pair satisfying the displayed four-fiber model and weight identity, if the four
fiber sizes are not all sixteen, then $E$ tiles $G$ by a three-dimensional
subgroup.
\end{theorem}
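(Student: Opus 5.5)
The plan is to use the weight identity \eqref{b9:seven:weight} to pin down the distribution of $E$ over the first three coordinates, and then combine it with the fiber sizes $n_i=|A_i|$ to build the required three-dimensional subgroup complement.

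\emph{Step 1: the $\F_2^3$-projection of $E$.} Write $E_w=\{z:(w,z)\in E\}$ for $w\in\F_2^3$, and put $m_w=|E_w|$. Taking $z=0$ in \eqref{b9:seven:weight} gives $\operatorname{wt}_E(u,0)\equiv 4u_1u_2u_3\pmod 8$, where $\operatorname{wt}_E(u,0)=\sum_{w:u\cdot w=1}m_w$. For general $z$ the identity says the free coordinates contribute no cubic terms. Spectrality also supplies the Fourier zeros: $W_E(\lambda-\lambda')=0$ for distinct $\lambda,\lambda'\in\Lambda$. These include the label differences $(e_i+e_j,z)$ and $(e_i,z)$ with $z\in A_i+A_0$, and the within-fiber differences $(0,z)$ with $z\in(A_i+A_i)\setminus\{0\}$. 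Using Fourier inversion in the first three coordinates, as in the Odd quotient obstruction (Lemma~\ref{b9:cubic:flat}), I aim to show the following: for each label pair, the sets $A_i+A_j$ and the sets $A_i+A_i$ constrain the Walsh transforms of the functions $w\mapsto(-1)^{z\cdot(\cdot)}$ restricted to $E_w$.

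\emph{Step 2: reduce to a halfset configuration when sizes are unequal.} Since $\sum n_i=64$, each $n_i\le 32$ and $4\mid n_i$, the sumset $A_i+A_j$ of two fibers with $n_i+n_j>32$ covers all of $\F_2^6$. This is the same pigeonhole argument as in Theorem~\ref{b9:rank3:projective}. Any pair with $n_i+n_j>32$ then gives a full hyperplane's worth of Fourier zeros, namely $(a_i+a_j,z)$ for every $z$. This forces $E$ to be equidistributed across the hyperplane $\{w:(a_i+a_j)\cdot w=0\}$ fiberwise in $z$: $1_E(w,z)$ and $1_E(w+v,z)$ balance along a suitable direction. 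When the sizes are not all sixteen, some pair sum exceeds $32$, or some fiber has $n_i\le 12$ while another has $n_j\ge 20$. Going through the eight remaining $m=3$ patterns, together with the seven already certified by the weight-only calculation, I expect each unequal pattern to yield at least two independent label directions $a_i+a_j$ of this "full" type. Those directions span a plane $P\subset\F_2^3$ on which $E$ is a union of cosets along a complementary line. This mirrors the five-fiber argument, where $L=P^\perp$ and $E+L$ had size $256$.

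\emph{Step 3: finish with the three-halfsets lemma.} I then set $B=E+L$ for the two-dimensional $L\subset\F_2^3\times\{0\}$ found in Step 2, check that the four translates are disjoint, and express the nonzero Fourier support of $1_B$ through the sets $D_{ij}$ and $D_0$ built from the remaining fibers of size $|\F_2^6|/2$ or their analogues. Lemma~\ref{b9:halfsets:threehalfsets} then supplies a translation $t$ with $1_B(x)+1_B(x+t)=1$, so $L+\langle t\rangle$ is the complement. In the patterns where the halfset sizes are not exactly $32$, I would instead use the weight identity to show that the relevant fibers $E_w$ are halfsets of $\F_2^6$. The key input is that $\operatorname{wt}_E(u,z)\equiv0\pmod8$ whenever $u_1u_2u_3=0$.

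\emph{Main obstacle.} The hard step is Step 2 for patterns such as $(8,8,24,24)$ and $(12,12,20,20)$, where no single pair sum forces a full hyperplane of zeros cleanly. Here I would first try a Parseval/second-moment count on $\sum_z W_E(a_i+a_j,z)^2$, modeled on the argument in Theorem~\ref{b9:64:common}, to produce the needed Fourier zeros. The fallback is a finite exact enumeration of the possible projection vectors $(m_w)$ subject to the congruences mod $8$.
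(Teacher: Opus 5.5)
Your proposal has a genuine gap, and it sits in Step~2. The pigeonhole claim there is false. For $z\in\F_2^6$ one has $z\in A_i+A_j$ exactly when $A_i\cap(A_j+z)\neq\varnothing$. This is forced only when $n_i+n_j>|\F_2^6|=64$, which never happens here because $\sum_k n_k=64$. The argument in Theorem~\ref{b9:rank3:projective} works only because there a $24$-point set lies in a $32$-point flat, so the threshold is half of that flat. As plain set arithmetic, two fibers of sizes $24$ and $16$ inside a common hyperplane of $\F_2^6$ have their sumset inside that hyperplane, so the sizes alone cannot give $A_i+A_j=\F_2^6$.

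Consequently you do not get Fourier zeros at $(a_i+a_j,z)$ for every $z$. The fiberwise equidistribution, the plane $P$, and the subgroup $L$ are never produced. Moreover, a plane $P\subset\F_2^3$ has one-dimensional annihilator, so $L=P^\perp$ could not be the two-dimensional subgroup you need for $L+\langle t\rangle$.

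Step~3 fails independently. The three-halfsets lemma needs sets of exactly half the ambient size; that is what made $\F_2^5\setminus(C_i+C_j)=D_{ij}$ valid in the five-fiber model. Here the fibers have sizes such as $8,12,20,24$ in $\F_2^6$, and you give no mechanism that makes anything a halfset. Your fallback, enumerating $(m_w)$ modulo $8$, records only how $E$ sits over $\F_2^3$ and gives no route to a complement.

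The missing idea is to use $n_i+n_j>32$ through sign domination, not sumsets.
\begin{enumerate}
\item Let $i,j$ index the two largest fibers; their sizes sum to more than $32$ unless all four equal $16$. Put $a=a_i+a_j$.
\item For $u\in a^\perp\subset\F_2^3$, the two large fibers enter $W_\Lambda(u,0)=\sum_k n_k(-1)^{u\cdot a_k}$ with the same sign and outweigh the other two. So $W_\Lambda(u,0)\neq0$ on the two-dimensional subgroup $L=a^\perp\times\{0\}$.
\item Hence $(E+E)\cap L=\{0\}$, and $E$ maps injectively onto a $64$-point set $\bar E\subset G/L\cong\F_2^7$.
\item Since $a$ has weight one or two, the weight identity vanishes on $L^\perp=\langle a\rangle\times\F_2^6$. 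Every evaluation word of $\bar E$ therefore has weight divisible by eight, and the Fourier coefficients of $f=1-2\cdot1_{\bar E}$ are quarter-integral.
\item The translated large fibers $(\{0\}\times A_i)\cup(\{a\}\times A_j)$ lie in $L^\perp$ and form an orthogonal family of more than $32$ characters on $\bar E$. This forces $\supp\widehat f$ to be triangle-free, since a triangle would give four disjoint translates of the family in a group of order $128$.
\item Theorem~\ref{b9:seven:boolean}, which rests on the self-dual code classification in Lemma~\ref{b9:seven:cap}, then gives an antiperiod $\bar t$ of $\bar E$. Any lift $t$ yields the complement $L+\langle t\rangle$; Corollary~\ref{b9:seven:half} packages these last steps.
\end{enumerate}
None of these ingredients — the quotient by $L$, triple-evenness of $\bar E$, or the quarter-integral antiperiod theorem — appears in your plan.
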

This proves tiling of $E$, whose cubic tensor supplied the model. It does
not by itself assert simultaneous tiling of the spectrum. The seven unequal patterns left by the weight-only test are all covered:
\[
\begin{array}{c|c}
\text{sorted fiber sizes}&\text{size of the quotient orthogonal family}\\\hline
(8,8,16,32)&48\\
(8,8,24,24)&48\\
(8,12,12,32)&44\\
(8,12,20,24)&44\\
(8,16,16,24)&40\\
(12,12,20,20)&40\\
(12,16,16,20)&36
\end{array}
\]
The proof applies to unequal fibers generally, not just this enumerated list.

\subsection{A small coding fact with positive certificates}
\begin{lemma}\label{b9:seven:cap}
Let $O$ be a set of $m\in\{4,8,12,16\}$ distinct nonzero vectors over $\F_2$.
Suppose it has no triangle $a,b,a+b$ and
\[
 \sum_{s\in O}s_i s_j=0\pmod2\quad\text{for all }i,j.
\]
Then some linear functional takes the value one on all of $O$.
\end{lemma}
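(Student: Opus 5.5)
Replace $O$ by the span $V=\langle O\rangle$ and ask whether $O$ misses some linear hyperplane of $V$. A functional $\ell$ with $\ell\equiv1$ on $O$ exists exactly when $0\notin\operatorname{aff}(O)$, that is, when no odd-size subset of $O$ sums to zero. So the plan is to suppose some odd subset of $O$ has zero sum and derive a contradiction from the two hypotheses.

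The hypothesis $\sum_{s\in O}s_is_j=0$ for all $i,j$ says that the $n$ coordinate words, viewed as words of length $m$ indexed by $O$, form a self-orthogonal code $C$. Taking $i=j$ also makes every such word even. Concretely, $C$ is the row space of the generator matrix whose columns are the elements of $O$. It is projective (columns distinct and nonzero), and the absence of triangles means no three columns are dependent. Thus $C^\perp$ has minimum distance at least $4$. Since $C\subseteq C^\perp$, the dual $C^\perp$ contains $C$ together with the all-ones word. The desired functional exists iff $\mathbf1\in C$, because a functional $\ell$ evaluates to the word $(\ell(s))_{s\in O}\in C$.

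So the statement becomes: if $C$ is self-orthogonal of length $m\in\{4,8,12,16\}$ and its dual has minimum distance $\ge4$, then $\mathbf1\in C$. Suppose not. Then $C'=C+\langle\mathbf1\rangle$ is still self-orthogonal, because $m$ is even and every word of $C$ is even, and it has dimension $\dim C+1$. Since $C'$ is self-orthogonal, $C'^\perp\supseteq C'$. The main step is to bound $\dim C$ from below using $d(C^\perp)\ge4$. From the Hamming bound, a code of length $m$ with minimum distance $4$ has dimension at most $m-1-\log_2 m$; its even-weight version gives $\dim C^\perp\le m-1-\log_2 m$ for $m=4,8,16$. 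Hence $\dim C\ge 1+\log_2 m$, and equality forces $C^\perp$ to be extended-Hamming-like. One must then exclude $\dim C^\perp$ being small enough to allow $C'$ to fit. For $m=4,8,16$ the extremal duals are the extended Hamming codes, whose duals are the Reed--Muller codes $\mathrm{RM}(1,\cdot)$, and these contain $\mathbf1$. In the nonextremal cases $C\supseteq$ something forced. Concretely, an even code of distance $4$ and length $2^k$ is contained in an extended Hamming code only after shortening arguments, so this needs care.

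The cleaner route, which I would actually carry out, is finite and positive-certificate in the spirit of the paper. The conditions are invariant under $\mathrm{GL}$ acting on the ambient space, so only the span type matters. Up to equivalence, $O$ is a cap, i.e.\ a set with no three collinear points, in $\PG(r-1,2)$ with $r\le m-1$, whose coordinate code is self-orthogonal. One enumerates caps of sizes $4,8,12,16$ in small projective spaces satisfying the quadratic condition; the condition forces $\dim C\ge\log_2m+1$, bounding the rank. For each class one exhibits the functional. For $m=4$ the argument is direct: four points with all pairwise products even and no triangle form an affine basis of a $3$-space with sum zero, and the functional is obvious. For $m=8$ the configuration must be an affine $3$-flat not through zero. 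The main obstacle is $m=12$, where there is no extended Hamming code and the enumeration of self-orthogonal projective distance-$4$-dual configurations must be carried out honestly, with $m=16$ also requiring the classification of caps related to $\mathrm{RM}(1,4)$. I would try to handle $m=12$ by showing $\dim C\ge5$ via the dual bound and then using that the $12$ columns, being a cap inside the complement of a hyperplane-free structure, extend to the $16$-point affine cap.
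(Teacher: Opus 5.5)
\textbf{There is a genuine gap: the substantive cases $m=12$ and $m=16$ are not proved.}

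Your reformulation is correct and matches the paper's starting point. With $C$ the evaluation code whose columns are $O$, the moment condition makes $C$ even and self-orthogonal, and the cap condition gives $d(C^\perp)\ge4$. The desired functional exists if and only if $\mathbf1\in C$, and $C+\langle\mathbf1\rangle$ is self-orthogonal.

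After that, nothing is established for $m=12$ or $m=16$.
\begin{enumerate}
\item The Hamming bound gives only $\dim C\ge1+\log_2 m$, and nothing forces equality. At $m=16$ both $\dim C=5$ (realized by $\mathrm{RM}(1,4)$) and $\dim C=8$ (realized by a self-dual code) occur. So the remark about extremal extended Hamming codes settles at most one dimension, and you concede the rest ``needs care''.
\item The fallback cap enumeration is not carried out.
\item The suggested shortcut at $m=12$ cannot work. The actual configurations (columns of a generator matrix of $d_{12}^+$) span six dimensions. In fact no $12$-subset of an affine $4$-flat satisfies the moment condition. Its $4$-point complement would have to be an affine plane $a+\langle u,v\rangle$ on which every $x_ix_j$ sums to zero. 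That means $u_iv_j+u_jv_i=0$ for all $i<j$, which forces $u,v$ to be dependent.
\item Your $m=4$ case is misdescribed. Self-orthogonality gives $\dim\langle O\rangle=\dim C\le m/2=2$, a space with only three nonzero vectors, so this case is vacuous. The set $\{e_1,e_2,e_3,e_1+e_2+e_3\}$ you describe has $\sum_s s_1s_2=1$, so it violates the hypothesis.
\end{enumerate}

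\textbf{The missing idea is to enlarge $C$ to a self-dual code rather than analyze $C$ directly.}
\begin{enumerate}
\item Extend $C+\langle\mathbf1\rangle$ to a self-dual code $D$. Then $D=D^\perp\subseteq C^\perp$, so $D$ itself has minimum distance at least $4$.
\item By Pless's classification there is no such code at length $4$, one at each of lengths $8$ and $12$, and three at length $16$.
\item If $\mathbf1\notin C$, then $\mathbf1\notin(C^\perp)^\perp$. So $C^\perp$ contains an odd word $v$, and the entire coset $v+D$ lies in $C^\perp$.
\item A finite check shows that every odd coset of each of these five codes contains a word of weight $1$ or $3$. This contradicts $d(C^\perp)\ge4$.
\end{enumerate}
This handles every value of $\dim C$ at once, and it replaces your open-ended cap enumeration by a verification on five explicit codes.
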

\begin{proof}
Let $C$ be the evaluation code with columns $O$. The moment condition
says $C\subseteq C^\perp$. Distinct, nonzero, triangle-free columns say
that $C^\perp$ has minimum distance at least four. Since $C$ is even and
$m$ is even, $C+\langle\mathbf1\rangle$ is self-orthogonal. Extend it to
a binary self-dual code $D$. Then $D\subseteq C^\perp$, so $D$ has minimum
distance at least four.

At length four no such self-dual code exists. At lengths eight, twelve,
and sixteen the classical complete classification gives respectively
one, one, and three models of minimum distance at least four. For each,
every odd coset has a representative of weight one or three. The supplied
positive certificates verify all these cosets: respectively $8$, $32$,
and $128$ per length-sixteen model.

If $\mathbf1\notin C$, duality supplies an odd $v\in C^\perp$.
Its coset $v+D$ remains in $C^\perp$, but has a representative of weight
one or three, contradicting its minimum distance. Thus $\mathbf1\in C$,
which is precisely the desired functional.
\end{proof}
The external input is classification completeness, not an unverified
negative search. The finite checker verifies generator ranks,
self-duality, minimum distances, and every supplied odd-coset certificate.
The classification is due to Pless~\cite{Pless}; the generator matrices are available in~\cite{HMdata}.

\subsection{Quarter-integral Boolean Fourier spectra}
\begin{theorem}\label{b9:seven:boolean}
Let $f:\F_2^r\to\{-1,1\}$ be balanced and satisfy
$\widehat f(s)\in\tfrac14\mathbb Z$ for every $s$.
If its Fourier support has no triangle $a,b,a+b$, then $f$ has an
antiperiod: $f(x+t)=-f(x)$ for some $t\ne0$ and all $x$.
\end{theorem}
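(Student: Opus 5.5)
The plan is to reduce to Lemma~\ref{b9:seven:cap}, applied to the Fourier support $O=\supp\widehat f\setminus\{0\}$. Since $f$ is balanced, $\widehat f(0)=0$, so $O$ is the whole support. Parseval gives $\sum_s\widehat f(s)^2=1$. Each nonzero coefficient lies in $\tfrac14\Z$, so it is at least $\tfrac14$ in absolute value, and hence $|O|\le16$. If a linear functional $t$ takes the value one on all of $O$, then $f(x+t)=\sum_s\widehat f(s)(-1)^{s\cdot t}(-1)^{s\cdot x}=-f(x)$. This $t$ is the required antiperiod, and it is nonzero because $O$ is nonempty. So it remains to check the hypotheses of the cap lemma: the size of $O$, triangle-freeness (which is given), and the quadratic moment condition.

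\textbf{Step 1: idempotence.} Write $f=1-2g$ with $g$ the indicator of a halfset. Then $f^2=1$ gives $\widehat f*\widehat f=\delta_0$, so for $s\ne0$
\[
\sum_{t}\widehat f(t)\widehat f(s+t)=0 .
\]
This forces the support to be an additively structured set. In particular no coefficient can have absolute value one, since that would make $f$ a character times $\pm1$, with a single support point and an antiperiod trivially. That case is allowed anyway.

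\textbf{Step 2: the size of $O$.} Write $\widehat f(s)=c_s/4$ with integers $c_s$, so that $\sum c_s^2=16$. Parity of $f$ modulo $4$ should give more structure: $4\widehat f(s)\cdot 2^{r-2}$ counts something, and the values $f(x)=\tfrac14\sum c_s\chi_s(x)\in\{\pm1\}$ are odd, so $\sum_s c_s\chi_s(x)\equiv 4\pmod 8$. Evaluating at $x=0$ and at a dual basis should yield the needed congruences. I expect the $c_s$ to be $\pm1,\pm2$, and possibly $\pm3$. I would show, using the vanishing convolution and the absence of triangles, that every $|c_s|$ equals $1$ or that $\pm2$ occurs in an even pattern. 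The point is to arrange that $O$ has size $4,8,12$ or $16$. If a coefficient $\pm2$ occurs, then $\sum_{t}\widehat f(t)\widehat f(s+t)$ at the relevant points involves triangles, which are excluded. I would try to show directly that $|c_s|\ge2$ forces a triangle via Step 1, leaving $|O|=16$ with all $c_s=\pm1$. If that fails, I would handle the mixed case by counting: $|O|\equiv 0\pmod 4$ follows from $\sum c_s^2=16$ together with oddness modulo $8$.

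\textbf{Step 3: the moment condition.} Evaluate $4f(x)=\sum_{s\in O}c_s(-1)^{s\cdot x}$ modulo $8$ using the expansion $(-1)^{y}\equiv1-2y+4\binom{y}{2}$, in the style of the parity expansion in Lemma~\ref{b9:96}. The constant term gives $\sum c_s\equiv\pm4$. The linear term gives $\sum_s c_s s_i\equiv0\pmod2$, and the quadratic term gives $\sum c_s s_is_j\equiv0\pmod2$ for $i\ne j$. When all $c_s$ are odd, this is exactly $\sum_{s\in O}s_is_j=0$, with the diagonal included via the linear condition. Lemma~\ref{b9:seven:cap} then finishes the proof.

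The main obstacle is Step 2: excluding even or large coefficients such as $c_s=\pm2,\pm3$, so that the reduction to odd coefficients (and hence to the moment condition over $O$ itself) is valid. I would attack this with the convolution identity at sums of pairs of support points, where triangle-freeness kills most terms.
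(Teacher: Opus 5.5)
\textbf{Verdict: there is a genuine gap.} Your Steps 1 and 3 are correct. When every $c_s$ is odd they are exactly the paper's argument: the congruence $\sum_s c_s(-1)^{s\cdot x}\equiv4\pmod 8$ at coordinate units and their pairwise sums gives the moment condition, and the cap lemma gives a functional odd on the support. The gap is Step 2, which tries to prove something false, because even coefficients are compatible with all the hypotheses.

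\emph{First counterexample.} The majority function $f=\tfrac12(\chi_{e_1}+\chi_{e_2}+\chi_{e_3}-\chi_{e_1+e_2+e_3})$ on $\F_2^3$ is balanced and has coefficients $\pm\tfrac12$. Its support is an affine plane missing $0$, hence triangle-free. Here $|O|=4$ and $\sum_{s\in O}s_1s_2=1$; the cap lemma's hypotheses are in fact never satisfiable with $m=4$.

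\emph{Second (mixed) counterexample.} In $\F_2^5$, let $f=\chi_{e_5}$ where $x_4=0$, and let $f$ be the majority of $x_1,x_2,x_3$ where $x_4=1$. Its support has two coefficients $\pm\tfrac12$, at $e_5$ and $e_4+e_5$, and eight coefficients $\pm\tfrac14$. It is triangle-free, $|O|=10$, and $\sum_{s\in O}s_4s_5=1$.

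So neither ``$|c_s|\ge2$ forces a triangle'' nor the fallback $|O|\equiv0\pmod4$ holds, and the moment hypothesis of the cap lemma fails for the full support. The convolution identity cannot rescue this. At a support point it is vacuous under triangle-freeness, since any nonzero term would itself form a triangle. At other points it only relates pairs with equal sums. What your congruences actually give is the moment condition, and $|O_{\mathrm{odd}}|\equiv\sum_s c_s^2\equiv0\pmod 4$, for the odd-coefficient set $O_{\mathrm{odd}}=\{s:c_s\text{ odd}\}$, because even $c_s$ drop out modulo two. Coefficients $\pm3$, by contrast, never need to be excluded.

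\textbf{The missing idea} is how to pass from a functional odd on $O_{\mathrm{odd}}$ to one odd on the whole support; this is the bulk of the paper's proof.
\begin{enumerate}
\item If $O_{\mathrm{odd}}=\varnothing$, the identity $\widehat f*\widehat f=\delta_0$ leaves either a single coefficient $\pm1$, or four coefficients $\pm\tfrac12$ whose support points sum to zero. The latter form an affine plane avoiding $0$, on which some functional is identically one.
\item Otherwise the cap lemma applied to $O_{\mathrm{odd}}$ supplies $t$ and excludes $|O_{\mathrm{odd}}|=4$. Parseval then leaves at most two even coefficients, each $\pm\tfrac12$. But $t$ may fail on them: in the mixed example, $t=(1,1,1,0,0)$ is odd on $O_{\mathrm{odd}}$ yet even on $e_5$.
\item The paper sets $g=(f+f(\cdot+t))/2$, whose coefficients are exactly the $t$-even ones. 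Exactly one is impossible, since $g$ takes values in $\{-1,0,1\}$. If there are two, at $a,b$, Parseval forces $|O_{\mathrm{odd}}|=8$ with all magnitudes $\tfrac14$. Self-orthogonality then makes $O_{\mathrm{odd}}$ the $t$-odd affine hyperplane of its four-dimensional span $W$.
\item The function $h=(f-f(\cdot+t))/2$, whose Fourier support is $O_{\mathrm{odd}}$, lives on the zero set of $g=\pm\tfrac12\chi_a\pm\tfrac12\chi_b$. That zero set is a halfspace on which $\chi_{a+b}$ is constant, so $O_{\mathrm{odd}}$ is invariant under translation by $a+b$, and hence $a+b\in W$.
\item Triangle-freeness gives $a\notin W$. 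A functional $v$ with $v(W)=0$ and $v(a)=1$ then satisfies $v(b)=1$, and $t+v$ is the antiperiod.
\end{enumerate}
Without an argument of this kind, your proof covers only the all-odd case.
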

\begin{proof}
Write $\widehat f(s)=k_s/4$. Parseval gives $\sum k_s^2=16$.
If all $k_s$ are even, the possibilities are a single coefficient of
magnitude four, or four coefficients of magnitude two. The former is
a character. In the latter case the convolution identity
$\widehat f*\widehat f=\mathbf1_{\{0\}}$ forces the four support points
to sum to zero, so they form an affine plane. Balance excludes zero from
that plane. In either case a functional is odd on the entire support,
giving an antiperiod.

Otherwise let $O=\{s:k_s\text{ odd}\}$. The identity
\[
 \sum_s k_s(-1)^{s\cdot x}=4f(x)\equiv4\pmod8
\]
shows that $\sum_{s:s\cdot x=1}k_s$ is divisible by four for all $x$.
Using coordinate units and their pairwise sums gives
\[
 \sum_{s\in O}s_i=\sum_{s\in O}s_i s_j=0\pmod2.
\]
Also $|O|\equiv\sum k_s^2\equiv0\pmod4$, hence
$|O|\in\{4,8,12,16\}$. Lemma~\ref{b9:seven:cap} supplies a functional $t$ odd
on $O$, and rules out $|O|=4$. There are therefore at most two nonzero
even coefficients, each of magnitude two.

Put $g=(f+f(\cdot+t))/2$. Its values are $-1,0,1$ and its surviving
Fourier coefficients are precisely the $t$-even coefficients. There cannot
be exactly one, since it would make $g$ half-integral everywhere. If none
survive, $t$ already works. Otherwise there are two, at $a,b$, and
\[
 g=(\varepsilon_a\chi_a+\varepsilon_b\chi_b)/2.
\]
Parseval now forces eight odd coefficients, all of magnitude $1/4$.
Their span $W$ has dimension at most four by self-orthogonality and at
least four since its odd hyperplane contains eight distinct points.
Thus $O$ is the entire odd affine hyperplane of $W$.

For $h=(f-f(\cdot+t))/2$ we have $gh=0$ and $h^2=1-g^2$.
It follows that $h$ is supported on the physical halfspace
$\chi_{a+b}=-\varepsilon_a\varepsilon_b$, so its Fourier support $O$
is invariant under translation by $a+b$. Hence $a+b\in W$.
If $a\in W$, it is $t$-even, so $a\in O+O$, producing a forbidden
triangle. Thus $a\notin W$. Choose $v$ vanishing on $W$ with $v(a)=1$;
then also $v(b)=1$. The functional $t+v$ is odd on the whole support.
\end{proof}

\begin{corollary}[Half-density partial-spectrum lemma]\label{b9:seven:half}
Let $B\subset\F_2^7$ have size $64$, and let every linear evaluation word
on $B$ have weight divisible by eight. If more than $32$ distinct
characters are mutually orthogonal on $B$, then $B$ tiles by two translates.
\end{corollary}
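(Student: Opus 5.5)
The plan is to apply Theorem~\ref{b9:seven:boolean} to the signed indicator $f=2\cdot 1_B-1$ on $\F_2^7$. Its conclusion, an antiperiod $t\ne0$, says exactly that $B+t=\F_2^7\setminus B$, which is the assertion $B\oplus\{0,t\}=\F_2^7$. So it suffices to check the three hypotheses of that theorem: balance, quarter-integrality of the spectrum, and a triangle-free Fourier support.

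\emph{Balance.} Since $|B|=64=2^7/2$, the function $f$ takes the value $1$ and $-1$ equally often. In particular $\widehat f(0)=0$.

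\emph{Quarter-integrality.} For $s\ne0$ we have $\widehat f(s)=2^{-7}\cdot 2W_B(s)=W_B(s)/64$. Write $W_B(s)=64-2\operatorname{wt}_B(s)$, where $\operatorname{wt}_B(s)$ is the weight of the linear evaluation word of $s$ on $B$. By hypothesis this weight is divisible by eight, so $W_B(s)\in16\Z$. Hence $\widehat f(s)\in\tfrac14\Z$ for every $s$.

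\emph{No triangle in the support.} Suppose $a,b,a+b$ all lie in $\supp\widehat f$. They are then nonzero, so $P=\{0,a,b,a+b\}$ is a plane with $P\setminus\{0\}\subseteq\supp W_B$. Let $\Lambda$ be the given family of more than $32$ characters that are mutually orthogonal on $B$. Orthogonality gives $(\Lambda+\Lambda)\setminus\{0\}\subseteq\{u:W_B(u)=0\}$, so $\Lambda+\Lambda$ meets $P$ only in $0$. Therefore distinct elements of $\Lambda$ lie in distinct cosets of $P$, and $|\Lambda|\le 2^7/4=32$, contrary to the hypothesis. This is the same disjoint-translates count as in Theorem~\ref{b8:quarter}.

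With all three hypotheses in place, Theorem~\ref{b9:seven:boolean} supplies the antiperiod and the tiling. The only point requiring care is the normalization in the quarter-integrality step, namely that the factor $2/2^7$ combined with $W_B\in16\Z$ yields exactly $\tfrac14\Z$. I expect no genuine obstacle.
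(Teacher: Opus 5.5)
Your proposal is correct and follows essentially the same route as the paper. Both apply Theorem~\ref{b9:seven:boolean} to the signed indicator: the paper uses $1-2\mathbf{1}_B$, which differs from yours only by sign. Both get quarter-integrality from $16\mid W_B$, and both exclude a triangle by the four-disjoint-translates count $4|S|\le 128$, which is your coset-counting argument stated differently.
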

\begin{proof}
For $f=1-2\mathbf1_B$, balance holds and its normalized nonzero Fourier
coefficients are quarter-integral, since $W_B$ is divisible
by sixteen. Let $S$ be the orthogonal character family. Its difference
set avoids the support of $\widehat f$. If that support contained
$a,b,a+b$, the translates $S,S+a,S+b,S+a+b$ would be pairwise disjoint,
forcing $4|S|\le128$. Thus it is triangle-free. Theorem~\ref{b9:seven:boolean}
gives $B+t=\F_2^7\setminus B$.
\end{proof}
The Boolean theorem is independent of ambient dimension. The numerical
divisibility in this corollary is specialized to the needed dimension seven.

\subsection{The quotient closes every unequal pattern}
\begin{proof}[Proof of Theorem~\ref{b9:seven:main}]
Choose the two largest fibers, indexed by $i,j$. Their combined size
is greater than $32$, unless all four fibers have size sixteen.
Put $a=a_i+a_j$ and let
\[
 L=\{(u,0):u\cdot a=0\}\subset G.
\]
This is a two-dimensional subgroup in the point-coordinate label space.
For any $u\in L$, the two large fibers have the same sign in
$W_\Lambda(u,0)$. Their mass exceeds that of the other two
fibers, so this coefficient is nonzero. Consequently
$(E+E)\cap L=\{0\}$ and the quotient map $G\to G/L$ is injective on $E$.
Its image $\bar E$ has size $64$ and half density in $G/L\cong\F_2^7$.

The quotient characters are
$L^\perp=\langle a\rangle\times\F_2^6$. The difference $a$ of two
affine-basis labels has Hamming weight one or two, so the displayed cubic identity vanishes on
all of $L^\perp$. Hence every quotient evaluation word on $\bar E$
has weight divisible by eight.

Translate the two large frequency fibers by $(a_i,0)$. They become
\[
 S=(\{0\}\times A_i)\cup(\{a\}\times A_j)\subset L^\perp.
\]
Their pairwise character inner products are unchanged by frequency
translation, and descend unchanged to the injective image $\bar E$.
Thus $S$ is an orthogonal quotient family with more than $32$ members.
Corollary~\ref{b9:seven:half} gives an antiperiod $\bar t$ of $\bar E$.
Lifting it to $t\in G$, the eight-point subgroup
$L+\langle t\rangle$ complements $E$.
\end{proof}

\subsection{The one remaining pattern: an exact frontier}
Suppose all four fibers in the four-fiber model have size sixteen. Then
\[
 W_\Lambda(u,0)=0
 \quad\Longleftrightarrow\quad
 u\in P\setminus\{0\},\qquad P=\{0,3,5,6\}.
\]
The binary masks denote $e_1+e_2$, $e_1+e_3$, $e_2+e_3$.
At each fixed free coordinate, the point labels lie in a coset of $P$.
The line $\langle111\rangle$ avoids $P$, so projection along it is
injective on $E$. Let $Q\subset\F_2^8$ be the 64-point image.
Its quotient characters have even label parity, where the cubic weight identity vanishes.
Therefore $Q$ is triply even.

The three spectrum fibers at $e_1,e_2,e_3$, translated by $e_1$, descend
to an ordinary orthogonal character family of size $48$ on $Q$.
This is an exact reduction, not a full spectrum of $Q$.
A plane tiling complement for $Q$ would lift to a three-dimensional
complement for $E$. Completing the structured 48-character family to
a 64-character spectrum would also suffice, by the quarter-density
spectral tiling argument of Theorem~\ref{b8:quarter}. No such completion
has been proved here.

One useful necessary Fourier condition is particularly small. Put
$c=W_Q/16$. Then
\[
 c(0)=4,\qquad c(u)\in\mathbb Z,\qquad
 \sum_{u\ne0}c(u)^2=48,\qquad c*c=16c.
\]
Moreover its support cannot contain a three-dimensional subspace:
the eight translates of the 48-character family would otherwise be
pairwise disjoint in a group of size $256$.
These conditions have not yet yielded a plane complement.

The four-fiber conclusion concerns $E$, the member whose cubic moment
has contraction rank three. Applying it to the transposed pair proves
an assertion about the other member. In particular, it does not justify
assuming that every remaining pair has a rank-three tensor. The explicit
examples supplied with the code include spectral pairs of cubic ranks
$(5,5)$ and pairs with both cubic tensors zero; they have tiling complements
and satisfy the period and odd-plane-section filters. The unresolved
$64$-point problem includes these other tensor types as well as the
uniform four-fiber case.

\section{The eighth-density support assertion fails}\label{b9:eighth}
Lemma~\ref{b8:supportplane} is the central high-density step in dimension
eight. A possible replacement in dimension nine would assert that the
Fourier support of a set of density one eighth contains a three-dimensional
subspace. The following example shows that even adding spectrality does
not make this assertion true.

\begin{proposition}\label{b9:counterexample}
There is a spectral set $B\subset\F_2^9$ of cardinality $64$ whose Fourier
support contains no three-dimensional subspace. Both $B$ and the spectrum
in the construction nevertheless have linear subspace tiling complements.
\end{proposition}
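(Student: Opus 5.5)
The plan is to build the example as a product, so that the Fourier question reduces to a small ambient space and a finite check. Let $A\subset\F_2^6$ be the graph $A=\{(x,F(x)):x\in\F_2^3\}$ of a nonlinear permutation $F$ of $\F_2^3$. The first choice to try is the inversion map $x\mapsto x^{-1}=x^6$ on $\F_8$, written in a fixed basis. Put
\[
 B=A\times\F_2^3\subset\F_2^9 ,
\]
which has $64$ points and density one eighth. Then $W_B(u,v,w)=8\,W_A(u,v)\,\mathbf 1_{w=0}$. Hence $\supp W_B=\supp W_A\times\{0\}$, and any linear subspace of $\supp W_B$ is a subspace of $\supp W_A$ placed in the first six coordinates. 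The Fourier assertion therefore reduces to showing that $\supp W_A\subset\F_2^6$ contains no three-dimensional linear subspace.

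The tiling and spectral claims are immediate from Lemma~\ref{b8:lifting}.
\begin{enumerate}
\item Since $A$ is a graph over the first factor, $K_A=\{0\}\times\F_2^3$ is a linear tiling complement of $A$ in $\F_2^6$. Consequently $K=K_A\times\{0\}$ is a linear complement of $B$ in $\F_2^9$.
\item By Lemma~\ref{b8:lifting}, $\Lambda=K^\perp=\F_2^3\times\{0\}\times\F_2^3$ is a spectrum of $B$.
\item The spectrum $\Lambda$ is itself a linear subspace, so any linear complement of it, for example $\{0\}\times\F_2^3\times\{0\}$, is a tiling complement. Thus both $B$ and its spectrum have linear subspace tiling complements.
\end{enumerate}

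Next I would describe $\supp W_A$ explicitly. Write $W_A(u,v)=\sum_x(-1)^{u\cdot x+v\cdot F(x)}$.
\begin{enumerate}
\item For $v=0$, the sum equals $8\,\mathbf 1_{u=0}$.
\item For $u=0$ and $v\neq0$, the sum vanishes because $F$ is a permutation.
\item For $u\neq0\neq v$, the values are Walsh coefficients of the component $v\cdot F$. For inversion on $\F_8$ these lie in $\{0,\pm4\}$, and each nonzero $v$ has exactly four $u$ with $W_A(u,v)\neq0$.
\end{enumerate}
So $\supp W_A\setminus\{0\}$ has exactly $28$ points, all with both coordinates nonzero.

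The core is then to rule out a three-dimensional subspace $Z$ with $Z\setminus\{0\}\subseteq\supp W_A$. Such a $Z$ must meet $\F_2^3\times\{0\}$ and $\{0\}\times\F_2^3$ only in $0$, so it is the graph $\{(x,Mx)\}$ of an invertible linear map $M$, after identifying both coordinates suitably. A structural argument would use the explicit description of the $28$ points; for inversion they form a conic-type configuration, essentially $\{(u,v):\operatorname{Tr}(uv^{-1})\text{-type condition}\}$ after dualizing. The point is to show that no invertible linear graph fits inside it.

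I expect this exclusion to be the main obstacle. Rather than rely on that structure, I would settle it by a finite exact check: enumerate all $1395$ three-dimensional subspaces of $\F_2^6$ by reduced row-echelon bases, and confirm that each contains a nonzero vector outside $\supp W_A$. This is the same style of verification as the earlier certificates, with integer Walsh values and no tolerance.

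If inversion fails and some subspace survives, I would replace $F$ by another permutation of $\F_2^3$, or by a graph over $\F_2^4$ in $\F_2^7$ times $\F_2^2$, and search among the small number of affine-equivalence classes. In every variant the tiling and spectrum statements go through unchanged, since they depend only on the graph structure. Only the final subspace-avoidance check needs repeating.
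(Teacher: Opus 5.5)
Your construction is correct. It reaches, by a more structural route, an example that has the same shape as the paper's. The product and graph structure give spectrality and both subspace complements immediately, exactly as you say. So everything rests on showing that $\supp W_A\subset\F_2^6$ contains no three-dimensional subspace.

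You defer that step to an exhaustive check of the $1395$ three-spaces, with a fallback in case inversion fails. The check does succeed, and it can be replaced by a short argument.

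\begin{enumerate}
\item Use the characters $(x,y)\mapsto(-1)^{\operatorname{Tr}(ax+by)}$ of $\F_8^2$. This changes the dual coordinates by an invertible linear map, so it does not affect whether the support contains a subspace.
\item Since $x^{-1}=x^6$ and $\operatorname{Tr}(z)=\operatorname{Tr}(z^4)$, one has $\operatorname{Tr}(bx^{-1})=\operatorname{Tr}(b^4x^3)$. Expanding $b^4(x+b)^3$, the two middle terms cancel because $\operatorname{Tr}(b^5x^2)=\operatorname{Tr}(b^{20}x^8)=\operatorname{Tr}(b^6x)$. Hence, for $b\neq0$,
\[
 \operatorname{Tr}\bigl(b(x+b)^{-1}\bigr)=\operatorname{Tr}(bx^{-1})+\operatorname{Tr}(b^7)=\operatorname{Tr}(bx^{-1})+1 .
\]
\item Thus $h(x)=\operatorname{Tr}(ax+bx^{-1})$ satisfies $h(x+b)=h(x)+1+\operatorname{Tr}(ab)$, so $W_A(a,b)=0$ whenever $\operatorname{Tr}(ab)=0$. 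Together with $W_A(a,0)=0$ for $a\neq0$, this gives $\supp W_A\setminus\{0\}\subseteq\{Q=1\}$, where $Q(a,b)=\operatorname{Tr}(ab)$. This is the precise form of your ``conic-type'' description: the condition is $\operatorname{Tr}(ab)=1$, not a condition on $uv^{-1}$.
\item Suppose a three-space $Z$ lay in the support. Then $Q$ equals one at all seven nonzero points of $Z$. So its polar form $\beta\bigl((a,b),(a',b')\bigr)=\operatorname{Tr}(ab'+a'b)$ equals one on every pair of distinct nonzero vectors of $Z$. For a basis $z_1,z_2,z_3$ this contradicts $\beta(z_1,z_2+z_3)=\beta(z_1,z_2)+\beta(z_1,z_3)$.
\end{enumerate}

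The choice of an APN permutation matters, so your fallback to ``another permutation of $\F_2^3$'' is not harmless. For $F(x)=(x_1,x_2,x_3+x_1x_2)$, the diagonal $\{(w,w)\}$ lies in $\supp W_A$.

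\textbf{Comparison with the paper.} The paper takes an explicit $32$-point spectral pair $(L,A)$ in $\F_2^8$, coming from an order-$32$ Hadamard model, and sets $B=L\times\F_2$. It then verifies everything by exact computation:
\begin{enumerate}
\item the $29$-point support $S$;
\item its $56$ planes, none of which extends to a three-space inside $S$;
\item the subgroup complements $K_E,K_B$, by forming all sums;
\item an extra non-subgroup complement $T$ with $T+T=S$.
\end{enumerate}
Its example in fact has your shape. $L$ is a union of cosets of $\langle120,128\rangle$, so $B$ is a union of cosets of a three-dimensional subspace. The $28$ nonzero points of $S$ lie in a six-dimensional subspace: the first three coordinates times the even-weight vectors in coordinates four to seven. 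There they are exactly the points where a form $a\cdot\phi(b)$, with $\phi$ linear and invertible, equals one.

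Your route explains why such an example exists. It shows the phenomenon already occurs for eight points in $\F_2^6$, and with the argument above it needs no computation. The price is that your spectrum $\F_2^3\times\{0\}\times\F_2^3$ is itself a subspace, so the clause about the spectrum's complement is trivial. In the paper, the spectrum $A\times\F_2$ is not a subspace, and its complement $K_E$ requires a genuine check. Both versions prove the statement as written.
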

\begin{proof}
We identify integers from $0$ to $255$ with vectors of $\F_2^8$, the least
significant bit being the first coordinate, and write addition as binary
exclusive-or. Put
\[
\begin{split}
 L=\{&0,128,120,248,68,196,60,188,34,162,90,218,153,25,225,97,\\
     &238,110,150,22,83,211,43,171,207,79,183,55,141,13,245,117\},\\
 A=\{&0,1,2,3,4,5,6,7,8,16,32,56,64,88,104,112,\\
     &128,152,168,176,200,208,224,248,247,239,223,199,191,167,151,143\}.
\end{split}
\]
The two sets form a spectral pair in $\F_2^8$, as direct character sums
verify. Set $B=L\times\F_2$ and $E=A\times\F_2$. The tensor product
of their character matrix with the order-two Walsh matrix shows that
$(E,B)$ is a spectral pair in $\F_2^9$.

A direct Walsh transform gives $\operatorname{supp}W_B=S\times\{0\}$,
where
\[
\begin{split}
 S=\{&0,25,26,28,31,41,43,45,47,50,51,52,53,73,74,\\
     &77,78,84,85,86,87,98,99,102,103,121,123,124,126\}.
\end{split}
\]
There are $56$ two-dimensional subspaces contained in $S$, and for each
such plane $P$ and each $z\in S\setminus P$, at least one point of $z+P$
lies outside $S$. This finite test is exhaustive: a three-dimensional
subspace would be $P\cup(z+P)$ for any one of its planes and any point
outside that plane. The complete integer checker is printed in
Appendix~\ref{b9:appendix}.

For clarity, this is not a counterexample to Fuglede's conjecture.
The eight-point set
\[
 T=\{0,25,43,52,77,87,98,126\}
\]
satisfies $T+T=S$, and therefore complements $E$ by
Lemma~\ref{b8:certificate}. It is not a subgroup. Moreover direct
addition verifies the subgroup complements
\[
 K_E=\langle25,42,76\rangle,\qquad
 K_B=\langle1,2,4\rangle,
\]
embedded in the first eight coordinates. In each case the $64\cdot8$
sums are all distinct. Thus the failed assertion concerns a subspace
inside the Fourier support, rather than the existence of a subspace
tiling complement by other means.
\end{proof}


\section{The odd-prime case: reductions and direction lemmas}\label{p11:prelim}
\providecommand{\D}{\mathcal D}
\providecommand{\U}{\mathcal U}
\providecommand{\PG}{\mathrm{PG}}
We now prove Fuglede's conjecture in $\F_{11}^3$. The geometric starting
point is the argument of Fallon, Mayeli, and Villano \cite{FMV} for
$\F_7^3$. For a set $B\subset\F_p^3$, let $\D(B)\subset\PG(2,p)$
be the directions of its nonzero differences, and let $\U_B$ be the
normal directions for which all $p$ parallel planes have the same
intersection size with $B$. When convenient we write $D(B)$ for $\D(B)$.
Spectrality relates these two sets of directions by
\[
 \D(A)\subseteq\U_E,\qquad \D(E)\subseteq\U_A.
\]
Our proof retains this relation, the line and plane intersection bounds,
and the reduction from a nearly full pencil of uniform normals to parallel
layers. Additional moment identities and finite planar classifications
will be needed to finish the argument.

\begin{proposition}[The cardinalities to be excluded]\label{p11:cardinalities}
To prove Fuglede's conjecture in $\F_{11}^3$, it suffices to exclude
spectral sets of cardinalities $22,33,44,55,66,77$.
\end{proposition}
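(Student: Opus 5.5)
The plan is to assemble known facts about $\F_p^3$ and then apply Malikiosis's linear-programming bound.

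\emph{Tiling implies spectral.} Aten et al.\ \cite{Aten} proved the tiling-to-spectral implication in dimension three for every prime. It therefore suffices to show that every spectral set tiles.

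\emph{Cardinality divides $p^3$ or is a multiple of $p$.} Let $(E,\Lambda)$ be a spectral pair in $\F_{11}^3$.
\begin{itemize}
\item Since $E$ and $\Lambda$ are nonempty, the relation $\Lambda-\Lambda\subseteq Z(\widehat{1_E})\cup\{0\}$ shows that if $|E|>1$ then $\widehat{1_E}$ vanishes at some nonzero point.
\item A standard cyclotomic argument applies: a vanishing sum of $p$-th roots of unity has equal multiplicities. Hence $E$ is equidistributed on the $p$ parallel planes of the corresponding direction, so $p\mid |E|$.
\item The sizes $1$, $p$, $p^2$, $p^3$ are known to tile. This covers singletons and the whole group. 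It also covers $|E|=p$, since such a set is equidistributed along some direction and is a graph over a line, or is handled directly as in \cite{IMP,Aten}. The case $|E|=p^2$ follows from \cite{Aten}, where spectral sets of size $p^2$ (and $p^{d-1}$ generally) are shown to tile by a line through a nonzero vanishing direction argument.
\end{itemize}
So the cases that remain are $|E|=kp$ with $2\le k\le p-1$ and $|E|=kp^2$; the latter lie above the range $p^2$.

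\emph{Removing sizes above $p^2$.} For $|E|>p^2$, the spectrum $\Lambda$ has $|\Lambda|>p^2$. By pigeonhole, two points of $\Lambda$ differ by a vector in every direction, so $\D(\Lambda)$ is all of $\PG(2,p)$. Then $E$ is uniform in every direction, $\widehat{1_E}$ vanishes off zero, and $E$ is the whole group. The same pigeonhole applies to any set of size exceeding $p^2$ inside a plane-sized count. The intermediate range is therefore $|E|\in\{22,33,\dots,110\}$, with $k=2,\dots,10$.

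\emph{Linear programming.} Malikiosis \cite{Malikiosis} applied Delsarte's LP bound to the pair $(E,\Lambda)$, whose difference sets avoid each other's Fourier support. This excludes the three largest values $k=8,9,10$, that is, sizes $88,99,110$.

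\emph{Main obstacle.} The main difficulty is citing the LP exclusion precisely for $p=11$ rather than rederiving it; that is an external numerical fact. Recording that exclusion leaves exactly the six sizes $22,33,44,55,66,77$, which is the claimed reduction.
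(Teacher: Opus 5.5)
Your proposal is correct and follows the paper's route. It cites Aten et al.\ for tiling $\Rightarrow$ spectral and for the sizes $1,p,p^2,p^3$, uses the all-directions pigeonhole argument to dispose of $|E|>p^2$, and invokes Malikiosis's bound $p^2-(p-1)\sqrt p<|E|<p^2$, where $121-10\sqrt{11}<88$, to remove $88,99,110$. Two small slips do not affect the logic: the multiples of $p$ above $p^2$ are not all of the form $kp^2$ (your next paragraph handles every $|E|>p^2$ anyway), and you should state Malikiosis's inequality and check the arithmetic rather than quoting only its conclusion for $k=8,9,10$.
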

\begin{proof}
The reductions of Aten et al.\ \cite{Aten} show that tiling implies
spectrality in $\F_p^3$, that spectral sets of cardinalities $1,p,p^2,p^3$
tile, and that every other spectral cardinality is $mp$, $2\le m\le p-1$.
We recall direct proofs of the elementary parts below. Malikiosis
\cite[Theorem 1.4]{Malikiosis} excludes
\[
 p^2-(p-1)\sqrt p<|E|<p^2.
\]
For $p=11$, this excludes $88,99,110$, since
$121-10\sqrt{11}<88$. The six asserted cardinalities remain.
\end{proof}

\begin{lemma}[Elementary spectral reductions]\label{p11:prelims-basic}
Let $(E,A)$ be a spectral pair in $\F_p^3$, of size $mp$,
$2\le m\le p-1$. Each affine plane contains at most $p$ points of
either member, and each affine line contains at most $\min(m,p-m)$.
Neither uniform-normal set, and hence neither difference-direction set,
contains a complete projective line.
\end{lemma}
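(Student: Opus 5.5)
The plan has three steps: plane-uniformity from Fourier zeros, then the line bound, then the statement about projective lines.

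\emph{Step 1: uniform normals.} Let $(E,A)$ be a spectral pair in $\F_p^3$ of size $mp$ with $2\le m\le p-1$. For a nonzero difference $a-a'$ of points of $A$, orthogonality gives $\widehat{1_E}(a-a')=0$. Since $p$ is prime, the $p-1$ nonzero multiples $k(a-a')$ are Galois conjugate frequencies, so $\widehat{1_E}$ vanishes at all of them. Write $n_j=|\{x\in E: (a-a')\cdot x=j\}|$. These vanishings say that $\sum_j n_j\zeta^{kj}=0$ for every $k\ne0$. By Fourier inversion on $\F_p$, all $n_j$ are then equal, to $|E|/p=m$. This proves $\D(A)\subseteq\U_E$, and symmetrically $\D(E)\subseteq\U_A$.

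\emph{Step 2: planes and lines.} I expect the plane bound $|E\cap\Pi|\le p$ to be stated for a fixed affine plane $\Pi$. The plan is to show that if $|E\cap\Pi|>p$, then by pigeonhole $E\cap\Pi$ determines every direction inside $\Pi$. Concretely, for a direction $v$ parallel to $\Pi$, the $p$ lines in $\Pi$ parallel to $v$ cannot each hold at most one of more than $p$ points. Hence the whole projective line $\ell_\Pi$ of directions parallel to $\Pi$ lies in $\D(E)\subseteq\U_A$. Now take the normal $w$ of $\Pi$. Every plane through a line in direction $w$ has normal in $\ell_\Pi$, and these are all uniform for $A$, each plane meeting $A$ in $m$ points. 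Count $A$ through the $p+1$ planes containing a fixed line $L$ in direction $w$. This gives $(p+1)m-p|A\cap L|=|A|=mp$, so $|A\cap L|=m/p$, which is not an integer since $1\le m<p$. That contradiction gives $|E\cap\Pi|\le p$, and $A$ is handled symmetrically.

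For an affine line $L$ in direction $v$, suppose $|E\cap L|=k\ge2$. Then $v\in\D(E)\subseteq\U_A$. Choose any plane $\Pi\supset L$; every line of $\Pi$ parallel to $L$ is a translate of $L$ inside $\Pi$. Apply the uniformity of $A$ to planes containing the direction $v$. Then do the same double count as above with $E$ in place of $A$, counting along the pencil of planes through $L$ whose normals lie in $\U_E$. I expect to obtain $|E\cap L|\le m$ and, by complementing within the pencil, $|E\cap L|\le p-m$. My first attempt would be to count $E$ over the $p+1$ planes through $L$ using $|E\cap\Pi|\le p$ and the uniformity $m$ on the normals that lie in $\U_E$. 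This line bound is the step I expect to be the main obstacle: I do not yet see which uniformity hypothesis delivers both halves of $\min(m,p-m)$, and this may need the full strength of the plane bound together with a second pencil count.

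\emph{Step 3: no projective lines.} If $\U_E$ contained a whole projective line, that line is the set of normals of the planes through some fixed affine line $L$. The pencil count from Step 2 then gives $|E\cap L|=m/p\notin\Z$, a contradiction. The same argument applies to $\U_A$. Since $\D(A)\subseteq\U_E$ and $\D(E)\subseteq\U_A$ by Step 1, neither difference-direction set contains a complete projective line either.
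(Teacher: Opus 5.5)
Your Step 1, your plane bound, and Step 3 are correct and follow the paper's route. The paper gets equidistribution from divisibility by $1+X+\cdots+X^{p-1}$; your Galois conjugation plus inversion on $\F_p$ is equivalent. You should say explicitly that $(A,E)$ is also a spectral pair because the scaled character matrix is unitary, so its transpose has orthogonal rows; that is what justifies ``symmetrically''.

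The genuine gap is the line bound $|E\cap L|\le\min(m,p-m)$, which you do not prove. The fact you invoke, $v\in\D(E)\subseteq\U_A$, concerns equidistribution of $A$ and says nothing about how many points of $E$ lie in planes through $L$. What you need is a single plane through $L$ that is uniform for $E$, i.e.\ a direction of $\D(A)$ orthogonal to $v$. This comes from pigeonhole on the spectrum. We have $|A|=mp>p$, while the frequency space has only $p$ cosets of $v^\perp$. So two points of $A$ differ by a nonzero $d\in v^\perp$. Then $d\in\D(A)\subseteq\U_E$, and the plane through $L$ with normal $d$ contains exactly $m$ points of $E$. This gives $|E\cap L|\le m$.

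For the other half, ``complementing within the pencil'' is not an argument. Use the pencil identity $\sum_{\Pi\supset L}|E\cap\Pi|=mp+p\,a$, where $a=|E\cap L|$. The one uniform plane contributes $m$, and the remaining $p$ planes contribute at most $p$ each by your plane bound. Hence $mp+pa\le m+p^2$, so $a\le p-m+m/p$, and since $m<p$, integrality gives $a\le p-m$. The $p-m$ half therefore depends on the plane bound, which your ordering already supplies.
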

\begin{proof}
Write $\zeta=e^{2\pi i/p}$. A nonzero Fourier zero of $1_E$ gives an
integer polynomial
\[
 \sum_{t=0}^{p-1}|\{x\in E:x\cdot v=t\}|X^t
\]
vanishing at $\zeta$. It is a multiple of
$1+X+\cdots+X^{p-1}$, so its coefficients are equal. The square
character matrix of a spectral pair is unitary after scaling; its
transpose also has orthogonal rows. This proves both the symmetry and
the displayed inclusions of direction sets.

If an affine line has $a$ points of $E$, the $p+1$ planes through it
have total occupancy
\begin{equation}\label{p11:pencil}
 |E|+pa.
\end{equation}
A complete pencil of uniform normals would give $(p+1)m=mp+pa$,
which is impossible. A plane with $p+1$ points determines every
direction of its direction plane, by pigeonholing into parallel lines.
It would give a complete uniform pencil for the partner. This proves
the plane bound.

For a line of direction $v$, the spectrum has two points whose difference
belongs to $v^\perp$, because $|A|>p$ and there are $p$ cosets of
$v^\perp$. The line is therefore contained in a uniform plane, and
$a\le m$. In the pencil through that line one plane has occupancy $m$
and all others have occupancy at most $p$. Thus
$mp+pa\le m+p^2$, and integrality gives $a\le p-m$.
\end{proof}

For completeness, the cardinality reductions used above also have short
proofs. A spectral set of size greater than $p^2$ determines every
direction, so the spectrum has all nonzero Fourier coefficients zero.
Fourier inversion makes the spectrum the whole space. A spectral set of
size $p$ is a transversal of parallel planes, since any nonzero Fourier
zero gives equidistribution; it tiles by the corresponding plane.
A spectral set of size $p^2$ must miss a direction, since otherwise
its spectrum again has all nonzero coefficients zero. It is a transversal
of the lines in that direction, and tiles by the line.

Conversely, a tile in $\F_p^3$ has size $1,p,p^2$ or $p^3$. If its
size is $p$, its tiling complement has a nonzero Fourier coefficient at
some nonzero frequency, and the tiling identity forces a Fourier zero
of the tile there. It is a plane transversal and the corresponding
frequency line is a spectrum. If the tile has size $p^2$, choose a
nonzero difference $w$ in its size-$p$ complement. For every nonzero
$v\in w^\perp$, the complement has two points with the same dot
product with $v$, so it is not equidistributed there. Its Fourier
coefficient is nonzero, forcing the tile's coefficient to vanish.
Thus $w^\perp$ is a spectrum. The other two cardinalities are immediate.

\begin{lemma}[The parallel-plane criterion]\label{p11:parallel-planes}
Let $|E|=mp$, $2\le m\le p-1$, with every affine plane containing
at most $p$ points of $E$. If at least $p-1$ uniform normals lie in a
two-dimensional subspace $P$, then projection along $P^\perp$ is
injective on $E$, and $E$ is contained in $m$ parallel planes whose
normal direction belongs to $P$.
\end{lemma}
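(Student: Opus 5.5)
The plan is to work entirely inside the pencil of planes containing the direction $\ell=P^\perp$. A plane has its normal in $P$ exactly when it contains lines of direction $\ell$. So the planes with normals in $P$ are the preimages of lines under the projection $\pi:\F_p^3\to\F_p^3/\langle\ell\rangle\cong\F_p^2$. The $p+1$ planes through a fixed line of direction $\ell$ have normals running over the whole projective line $P$.

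\emph{Injectivity.} Let $L$ be a line of direction $\ell$, and suppose $|L\cap E|=a$. By the pencil identity \eqref{p11:pencil}, the $p+1$ planes through $L$ have total occupancy $mp+pa$. At least $p-1$ of these planes have uniform normals, so each of them has occupancy exactly $m$. The remaining at most two planes have occupancy at most $p$ by hypothesis. Hence
\[
 mp+pa\le (p-1)m+2p ,
\]
that is, $pa\le 2p-m<2p$, so $a\le1$. Thus $\pi$ is injective on $E$, and $\bar E=\pi(E)$ has $mp$ points. Not all $p+1$ normals of $P$ can be uniform, by Lemma~\ref{p11:prelims-basic}. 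If exactly one is non-uniform, the argument below simplifies, because the other count is then constant. So I would assume there are exactly two non-uniform normals $u,w\in P$. These are distinct projective points, so the functionals $u,w$ give coordinates $(t,s)$ on $\F_p^2$.

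\emph{Row--column decomposition.} Let $n_t$ be the number of points of $\bar E$ on the line $u=t$, and $k_s$ the number on the line $w=s$. Fix $y=(t,s)$ and count $\bar E$ over the $p+1$ lines through $y$. When $y\notin\bar E$, every point of $\bar E$ is counted once, so the total is $mp$. When $y\in\bar E$, the point $y$ is counted $p+1$ times and the others once, so the total is $mp+p$. The $p-1$ uniform lines contribute $(p-1)m$ in either case. Therefore
\[
 n_t+k_s=m+p\,1_{\bar E}(t,s).
\]
So $1_{\bar E}(t,s)=f(t)+g(s)$, where $f(t)=n_t/p$ and $g(s)=(k_s-m)/p$.

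\emph{Structure.} A $\{0,1\}$-valued function of the form $f(t)+g(s)$ must be a union of full rows or a union of full columns. If $f$ takes two distinct values, fixing any $s$ shows they differ by exactly $1$ and $g$ is constant, so $1_{\bar E}$ depends only on $t$. Otherwise $f$ is constant and $1_{\bar E}$ depends only on $s$. In either case $\bar E$ is a union of full parallel lines, each with $p$ points. Since $|\bar E|=mp$, there are exactly $m$ such lines. Pulling back by $\pi$, $E$ lies in $m$ parallel planes whose normal is $u$ or $w$, and both belong to $P$. This also matches the plane bound: each of these planes then contains exactly $p$ points of $E$.

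The only delicate step is bookkeeping the two exceptional normals. I need that the non-uniform normals are at most two and distinct, so that they really form a coordinate system. I also need that the counting through points $y\notin\bar E$ is exact and uses no bound. After that the rest is the elementary $0/1$ row-plus-column argument.
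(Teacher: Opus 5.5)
Your proof is correct and follows the paper's route: the same pencil count gives $a\le1$, your point-incidence identity $n_t+k_s=m+p\,1_{\bar E}(t,s)$ is the elementary counting form (the two-profile instance of the identity behind Lemma~\ref{p11:radon}) of the paper's Fourier-inversion step $f(x,y)=a(x)+b(y)$, and the $0/1$ row-or-column dichotomy is the same. One small repair: the criterion does not assume that $E$ is spectral, so instead of citing Lemma~\ref{p11:prelims-basic}, note directly from your pencil identity that $p+1$ uniform normals would give $(p+1)m=mp+pa$, i.e.\ $m=pa$, which is impossible for $2\le m\le p-1$.
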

\begin{proof}
The pencil through a line parallel to $P^\perp$ gives
$mp+pa\le(p-1)m+2p$, whence $a\le1$. The projection therefore has
Boolean indicator $f$. Its Fourier support is contained in at most
two lines. After a linear change of coordinates, Fourier inversion
writes $f(x,y)=a(x)+b(y)$. If $a$ is nonconstant, the difference of
two rows is a fixed nonzero number, necessarily $1$ or $-1$. Both rows
must then be constant, forcing $b$ constant. Thus $f$ depends on one
coordinate and is the indicator of $m$ parallel lines. Their inverse
images are the required planes. Their normal lies in $P$, which is
the dual of the projection quotient.
\end{proof}
This is the parallel-plane argument of \cite[Lemma 3]{FMV}, with the
plane bound and the location of the resulting normal made explicit.

\begin{lemma}[The planar projection identity]\label{p11:radon}
Project a set of size $11m$ onto $\F_{11}^2$, retaining its integer
multiplicity function $f$. Suppose $K$ line directions are uniform
with line sum $m$, and set $L=12-K$. Choose a uniform direction to be
vertical, and let $s_1,\ldots,s_L$ be the remaining slopes. If $g_i(t)$
is the mass on $y-s_i x=t$, then
\begin{equation}\label{p11:radon-identity}
 \sum_{i=1}^L g_i(y-s_i x)=m(L-1)+11f(x,y).
\end{equation}
The reduced polynomial of every $g_i$ has degree at most $L-2$.
For its coefficients $c_{i,j}$ of positive degree, put
$w_i=(\prod_{k\ne i}(s_i-s_k))^{-1}$. Then
\begin{equation}\label{p11:radon-null}
 c_{i,j}=w_iR_j(s_i),\qquad \deg R_j\le L-j-2.
\end{equation}
\end{lemma}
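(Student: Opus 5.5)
The plan is to work entirely in the discrete Radon transform on $\F_{11}^2$. Write $p=11$ and let $f$ be the projected multiplicity function, with total mass $11m$. For each of the $p+1=12$ directions, the line-sum functions are defined on the $p$ parallel lines. The uniform directions contribute the constant $m$. The identity \eqref{p11:radon-identity} should follow from the standard reconstruction: summing the line sums over all $12$ lines through a point $(x,y)$ counts $f(x,y)$ once from each line and every other point exactly once. Hence
\[
 \sum_{\text{all 12 directions}} (\text{mass on the line through }(x,y)) = 11f(x,y)+11m .
\]
The $K$ uniform directions contribute $Km$ to the left side. Moving them across gives $\sum_{i=1}^L g_i(y-s_ix)=11m-Km+11f=m(L-1)+11f$, using $K=12-L$. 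One caveat: the vertical direction is uniform by choice, so the $L$ remaining directions are all of finite slope and have the stated form.

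For the degree statements, I would expand each $g_i$ as its reduced polynomial in $t$ over $\F_{11}$, of degree at most $10$. The identity is an integer identity, and I reduce it modulo $11$ to get $\sum_i g_i(y-s_ix)\equiv m(L-1)$ as a polynomial function on $\F_{11}^2$. The left side is a sum of ridge polynomials $\sum_i\sum_j c_{i,j}(y-s_ix)^j$. The key step is to compare coefficients of $x^ay^b$ with $a+b=j$, $a,b\le10$ (reduced monomials). For each $j\ge1$ this gives $\sum_i c_{i,j}\binom{j}{a}(-s_i)^a=0$ for $0\le a\le j$. When $j\le 10$ the binomials are units mod $11$, so $\sum_i c_{i,j}s_i^a=0$ for $a=0,\dots,j$. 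Here I must watch the reduction $x^{11}=x$; for $j\le10$ no wraparound occurs.

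The conclusions then come from Vandermonde linear algebra. The vector $(c_{i,j})_i$ is orthogonal to $(s_i^a)_i$ for $a\le j$. If $j\ge L-1$ this includes $a=0,\dots,L-1$, an invertible Vandermonde system since the slopes are distinct, so $c_{i,j}=0$. This gives $\deg g_i\le L-2$. The case $L\le 1$ is trivial, and $L\le 11$ is automatic because a vertical uniform direction exists.

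For $1\le j\le L-2$, the annihilator of the moments $a\le j$ in $\F_{11}^L$ has dimension $L-j-1$. The vectors $(w_iR(s_i))_i$ with $\deg R\le L-j-2$ span a space of that dimension: Lagrange partial fractions give $\sum_i w_i s_i^k=0$ for $k\le L-2$, and the map $R\mapsto(w_iR(s_i))_i$ is injective. It remains to check that these vectors are annihilated by the moments, i.e.\ $\sum_i w_i s_i^a R(s_i)=0$ whenever $a+\deg R\le L-2$, which holds. Equality of dimensions yields \eqref{p11:radon-null}.

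I expect the main obstacle to be the bookkeeping modulo $11$. The integer identity must be read as a polynomial identity over $\F_{11}$. The constant term absorbs $m(L-1)$ and $11f$, which is legitimate, and only positive-degree coefficients are constrained. I must also confirm that the binomial factors do not vanish mod $11$ for $j\le 10$.
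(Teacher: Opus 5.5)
Your proposal is correct and follows essentially the same route as the paper: the twelve-line pencil count through a point, reduction modulo eleven with comparison of homogeneous coefficients (binomials invertible for $j\le 10$), full-rank Vandermonde for $j\ge L-1$, and, for $1\le j\le L-2$, the Lagrange identities $\sum_i w_i s_i^h=0$ ($h\le L-2$) combined with a dimension count of the kernel. Your explicit checks that no $x^{11}=x$ wraparound occurs and that $R\mapsto (w_iR(s_i))_i$ is injective just spell out steps the paper leaves implicit.
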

\begin{proof}
The twelve lines through a point have total mass $11m+11f(x,y)$.
Remove the $K$ constant families to obtain the identity. Modulo eleven,
represent each profile by its polynomial of degree at most ten. The
identity is formal, since its degree in each variable is at most ten.
For every $j\ge1$ its homogeneous coefficients give
$\sum_i c_{i,j}s_i^h=0$ for $0\le h\le j$; the binomial coefficients
are invertible. The Vandermonde matrix has full rank for $j\ge L-1$.
For lower $j$, its kernel is precisely the displayed family, by the
Lagrange identities $\sum_iw_is_i^h=0$ for $0\le h\le L-2$ and
the dimension of the kernel.
\end{proof}

\subsection{Incidence and finite planar direction bounds}
\begin{lemma}[Incidence obstruction to a low-degree curve]\label{p11:incidence}
Let $S\subset\mathrm{PG}(2,p)$ have $N$ points, with
$a\le |S\cap L|\le b$ on every projective line. Let $a\le d\le b$.
If a nonzero homogeneous polynomial of degree $d$ vanishes on $S$, then
\[
 N^2+\bigl[p-(a+d)(p+1)\bigr]N+ad(p^2+p+1)
 \le d(b-a)(b-d).
\]
\end{lemma}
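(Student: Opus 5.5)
The plan is a double count of incidences between $S$ and the projective lines, weighted by how each line meets the curve $Z=\{F=0\}$, where $F$ is the given nonzero homogeneous polynomial of degree $d$. For a projective line $\ell$ put $n_\ell=|S\cap\ell|$, so $a\le n_\ell\le b$. The standard counts in $\PG(2,p)$ are
\[
 \sum_\ell n_\ell=N(p+1),\qquad \sum_\ell n_\ell(n_\ell-1)=N(N-1),
\]
the sums running over all $p^2+p+1$ lines. The curve enters through a dichotomy: if $F$ does not vanish identically on $\ell$, then $F|_\ell$ is a nonzero binary form of degree $d$ and has at most $d$ zeros on $\ell$, so $n_\ell\le d$. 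Otherwise $\ell$ is a linear component of $Z$, so $\ell$ divides $F$ and there are at most $d$ such lines.

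I then choose a quadratic weight adapted to the interval $[a,b]$ and the threshold $d$. On non-component lines $a\le n_\ell\le d$, so $(n_\ell-a)(d-n_\ell)\ge0$. On component lines, where $d\le n_\ell\le b$, the same product is $\ge -(n_\ell-a)(n_\ell-d)\ge -(b-a)(b-d)$. Summing over all lines therefore gives
\[
 \sum_\ell (n_\ell-a)(d-n_\ell)\ \ge\ -d\,(b-a)(b-d),
\]
since at most $d$ lines contribute negatively.

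Next I expand the left side using the two incidence moments. Write
\[
 (n-a)(d-n)=-n^2+(a+d)n-ad \quad\text{and}\quad \sum_\ell n_\ell^2=N(N-1)+N(p+1)=N^2+pN .
\]
The left side then equals
\[
 -(N^2+pN)+(a+d)(p+1)N-ad(p^2+p+1).
\]
Rearranging yields exactly
\[
 N^2+\bigl[p-(a+d)(p+1)\bigr]N+ad(p^2+p+1)\le d(b-a)(b-d).
\]

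The only step needing care is the count of component lines: a degree-$d$ form has at most $d$ distinct linear factors, by unique factorization in $\F_p[X,Y,Z]$. A second point to check is that on component lines the bound $n_\ell\ge d$ is not needed, because the estimate $(n-a)(n-d)\le(b-a)(b-d)$ holds for all $n\in[a,b]$ when $d\ge a$. For $n<d$ the product is nonpositive, and the right side is nonnegative since $a\le d\le b$. That settles the hypothesis $a\le d\le b$. Everything else is the routine expansion above.
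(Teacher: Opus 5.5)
Your proposal is correct and is essentially the paper's own proof. It uses the same quadratic weight $(n_L-a)(n_L-d)$, which is nonpositive on every line not dividing $F$ (such a line has at most $d$ roots) and at most $(b-a)(b-d)$ on each of the at most $d$ factor lines. The sum is then evaluated with $\sum_L n_L=(p+1)N$ and $\sum_L n_L^2=N^2+pN$. Your observation that the factor-line bound holds for every $n\in[a,b]$, not only for $n\ge d$, is a detail the paper leaves implicit.
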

\begin{proof}
A line not dividing the polynomial contains at most $d$ roots. Thus
$(n_L-a)(n_L-d)\le0$ on every nonfactor line, whereas on each of at
most $d$ factor lines it is at most $(b-a)(b-d)$. Sum and use
$\sum_Ln_L=(p+1)N$ and $\sum_Ln_L^2=N^2+pN$.
\end{proof}
We shall apply this inequality repeatedly.
If the left side's minimum over integers exceeds the right side, no
such polynomial exists. Lower degrees are covered by multiplying by a
power of a nonzero linear form.

\begin{proposition}[Weighted incidence divisibility]\label{p11:weighted-incidence}
Let $(E,A)$ be a spectral pair in $\F_p^3$ of size $mp$, with $p$ odd
and $1\le m<p$. For a projective normal $v$, let
\[
 q_E(v)=\sum_t\binom{|E\cap\{x:v\cdot x=t\}|}{2},
 \qquad k_E(v)=\frac{q_E(v)-p\binom m2}{p},
\]
and similarly for $A$. Suppose the $k$'s are integers, and every nonzero
one is at least $a>0$. Put $V_E=\{v:k_E(v)>0\}$ and define $V_A$
similarly. Suppose every projective line meets each $V$ in at least
$r$ points. Define
\[
 C=\frac{m(p-m)}2,\qquad T=\frac{m(p^2-m)}2.
\]
If $C<(r+1)a$, then
\[
 C|V_E|=C|V_A|=rT.
\]
In particular, nonintegrality of $rT/C$ excludes the spectral pair.
\end{proposition}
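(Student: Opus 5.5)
The plan is to evaluate the sum of $k_E$ along each projective line of normals, use spectrality to find that sum exactly on the lines dual to points of $V_A$, and then double count incidences between $V_E$ and $V_A$.

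\emph{Step 1: sign and vanishing of $k$.} For a fixed normal $v$ the $p$ parallel planes carry total occupancy $mp$. The function $n\mapsto\binom n2$ is strictly convex on the integers. Hence $q_E(v)\ge p\binom m2$, with equality exactly when every plane has occupancy $m$. Thus $k_E(v)\ge0$, and $k_E(v)=0$ if and only if $v\in\U_E$; so $V_E$ is the complement of $\U_E$. The same holds for $A$.

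\emph{Step 2: the line sums.} The quantity $q_E(v)$ counts unordered pairs of points of $E$ whose difference lies in $v^\perp$. Fix a direction $w$ and let $\ell=w^\perp$ be the corresponding projective line of normals. A pair with difference direction $w$ is counted by all $p+1$ normals on $\ell$, and every other pair is counted by exactly one of them. Writing $n_w(E)$ for the number of pairs with difference direction $w$,
\[
 \sum_{v\in\ell}q_E(v)=\binom{mp}{2}+p\,n_w(E).
\]
Subtracting $(p+1)p\binom m2$ and dividing by $p$ gives
\[
 \sum_{v\in\ell}k_E(v)=\frac{m(p-m)}2+n_w(E)=C+n_w(E).
\]
Summing $q_E$ over all normals instead, each pair is counted $p+1$ times, and a similar computation gives $\sum_v k_E(v)=m(p^2-m)/2=T$.

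\emph{Step 3: exactly $r$ points on the relevant lines.} Let $w\in V_A$. Then $w\notin\U_A$, and since $\D(E)\subseteq\U_A$, the direction $w$ is not a difference direction of $E$; hence $n_w(E)=0$. The line $w^\perp$ therefore has $k_E$-sum exactly $C$. It contains at least $r$ points of $V_E$, each with $k_E\ge a$. Since $C<(r+1)a$, it contains exactly $r$ points of $V_E$, and all other points on it have $k_E=0$. Symmetrically, for $v\in V_E$ the line $v^\perp$ in the normal space of $A$ contains exactly $r$ points of $V_A$, using $\D(A)\subseteq\U_E$.

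\emph{Step 4: double counting.} Count pairs $(v,w)$ with $v\in V_E$, $w\in V_A$, $v\cdot w=0$, weighted by $k_E(v)$. Summing first over $w$, Step 3 gives $C|V_A|$. Summing first over $v$, Step 3 gives $r\sum_{v\in V_E}k_E(v)=rT$. Hence $C|V_A|=rT$, and interchanging the roles of $E$ and $A$ gives $C|V_E|=rT$. The exclusion statement follows at once, since $|V_E|$ is an integer. The only delicate point is Step 3: the hypothesis $C<(r+1)a$ is exactly what forces the count on each relevant line to be precisely $r$, which is what makes the double count exact.
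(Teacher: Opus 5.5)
Your proof is correct and follows essentially the same route as the paper. Your line-sum identity $\sum_{v\perp w}k_E(v)=C+n_w(E)$ is the paper's relation $t_E=Bk_E-C\mathbf{1}$, which the paper derives from the incidence-matrix identities $B^2=pI+J$ and $Bt_E=q_E$ where you count directly; the step forcing exactly $r$ points of $V_E$ on each line $w^\perp$, $w\in V_A$, and the weighted double count giving $C|V_A|=r\sum_v k_E(v)=rT$ are the same as in the paper.
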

\begin{proof}
Convexity of $\binom n2$ shows that $k_E\ge0$, with equality exactly
at the uniform normals. Let $t_E(d)$ count unordered pairs in $E$ with
difference direction $d$. Let $B$ be the projective point-line incidence
matrix defined by $B_{v,d}=1$ when $v\cdot d=0$. The polarity gives
\[
 B^2=pI+J,\qquad B\boldsymbol1=(p+1)\boldsymbol1,
 \qquad Bt_E=q_E.
\]
Also $\sum_dt_E(d)=\binom{mp}{2}$. Applying $B$ and simplifying gives
\[
 t_E=Bk_E-C\boldsymbol1,\qquad \sum_vk_E(v)=T.
\]
If $d\in V_A$, then $d\notin D(E)$ by spectrality, so
\[
 \sum_{v\perp d}k_E(v)=C.
\]
There are at least $r$ nonzero summands and, since each is at least $a$
and $C<(r+1)a$, at most $r$. Hence there are exactly $r$. Exchanging
the two sets shows that each point of $V_E$ is incident with exactly
$r$ points of $V_A$. Summing the displayed equation over $V_A$ gives
$C|V_A|=rT$; symmetry gives the other equality.
\end{proof}

\begin{lemma}\label{p11:six-point}
Every noncollinear six-point subset of $\F_{11}^2$ determines at least six directions.
\end{lemma}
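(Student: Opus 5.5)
We want: every noncollinear six-point set $X\subset\F_{11}^2$ determines at least six directions in $\PG(1,11)$, which has $12$ points. We use the standard tools for planar direction problems (Rédei-type counting over small sets), combined with a case split on the largest collinear subset.

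\textbf{Splitting by the longest line.} Let $k$ be the maximum number of points of $X$ on a line; noncollinearity gives $2\le k\le5$. Pick a line $\ell$ with $k$ points of $X$. The six points lie on the $11$ lines parallel to $\ell$, and we use this to produce many directions.

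\textbf{The cases $k=5$ and $k=4$.} If $k=5$, let $x$ be the point off $\ell$. The five segments joining $x$ to $\ell\cap X$ have five distinct directions, all different from the direction of $\ell$. That already gives six directions. If $k=4$, let $y,z$ be the two points off $\ell$. The point $y$ alone gives four directions to the points of $\ell$, and together with $\ell$ that makes five. Suppose only five directions occur. Then the four directions from $z$ coincide with those from $y$, and the direction $yz$ is among these five. This says that the two pencils of lines through $y$ and through $z$, each in the same four directions, cut $\ell$ in the same four points. The lines through $y$ and $z$ in a fixed direction $\delta$ meet $\ell$ in points related by a translation determined by $z-y$; since that translation has order $11$ and cannot preserve a set of four points, this is impossible unless $z-y$ is parallel to $\ell$. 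If $z-y$ is parallel to $\ell$, it is a translation along $\ell$, again without a $4$-point orbit. Alternatively, this finite configuration can be checked directly.

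\textbf{The cases $k=3$ and $k=2$.} If $k=2$, no three points are collinear, so the points form a $6$-arc. Each direction class then consists of disjoint pairs, so at most $3$ of the $15$ pairs share a direction, giving at least $5$ directions. Equality would force every direction class to be a perfect matching. In that case a standard parity/sum argument, or direct classification, applies: a set all of whose direction classes are perfect matchings of $6$ points is an affine-regular hexagon type configuration, which requires characteristic $3$ or a $-3$ square condition. We rule this out for $p=11$ by computation. For $k=3$, each direction class contains at most one triple plus pairs. We bound the number of pairs per direction by counting, and the leftover low-count cases reduce to a few normalized configurations. Normalize three points of $\ell$ as $(0,0),(1,0),(t,0)$ and one further point as $(0,1)$ via $\mathrm{AGL}(2,11)$.

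\textbf{Main obstacle and fallback.} The main obstacle is the equality cases, $k=2$ or $3$ with exactly five directions, where pure counting stalls at $5$. There I would rely on an exact finite enumeration, since the paper already uses such computations. Up to $\mathrm{AGL}(2,11)$, fix three noncollinear points as $(0,0),(1,0),(0,1)$ and enumerate the remaining three points among the $118$ others, about $\binom{118}{3}$ cases. For each case, count the distinct slopes and verify that the minimum is $\ge6$. This check alone proves the lemma, with the structural cases above as a sanity check.
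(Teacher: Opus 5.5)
Your closing brute-force check is a complete proof and is essentially the paper's argument. The paper also sends a noncollinear triple to $(0,0),(1,0),(0,1)$. Instead of testing all $\binom{118}{3}=266{,}916$ completions, it fixes one of the $36$ candidate direction sets $\{0,10,\infty,a,b\}$, keeps only points whose three directions to the triangle lie in that set, and finds that none of the resulting $1{,}566$ triples is compatible.

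Do not keep the case analysis as a ``sanity check'', because it contains false steps.
\begin{itemize}
\item For $k=4$, take $\ell$ to be the $x$-axis, $y=(0,1)$ and $z=(q,r)$. The two pencils then induce the map $a\mapsto q+ra$ on $\ell$, which is not a translation. The case $r=-1$ really occurs, for instance for $\{(\pm1,0),(\pm2,0),(0,\pm1)\}$, where the four directions from $y$ and from $z$ coincide. So the sixth direction must come from $yz$, which meets $\ell$ at the centre of symmetry, a point outside $X$.
\item For $k=3$, a direction class can contain two parallel triples, so ``at most one triple'' is false.
\item For $k=2$, an affine-regular hexagon determines six directions, so it is not the five-direction equality configuration you describe.
\end{itemize}
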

\begin{proof}[Finite certificate]
Normalize a noncollinear triangle to $T=\{(0,0),(1,0),(0,1)\}$. Any direction set of size at most five extends to
\[
 S=\{0,10,\infty,a,b\},\qquad 1\le a<b\le9.
\]
For each of the $36$ masks, enumerate every point outside $T$ whose
three directions to $T$ belong to $S$. Three additional points extend
$T$ to a six-point set only if all their pairwise directions also lie
in $S$. The complete enumeration in
\path{code/prime11/audit/checks/size66_77/independent66_checks.py}
checks $1,566$ candidate triples and finds no compatible triple.
The full program is printed in Appendix~\ref{p11:code}. It constructs
all candidate lists directly and uses exact arithmetic, so neither
an orbit classification nor an externally supplied list is needed.
\end{proof}

\begin{lemma}[Canonical completion of a nearly full graph]\label{p11:completion}
Let $p$ be an odd prime, and let $E\subset\F_p^2$ have $p-1$
points and a missing direction.
Then $e_0=-\sum_{e\in E}e$ is not in $E$, and
\[
 D(E\cup\{e_0\})=D(E).
\]
\end{lemma}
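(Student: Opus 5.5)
The plan is to use coordinates in which the missing direction is vertical, so that $E$ becomes the graph of a function with one value deleted, and then to recognise each other undetermined direction as a permutation property. Everything used is preserved by invertible linear maps: $e_0=-\sum_{e\in E}e$ transforms linearly, and directions transform by the induced projective map. So we may assume the missing direction is $(0,1)$.

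\textbf{Step 1: $E$ is a graph missing one abscissa.} No two points of $E$ share a first coordinate. Since $|E|=p-1$, the first coordinates are all elements of $\F_p$ except one value $x_0$. We may therefore write
\[
 E=\{(x,f(x)):x\ne x_0\}.
\]

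\textbf{Step 2: $e_0\notin E$.} Because $p$ is odd, $\sum_{x\in\F_p}x=0$. Hence the first coordinate of $\sum_{e\in E}e$ is $-x_0$, and $e_0$ has first coordinate $x_0$. No point of $E$ has this first coordinate, so $e_0\notin E$. Set $f(x_0)=-\sum_{x\ne x_0}f(x)$. Then $e_0=(x_0,f(x_0))$, and $E\cup\{e_0\}$ is the graph of the function $f$ on all of $\F_p$.

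\textbf{Step 3: the direction sets agree.} The inclusion $D(E)\subseteq D(E\cup\{e_0\})$ is trivial, so it suffices to show that every direction missed by $E$ is also missed by the enlarged set.
\begin{itemize}
\item The vertical direction stays missing, since the full graph still has distinct first coordinates.
\item Every other direction has the form $(1,c)$. It is determined by a graph precisely when the map $g_c(x)=f(x)-cx$ takes some value twice on the relevant domain.
\end{itemize}
Suppose $(1,c)\notin D(E)$. Then $g_c$ is injective on the $p-1$ points $x\ne x_0$, so it omits exactly one value $y^*$. Since the values of $g_c$ on these points together with $y^*$ exhaust $\F_p$, and $\sum_{y\in\F_p}y=0$, we get
\[
 y^*=-\sum_{x\ne x_0}\bigl(f(x)-cx\bigr)
 =-\sum_{x\ne x_0}f(x)-c\,x_0
 =f(x_0)-c\,x_0=g_c(x_0),
\]
where $\sum_{x\ne x_0}x=-x_0$ was used. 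Thus $g_c$ is a permutation of $\F_p$, and $(1,c)$ is not determined by $E\cup\{e_0\}$. This proves $D(E\cup\{e_0\})=D(E)$.

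The only point needing care is the bookkeeping in Step 3: the prescribed $e_0$ must be exactly the completion that fills the omitted value of $g_c$ simultaneously for every undetermined slope $c$. The displayed sum identity shows that this holds uniformly in $c$, because the missing value is always minus the sum of the others, and that quantity depends linearly on $c$ through $x_0$ alone.
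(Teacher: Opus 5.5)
Your proof is correct and is essentially the paper's argument written in coordinates: the paper notes that any linear form $\ell$ injective on $E$ omits exactly the value $-\sum_{e\in E}\ell(e)=\ell(e_0)$. Your first-coordinate map and your functions $g_c(x)=f(x)-cx$ are precisely the forms $(x,y)\mapsto x$ and $(x,y)\mapsto y-cx$ restricted to the graph. The paper's coordinate-free version simply avoids the normalization and handles the vertical direction and the slopes in a single step.
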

\begin{proof}
For any linear form $\ell$ injective on $E$, its values omit exactly one
field element. That element is $-\sum_{e\in E}\ell(e)=\ell(e_0)$,
because the sum of all field elements is zero for odd $p$.
Thus $e_0\notin E$ and every previously missing direction remains missing.
Adding a point cannot remove a determined direction, proving equality.
\end{proof}

\begin{lemma}[Ten-point direction bound over $\F_{11}$]\label{p11:ten-point}
Every ten-point set with at most three collinear points determines
at least ten directions.
\end{lemma}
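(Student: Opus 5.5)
We argue by contradiction: let $E\subset\F_{11}^2$ have ten points, at most three collinear, and suppose $|D(E)|\le 9$. Since $\PG(1,11)$ has twelve directions, at least three are missing. The plan mirrors Lemma~\ref{p11:six-point}: a structural reduction using the missing directions, then an exhaustive exact enumeration, with Lemma~\ref{p11:completion} used wherever a near-graph structure appears.

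\emph{Step 1: normalization.} Apply an affine map sending one missing direction to the vertical. Then $E$ is the graph of an injective partial function on ten of the eleven $x$-values. Since $p-1=10$, Lemma~\ref{p11:completion} applies: the completed set $E'=E\cup\{e_0\}$, with $e_0=-\sum e$, has $11$ points and the same direction set. In particular $E'$ is an eleven-point set missing at least three directions. The collinearity bound degrades by at most one, so $E'$ has at most four collinear points.

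\emph{Step 2: exploiting eleven points.} An $11$-point set missing a direction is a graph of a function $\F_{11}\to\F_{11}$. A second missing direction makes it a graph of a permutation polynomial after a shear. By R\'edei-type results (or, to stay self-contained, by enumeration), eleven points determining at most $(p+3)/2=7$ directions over a prime field form a line. A line is impossible here because at most four points are collinear. So $E'$ determines between $8$ and $9$ directions. The R\'edei/Lov\'asz--Schrijver classification for prime $p$ says that a nonlinear function graph determines at least $(p+3)/2$ directions, but it does not immediately exclude $8$ or $9$. I therefore expect to finish computationally.

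\emph{Step 3: finite enumeration.} Fix the normalization that three missing directions are $0,\infty,1$; this is possible because $\mathrm{PGL}(2,11)$ is sharply $3$-transitive on directions. Then $E'$ is the graph of a permutation $\sigma$ of $\F_{11}$ such that $\sigma(x)-x$ is also a permutation, i.e.\ an orthomorphism, normalized by $\sigma(0)=0$ via translation. We enumerate all orthomorphisms of $\Z_{11}$ with $\sigma(0)=0$; there are on the order of $10^3$--$10^4$ of them, which is easy to search by backtracking. For each one and each of its eleven points $e$, we test whether $E'\setminus\{e\}$ has at most three collinear points and at most nine directions. The lemma asserts that no survivor exists.

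The main obstacle is making this enumeration airtight and certifying it in the paper's style. A point to handle carefully is that the removed point $e_0$ is forced by the centroid condition, which we use as a check. The search should be written, like the six-point certificate, as exact arithmetic with candidate lists generated directly and no orbit classification assumed. If a human-readable argument is preferred, we would first try to show via the polynomial profiles of Lemma~\ref{p11:radon} with $L\le 9$ that $\sigma$ has low reduced degree, and then apply Lemma~\ref{p11:incidence}. I expect the computer check to be the reliable route.
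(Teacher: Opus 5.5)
Your proposal is correct and follows the paper's route. You complete the ten-point set to an eleven-point permutation graph via Lemma~\ref{p11:completion}, enumerate such graphs exhaustively, and check that no single-point deletion has line bound three. The paper normalizes two missing directions and the points $(0,0),(1,1)$, giving $9!$ permutations of which $1320$ survive, each with three non-concurrent four-point lines; your normalization of three missing directions to $0,\infty,1$ is an equally exhaustive, smaller search over normalized orthomorphisms. Your Step~2 misquotes R\'edei's bound (the graph of $x^{6}$ is a non-line determining exactly seven directions), but Step~3 never uses it.
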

\begin{proof}[Finite classification with three geometric witnesses]
A counterexample has at least three missing directions. Choose two as
coordinate directions and apply the completion lemma. The resulting
eleven-point set is a permutation graph, has at most four points on a
line, and still has at most nine directions. Normalize two points to
$(0,0),(1,1)$, leaving $9!$ permutations to examine.

The complete checker \path{code/prime11/audit/checks/size55/ten_point_completion.cpp} finds exactly
1320 such normalized permutations. They have exactly three four-point
lines each and form three affine orbits, with representatives
\[
\begin{array}{c|rrrrrrrrrrr}
x&0&1&2&3&4&5&6&7&8&9&10\\\hline
f_1(x)&0&1&2&3&5&7&4&9&6&10&8\\
f_2(x)&0&1&2&4&3&8&10&9&7&6&5\\
f_3(x)&0&1&2&4&6&5&9&3&10&8&7
\end{array}
\]
The three four-point lines $y=mx+b$ are respectively
\[
\begin{array}{c|ccc}
 & (m,b)&(m,b)&(m,b)\\\hline
f_1&(1,0)&(5,7)&(9,6)\\
f_2&(2,9)&(6,0)&(10,4)\\
f_3&(1,0)&(2,9)&(7,0)
\end{array}
\]
In each row the three lines have no common point. Deleting one point
therefore cannot reduce all line intersections to at most three.
This contradicts the assumed original set.

For coverage, the checker examines every permutation with the normalized
first two values. Its affine canonicalization tests every invertible
linear coordinate change whose two projections are injective, every
horizontal translation, and the uniquely determined vertical translation
and scaling restoring the first two values. These exhaust the affine
equivalences between normalized permutation graphs. Independently, it
checks that the intersection of all four-point lines is empty for every
one of the 1320 graphs, so the exclusion does not depend on the orbit
canonicalization.
\end{proof}


\section{\texorpdfstring{A computer-assisted exclusion of cardinality $22$}{A computer-assisted exclusion of cardinality 22}}
\label{p11:sec:size22}

The finite checks in this section are implemented by the standard-library
Python program \path{code/prime11/original/output/fuglede11_next/size22/verify22.py}; its complete output gives the counts recorded below. All searches use exact arithmetic in
$\F_{11}$, not numerical optimization. The geometric and algebraic
arguments below explain their coverage. Appendix~\ref{p11:code} records the programs, their inputs, and the
finite conclusions used here.

Write $D(B)$ for the projective directions of differences of $B$, and write
$U_B$ for the normal directions in which the eleven parallel-plane
occupancies of $B$ all equal two. We use the published facts of
Fallon--Mayeli--Villano: for a spectral pair $(E,A)$ of size $22$, every
line contains at most two points, every plane at most eleven, and
$D(A)\subseteq U_E$, $D(E)\subseteq U_A$. We also use their Lemma 3.
Neither $U_E$ nor $U_A$ contains a projective line: otherwise the twelve
planes through any affine line in the perpendicular direction would
all have occupancy two, giving $22+11a=24$ for an integer line occupancy
$a$, which is impossible. Consequently neither direction set contains
a projective line either.

\begin{lemma}\label{p11:lem:22two}
Neither member of a spectral pair of size $22$ can lie in two parallel
affine planes.
\end{lemma}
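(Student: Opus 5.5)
The plan is to suppose, after a linear change of coordinates, that $E\subseteq\{x_3=c_1\}\cup\{x_3=c_2\}$. By the plane bound each of the two planes contains at most eleven points, so each contains exactly eleven. Write $E_1,E_2$ for these slices, regarded as eleven-point subsets of $\F_{11}^2$. By symmetry of the spectral relation the same argument will apply to $A$. Every line carries at most two points of $E$, so each $E_i$ is an eleven-point planar set with no three collinear.

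The first step is to read off the uniform normals. A normal $v$ not proportional to $e_3$ meets both planes in parallel pencils of lines. If $g_1,g_2:\F_{11}\to\{0,1,2\}$ are the line-occupancy profiles of $E_1,E_2$ in the induced direction (suitably aligned), then $v\in U_E$ exactly when $g_1+g_2\equiv2$. In particular $E_1$ is uniform in that planar direction (all $g_1=1$) if and only if $E_2$ is. Since $D(A)\subseteq U_E$, and $A$ has $22$ points with at most two on any line, $D(A)$ is large. It cannot be contained in the single projective line $e_3^\perp$, since neither direction set contains a projective line. I will use \eqref{p11:pencil} and the Fourier-zero argument of Lemma~\ref{p11:prelims-basic} to convert the uniform normals into direction information for the slices. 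The condition $g_1+g_2=2$ says that the multiset $E_1\sqcup E_2$ meets every line of the corresponding planar direction exactly twice, equivalently that the planar polynomial profile of $1_{E_1}+1_{E_2}$ vanishes there.

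The second step is to bring in the differences of $E$ itself. These include $D(E_1)\cup D(E_2)\subseteq e_3^\perp$ together with the cross directions $E_2-E_1$, and all of them lie in $U_A$. An eleven-point set with no three collinear is not a line, so by R\'edei's bound it determines at least $(11+3)/2=7$ directions, all on the projective line $e_3^\perp$. Thus at least seven points of $e_3^\perp$ lie in $U_A$, while $U_A$ does not contain this whole line. I then apply the pencil identity to $A$ at lines parallel to such a direction, and the parallel-plane criterion (Lemma~\ref{p11:parallel-planes}) if $U_A$ contains $10$ points of $e_3^\perp$. The aim is to force $A$ into two parallel planes with normal on $e_3^\perp$, or to obtain a contradiction from $A$'s line bound. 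In the former case we are back in the symmetric situation, and the uniformity constraints from both sides can be intersected.

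The main obstacle is the residual configuration, where $E_1,E_2$ determine between $7$ and $10$ directions. Here a purely counting argument will likely not close, and I would finish with an exact finite enumeration in the style of Lemmas~\ref{p11:six-point} and~\ref{p11:ten-point}. Up to affine equivalence, I would list the eleven-point arcs (no three collinear) in $\F_{11}^2$ whose direction set omits at least two directions. Omitting a direction makes the set a permutation graph, and Lemma~\ref{p11:completion} normalizes it. For each pair $(E_1,E_2)$, with $E_2$ ranging over translates and images compatible with the complementary profile condition $g_1+g_2\equiv2$, I would then test whether the forced sets $U_E$ and $U_A$ can contain the required directions without containing a projective line. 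I expect the arc/profile enumeration to be small because of the rigid condition $g_1+g_2\equiv 2$ on at least seven directions, and to return no surviving configuration.
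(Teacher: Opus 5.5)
There is a genuine gap. Your direction count for the slices is much too weak, and this hides where the real work is. An eleven-point cap has $55$ pairs, and at most five of them are parallel to any given direction, because each line carries at most two points. So each slice already determines at least eleven of the twelve directions of its plane; Rédei's bound of $7$ is never needed. Since $D(E)\subseteq U_A$ and $U_A$ contains no projective line, the two slices determine the same eleven directions. Lemma~\ref{p11:parallel-planes} therefore always applies and places $A$ in two parallel planes. Your ``residual configuration'' with $7$ to $10$ directions is empty, and the proposed enumeration of arcs omitting two directions does no work. Lemma~\ref{p11:completion} is also beside the point here, since an eleven-point set omitting a direction is already a graph.

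The actual case is the one you dismiss with ``the uniformity constraints from both sides can be intersected'', and that is not an argument. What is missing is a quantitative comparison of $|U_E|$ with $|D(A)|$.

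\emph{Bounding $|U_E|$.} Fix a determined in-layer direction $d$. Each slice consists of five pairs parallel to $d$ and one singleton. For $v\in U_E\cap d^\perp$, every pair lies in a single plane $v\cdot x=t$, and each such plane has exactly two points. Hence the two singletons, which lie in different layers, must share a plane. This leaves at most one projective choice of $v$. The eleven lines $d^\perp$, together with the line $d_0^\perp$ of the missing direction, are the twelve lines through the layer normal, and the layer normal is not uniform. Therefore $|U_E|\le 11+11=22$.

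\emph{Bounding $|D(A)|$.} Write $A=P\times\{0\}\cup Q\times\{1\}$. Then $D(A)$ contains the eleven in-plane directions and $|Q-P|$ distinct cross-layer directions. If $|Q-P|=11$, then $Q-p$ would be the same set for every $p\in P$, so $Q$ would be invariant under a nonzero difference of $P$ and hence be an affine line. This contradicts the cap property, so $|Q-P|\ge 12$ and $|D(A)|\ge 23$.

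Since $D(A)\subseteq U_E$, this is a contradiction. This is how the paper concludes, and no finite search is needed.
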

\begin{proof}
Suppose $E$ does. Its two slices each have eleven points and no three
collinear. Each slice determines at least eleven directions: its $55$
pairs contribute at most five pairs in each direction. Since $D(E)$
does not contain a projective line, the union of their in-plane direction
sets has exactly eleven directions. The published parallel-plane
criterion therefore puts $A$ in two parallel planes as well.

Fix one of the eleven tangent directions $d$ determined by the slices.
In this direction each slice consists of five pairs and one singleton.
Thus $E$ consists of ten pairs parallel to $d$ and two singletons.
For a normal $v\in U_E\cap d^\perp$, every pair is wholly in one of the
two-point planes $v\cdot x=t$. The two singletons must therefore lie
in the same plane. If their difference is $s$, then $v\cdot s=0$.
Since the singletons lie in different layers, $s$ is not parallel to
$d$, and there is at most one projective direction satisfying both
conditions on $v$. The eleven projective lines $d^\perp$ form a pencil
through the layer normal $n$. The twelfth pencil line contains at most
eleven points of $U_E$, because $U_E$ contains no full line. Hence
$|D(A)|\le|U_E|\le22$.

On the other hand, write the two eleven-point slices of $A$ as
$P\times\{0\}$ and $Q\times\{1\}$ after an affine change of coordinates.
Their in-plane differences determine eleven directions. The cross-layer
directions are in bijection with $Q-P$. We have $|Q-P|>11$: otherwise all
the eleven-point sets $Q-p$, $p\in P$, coincide. Hence $Q$ is invariant
under a nonzero difference of $P$, and its size eleven forces it to be an
affine line, contradicting the cap condition. Consequently
$|D(A)|\ge11+12=23$, a contradiction.
\end{proof}

\subsection{Projection profiles}
Apply Lemma~\ref{p11:radon} with $m=2$. Thus the bad profiles satisfy
\begin{equation}\label{p11:eq:22radon}
 \sum_{i=1}^L g_i(y-s_i x)=2(L-1)+11f(x,y),
\end{equation}
and their degree-$j$ coefficients satisfy
\begin{equation}\label{p11:eq:22null}
 c_{i,j}=w_iR(s_i),\qquad \deg R\le L-j-2.
\end{equation}
Here $f$ takes values in $\{0,1,2\}$ and has total mass $22$.

\begin{lemma}\label{p11:lem:22nine}
If $U_E$ contains nine directions in a two-dimensional linear subspace,
then $E$ lies in two parallel planes.
\end{lemma}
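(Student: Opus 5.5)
The plan is to project $E$ along $P^\perp$, so that the uniform normals in $P$ become uniform line directions of a planar multiplicity function. The planar projection identity of Lemma~\ref{p11:radon} should then force each of the three remaining line profiles to be constant modulo eleven. Integrality and the plane bound finish the argument.

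\textbf{Setting up the projection.} Let $\pi:\F_{11}^3\to\F_{11}^3/P^\perp\cong\F_{11}^2$ be the projection along $P^\perp$, and let $f$ be the integer multiplicity function of $\pi(E)$, of total mass $22$. Affine lines of $\F_{11}^2$ pull back to affine planes of $\F_{11}^3$ whose normals lie in $P$. The twelve parallel classes of lines therefore correspond to the twelve projective points of $P$. A normal in $U_E\cap P$ gives a line direction in which every line has mass exactly $2$.

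\textbf{Applying Lemma~\ref{p11:radon}.} We apply it with $m=2$, $K\ge9$, and hence $L\le3$. If $K\ge10$, Lemma~\ref{p11:parallel-planes} applies directly, using the plane bound of Lemma~\ref{p11:prelims-basic}, so we may take $L=3$. For the three remaining slopes, the reduced polynomial of each profile $g_i$ modulo $11$ has degree at most $L-2=1$. Thus $g_i(t)\equiv a_i+b_it\pmod{11}$ for all $t\in\F_{11}$.

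\textbf{Ruling out a nonconstant profile.} Each value $g_i(t)$ is a nonnegative integer, and $\sum_tg_i(t)=22$. If $b_i\ne0$, then $g_i$ takes every residue class modulo $11$ once. Its total mass would then be at least $0+1+\cdots+10=55>22$, which is impossible. Hence $b_i=0$ and $g_i\equiv a_i\pmod{11}$ on all eleven parallel lines, where $a_i$ is taken in $\{0,\dots,10\}$.

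\textbf{Case analysis on $a_i$.} Since every value is at least $a_i$, we get $11a_i\le22$, so $a_i\le2$.
\begin{itemize}
\item If $a_i=2$, every value is exactly $2$, so this direction is uniform as well.
\item If $a_i=1$, the values lie in $\{1,12\}$ and sum to $22$. This forces a line of mass $12$, that is, an affine plane containing $12$ points of $E$. That contradicts the plane bound of Lemma~\ref{p11:prelims-basic}.
\item If $a_i=0$, the values lie in $\{0,11,22\}$. Mass $22$ on a single line again violates the plane bound. So exactly two parallel lines carry $11$ points each, and their preimages are two parallel planes containing $E$, which is the conclusion.
\end{itemize}
It remains to exclude the possibility that all three nonuniform directions have $a_i=2$. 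In that case all twelve normals of $P$ would be uniform, so $U_E$ would contain a complete projective line. This contradicts Lemma~\ref{p11:prelims-basic}, or equivalently the count $22+11a=24$ recalled before Lemma~\ref{p11:lem:22two}.

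\textbf{Main obstacle.} The only genuinely nontrivial input is the degree bound in Lemma~\ref{p11:radon}. I expect the main thing to check carefully is that it applies with the correct $L$ after the projection. That requires the uniform plane normals in $P$ to correspond exactly to uniform line directions of $f$ with line sum $m=2$. Once the degree is at most one, the rest is elementary counting.
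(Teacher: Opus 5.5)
Your proof is correct and follows the paper's route. You project along $P^\perp$, apply Lemma~\ref{p11:radon} with $L=3$ to get affine profiles, kill nonconstant ones by the $55>22$ mass count, and use the plane bound to leave only the constant-two or two-elevens profiles. The difference is minor: the paper uses the constant term $4\bmod 11$ of the Radon identity to show exactly one profile is of $0/11$ type, so $f$ is an indicator of two lines, whereas you read the two planes directly off any $0/11$ profile and exclude the all-uniform case by the no-full-line fact. Your separate $K\ge10$ case is unnecessary, since Lemma~\ref{p11:radon} applies to any nine chosen uniform directions.
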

\begin{proof}
Use the preceding projection with $K=9$ and $L=3$. Each $g_i$ is affine
modulo eleven. A nonconstant affine function takes every residue once,
so its nonnegative integer values sum to at least $0+1+\cdots+10=55$,
whereas every $g_i$ has sum $22$. Thus each $g_i$ is constant modulo
eleven. Since $0\le g_i\le11$, it is either constantly two or takes
values zero and eleven, with exactly two elevens. The constant term of
\eqref{p11:eq:22radon} is four modulo eleven, so exactly two of the three
functions are constantly two and the remaining one is of the latter
type. Equation \eqref{p11:eq:22radon} now says that $f$ is the indicator of
two parallel lines. Its inverse image contains $E$ in two parallel planes.
\end{proof}

\subsection{The finite lemmas and their coverage}

\begin{lemma}[Verified finite planar statements]\label{p11:lem:22finitecaps}
In $\F_{11}^2$ the following hold.
\begin{enumerate}
\item Every seven-point cap determines at least nine directions.
\item Every six-point cap determining at most six directions determines
exactly six, projectively equivalent to
$G_6=\{0,1,4,7,10,\infty\}$.
\item Every five-point cap determining at most five directions determines
exactly five, projectively equivalent to
$G_5=\{0,2,3,10,\infty\}$.
\item A cap whose parallel chords always have equal difference vectors
up to sign has at most six points.
\end{enumerate}
\end{lemma}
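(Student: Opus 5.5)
The four statements are finite assertions about point sets in $\F_{11}^2$, so I will prove them by an exact, exhaustive enumeration with normalizations that cut the search down and that provably lose no case. Throughout, a cap is a set with no three collinear points. Directions are the $12$ points of $\PG(1,11)$, written as slopes in $\F_{11}\cup\{\infty\}$. The affine group $\mathrm{AGL}(2,11)$ acts on caps, and through its linear part it acts on direction sets by $\mathrm{PGL}(2,11)$; this is the equivalence meant by ``projectively equivalent''.

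\textbf{Normalization and enumeration.} A cap of size at least three contains a triangle. An affine map sends that triangle to $T=\{(0,0),(1,0),(0,1)\}$, whose directions are $\{0,\infty,10\}$. Hence every cap is affinely equivalent to a cap containing $T$. I then grow caps by adding points in increasing lexicographic order, keeping the cap condition at each step. This gives every cap of size $5,6,7$ containing $T$, and for each one I record $|D|$ and $D$ itself. There are at most $\binom{118}{4}$ extensions of $T$ to seven points, and pruning by the cap condition makes this small, so no orbit theory is needed for coverage.

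For the first two items a more targeted search, in the style of Lemma~\ref{p11:six-point}, also works. Any direction set of size at most $8$ that contains $\{0,10,\infty\}$ can be padded to a mask $S$ with $|S|=8$; there are $\binom{9}{5}=126$ such masks. For each mask, I list the points $z\notin T$ whose three directions to $T$ lie in $S$. I then look for cliques among these points, where two points are joined when their direction lies in $S$ and the cap condition holds. A clique of size $4$ would be a seven-point cap with at most eight directions. Finding none proves item~1.

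For items~2 and~3, I use masks of sizes $6$ and $5$. The same search shows that every surviving cap has exactly that many directions. It also returns the explicit direction sets, and I reduce each one to a canonical form under $\mathrm{PGL}(2,11)$ by trying all $1320$ elements. The expected result is that every canonical form equals that of $G_6$, respectively $G_5$. Note that $G_6$ contains $0,10,\infty$, consistent with the normalization.

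\textbf{Item 4.} Interpret ``parallel chords have equal difference vectors up to sign'' as follows: whenever two chords $x-y$ and $x'-y'$ of the cap have the same direction, $x-y=\pm(x'-y')$. Here the cap condition is used: such chords are distinct, lie on parallel lines, and pair up into parallelograms. I enumerate caps containing $T$ with this property by the same incremental search, pruning as soon as a new point creates a parallel chord of a different length. Closure under this stronger condition is very restrictive, so I expect the search to die at seven points; exhibiting a six-point example, for instance an affine hexagon image, shows the bound is sharp if needed. A conceptual route also exists: with $n$ points there are $\binom n2$ chords, each direction carries at most two chords that must form a parallelogram, and counting parallelograms against the $12$ directions gives $\binom n2\le 2\cdot 12$. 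That already bounds $n\le7$, leaving only the seven-point case for the machine.

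\textbf{Main obstacle.} The hard part is the equivalence claims in items~2 and~3, not their existence. I must show that every extremal direction set is a $\mathrm{PGL}(2,11)$-image of the stated representative, and I must certify coverage when the triangle normalization fixes only part of the group. I will handle this by canonicalizing the direction set directly over all of $\mathrm{PGL}(2,11)$, independent of how the cap was normalized. As in the paper's other checks, I will run a second, independently written enumeration that starts from a different base triangle, such as a different lexicographic root, and compare the resulting counts.
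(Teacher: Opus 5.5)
Your main line is the paper's own proof: normalize a triangle to $(0,0),(1,0),(0,1)$, run over all direction masks containing $0,-1,\infty$ (the $126$, $84$, $36$ masks for items 1--3) with exhaustive compatible-cap extension, identify the surviving direction sets with the $\mathrm{PGL}(2,11)$-orbits of $G_6$ and $G_5$, and for item~4 run the incremental equal-chord search. This is correct, and the search dying at seven points suffices for item~4 because both conditions are hereditary and affine-invariant.

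Drop your side remarks on item~4, however, since both are false. First, a direction can carry three chords in such a cap. The cap $(0,0),(0,1),(1,0),(1,1),(2,4),(2,5)$ satisfies the equal-chord condition and has three vertical chords, each equal to $(0,1)$, so your count $\binom n2\le 24$ is unjustified. Second, an affine-regular hexagon is not an example, because its long diagonal parallel to a pair of sides has twice their length.
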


Here and below a cap means a set with no three collinear. For the first
three assertions, normalize three points to $(0,0),(1,0),(0,1)$.
Their slopes are $0,\infty,-1$. The program enumerates every allowed
direction set containing these slopes and then every compatible cap
extension of the triangle. There are respectively $126$, $84$, and $36$
allowed masks. The seven-point search exhausts $6756$ states without a
seven-point cap. The six-point search finds ten supporting masks, all
in the $110$-element projective orbit of $G_6$; the five-point search
finds six masks, all in the $132$-element orbit of $G_5$. A separate
exhaustive check of the $36$ normalized five-direction masks finds no
six-point cap, so the six-point direction sets have exactly six elements.
For five points, the elementary pair count already gives five directions.

For the fourth assertion, normalize a triangle as above and retain, for
each direction, the unique permitted difference vector up to sign.
Reject any insertion creating a collinear triple or a different chord
length in an already used direction. The search exhausts $890$ states;
its maximum size is six and no seven-point configuration survives.
Both conditions and the triangle normalization are affine-invariant.

\begin{lemma}[Verified line-count polynomial statements]\label{p11:lem:22finiteprofiles}
There is no nonconstant polynomial of degree at most three whose
eleven values, interpreted as integers, lie in $\{0,\ldots,6\}$ and sum
to $22$. The following projection configurations are also impossible:
\begin{enumerate}
\item plane occupancies at most six and six uniform directions $G_6$;
\item plane occupancies at most five and any six uniform directions;
\item plane occupancies at most five and five uniform directions $G_5$;
\item plane occupancies at most four and any five uniform directions.
\end{enumerate}
In the first two cases assume that no projective line has more than six
uniform directions; in the last two assume the upper bound five.
\end{lemma}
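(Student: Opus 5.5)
The plan separates the statement into two parts: a purely arithmetic assertion about low-degree polynomials over $\F_{11}$, and four projection configurations. Each projection configuration is then reduced, via the planar projection identity of Lemma~\ref{p11:radon}, to a finite enumeration of polynomial profiles.

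For the first assertion I would argue directly. Let $g(t)=c_0+c_1t+c_2t^2+c_3t^3$ be nonconstant, with values in $\{0,\ldots,6\}$ whose integer sum is $22$. Summing a polynomial of degree at most three over all of $\F_{11}$ gives $\sum_t g(t)\equiv 11c_0\equiv0\pmod{11}$, since $\sum_t t^h=0$ for $1\le h\le 9$. Thus $22\equiv0$ is automatically consistent, so the constraint carries no information. The substance is an enumeration. Up to affine substitution $t\mapsto \alpha t+\beta$ (which preserves degree and the multiset of values), normalize $c_3\in\{0,1\}$ and the next coefficient suitably. Then enumerate the at most $11^4$ coefficient vectors, checking the value range and the sum. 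I would present this as an exhaustive exact check with a printed count of zero survivors. A short hand argument is plausible for degree one: a nonconstant affine map takes every residue, so some value exceeds $6$. For degrees two and three I expect only the computer check to be clean.

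For the four configurations, project $E$ along the relevant direction as in Lemma~\ref{p11:radon} with $m=2$, so $f\in\{0,1,2\}$ has mass $22$. The assumed uniform directions give $K=6$ or $K=5$, hence $L=6$ or $L=7$. By \eqref{p11:eq:22null}, every nonuniform profile $g_i$ then has degree at most $L-2$. The plane-occupancy bound says each $g_i$ takes integer values in $\{0,\ldots,b\}$, where $b=6,5,5,4$ respectively, and sums to $22$. The hypothesis that no projective line carries more than $6$ (resp.\ $5$) uniform directions lets us choose the projection so that the uniform directions visible in the plane are exactly the prescribed $K$, rather than a larger set.

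The enumeration then proceeds as follows.
\begin{enumerate}
\item Fix the uniform slopes. In cases 1 and 3 these are $G_6$ and $G_5$ up to projective equivalence. In cases 2 and 4 they are all $K$-subsets of $\PG(1,11)$ modulo $\mathrm{PGL}(2,11)$, which is a small number of orbits.
\item Compute the remaining slopes $s_i$ and the weights $w_i$.
\item Parametrize each $g_i$ by the shared polynomials $R_j$ of \eqref{p11:eq:22null} together with free constant terms.
\item Impose the value-range and sum constraints on every $g_i$.
\item Check whether \eqref{p11:eq:22radon} produces an $f$ with values in $\{0,1,2\}$ and total mass $22$.
\end{enumerate}
The first assertion prunes heavily here: any $g_i$ of degree at most three with range $\{0,\ldots,6\}$ must be constant. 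Every surviving profile must therefore be constant or have degree four or five, which severely restricts the $R_j$. Constant profiles equal to $2$ mean additional uniform directions, contradicting the maximality built into the choice of $K$. Any other constant value is ruled out by the mass $22$ together with the range bound, since $11\cdot c=22$ forces $c=2$.

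The main obstacle is the size of the search over the $R_j$ in cases 2 and 4, where the slope set is arbitrary and the degree allowance $L-2=4$ or $5$ is highest. I would first try to use the coupling $c_{i,j}=w_iR_j(s_i)$ to determine all top-degree coefficients from a few, enumerate those over $\F_{11}$, and filter each profile independently by range and sum before assembling $f$. If this remains too large, the fallback is to fix one nonconstant profile from a precomputed list of admissible degree-$\le5$ polynomials with bounded values and sum $22$, and propagate it through the linear relations.
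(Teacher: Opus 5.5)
Your plan is essentially the paper's argument. Both use the projection identity with $m=2$ and the Vandermonde bound $\deg g_i\le L-2$. Both use the cubic exclusion to force every nonuniform profile to be a genuine quartic, or a quartic or quintic when $L=7$. Both rule out constant profiles by the bound on uniform directions, and then run an exhaustive coefficient enumeration over normalized slope sets. The one device you do not name is the paper's translation of the projected plane, which replaces each argument by $t+H+Js_i$ and thereby centers all profiles at once: for $L=6$ it kills the cubic coefficients $w_i(a+bs_i)$, and in the $L=7$ quartic branch it reduces $R$ to $cs^2$ or $c$. This is what shrinks the search you flagged as the main obstacle to a few thousand tuples, although your backtracking over precomputed profile lists would also be exhaustive.
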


We give the exact search parametrizations, so the asserted exclusions
do not rely on an unrestricted solver. There are $385$ nonconstant
quartics with values in $\{0,\ldots,6\}$ summing to $22$. Every such
quartic has eleven translates, and exactly one translate has cubic
coefficient zero. There are $35$ centered quartics. Of these, $15$ have
maximum value at most five and $10$ have maximum at most four.
There are $74$ centered quintics with values at most five and sum $22$;
centering a quintic means that its degree-four coefficient is zero.
Of these, $24$ have maximum value at most four. All these lists are
obtained by enumerating polynomial coefficients, not by interpolation
from a selected sample.

For six uniform directions, $L=6$ and the bad profiles have degree at
most four. None is uniform under the hypothesis on the total number of
uniform directions. The low-degree exclusion therefore makes every
bad profile a quartic. By \eqref{p11:eq:22null}, their leading coefficients
are $\lambda w_i$ for one nonzero $\lambda$. Their cubic coefficients
are $w_i(a+bs_i)$. Translating the projected coordinates replaces the
argument of each profile by $t+H+Js_i$, and hence can make all cubic
coefficients zero simultaneously. The program tests every selection of
the centered quartics with the specified leading coefficients, and
checks all remaining quadratic, linear and constant coefficients in
\eqref{p11:eq:22radon}. The constant required here is ten modulo eleven.
For $G_6$ this tests $10{,}000$ tuples, with no identity. For all $84$
normalized six-direction masks with the occupancy bound five, it tests
$8880$ tuples, again with no identity.

For five uniform directions, $L=7$ and the profiles have degree at most
five. There are two exhaustive branches. If a quintic occurs, every
profile is quintic and the leading coefficients are $\lambda w_i$.
Translation makes all degree-four coefficients zero. The degree-three
coefficients then have the form $w_i(a+bs_i+cs_i^2)$. Enumerating
$\lambda,a,b,c$, and the centered profiles with those coefficients,
leaves only the degree-two, degree-one and constant equations to test.

Otherwise the profiles are quartics. Their leading coefficients are
$w_i(a+bs_i)$. They must all be nonzero, since a profile of degree at
most three would be constantly two and give an extra uniform direction.
The cubic coefficients are $w_iR(s_i)$ with $\deg R\le2$. Translation
changes $R(s)$ by a multiple of $(a+bs)(H+Js)$. If $a\ne0$, every
translation class has a representative $R(s)=cs^2$; if $a=0$, every
class has a representative $R(s)=c$. Thus these two forms cover all
quartic configurations. Again the remaining coefficient equations are
checked, now with required constant $2(7-1)=1$ modulo eleven.

For $G_5$ and occupancy bound five, the quintic branch tests $3602$
profile tuples and the quartic branch $9900$, with no identities.
For all $36$ normalized five-direction masks and occupancy bound four,
the corresponding counts are $552$ and $19{,}800$, again with no
identities. The hypotheses ensure that omitted profiles of degree at
most three would be uniform, so the two branches are exhaustive.

\subsection{Successive reductions and the final contradiction}

\begin{theorem}\label{p11:thm:22}
There is no spectral pair
of cardinality $22$ in $\F_{11}^3$.
\end{theorem}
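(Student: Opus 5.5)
The plan is to derive a contradiction from a hypothetical spectral pair $(E,A)$ of size $22$ by showing that some uniform-normal set is forced to contain nine directions in one pencil. Lemma~\ref{p11:lem:22nine} and Lemma~\ref{p11:lem:22two} then finish the argument.

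First I would bound $|U_E|$ and $|U_A|$ from below. Pick any plane of $E$ meeting it in $k$ points; by Lemma~\ref{p11:prelims-basic} we have $k\le11$, and the slice is a cap. Its in-plane directions lie in $D(E)\subseteq U_A$, and they lie in a single pencil, namely the projective line dual to the plane. A large slice would therefore already give many uniform normals of $A$ in one pencil.

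Next I would average. The eleven planes in a fixed direction carry $22$ points, so some plane holds at least two. Using \eqref{p11:pencil} over pencils, some plane should hold at least five or six points. Lemma~\ref{p11:lem:22finitecaps}(1)--(3) then gives at least six directions, or exactly the configurations $G_6$, $G_5$. This yields a pencil of $U_A$ containing at least five or six uniform directions. I would then project $A$ along the dual line. Since the planes of $A$ have occupancy at most $11$, the projection identity \eqref{p11:eq:22radon} holds with $L=12-K$.

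The core is a successive-reduction loop. Lemma~\ref{p11:lem:22finiteprofiles} excludes the configurations with five or six uniform directions in a pencil under the occupancy bounds. So either the pencil contains more uniform directions (at least seven), or some plane has occupancy above the stated bound. The latter gives a larger cap slice and, via Lemma~\ref{p11:lem:22finitecaps}, more directions in a pencil on the other side. Iterating and swapping the roles of $E$ and $A$ should raise the count. With at least seven uniform directions, the low-degree exclusion (no nonconstant cubic profile with values in $0..6$ summing to $22$) forces the bad profiles to be constant until nine are reached. Lemma~\ref{p11:lem:22nine} then places a member in two parallel planes, contradicting Lemma~\ref{p11:lem:22two}. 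Lemma~\ref{p11:lem:22finitecaps}(4) would handle the degenerate case where slices have constant chord vectors, that is, where a projection in a tangent direction collapses pairs. The condition that $U_E$ contains no full line keeps the counts honest: each pencil has at most eleven uniform directions, so $K\le11$ and $L\ge1$.

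The main obstacle is the bookkeeping that ties the occupancy bounds to the direction counts so that every branch lands in exactly one hypothesis of Lemma~\ref{p11:lem:22finiteprofiles}. In particular, the hypotheses ``no line has more than six (resp.\ five) uniform directions'' must be arranged so that failure of them is itself the progress step. I would organize this as a case split on the maximal plane occupancy $M\in\{\le4,5,6,\ge7\}$ of the two members, handling $M\ge7$ by Lemma~\ref{p11:lem:22finitecaps}(1) (nine directions in one pencil, hence Lemma~\ref{p11:lem:22nine}), and the smaller values by the corresponding items of Lemma~\ref{p11:lem:22finiteprofiles}.
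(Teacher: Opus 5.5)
Your endgame does not work. The loop is meant to start from the claim that \eqref{p11:pencil} forces some plane to contain five or six points, but it does not. Through a secant line the twelve planes carry $22+11\cdot 2=44$ points, an average below four, and no counting of this kind rules out every plane meeting $E$ and $A$ in at most four points.

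Moreover, the profile lemmas only ever give \emph{upper} bounds. Read correctly, your case analysis is a descending chain:
\begin{enumerate}
\item Occupancy $\ge7$ is excluded by Lemma~\ref{p11:lem:22finitecaps}(1) together with Lemmas~\ref{p11:lem:22nine} and~\ref{p11:lem:22two}.
\item With occupancy at most six, seven uniform directions in a pencil are excluded at once by the cubic-profile check. The five remaining profiles would be constantly two, with constants summing to $10\not\equiv 8\pmod{11}$.
\item A six-point slice is then forced to be $G_6$ and is excluded. Item~(2) of Lemma~\ref{p11:lem:22finiteprofiles} then caps pencils at five.
\item A five-point slice is forced to be $G_5$ and is excluded. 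Item~(4) then caps pencils at four.
\end{enumerate}
The chain ends in the regime where every plane meets either member in at most four points and every projective line meets $U_E$ and $U_A$ in at most four points. There no profile lemma applies, and nine uniform directions in a pencil are flatly impossible. Your strategy therefore has nothing left to use at exactly the point where a contradiction is still needed.

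The missing idea is a pair count for that terminal regime. Since $D(E)\subseteq U_A$ meets every projective line in at most four points, $|D(E)|\le 1+12\cdot 3=37$. Any four coplanar points of $E$ form a cap with at most four directions, hence a parallelogram. So fix $d\in D(E)$: any two of the $r_d$ secants in direction $d$ span a plane containing exactly those four points of $E$. Consequently all these secants have the same vector $v$ up to sign.

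Project their initial points modulo $\langle v\rangle$. The result is a planar cap whose parallel chords are equal up to sign. To see this, lift two parallel chords as $w$ and $\lambda w+\mu v$; the condition $\{\mu/\lambda,(\mu+1)/\lambda,(\mu-1)/\lambda\}=\{0,1,-1\}$ forces $\mu=0$ and $\lambda^2=1$. Lemma~\ref{p11:lem:22finitecaps}(4) then gives $r_d\le6$, and
\[
 231=\binom{22}{2}=\sum_{d\in D(E)}r_d\le 6|D(E)|\le 6\cdot37=222,
\]
a contradiction. You invoke item~(4) only for an unspecified ``degenerate case''. It is in fact the heart of the final step, and without this count your proposal does not close.
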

\begin{proof}
Both $E$ and $A$ are caps. If either has seven points in an affine plane,
Lemma \ref{p11:lem:22finitecaps}(1) gives nine directions of uniformity for
the other in a linear plane. Lemmas \ref{p11:lem:22nine} and \ref{p11:lem:22two}
give a contradiction. Thus every affine plane contains at most six
points of either set.

If $U_E$ has at least seven directions in a linear plane, choose seven
of them in \eqref{p11:eq:22radon}. The remaining five profiles have degree
at most three and values at most six. The profile check forces them all
to be constantly two, which contradicts the identity (its constant
terms would be $10$ and $8$ modulo eleven). Consequently every projective
line meets $U_E$ in at most six points, and likewise for $U_A$.

If an affine plane contains six points of $A$, their direction set lies
in $U_E$ and hence has at most six directions. The six-cap classification
reduces it to $G_6$. The first profile exclusion is a contradiction.
By symmetry, the plane bound improves to five. The second profile
exclusion now improves the bound on every projective-line intersection
with either $U$ to five.

If a plane contains five points of $A$, those points determine exactly
five directions and the five-cap classification reduces them to $G_5$.
The third profile exclusion is a contradiction. Thus the plane bound
improves to four. The fourth profile exclusion improves every
projective-line intersection with either $U$ to at most four.

Since $D(E)\subseteq U_A$, every projective line meets $D(E)$ in at most
four points. Taking the twelve lines through one point of $D(E)$ gives
\begin{equation}\label{p11:eq:22directions}
 |D(E)|\le1+12\cdot3=37.
\end{equation}
Every four coplanar points of $E$ form a parallelogram: they are a cap
and determine at most four directions. Among their six pairs, two
distinct directions must occur twice, forcing both pairs of opposite
sides to be parallel.

Fix a determined direction $d$. Since $E$ is a cap, each secant parallel
to $d$ contains exactly two points. Any two such secants lie in an
affine plane with four points of $E$, hence form a parallelogram.
Their difference vectors therefore agree up to sign. Choose one
nonzero vector $v$ in this direction and orient every secant consistently,
writing their endpoints as $b_i,b_i+v$, $1\le i\le r_d$.

Project the $b_i$ to the quotient by $\langle v\rangle$, obtaining a
planar set $C$ of size $r_d$. It is a cap: three collinear projected
points would put six points of $E$ in an affine plane. We claim that
parallel chords of $C$ have equal vectors up to sign. Choose lifts of
two parallel chord vectors as $w$ and $w'=\lambda w+\mu v$, with
$\lambda\ne0$. The first pair of secants determines the four directions
\[
 [v],\quad[w],\quad[w+v],\quad[w-v].
\]
These exhaust the directions of $E$ in their linear plane. The second
pair of secants therefore forces
\[
 \left\{\frac\mu\lambda,\frac{\mu+1}\lambda,
                  \frac{\mu-1}\lambda\right\}=\{0,1,-1\}.
\]
Taking means gives $\mu=0$, and taking sums of squares gives
$\lambda^2=1$. Thus the projected chord vectors agree up to sign,
as claimed. Lemma \ref{p11:lem:22finitecaps}(4) yields $r_d\le6$.

There are $\binom{22}{2}=231$ unordered pairs in $E$, each belonging to
one direction. Hence
\[
 231=\sum_{d\in D(E)}r_d\le6|D(E)|\le6\cdot37=222,
\]
the final contradiction.
\end{proof}


\section{\texorpdfstring{The exclusion of cardinality $33$}{The exclusion of cardinality 33}}\label{p11:33}
The argument for cardinality $33$ has two parts. We first show that a
spectral set must be a graph over one particular planar set. We then use
vanishing moments to prove that this graph is centrally symmetric, after
an affine change of coordinates. A determinant obstruction finishes the
proof. The finite calculation in the second part is the largest calculation
needed for $\F_{11}^3$.

\subsection{Plane intersections and uniform directions}
Throughout this section $(E,A)$ denotes a hypothetical spectral pair in
$\F_{11}^3$ with $|E|=|A|=33$. The published bounds in
Proposition~\ref{p11:prelims-basic} give at most three points on each
line and at most eleven points on each plane, for either member.

\begin{lemma}\label{p11:33-small-planar}
A planar set over $\F_{11}$ with no four collinear points satisfies the
following assertions.
\begin{enumerate}
\item If it has eight points, it determines at least eight directions.
\item If its missing directions contain a harmonic four-set, it has
at most eight points.
\end{enumerate}
\end{lemma}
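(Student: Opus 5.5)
We propose a finite certificate of the same type as Lemmas~\ref{p11:six-point} and \ref{p11:ten-point}: normalization by an affine map, followed by an exhaustive exact search over $\F_{11}$. Both assertions are affine-invariant, since affine maps act on directions by projectivities and preserve collinearity. For assertion (1), suppose an eight-point set with no four collinear points determines at most seven directions. Two cases arise. If it contains a noncollinear triple forming a triangle, we normalize that triangle to $(0,0),(1,0),(0,1)$, so the determined slopes include $0,\infty,-1$. Any counterexample then has a direction set contained in $\{0,10,\infty\}\cup S'$ with $|S'|=4$, where $S'\subset\{1,\dots,9\}$; there are $\binom94=126$ masks. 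For each mask we enumerate the points outside the triangle whose three directions to it lie in the mask. We then run a depth-first clique extension, adding one point at a time and requiring all new pairwise directions in the mask and no four collinear. We expect no eight-point set to survive. The degenerate alternative, that every triple is collinear, cannot occur with eight points and at most three on a line.

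For assertion (2), a harmonic four-set of missing directions is projectively equivalent to $\{0,\infty,1,-1\}$. The cross ratio $-1$ characterizes the orbit, so after a linear change of coordinates we may assume these four slopes are missing. The set is then simultaneously a partial permutation graph with respect to the directions $0$ and $\infty$, and also with respect to $\pm1$. That is, the coordinates $x$, $y$, $x+y$, $x-y$ are each injective on it. We translate one point to the origin and use the remaining stabilizer of the harmonic set, which contains the scalings and the swaps $x\leftrightarrow y$ and $y\mapsto -y$, to reduce symmetry. We then search for nine-point sets in which the four linear forms are injective and no four points are collinear. This is a small exact search: a partial permutation of $\F_{11}$ avoiding the two diagonal conflicts, with a collinearity filter. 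The required conclusion is that none of size nine exists. We would also note the easy bound of eleven points, with equality only for a full permutation graph, to show the search space is bounded.

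The main obstacle is making the coverage argument airtight rather than the computation itself. In (1) we must justify that every counterexample contains a triangle to normalize; this follows because eight points with at most three on a line cannot be collinear. We must also justify that the mask enumeration includes every direction set of size at most seven containing the triangle's three slopes, padding smaller sets arbitrarily. In (2) we must check that the harmonic normalization loses no cases, that is, that $\mathrm{PGL}(2,11)$ is transitive on harmonic quadruples. If the nine-point search in (2) turns out to be slow, we would first fix the points on the line $y=0$ to at most three, then branch by rows. As a cross-check, we would report the maximum sizes found by the searches (seven in the first mask search, eight in the second) and print the checker in Appendix~\ref{p11:code}.
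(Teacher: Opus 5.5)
Your proposal is correct and is essentially the paper's proof. For (1) the paper uses the same triangle normalization and exhaustive clique extension over the $126$ masks containing $0,-1,\infty$, reporting maximum size seven; for (2) it sends the harmonic four-set to $\{0,1,2,\infty\}$ (equivalent to your $\{0,\infty,1,-1\}$), translates one point to the origin, and searches exhaustively for sets avoiding those slopes with no four collinear, reporting maximum size eight. The only difference is your optional extra symmetry reduction in (2), which the paper omits so that completeness of the search needs no equivalence argument.
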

\begin{proof}
Both assertions are finite planar checks. We give the normalizations so
that their coverage is explicit. For the first assertion, a noncollinear
triple can be sent to $(0,0),(1,0),(0,1)$. If the set determined at most
seven directions, its directions would be contained in one of the $126$
seven-element subsets of $\mathrm{PG}(1,11)$ containing
$0,-1,\infty$. For each of these subsets, enumerate the points compatible
with the triangle, and extend the triangle in increasing order, requiring
each new difference to have an allowed direction and rejecting any four
collinear points. The search has $25,725$ nodes; the largest set found has
seven points.

For the second assertion, send the four missing directions to
$\{0,1,2,\infty\}$ and translate a point to the origin. Enumerate
sets whose differences have slopes in $\{3,4,\ldots,10\}$, again
rejecting four collinear points. This search has $266,001$ nodes and
maximum size eight. These are precisely the tests in
\path{verify_small_profiles.py} and \path{classify_eight_cliques.py}
in the accompanying \path{size33} directory. Each extension considers
all remaining compatible points, so no equivalence test is needed for
completeness.
\end{proof}

\begin{lemma}\label{p11:33-layers}
Neither $E$ nor $A$ is contained in three parallel affine planes.
\end{lemma}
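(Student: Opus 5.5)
Suppose, for contradiction, that $E$ lies in three parallel affine planes $P_1,P_2,P_3$ with common normal $n$. By symmetry of the spectral relation the same argument will then apply to $A$. Each plane holds at most eleven points by Lemma~\ref{p11:prelims-basic} and $|E|=33$, so each slice $E_i=E\cap P_i$ has exactly eleven points. The slices have no four collinear points, since lines carry at most three. The plan has two halves. First force $A$ into the same three-layer structure, and then count directions to reach a contradiction, in the spirit of Lemma~\ref{p11:lem:22two}.

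\textbf{Step 1.} Each slice $E_i$ is an eleven-point planar set with no four collinear, so by Lemma~\ref{p11:33-small-planar}(1), applied to any eight of its points, it determines at least eight directions. Every in-plane direction lies in the projective line $n^\perp$, and $D(E)$ contains no full projective line. So the union of the in-plane direction sets has at most eleven elements, and at least eight. Since $D(E)\subseteq U_A$, the set $U_A$ meets the projective line $n^\perp$ in at least eight points.

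\textbf{Step 2.} I next want $U_A\cap n^\perp$ to reach the $p-1=10$ directions needed by Lemma~\ref{p11:parallel-planes}. If it does, $A$ also lies in three parallel planes with normal in $n^\perp$. To get ten, I would use the profile identity of Lemma~\ref{p11:radon} with $m=3$. Project $A$ along the direction $n^\perp$ (the dual line) and take $K\ge8$, hence $L\le4$. The bad profiles then have degree at most $L-2\le2$, with values in $\{0,\dots,11\}$ summing to $33$, and their coefficients are governed by \eqref{p11:radon-null}. A nonconstant quadratic or affine profile modulo eleven takes many distinct residues. I would rule out $L=3,4$ by exhausting the few such profiles, the same kind of finite polynomial check used in Lemma~\ref{p11:lem:22finiteprofiles}, together with the constant-term congruence $3(L-1)\equiv\sum(\text{const})$. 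Once this step is done, both $E$ and $A$ are unions of three eleven-point layers.

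\textbf{Step 3.} Now count directions on $A$. Write the layers of $A$ as $P\times\{0\}$, $Q\times\{1\}$ and $R\times\{c\}$. The cross-layer directions between two layers correspond to a difference set such as $Q-P$, as in Lemma~\ref{p11:lem:22two}. One has $|Q-P|>11$ unless $Q$ is invariant under a difference of $P$, which would make $Q$ a union of lines and contradict the bound of three points per line. Bounding $|Q-P|$ below more sharply, for instance by Cauchy--Davenport-type estimates on projections, or by using that $Q-P$ misses few points, should give $|D(A)|$ substantially larger than $|U_E|$.

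For the upper bound, $E$ consists of three eleven-point layers. A normal $v\ne n$ is uniform for $E$ only if all three slices are equidistributed along lines of direction $v^\perp\cap n^\perp$ with a common line sum. This should force $U_E$ to be small, bounded as in the tangent-direction argument of Lemma~\ref{p11:lem:22two} by a count over the pencil through $n$. Comparing $|D(A)|\le|U_E|$ with the lower bound then gives the contradiction.

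I expect the main obstacle to be Step 2, upgrading eight uniform normals on the line to ten. Lemma~\ref{p11:33-small-planar}(2) is presumably there for this purpose: if fewer than ten directions are covered, the missing in-plane directions of some slice may contain a harmonic four-set, and that lemma would then cap the slice at eight points, contradicting eleven. I would try this route first and fall back on the profile analysis only if it fails.
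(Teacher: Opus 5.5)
\textbf{Step 2 is not a real obstacle.} Lemma~\ref{p11:ten-point} applies to any ten points of an eleven-point slice, since line intersections are at most three. So each slice determines at least ten directions, $U_A$ meets $n^\perp$ in at least ten points, and Lemma~\ref{p11:parallel-planes} puts $A$ in three parallel planes. Applying that lemma once more to a layer of $A$ makes both projections injective. In dual coordinates this gives $E=\{(x,f_s(x),s)\}$ and $A=\{(u,t,g_t(u))\}$. Your harmonic-set fallback would not have closed this anyway: eight or nine covered directions leave three or four missing ones, with no reason for them to be harmonic.

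\textbf{The genuine gap is Step 3.} You claim that $v\ne n$ is uniform for $E$ only if every slice is equidistributed along the lines of direction $\ker v\cap\ker n$. This is false. Uniformity only says that each plane $v\cdot x=t$ carries three points in total, and such a plane meets the three layers in three parallel lines. For instance, take suitably shifted line-count profiles beginning $(2,0,1,\dots)$, $(1,2,0,\dots)$ and $(0,1,2,\dots)$, with all remaining entries one. Each has mass eleven, yet together they sum to the constant three. In Fourier terms, $\widehat{1_E}(v)$ is a three-term sum of slice coefficients twisted by powers of $\zeta$, and it can vanish with no summand vanishing.

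So you obtain no upper bound on $|U_E|$, and the lower bound on $|D(A)|$ is also only hoped for. The inequality $|D(A)|\le|U_E|$ is a weak consequence of spectrality, with no evident way to make it contradictory here.

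\textbf{The missing idea.} The paper's contradiction is arithmetic rather than combinatorial. In the graph coordinates the character matrix has blocks $H_{st}=D_{tf_s}FD_{sg_t}$. Orthogonality of two distinct block rows gives three unitary $11\times11$ matrices over $K=\mathbb{Q}(\zeta)$, with $\zeta=e^{2\pi i/11}$, summing to zero. Multiplying by the adjoint of one of them gives $I+X+Y=0$ with $X,Y$ unitary, hence $X+X^*=-I$ and $X^2+X+I=0$. Since $t^2+t+1$ is irreducible over $K$, the space $K^{11}$ would have to be even-dimensional over $K$, which is false.
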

\begin{proof}
Suppose $E$ is so contained. Each layer has eleven points, and hence
determines at least ten directions by Lemma~\ref{p11:ten-point}.
The parallel-plane criterion, Lemma~\ref{p11:parallel-planes}, places
$A$ in three parallel planes and also makes its projection along the
annihilator of the original direction plane injective. Apply the same
criterion to an eleven-point layer of $A$. The two conclusions give dual
coordinates in which
\[
 E=\{(x,f_s(x),s):x\in\F_{11},\ s\in S\},\qquad
 A=\{(u,t,g_t(u)):u\in\F_{11},\ t\in T\},
 \qquad |S|=|T|=3.
\]
For clarity, the first layer normal may be taken as the third coordinate;
the normal furnished for the partner belongs to its orthogonal plane
and may be taken as the second coordinate. The two injectivity
conclusions then make both displayed parametrizations bijective.

Let $\zeta=e^{2\pi i/11}$ and let
$F=(\zeta^{xu})_{x,u\in\F_{11}}$, so that $FF^*=11I$.
The character matrix has blocks
\[
 H_{st}=D_{t f_s}FD_{s g_t},
\]
where the diagonal factors have entries that are powers of $\zeta$.
For distinct $s,s'$, block-row orthogonality gives
\[
 \sum_{t\in T}U_t=0,\qquad U_t=11^{-1}H_{st}H_{s't}^*.
\]
The three $11\times11$ matrices $U_t$ are unitary over
$K=\mathbb Q(\zeta)$. Multiplication by the adjoint of one of them
gives $I+X+Y=0$, where $X,Y$ are unitary over $K$.
It follows that $X+X^*=-I$ and $X^2+X+I=0$.
But $t^2+t+1$ is irreducible over $K$, since
$[\mathbb Q(\zeta_{33}):\mathbb Q(\zeta_{11})]=2$.
A vector space admitting such an operator has even dimension over $K$,
contrary to its dimension eleven.
\end{proof}

\begin{lemma}\label{p11:33-plane8}
Every affine plane contains at most eight points of either member.
Moreover every projective line meets either $\U_E$ or $\U_A$ in at most
eight points.
\end{lemma}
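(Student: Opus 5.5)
The plan is a bootstrapping argument. Plane bounds and line bounds on the uniform-normal sets improve each other, using Lemma~\ref{p11:33-small-planar}, the parallel-plane criterion (Lemma~\ref{p11:parallel-planes}) and Lemma~\ref{p11:33-layers}. The order of steps mirrors the $22$-point case.

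First I show that no projective line meets $\U_E$ (or $\U_A$) in ten or more points. If it did, Lemma~\ref{p11:parallel-planes} would put $E$ in $m=3$ parallel planes, contradicting Lemma~\ref{p11:33-layers}. So every projective line meets each $\U$ in at most nine points.

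Next I reduce the plane bound. Suppose an affine plane $\Pi$ contains $k\ge9$ points of $A$, with at most three on any line by Lemma~\ref{p11:prelims-basic}. These points determine directions in the projective line $\Pi^\ast$ dual to $\Pi$'s direction space, and $\D(A\cap\Pi)\subseteq\U_E\cap\Pi^\ast$. Since $|\U_E\cap\Pi^\ast|\le9$, the set $A\cap\Pi$ determines at most nine directions and misses at least three of the twelve.
\begin{itemize}
\item If $k\ge10$: I take a ten-point subset with no four collinear. Lemma~\ref{p11:ten-point} needs at most three collinear, which holds. That subset determines at least ten directions, a contradiction.
\item If $k=9$: the nine points determine at most nine directions, so three directions are missing. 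To reach a contradiction I would show that some four missing directions form a harmonic set and then apply Lemma~\ref{p11:33-small-planar}(2), which bounds such sets by eight points.
\end{itemize}
Exactly three missing directions need not contain a harmonic quadruple. There I would instead sharpen the count with the pencil identity~\eqref{p11:pencil}. Nine points on a plane give occupancies on the pencil through a line that force at most eight uniform normals on the corresponding projective line, so the nine-point set omits four directions. Among four directions of $\PG(1,11)$, up to projectivity, I would check whether the harmonic condition can be arranged, or else fall back on a direct finite check of the same kind as in Lemma~\ref{p11:33-small-planar}. I expect this nine-point case to be the main obstacle.

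Finally, with planes carrying at most eight points, I sharpen the line bound on $\U$ to eight. Suppose a projective line meets $\U_E$ in nine points. Apply Lemma~\ref{p11:radon} with $m=3$ and $K=9$, so $L=3$. Each bad profile $g_i$ is then affine modulo eleven and takes values at most eight, by the new plane bound. A nonconstant affine residue profile has sum at least $55>33$, so every $g_i$ is constant modulo eleven. With values in $[0,8]$, each $g_i$ is then identically $3$, hence uniform. That gives twelve uniform normals on the line, a complete pencil, which Lemma~\ref{p11:prelims-basic} forbids. By symmetry both statements hold for $A$ as well.
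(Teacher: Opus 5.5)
\textbf{Where the proposal stands.} Your first two steps are correct. Ten uniform normals on a projective line would put $E$ in three parallel planes by Lemma~\ref{p11:parallel-planes}, contrary to Lemma~\ref{p11:33-layers}. Lemma~\ref{p11:ten-point} then excludes ten coplanar points.

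Your last step is also correct, but it needs only the plane bound nine: profile values in $[0,9]$ with constant residue already force the uniform profile $3$. So it should come before the nine-point case. As written it is circular: it relies on the plane bound eight, which your nine-point case wants to derive from ``at most eight uniform normals''.

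\textbf{The gap.} The nine-point case is left unresolved, and neither of your suggested routes closes it.
\begin{itemize}
\item The pencil identity gives nothing here. If $\Pi$ has normal $n$ and contains nine points of $A$, then $n\notin\U_A\supseteq\D(E)$. Hence every line in direction $n$ meets $E$ at most once. The pencils whose normals form $\mathrm{PG}(n^\perp)$ then only give $33+11a\le 3u+9(12-u)$ with $a\le1$, i.e.\ $u\le10$.
\item A direct planar finite check also fails. The grid $\{0,1,2\}^2\subset\F_{11}^2$ has nine points, at most three on a line, and exactly eight directions (slopes $0,\infty,\pm1,\pm2,\pm\tfrac12$). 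Its missing slopes $3,4,7,8$ have cross ratio $4$, outside the harmonic orbit $\{2,6,10\}$. So four missing directions need not be harmonic, and ``nine points determine nine directions'' is false.
\end{itemize}

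\textbf{The missing idea: where the harmonic condition comes from.}
\begin{enumerate}
\item Once every projective line carries at most eight uniform normals, a nine-point plane of $A$ determines exactly eight directions by Lemma~\ref{p11:33-small-planar}(1). So its projective line carries exactly eight uniform normals of $E$. The four missing directions are precisely the four nonuniform normals.
\item With $L=4$ in Lemma~\ref{p11:radon}, the four nonuniform profiles are quadratics modulo eleven. Their quadratic coefficients $a_i$ satisfy $\sum_i a_i l_i^2=0$, where $l_i$ are the corresponding linear forms.
\item A check of the centered quadratics shows that the only ones with least residues summing to $33$ and maximum at most nine are $[a(t-h)^2+5]_{11}$ with $a$ a nonsquare.
\item Normalize the forms to $x,y,x+y,x+\lambda y$. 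The relation makes the coefficients proportional to $(\lambda-1,\ \lambda(1-\lambda),\ -\lambda,\ 1)$. Requiring all four to be nonsquares forces $\lambda\in\{2,6,10\}$, the harmonic orbit.
\end{enumerate}
Only this harmonic structure lets Lemma~\ref{p11:33-small-planar}(2) finish the argument, and this is how the paper proceeds.
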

\begin{proof}
A plane with ten points determines at least ten directions by
Lemma~\ref{p11:ten-point}. The parallel-plane criterion would put the
partner in three parallel planes, contradicting
Lemma~\ref{p11:33-layers}. Thus the plane bound improves to nine.

No projective line can lie in $\U_E$. Indeed, the twelve planes in the
corresponding pencil, through a line containing $a$ points of $E$,
would give $33+11a=12\cdot3$, which is impossible. If a projective
line contained nine points of $\U_E$, the restricted moment polynomials
of degrees one through eight would vanish identically. The eleven
plane counts in every direction on this line would consequently be
represented modulo eleven by an affine polynomial. A nonconstant affine
polynomial has least residues summing to $55>33$. A constant polynomial,
with all actual counts between zero and nine, gives the constant profile
three. Thus the entire projective line would be uniform, again a
contradiction. Hence the uniform-direction bound is eight.

If a projective line contains eight uniform directions, its plane
profiles are quadratic modulo eleven. Among the $110$ centered
quadratics $at^2+c$, $a\ne0$, the profiles whose least residues have
sum $33$ and maximum at most nine are exactly
\begin{equation}\label{p11:33-quadratic}
 [a(t-h)^2+5]_{11},\qquad a\in\{2,6,7,8,10\},
\end{equation}
where brackets denote the least nonnegative residue. Their common
multiset is
\[
 \{0,0,1,1,2,2,4,4,5,7,7\}.
\]
This follows either by substituting the five nonzero quadratic residues,
or from the univariate check in \path{verify_small_profiles.py}.
The four nonuniform normals, represented by forms $l_i$, therefore
satisfy $\sum_{i=1}^4 a_i l_i^2=0$, with all $a_i$ nonsquares.
Normalize the forms to $x,y,x+y,x+\lambda y$. Their coefficients,
up to a common nonzero scalar $d$, are
\[
 ((\lambda-1)d,\ \lambda(1-\lambda)d,\ -\lambda d,\ d).
\]
The nonsquare conditions imply that $\lambda$ is a nonsquare and
$\lambda-1$ is a square. Thus $\lambda\in\{2,6,10\}$, the harmonic
orbit.

If $A$ had nine points in a plane, those points would determine at least
eight directions by Lemma~\ref{p11:33-small-planar}, and at most eight
by the bound just proved. The four missing directions would be harmonic,
contradicting the second part of that lemma. Interchanging $E,A$ proves
the assertions.
\end{proof}

\begin{lemma}\label{p11:33-seven-gap}
A projective line cannot meet $\U_E$ or $\U_A$ in exactly seven points.
\end{lemma}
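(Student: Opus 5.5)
By the symmetry between $E$ and $A$, it suffices to treat a projective line $\ell$ meeting $\U_E$ in exactly seven points. The argument will follow the eight-point case of Lemma~\ref{p11:33-plane8}.

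First I set up the projection. Project $E$ onto the quotient plane dual to $\ell$. Lemma~\ref{p11:radon} then applies with $m=3$, $K=7$ and $L=5$. The five nonuniform profiles $g_1,\ldots,g_5$ have reduced degree at most $L-2=3$. Their coefficients satisfy \eqref{p11:radon-null}: the cubic coefficients are $\lambda w_i$ for one common scalar $\lambda$, and the quadratic coefficients are $w_i(a+bs_i)$.

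Next I determine the admissible profiles. Each $g_i$ has eleven values in $\{0,\ldots,8\}$, since planes hold at most eight points by Lemma~\ref{p11:33-plane8}, and the values sum to $33$. A nonconstant affine profile is excluded because its least residues sum to $55$. A constant profile equals three, which would make that normal uniform and contradict $K=7$. So every profile is a genuine quadratic or cubic. I would enumerate the centered candidates exactly as in Lemma~\ref{p11:lem:22finiteprofiles}, splitting into two branches:
\begin{itemize}
\item \emph{Cubic branch.} If some profile is cubic then $\lambda\ne0$, so all five are cubic. A single translation of the projected coordinates makes every quadratic coefficient vanish.
\item \emph{Quadratic branch.} If $\lambda=0$, all five are quadratics with leading coefficients $w_i(a+bs_i)$, none of which is zero. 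In this branch the quadratic profiles are exactly those in \eqref{p11:33-quadratic}.
\end{itemize}

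In the quadratic branch I expect a closed-form argument to replace the search. The degree-two part of \eqref{p11:radon-identity} gives $\sum_i a_i l_i^2=0$ for five linear forms $l_i$ with every $a_i$ a nonsquare. These coefficients are fixed up to the one-parameter family $w_i(a+bs_i)$, which may already be impossible by a residue count in the manner of the harmonic argument. Failing that, I would finish with a small exhaustive check.

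The cubic branch needs the enumeration. Loop over the normalized slope sets, that is, five-element subsets of $\mathrm{PG}(1,11)$ up to projectivity, over $\lambda$, and over the centered cubic profiles with the prescribed leading coefficients. For each tuple, test the linear and constant equations in \eqref{p11:radon-identity}; the required constant is $3\cdot4\equiv1$ modulo eleven, and $f$ must be a nonnegative multiplicity function with values at most three on lines. If the profile identity alone does not exclude every tuple, reconstruct $f$ from \eqref{p11:radon-identity}. Then contradict the plane bound of eight, or the fact that eight points in a plane need eight directions (Lemma~\ref{p11:33-small-planar}), which would force more uniform directions of the partner on a line.

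I expect the main obstacle to be showing that the cubic branch is exhaustive and small enough to enumerate, together with controlling the surviving reconstructed $f$.
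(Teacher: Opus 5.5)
\textbf{Gap: neither branch is closed.} Your setup matches the paper's. You project along the annihilator and apply Lemma~\ref{p11:radon} with $m=3$, $L=5$, then rule out constant and affine profiles. You then split into a cubic branch ($\lambda\ne0$, all five cubic, centered by one translation) and a quadratic branch (all five of the form \eqref{p11:33-quadratic}). The ingredient missing from both branches is the same: the constant-term equation of \eqref{p11:radon-identity}, which says the five constant coefficients sum to $12\equiv1$.

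\emph{Cubic branch.} The constant equation alone settles it. You need no loop over slope sets or $\lambda$, and no reconstruction of $f$.
\begin{itemize}
\item The only monic centered cubics whose least residues lie in $\{0,\dots,8\}$ and sum to $33$ are $t^3+t+3$ and $t^3+9t+1$. This is a check of $121$ coefficient pairs.
\item Cubing permutes $\mathbb F_{11}^{\times}$, so rescaling the argument makes any admissible centered cubic monic without changing its constant term.
\item Hence each constant is $1$ or $3$, and $5+2k\not\equiv1$ for $0\le k\le5$.
\end{itemize}
The step you flag as the main obstacle is therefore the easiest one.

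\emph{Quadratic branch.} Your suggested degree-two residue count cannot work, because the degree-two information is consistent on its own. For example, take slopes $\{1,3,4,5,9\}$. Then $P(X)=X^5-1$ and every $w_i=(5s_i^4)^{-1}$ is a square. With $b=0$ and $a$ a nonsquare, every $q_i=aw_i$ is a nonsquare, as required.

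What you need instead is the following chain.
\begin{itemize}
\item The profile shape $[q(t-h)^2+5]$ ties the constant coefficient to the others: it equals $5+\ell_i^2/(4q_i)$. Here $\ell_i=w_iR(s_i)$ with $\deg R\le2$, by \eqref{p11:radon-null}.
\item The constant equation then becomes $\sum_i w_iR(s_i)^2/Q(s_i)=3$.
\item $\prod_i w_i$ is a square, since there are ten pairs of slopes. Each $q_i$ is a nonsquare, so their product is a nonsquare.
\item Consequently $a$ is a nonsquare when $b=0$. When $b\ne0$, with $r=-a/b$, the quantity $bP(r)$ is a square, because $-1$ is a nonsquare.
\item The Lagrange identities, and partial fractions for $1/P$, evaluate the left side as $r_2^2/a$ or as $-R(r)^2/(bP(r))$. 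Either way it is zero or a nonsquare.
\item But $3=5^2$ is a nonzero square, a contradiction.
\end{itemize}

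An exhaustive search over slopes, $(a,b)$ and $R$ would in principle reach the same conclusion. As written, though, your proposal leaves both branches as unresolved computations rather than proving the lemma.
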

\begin{proof}
Project along the annihilator of such a line and write $f$ for the
multiplicity function on $\F_{11}^2$. Its mass is $33$, and every
line has $f$-mass at most eight. Make the vertical direction uniform,
and write the five nonuniform profiles as $g_i(y-s_i x)$, with the
$s_i$ distinct. The Radon identity is
\begin{equation}\label{p11:33-five-ridges}
 \sum_{i=1}^5g_i(y-s_i x)=12+11f(x,y).
\end{equation}
The Vandermonde equations of Lemma~\ref{p11:radon} show that each $g_i$
has degree at most three modulo eleven. None is constant, since that
would give another uniform direction, and none is nonconstant affine,
since such a profile takes the value ten.

Put $P(X)=\prod_i(X-s_i)$ and $w_i=P'(s_i)^{-1}$. If one profile is
cubic, all five leading coefficients are $\lambda w_i$, with
$\lambda\ne0$, and their quadratic coefficients are $w_i(a+bs_i)$.
A translation makes these quadratic coefficients zero. The only monic
centered cubics whose residues lie in $\{0,\ldots,8\}$ and sum to
$33$ are
\[
 t^3+t+3,\qquad t^3+9t+1.
\]
This is a check of the $121$ pairs of remaining coefficients.
Since cubing permutes $\F_{11}^{\times}$, rescaling the argument
reduces any centered cubic to the monic case without changing its
constant coefficient. The five constant coefficients therefore sum to
$5+2k$, $0\leq k\leq5$, never to $12\equiv1\pmod{11}$.
This contradicts \eqref{p11:33-five-ridges}.

Otherwise all five profiles are quadratic of the form
\eqref{p11:33-quadratic}. Their quadratic and linear coefficients have
forms
\[
 q_i=w_iQ(s_i),\quad \ell_i=w_iR(s_i),\qquad
 Q(X)=a+bX,\quad \deg R\leq2.
\]
Every $q_i$ is a nonsquare. The constant coefficient of the $i$th profile
is $5+\ell_i^2/(4q_i)$, so the constant equation requires
\begin{equation}\label{p11:33-square-contradiction}
 \sum_i\frac{w_iR(s_i)^2}{Q(s_i)}=3.
\end{equation}
The product $\prod_iw_i$ is a square. If $b=0$, the product of the
five $q_i=aw_i$ shows that $a$ is a nonsquare. The Vandermonde
identities then make the left side $r_2^2/a$, where
$r_2=[X^2]R$. If $b\ne0$, put $r=-a/b$. Since
$\prod_i(s_i-r)=-P(r)$ and $-1$ is a nonsquare, the product of the
five $q_i$ shows that $bP(r)$ is a square. Polynomial division and
$\sum_iw_i/(s_i-r)=-1/P(r)$ make the left side
$-R(r)^2/(bP(r))$. In either case it is zero or a nonsquare, whereas
three is a nonzero square. This proves the lemma. The small cubic and
quadratic checks are reproduced by \path{verify_seven_lemma.py}.
\end{proof}

\subsection{Forcing a planar projection}
\begin{lemma}\label{p11:33-six-triples}
Let $B\subset\F_{11}^3$ have at most eight points in every affine
plane. If every projective line contains at most six directions of
$\D(B)$, then $B$ does not contain six parallel three-point lines.
\end{lemma}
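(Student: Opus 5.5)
I would argue by contradiction. Suppose $B$ contains six parallel three-point lines $\ell_1,\dots,\ell_6$, all with direction $v$, and let $\pi$ be the projection along $\langle v\rangle$ onto $\F_{11}^2$. Their images are six distinct points $c_1,\dots,c_6$, and the argument should take place in this planar configuration.

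The first step is to show that the $c_i$ form a cap. If three of them were collinear, the three lines $\ell_i$ would lie in a single affine plane with nine points of $B$, contrary to the bound of eight. Thus $C=\{c_1,\dots,c_6\}$ has no three collinear points, so in particular it is noncollinear. Lemma~\ref{p11:six-point} then shows that $C$ determines at least six directions in $\PG(1,11)$.

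The second step lifts each planar direction to a full projective line of directions of $B$. Fix a pair $\ell_i,\ell_j$ and let $w$ be the difference of their projections. The plane $\Pi$ spanned by the two lines contains six points of $B$: three on each of two parallel lines. Their differences have the form $\pm(w+kv)$, with $k$ ranging over differences of heights, together with the direction $[v]$. Since $[v]$ and $[w]$ are independent, all these directions lie on the projective line of $\PG(2,11)$ corresponding to the linear plane $\langle v,w\rangle$. The number of distinct directions $[w+kv]$ equals the number of distinct values $k=b-a$ with $a\in A_i$ and $b\in A_j$, where $A_i,A_j$ are the three-element height sets in $\F_{11}$. By Cauchy--Davenport, $|A_j-A_i|\ge5$. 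Adding $[v]$, this projective line therefore contains at least six directions of $\D(B)$.

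The third step, which I expect to be the main obstacle, is to push this count to seven on some projective line. Each pair $\{i,j\}$ determines the projective line through $[v]$ labelled by the direction of $c_j-c_i$. There are $\binom62=15$ pairs and at least six labels, so several pairs share a label. For a label $\delta$, collect every pair of the $\ell_i$ whose projections have direction $\delta$; these are parallel chords of $C$. All of their sets $A_j-A_i+(\text{scalar shift})$ land on the same projective line through $[v]$.

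The bound of six forces $|A_j-A_i|=5$ exactly, so each $A_i$ is an arithmetic progression with common difference equal to that of $A_j$. It also forces all the contributing shifted difference sets within one label to coincide. The shift is determined by how the chord vectors are scaled relative to $w$, so parallel chords with different length ratios move the five-element sets. I would show that two such five-element interval translates can coincide only in very restricted ways. Carrying this through all labels, I would aim either to produce seven directions on one line or to show that every pair of parallel chords of $C$ has equal vectors up to sign. A six-point cap with that rigid chord structure, which combines the progression condition with Lemma~\ref{p11:six-point}, should then be excluded by a short finite check in the spirit of Lemma~\ref{p11:lem:22finitecaps}(4). Failing a clean argument, I would reduce the whole configuration, consisting of a cap $C$ together with three-term progressions and common differences, to an explicit normalized enumeration over $\F_{11}$.
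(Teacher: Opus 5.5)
Your reduction matches the paper's up to the rigidity step. The projected centers form a cap, and Cauchy--Davenport with its Vosper equality case makes all six triples progressions $b_i+\{-v,0,v\}$ with one common step. Comparing two parallel chords $w'=\lambda w+\mu v$ then forces $\{(\mu+k)/\lambda:-2\le k\le2\}=\{-2,\dots,2\}$, hence $\mu=0$ and $\lambda^2=1$. (Lemma~\ref{p11:six-point} is not needed for this.)

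The gap is in the ending. Once this rigidity holds, every projective line through $[v]$ carries only $[v]$ and the five directions $[w+kv]$ of its label. So counting on the lines labelled by chord directions of $C$ can never produce a seventh direction. A planar check cannot exclude the cap either: Lemma~\ref{p11:lem:22finitecaps}(4) bounds equal-chord caps by six points, and six-point ones exist. For instance $\{(0,0),(0,1),(1,0),(1,1),(2,4),(2,5)\}$ is one, and the paper finds four affine classes. The configuration you arrive at is therefore consistent both in the plane and on every line through $[v]$.

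The missing idea is to use the heights. Since $\mu=0$, the lifted chord vectors themselves, not only their projections, agree up to the same sign. This gives a system of linear equations on the heights of the six centers along $v$. After an affine shear making three heights zero, each of the four cap classes leaves exactly eleven height vectors. For each of the resulting $44$ eighteen-point configurations, one computes all difference directions and finds a projective line containing at least $9$ of them; the paper's witnesses give $10,11,9,10$ by class. Such a line necessarily does not pass through $[v]$, and it contradicts the bound six. Your fallback of enumerating ``the whole configuration'' gestures at this. As written, however, the plan looks for the contradiction in the wrong place and does not carry the $\mu=0$ information into a height parametrization.
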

\begin{proof}
Suppose six such lines have direction $d$. Two of their parameter sets
$T_i,T_j\subset\F_{11}$ give $|T_j-T_i|$ cross directions, in
addition to $d$, on one projective line. Hence $|T_j-T_i|\leq5$.
The three-element case of Cauchy--Davenport~\cite{Davenport}
and its equality statement~\cite{Vosper}
imply that both triples are arithmetic progressions with a common step.
Here the equality statement can be checked directly on all
$\binom{11}{3}^2=27,225$ ordered pairs; precisely $605$ have difference
set of size five, and all have the asserted form. Thus the six triples
may be written $b_i+\{-v,0,v\}$ with one nonzero $v$.

The six projected centers in $\F_{11}^3/\langle v\rangle$ form a
cap, since three collinear centers would put nine points in a plane.
If two parallel projected chords lift to $w$ and
$w'=\lambda w+\mu v$, their two sets of five cross directions,
together with $[v]$, force
\[
 \{(\mu+k)/\lambda:k=-2,-1,0,1,2\}
   =\{-2,-1,0,1,2\}.
\]
Taking the sum and then the sum of squares gives $\mu=0$ and
$\lambda^2=1$. Thus parallel chords of the planar cap, and their
lifts, are equal up to the same sign.

There are four affine classes of planar six-caps with this property:
\[
\begin{array}{llllll}
(0,0)&(0,1)&(1,0)&(1,1)&(2,4)&(2,5),\\
(0,0)&(0,1)&(1,0)&(1,1)&(2,4)&(3,5),\\
(0,0)&(0,1)&(1,0)&(1,1)&(2,4)&(10,8),\\
(0,0)&(0,1)&(1,0)&(1,1)&(2,5)&(2,6).
\end{array}
\]
For completeness, normalize a noncollinear triple, extend it in
increasing order while imposing the cap and equal-chord conditions, and
identify sets using all noncollinear affine frames. The resulting
$890$ search states give $300$ normalized sets and the four displayed
classes; \path{six_equal_chords.py} implements this enumeration.
Write the lifts of a representative as $(h_i,y_i,z_i)$ with
$v=(1,0,0)$. An affine shear makes the first three heights zero.
The equal-chord equations leave exactly eleven height vectors in each
class. Direct enumeration of their difference directions shows a
projective-line intersection of size at least $10,11,9,10$, respectively.
Thus every one of the $44$ possible lifts contradicts the bound six.
The linear equations and the witness lines are checked by
\path{six_triple_skeletons.py}; the three-set difference calculation
is \path{verify_three_set_difference.py}.
\end{proof}

\begin{proposition}\label{p11:33-forced-base}
After an affine change of coordinates, each member is a graph over
\begin{equation}\label{p11:33-base}
 B_{33}=\{(x,y)\in\F_{11}^2:S(x,y)=20\},
\end{equation}
where
\[
 S(x,y)=[5+2x^2]_{11}+[5+7y^2]_{11}
       +[5+7(x+y)^2]_{11}+[5+2(x+2y)^2]_{11}.
\]
This set has $33$ points, contains zero, and satisfies $B_{33}=-B_{33}$.
\end{proposition}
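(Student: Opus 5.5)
The plan is to find a projective line carrying exactly eight uniform normals of the partner, and then identify the projected multiplicity function from the four quadratic profiles of Lemma~\ref{p11:33-plane8}. Take $E$; the argument for $A$ is symmetric. Since $\D(E)\subseteq\U_A$, Lemmas~\ref{p11:33-plane8} and~\ref{p11:33-seven-gap} give that every projective line meets $\D(E)$, and also $\U_A$, in at most eight points and never in exactly seven.

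\emph{Step 1: produce a line with eight uniform normals.} I would first rule out the case that every projective line meets $\D(E)$ in at most six points. In that case Lemma~\ref{p11:33-six-triples} says that no direction has six parallel three-point secants. Every secant carries at most three points, and $\binom{33}{2}=528$. Each direction $d$ has $r_d$ two-point secants and $t_d\le5$ three-point secants, so it contributes $r_d+3t_d$ pairs, while the points still satisfy $2r_d+3t_d\le33$. Hence $\sum_d(r_d+3t_d)\le|\D(E)|\cdot c$ for a small constant $c$ (about $21$). The count $|\D(E)|\le1+12\cdot5=61$ from the pencil through one direction is too weak. I would therefore sharpen it with the weighted incidence divisibility of Proposition~\ref{p11:weighted-incidence} (with $m=3$, $C=12$, $T=180$), or else with the incidence inequality of Lemma~\ref{p11:incidence} applied to the quadratic moment vanishing on $\D(E)$, so that a contradiction follows. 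This pair-counting step is the one I expect to be the main obstacle, because it needs a genuinely better bound than the trivial pencil count. Once it is done, some line $\ell$ contains exactly eight points of $\U_A$.

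\emph{Step 2: identify the base.} Project $A$ along $\ell^\perp$ to $\F_{11}^2$. The Radon identity of Lemma~\ref{p11:radon} with $K=8$ and $L=4$ expresses the multiplicity as
\[
 11f=\sum_{i=1}^4 g_i(l_i)-9 .
\]
By Lemma~\ref{p11:33-plane8} the four nonuniform profiles are $[a_i(l_i-h_i)^2+5]_{11}$ with $a_i$ nonsquares and with harmonic forms $l_i$, normalized to $x,y,x+y,x+2y$. I would then translate to kill the $h_i$. This uses the linear Vandermonde conditions, which force the linear coefficients to be $w_iR(s_i)$ with $\deg R\le0$ and hence removable by a translation, though this needs checking. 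The coefficient relation then determines the $a_i$ up to a common scalar, and the admissible scalar gives $(2,7,7,2)$. The identity becomes $11f=S-20$ after accounting for the constant, so $0\le f\le1$ forces $S\in\{9,20\}$ pointwise. The count $|f|=33$ and the multiset in \eqref{p11:33-quadratic} then show that $f=1_{B_{33}}$ exactly, and in particular that the projection is injective, so $A$ is a graph over $B_{33}$. A short finite evaluation confirms $|B_{33}|=33$ and $0\in B_{33}$, and since $S(-x,-y)=S(x,y)$ we get $B_{33}=-B_{33}$.

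If injectivity does not come out of the Radon identity directly, I would recover it as in Lemma~\ref{p11:parallel-planes}. The pencil through a line parallel to $\ell^\perp$ gives $33+11a\le8\cdot3+4\cdot8$, so $a\le2$. Any values $f=2$ would then have to be excluded by the same residue bookkeeping in the identity above.
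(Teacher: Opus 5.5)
Your Step~2 is essentially the paper's argument, but Step~1 has a genuine gap, exactly where you flag it, and the pair count you set up cannot close it. With $t_d\le5$ and $2r_d+3t_d\le33$, a single direction can carry $9+15=24$ pairs (not about $21$). So $528\le24|\D(E)|$ only yields $|\D(E)|\ge22$, and a contradiction would require $|\D(E)|\le21$. That is far below anything available: the pencil gives $61$.

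Neither of your suggested tools supplies such a bound. Lemma~\ref{p11:incidence} needs a nonzero low-degree form vanishing on $\D(E)$. But Lemma~\ref{p11:33-rigidity}, whose hypotheses already follow from Lemma~\ref{p11:33-plane8}, shows that no nonzero form of degree at most five vanishes there; in particular the quadratic moment of $A$ is identically zero. For Proposition~\ref{p11:weighted-incidence} (here $T=177$, not $180$), you have not established integrality of the $k$'s, nor a lower bound $a$ on their nonzero values with $C<(r+1)a$. Using Lemma~\ref{p11:33-six-triples} only to cap $t_d$ direction by direction wastes it.

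The missing idea is to locate a projective line carrying at most two directions. Put $N=|\D(E)|$ and $n_L=|\D(E)\cap L|$. If $3\le n_L\le6$ for every $L$, then $\sum_L(n_L-3)(n_L-6)\le0$. However, $\sum_Ln_L=12N$ and $\sum_Ln_L^2=N^2+11N$ give this sum the value $N^2-97N+2394$, which is positive for all $N$ (discriminant $-167$). So some $L$ has $n_L\le2$.

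Each of the eleven affine planes with direction plane $L$ then holds at most three points of $E$. Three noncollinear points would give three directions in $L$, and four collinear points break the line bound. Hence each such plane holds exactly three collinear points. These eleven triples use at most two directions, so six are parallel, contradicting Lemma~\ref{p11:33-six-triples}. This yields the eight-point line of $\U_A$, and symmetrically of $\U_E$.

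Step~2 also needs three small corrections.
\begin{enumerate}
\item By Lemma~\ref{p11:radon} the linear coefficients are $w_iR(s_i)$ with $\deg R\le1$, not $0$. The solutions of $\sum_ia_ih_il_i=0$ form a two-dimensional space equal to the translation family $h_i=l_i(x_0,y_0)$, which is why one translation kills all centres.
\item The admissible vectors $(d,-2d,-2d,d)$, $d$ a nonsquare, differ by square factors. A common dilation is therefore needed to reach $(2,7,7,2)$.
\item The constant term gives $11f=S-9$, not $S-20$. Then $f=(S-9)/11$ pointwise, and $S\le4\cdot7=28$ forces $f\in\{0,1\}$, so injectivity is automatic. (The paper gets it earlier from $8\cdot99+4\cdot165=33^2+11\sum f^2$.)
\end{enumerate}
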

\begin{proof}
Suppose $\U_A$ has no projective-line intersection of size eight.
Lemmas~\ref{p11:33-plane8} and \ref{p11:33-seven-gap} bound all such
intersections by six, and therefore do the same for $D=\D(E)$. Put
$N=|D|$ and $n_L=|D\cap L|$. If every $n_L\geq3$, then
\[
 \sum_L(n_L-3)(n_L-6)\leq0.
\]
The projective incidence identities
$\sum_Ln_L=12N$ and $\sum_Ln_L^2=N^2+11N$ instead give
\[
 \sum_L(n_L-3)(n_L-6)=N^2-97N+2394>0,
\]
since the discriminant is $-167$. Thus some $n_L\leq2$.
Each of the eleven affine planes with direction plane $L$ has at most
three points of $E$: a noncollinear triple would give three directions,
and four collinear points violate the line bound. All eleven planes
therefore contain exactly three collinear points. At least six of these
triples are parallel, contrary to Lemma~\ref{p11:33-six-triples}.
Thus $\U_A$ has an eight-point line, and similarly so does $\U_E$.

Consider the projection multiplicity $f$ associated to such a line in
$\U_E$. Its eight uniform profiles have squared mass $99$ each, and the
four profiles in \eqref{p11:33-quadratic} have squared mass $165$ each.
The squared Radon identity gives
\[
 8\cdot99+4\cdot165=33^2+11\sum_x f(x)^2,
 \qquad \sum_xf(x)^2=33=\sum_xf(x).
\]
Since $f$ is nonnegative and integer-valued, it takes only values zero
and one; the projection is injective.

Normalize the four nonuniform forms to $x,y,x+y,x+2y$. Their
quadratic coefficients are proportional to $(2,7,7,2)$, by a square
scalar, so a common dilation makes them exactly these coefficients.
Their linear coefficient relation says
$\sum_i a_i h_i l_i=0$, where $h_i$ are their centers. This system has
rank two. The two-dimensional family $h_i=l_i(x_0,y_0)$ lies in its
kernel, by $\sum_i a_i l_i^2=0$, and therefore is the whole kernel.
A translation makes all centers zero. The unsquared Radon identity
then reads $S=9+11f$, giving \eqref{p11:33-base}. Substitution at the
$121$ points shows that $S$ takes the values nine and twenty and that
twenty occurs $33$ times. Its expression is even, and $S(0,0)=20$.
The same argument applies to $A$.
\end{proof}

\subsection{Fifth moments and heights}
\begin{lemma}\label{p11:33-rigidity}
Let $B\subset\F_{11}^3$ have $33$ points, line intersections of size
at most three, plane intersections of size at most eight, and at most
eight directions of $\D(B)$ on every projective line. No nonzero
homogeneous polynomial of degree at most five vanishes on $\D(B)$.
\end{lemma}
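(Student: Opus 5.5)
The plan is to apply the incidence obstruction, Lemma~\ref{p11:incidence}, to $S=\D(B)\subset\PG(2,11)$ with $p=11$. The key preliminary is a two-sided bound on $n_L=|\D(B)\cap L|$ for every projective line $L$. The upper bound $b=8$ is a hypothesis. For the lower bound, I would reuse the argument from the proof of Proposition~\ref{p11:33-forced-base}. Suppose some line $L$ has $n_L\le2$. Each of the eleven parallel affine planes with direction plane $L$ then has at most three points of $B$: a noncollinear triple already gives three directions, and four collinear points would exceed the line bound. Since $|B|=33$, every such plane contains exactly three collinear points, so each plane contributes one direction of $L$. With at most two directions available, at least six of these triples are parallel. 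Lemma~\ref{p11:33-six-triples} needs at most six directions on every projective line, which is not among the hypotheses here, so the plan is to exclude the six parallel triples directly. The first part of that proof still applies: two parallel triples give $|T_j-T_i|$ directions on one projective line, so $|T_j-T_i|\le 7$. This is weaker than $\le5$, so I expect to replace it by a pigeonhole count. With six triples pairwise contributing cross directions on the pencil lines through $d$, some line should collect more than eight directions. If that fails, I would fall back on a direct finite check as in \path{six_triple_skeletons.py}. The target is $n_L\ge3$ for all $L$, so $a=3$.

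Next I also need a lower bound on $N=|\D(B)|$. The line bound gives at most three points of $B$ on each line, so each direction carries at most $11\cdot3=33$ ordered pairs. Counting the $33\cdot32$ ordered pairs gives $N\ge 32$. A sharper bound should follow from incidence counting: summing $n_L\ge3$ over the twelve lines through a point gives $N\ge 1+12\cdot 2=25$. That is weaker, so the pair count is the one to use.

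The main step is then the inequality of Lemma~\ref{p11:incidence} with $a=3$, $b=8$, $p=11$, and each $d\in\{3,4,5\}$. It reads
\[
 N^2+\bigl[11-12(3+d)\bigr]N+3d\cdot133\le 3d(8-d)\cdot\tfrac{5}{3}\cdot\tfrac{3}{5}=d\cdot5\cdot(8-d).
\]
For $d=5$ this is $N^2-85N+1995\le 75$. Its real roots lie near $41.1$ and $43.9$, so the integers $42$ and $43$ survive. For $d=4$ and $d=3$ the windows are similar, located near $12(3+d)-11$ over two. I therefore expect the raw inequality alone to leave a few integer values of $N$. Lower degrees $d<3$ are covered by multiplying by a linear form, as the paper notes, so they reduce to $d=3$.

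Closing those remaining windows is the step I expect to be the main obstacle. My first attempt would be to refine the count on factor lines. A degree-$d$ form vanishing on $\D(B)$ can contain at most $d$ line factors, each carrying at most eight points. On a nonfactor line, $n_L\le d$ means a line with $n_L\in\{6,7,8\}$ must be a factor. I would bound the number of such lines from below using $\sum n_L^2=N^2+11N$ with $N\approx 42$, aiming to show there are more than $d$ of them. Failing that, I would use the structure already forced by Proposition~\ref{p11:33-forced-base}: $B$ is a graph over $B_{33}$, which gives explicit constraints. Its eight-point uniform lines also pin down $\D(B)$ enough that a finite check over heights can rule out the residual values of $N$.
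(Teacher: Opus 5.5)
Your proposal has a genuine gap at the lower bound $n_L=|\D(B)\cap L|\ge3$ for every projective line, which Lemma~\ref{p11:incidence} with $a=3$ requires. You correctly reduce a line with $n_L\le2$ to six parallel three-point lines of some direction $d$. After that you offer only a hoped-for pigeonhole count and a fallback finite check modelled on Lemma~\ref{p11:33-six-triples}. That lemma's finite check depends on reducing to arithmetic progressions with a common step and to equal parallel chords, and that reduction comes from $|T_j-T_i|\le5$. Under the present bound of eight directions per line you only get $|T_j-T_i|\le7$, so neither reduction is available. It is not evident that six parallel triples can be excluded from the stated hypotheses alone.

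The missing idea is to use $F$ in this case rather than proving $n_L\ge3$ a priori. After multiplying by a power of a linear form, take $F$ of degree exactly five.
\begin{itemize}
\item Project the six triples along $d$. The images form a six-point cap, since three collinear images would put nine points of $B$ in a plane. By Lemma~\ref{p11:six-point} they determine at least six quotient directions.
\item Each quotient direction corresponds to a distinct projective line through $[d]$. That line contains $[d]$ and, by Cauchy--Davenport, at least five cross-difference directions of $B$.
\item Each such line therefore carries at least six zeros of $F$, so it must be a line component of $F$. Six line components are impossible in degree five.
\end{itemize}
In the complementary case every $n_L\ge3$, and your incidence argument applies; this is exactly the paper's split.

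You have also miscomputed that incidence argument. With $a=3$, $b=8$, $d=5$ the inequality is $N^2-85N+1995\le75$. The left side has real minimum $755/4>75$; equivalently, $N^2-85N+1920$ has discriminant $-455$. So no $N$ survives, and the same holds for $d=3,4$, although only $d=5$ is needed. Your window $N\in\{42,43\}$ and the whole final paragraph are therefore unnecessary. The proposed fallback would not prove the lemma as stated anyway: Proposition~\ref{p11:33-forced-base} applies only to members of a spectral pair, while the lemma concerns an arbitrary $B$ with the listed properties. As a minor point, a direction carries at most $33$ unordered pairs, so your pair count gives $N\ge16$, not $32$; no lower bound on $N$ is needed.
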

\begin{proof}
Multiplication by a power of a nonzero linear form reduces to degree
five. Suppose $F$ is a polynomial of that degree vanishing on $\D(B)$,
and put $N=|\D(B)|$, $n_L=|L\cap \D(B)|$.
If every $n_L\geq3$, a projective line not dividing $F$ has at most
five zeros. There are at most five distinct linear factors, and each
corresponding line has at most eight directions. Hence
\[
 \sum_L(n_L-3)(n_L-5)\leq5(8-3)(8-5)=75.
\]
But the incidence identities make this sum
\[
 N^2-85N+1995\geq755/4>75.
\]

Otherwise a projective line meets $\D(B)$ in at most two points. As in
the preceding proof, the corresponding eleven affine planes each
contain three collinear points, with at least six parallel triples.
Their projected positions form a planar six-cap and determine at least
six directions by Lemma~\ref{p11:six-point}. Each pair of triples
contributes at least five cross directions by Cauchy--Davenport, in
addition to their common direction. Thus each of the six quotient
directions gives a different projective line with at least six zeros
of $F$. All six lines must divide $F$, impossible in degree five.
\end{proof}

\begin{corollary}\label{p11:33-moments}
Both members of a size-$33$ spectral pair have all mixed moments of
total degree at most five equal to zero in $\F_{11}$.
\end{corollary}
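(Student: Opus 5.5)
The plan is to derive the corollary from Lemma~\ref{p11:33-rigidity} via the standard Fourier-to-moment duality, applied separately to each member. Fix the pair $(E,A)$ of size $33$ and work with $E$; the argument for $A$ is symmetric. First I check that $E$ meets the hypotheses of the rigidity lemma. Lines carry at most three points by Lemma~\ref{p11:prelims-basic}, and planes at most eight by Lemma~\ref{p11:33-plane8}. For the direction condition, spectrality gives $\D(E)\subseteq\U_A$, and Lemma~\ref{p11:33-plane8} bounds every projective-line intersection with $\U_A$ by eight. Hence no nonzero homogeneous polynomial of degree at most five vanishes on $\D(E)$. I would rather apply the rigidity lemma to $A$ itself, so that the conclusion lands on $\D(A)\subseteq\U_E$ and can be turned into information about $E$.

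The key step is to convert uniformity of $E$ in each direction $v\in\U_E$ into polynomial identities. If $v$ is uniform for $E$, then every plane $v\cdot x=t$ carries three points. Therefore $\sum_{x\in E}(v\cdot x)^k=3\sum_{t\in\F_{11}}t^k$ in $\F_{11}$. This vanishes for $1\le k\le 9$, because $\sum_t t^k\equiv0$ unless $10\mid k$. Thus for each $k\le5$ the form
\[
 M_k(v)=\sum_{x\in E}(v\cdot x)^k,
\]
which is homogeneous of degree $k$ in $v$, vanishes on $\U_E\supseteq\D(A)$. The rigidity lemma applied to $A$ then forces $M_k\equiv0$ as a polynomial for each $1\le k\le5$.

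Next I expand by the multinomial theorem:
\[
 M_k(v)=\sum_{|\alpha|=k}\binom{k}{\alpha}v^\alpha\sum_{x\in E}x^\alpha.
\]
For $k\le5<11$ every multinomial coefficient is a unit mod $11$. The monomials $v^\alpha$ of degree $k\le10$ are distinct as polynomials, and as functions on $\F_{11}^3$ as well. So the vanishing of $M_k$ gives $\sum_{x\in E}x^\alpha=0$ for every multi-index with $1\le|\alpha|\le5$. Exchanging the roles of $E$ and $A$ gives the same conclusion for $A$, using $\D(E)\subseteq\U_A$.

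The only delicate point is that the moments are not translation invariant. The corollary is presumably meant in the normalized coordinates of Proposition~\ref{p11:33-forced-base}, or at least the argument above needs no normalization: for every $v\in\U_E$ the moment $\sum_{x\in E}(v\cdot x)^k$ vanishes for $k\ge1$ directly, in any affine coordinates. Uniformity of a direction is preserved by translation, so the conclusion holds in every affine coordinate system. The main thing to confirm is that the hypotheses of the rigidity lemma match exactly the bounds established in Lemma~\ref{p11:33-plane8}; they do. No degree-zero statement is claimed, since the zeroth moment is $33\equiv0$ anyway.
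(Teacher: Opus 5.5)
Your proposal is correct and is essentially the paper's own proof: uniformity makes $M_j$ vanish on $\D(A)\subseteq\U_E$ for $1\le j\le5$, Lemma~\ref{p11:33-rigidity} applied to $A$ forces $M_j\equiv0$, the multinomial coefficients are units modulo eleven, the degree-zero moment is $33=0$, and the roles of the two members are then exchanged. You also check explicitly that the rigidity hypotheses hold: line bound three, plane bound eight, and at most eight directions per projective line via $\D(A)\subseteq\U_E$ and Lemma~\ref{p11:33-plane8}. The paper leaves that verification implicit.
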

\begin{proof}
For $1\leq j\leq5$, uniformity gives
\[
 M_{j,E}(v)=\sum_{e\in E}(v\cdot e)^j
          =3\sum_{t\in\F_{11}}t^j=0\qquad(v\in \D(A)).
\]
Lemma~\ref{p11:33-rigidity} makes $M_{j,E}$ identically zero.
All its multinomial coefficients are nonzero modulo eleven, so its
coefficients are exactly the required vanishing mixed moments, up to
nonzero factors. The degree-zero moment is $33=0$ in $\F_{11}$.
Interchange the two members.
\end{proof}

\begin{lemma}[The finite height calculation]\label{p11:33-height}
If $h:B_{33}\to\F_{11}$ satisfies
\begin{equation}\label{p11:33-height-moments}
 \sum_{(x,y)\in B_{33}}x^ay^bh(x,y)^k=0
 \qquad(a,b,k\geq0,\ a+b+k\leq5),
\end{equation}
then subtraction of an affine function makes $h$ odd.
\end{lemma}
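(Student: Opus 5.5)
We treat the statement as a finite problem in exact $\F_{11}$ arithmetic and organize it by the power $k$ of $h$. The conditions with $k=0$ say only that $B_{33}$ has vanishing mixed moments of degree at most five. This follows from the formula \eqref{p11:33-base} and can be checked directly, so it imposes nothing on $h$. For $k=1$, the conditions $\sum x^ay^bh=0$ with $a+b\le4$ are linear in the $33$ values of $h$. The affine functions $h=\alpha+\beta x+\gamma y$ satisfy every equation, because their contributions reduce to moments of $B_{33}$ of degree at most five. We therefore first compute the kernel $\mathcal K_1$ of this linear system and split it as (affine functions) $\oplus$ (odd part) $\oplus$ (even nonaffine part). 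Here oddness refers to the involution $P\mapsto -P$ of $B_{33}$, which is available because $B_{33}=-B_{33}$. The equations with monomials of even degree $a+b$ involve only the even part of $h$, and those with odd degree only the odd part. So the linear system decouples under the involution, and the affine normalization removes the constant and linear terms.

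Next we impose the nonlinear conditions: $k=2$ with $a+b\le3$, $k=3$ with $a+b\le2$, $k=4$ with $a+b\le1$, and $k=5$. Write $h=\ell+h_{\mathrm{odd}}+h_{\mathrm{ev}}$, with $\ell$ affine and $h_{\mathrm{ev}}$ the even nonaffine part. Using the symmetry, expand each moment condition as a polynomial identity in the coordinates of $h_{\mathrm{ev}}$ and $h_{\mathrm{odd}}$ with respect to bases of the two parts of $\mathcal K_1$. The goal is to show that these equations force $h_{\mathrm{ev}}=0$. Once $h_{\mathrm{ev}}=0$, $h-\ell$ is odd and the lemma follows. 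The nonlinear stage of this step is the main obstacle: it is a system of polynomial equations over $\F_{11}$ whose size depends on $\dim\mathcal K_1$. I expect the cleanest route to be the following.

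First reduce the affine part. The moment conditions are invariant under $h\mapsto h+\ell$: expanding $(h+\ell)^k$ gives terms $\sum x^ay^b\ell^{k-j}h^j$, which are lower-$k$ conditions of no larger total degree. We may therefore assume $\ell=0$. Then consider the conditions that are even under the involution, namely $k$ even with $a+b$ even and $k$ odd with $a+b$ odd. For $k=2$ the even-degree monomials give quadratic forms $\sum x^ay^b(h_{\mathrm{ev}}^2+h_{\mathrm{odd}}^2)$, while the odd-degree ones give bilinear cross terms $\sum x^ay^b\,h_{\mathrm{ev}}h_{\mathrm{odd}}$. We would try to show that the cross-term equations at $k=2$ and $k=3$ already force $h_{\mathrm{ev}}$ into a small subspace for each nonzero $h_{\mathrm{odd}}$. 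If that fails, we fall back on exhaustive enumeration over the even subspace, presumably of small dimension given the $33$ points and the many degree-$\le4$ constraints. For each candidate even part we check solvability of the remaining equations for the odd part by Gaussian elimination after fixing the even part, which makes the cross terms linear in the odd coordinates. The dimension counts and the search would be carried out exactly and recorded as a finite certificate, in the manner of the other computer-verified lemmas, with scalings $h\mapsto ch$ used to reduce the enumeration.
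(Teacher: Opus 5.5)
Your plan is essentially the paper's proof: you remove an affine function (justified by the binomial expansion), split $h=e+o$ using $B_{33}=-B_{33}$ so that the $k=1$ system decouples, and enumerate the nonzero even part projectively. The paper finds a seven-dimensional even space, hence $1{,}948{,}717$ classes; for each it solves the six bilinear $k=2$ cross equations linearly in the odd coordinates and tests the remaining moments, and $e\neq0$ never survives. One correction of detail: Gaussian elimination only disposes of those bilinear cross equations. The even-quadratic, cubic, quartic and quintic conditions are nonlinear in $o$, so you must enumerate every vector of the resulting odd kernel (dimension at most four, $242{,}202{,}917$ pairs in total), as the paper does, rather than settle them by elimination.
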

\begin{proof}
Subtract $h(0,0)$, and write $h=e+o$ with $e$ even and $o$ odd.
Subtract a linear function to make $o(1,0)=o(0,4)=0$.
These operations preserve \eqref{p11:33-height-moments} by the binomial
formula. In particular $e(0,0)=o(0,0)=0$. Representatives of the sixteen
nonzero antipodal pairs of $B_{33}$ are
\[
\begin{split}
 R=\{&(0,4),(0,5),(1,0),(1,3),(1,4),(1,5),(1,6),(1,7),\\
     &(1,10),(2,4),(2,5),(3,4),(4,0),(4,3),(4,4),(4,7)\}.
\end{split}
\]
The equations with $k=1$ split into
\[
 \sum_R x^ay^be=0\quad(a+b=0,2,4),\qquad
 \sum_R x^ay^bo=0\quad(a+b=1,3).
\]
The even matrix has rank nine, leaving seven variables. The odd matrix
has rank six, and its two normalization equations increase its rank to
eight, leaving eight variables. These ranks and nullspaces are computed
by Gaussian elimination modulo eleven.

The ten equations with $k=2$ are four equations
\begin{equation}\label{p11:33-even-quadratics}
 \sum_Rw(e^2+o^2)=0,\qquad w\in\{1,x^2,xy,y^2\},
\end{equation}
and six bilinear equations
\begin{equation}\label{p11:33-mixed-quadratics}
 \sum_Rweo=0,\qquad w\in\{x,y,x^3,x^2y,xy^2,y^3\}.
\end{equation}
Suppose $e\ne0$. Scaling the entire height makes its first nonzero
even coefficient one, giving exactly $(11^7-1)/10=1,948,717$ vectors.
For each of them, solve \eqref{p11:33-mixed-quadratics} in the eight
odd variables. The rank census is
\[
\begin{array}{c|r|c}
\text{rank}&\text{even vectors}&\text{odd nullity}\\\hline
4&187&4\\
5&3,052&3\\
6&1,945,478&2.
\end{array}
\]
Enumerating every vector in these kernels gives
\[
 187\cdot11^4+3,052\cdot11^3+1,945,478\cdot11^2
 =242,202,917
\]
pairs. Exactly $21,611$ satisfy \eqref{p11:33-even-quadratics}.
The remaining equations are
\[
\begin{array}{ll}
 \sum_Rw(e^3+3eo^2)=0,&w\in\{1,x^2,xy,y^2\},\\
 \sum_Rw(3e^2o+o^3)=0,&w\in\{x,y\},\\
 \sum_R(e^4+6e^2o^2+o^4)=0,&\\
 \sum_Rw(e^3o+eo^3)=0,&w\in\{x,y\},\\
 \sum_R(e^5+10e^3o^2+5eo^4)=0.&
\end{array}
\]
Exactly four pairs satisfy the cubic equations, and all four satisfy
the quartic equations. All have $o=0$. In the even basis generated by
\path{prepare_split.py}, their coefficients and final moments are
\[
\begin{array}{c|c}
\text{even coefficients}&\sum_{B_{33}}h^5\\\hline
(1,3,4,4,6,6,4)&8\\
(1,6,2,8,2,2,8)&3\\
(1,7,10,8,5,1,2)&3\\
(1,10,2,7,9,1,6)&3.
\end{array}
\]
Thus none satisfies the fifth-moment equation.

The source generators and exhaustive checkers are supplied in the
\path{fuglede11_round3/algebra} and \path{quadrics} directories.
The second implementation enumerates every odd kernel vector and
evaluates moments directly on $(x,y,e+o)$ and $(-x,-y,e-o)$; it gives
the same quadratic count and no solutions. This avoids the first
implementation's shortcut for a two-dimensional kernel. The normalization
is exhaustive because every nonzero even vector has a unique projective
representative, and the odd part is scaled simultaneously. Every odd
kernel vector is tested. The equations displayed above exhaust all
$k\geq2$ moments of total degree at most five. Hence $e\ne0$ is
impossible, and the normalized height is odd.
\end{proof}

\subsection{A determinant obstruction}
\begin{lemma}\label{p11:33-determinant}
Let $p$ be an odd prime. If a centrally symmetric spectral set in
$\F_p^d$ has odd cardinality $N$, then
\[
 \sqrt{(-1)^{(N-1)/2}N}\in\mathbb Q(\zeta_p).
\]
In particular a centrally symmetric spectral set cannot have cardinality
$33$ in any $\F_{11}^d$.
\end{lemma}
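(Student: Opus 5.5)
Let $E=-E$ be spectral with spectrum $\Lambda$, $|E|=N$ odd, and let $H=(\zeta^{\lambda\cdot x})_{\lambda\in\Lambda,x\in E}$ with $\zeta=\zeta_p$, so that $HH^*=NI$. The plan is to compute $\det H$ in two ways.

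\textbf{Setting up the symmetry.} Since $N$ is odd and $x\mapsto -x$ is an involution of $E$, its unique fixed point is $0\in E$. Order the columns as $0,x_1,\dots,x_k,-x_1,\dots,-x_k$, with $k=(N-1)/2$. Negating the columns equals complex conjugation, since $\zeta^{-\lambda\cdot x}=\overline{\zeta^{\lambda\cdot x}}$. This column permutation $\sigma$ is a product of $k$ transpositions. Hence
\[
\overline{\det H}=\det(\overline H)=\det(H P_\sigma)=(-1)^k\det H .
\]

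\textbf{Combining with unitarity.} From $HH^*=NI$ we get $|\det H|^2=N^N$, so $(\det H)\overline{\det H}=N^N$. Substituting the previous relation gives
\[
(-1)^k(\det H)^2=N^N,\qquad\text{so}\qquad (\det H)^2=(-1)^kN^N .
\]
Because $N$ is odd, $N^N=N\cdot (N^{(N-1)/2})^2$. Since $\det H\in\mathbb Z[\zeta]$,
\[
\sqrt{(-1)^kN}=\pm\det H/N^{(N-1)/2}\in\mathbb Q(\zeta_p).
\]
This is the stated criterion. Translating $\Lambda$ is not needed: central symmetry of $E$ alone makes conjugation act as a column permutation.

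\textbf{The case $N=33$.} Here $k=16$, so we need $\sqrt{33}\notin\mathbb Q(\zeta_{11})$. The field $\mathbb Q(\zeta_{11})$ is abelian with cyclic Galois group of order $10$. It therefore contains exactly one quadratic subfield, namely $\mathbb Q(\sqrt{-11})$. If $\sqrt{33}$ lay in it, then $\mathbb Q(\sqrt{33})=\mathbb Q(\sqrt{-11})$, which is false since $33\cdot(-11)$ is not a rational square (the fields have different signature). Equivalently, $3$ would ramify in $\mathbb Q(\sqrt{33})\subseteq\mathbb Q(\zeta_{11})$, but only $11$ ramifies in $\mathbb Q(\zeta_{11})$. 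Either argument gives the contradiction.

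\textbf{Main obstacle.} The only delicate point is the sign $(-1)^k$, that is, confirming that conjugation is exactly a column permutation with $k$ transpositions. This uses that the only self-paired point is $0$, which requires $p$ odd so that $x=-x$ forces $x=0$. Once the sign is fixed, the rest is routine algebraic number theory.
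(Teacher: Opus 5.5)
Your proof is correct and is essentially the paper's argument: complex conjugation acts on the character matrix as the permutation $x\mapsto -x$ of $E$, whose sign is $(-1)^{(N-1)/2}$. Combining this with $\det H\,\overline{\det H}=N^N$ gives $(\det H/N^{(N-1)/2})^2=(-1)^{(N-1)/2}N$.

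The differences are minor. The paper states explicitly that the center is first translated to $0$; you use this implicitly when you write $E=-E$, and translation preserves spectrality. For $N=33$, the paper notes that $\sqrt{33}$ would lie in the real subfield of $\mathbb Q(\zeta_{11})$, which has degree five and so no quadratic subfield. Your unique-quadratic-subfield and ramification arguments reach the same conclusion.
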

\begin{proof}
Translate the center to zero. Then $E=-E$, and zero belongs to $E$
because $N$ is odd. For a spectrum $A$, put
$H=(\zeta_p^{e\cdot a})_{e\in E,a\in A}$ and
$\Delta=\det H$. Complex conjugation permutes the rows by negation,
a permutation of sign $\varepsilon=(-1)^{(N-1)/2}$. Thus
\[
 \overline\Delta=\varepsilon\Delta,\qquad
 \Delta\overline\Delta=N^N,\qquad
 \Delta^2=\varepsilon N^N.
\]
Consequently $\Delta/N^{(N-1)/2}$ is the asserted square root in
$\mathbb Q(\zeta_p)$. When $p=11,N=33$, this would put
$\sqrt{33}$ in the real cyclotomic subfield, whose degree over
$\mathbb Q$ is five. That field has no quadratic subfield.
\end{proof}

\begin{corollary}\label{p11:33-square-m}
If $1\leq m<p$ is odd, a centrally symmetric spectral set of cardinality
$mp$ in $\F_p^d$ can exist only if $m$ is a square.
\end{corollary}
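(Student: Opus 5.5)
The plan is to apply Lemma~\ref{p11:33-determinant} with $N=mp$ and then decide when $\sqrt{\pm mp}$ can lie in $\mathbb Q(\zeta_p)$. Since $m$ and $p$ are odd, $N$ is odd. The lemma therefore gives $\sqrt{\varepsilon mp}\in\mathbb Q(\zeta_p)$, where $\varepsilon=(-1)^{(mp-1)/2}$.

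\textbf{Step 1.} Recall that $\mathbb Q(\zeta_p)$ is cyclic of degree $p-1$ over $\mathbb Q$. It therefore has a unique quadratic subfield, namely $\mathbb Q(\sqrt{p^*})$ with $p^*=(-1)^{(p-1)/2}p$. This follows from the Gauss sum identity $g^2=p^*$.

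\textbf{Step 2.} Suppose $\varepsilon mp$ is not a rational square. Then $\mathbb Q(\sqrt{\varepsilon mp})$ is a quadratic subfield of $\mathbb Q(\zeta_p)$, so it equals $\mathbb Q(\sqrt{p^*})$. Hence $\varepsilon mp\cdot p^*$ is a rational square. Since $p^*=\pm p$, this says $\pm m p^2$ is a square, so $\pm m$ is a square. Because $m>0$, the sign must be positive and $m$ is a square.

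\textbf{Step 3.} Suppose instead $\varepsilon mp$ is a rational square. Since $1\le m<p$, the prime $p$ does not divide $m$. So $p$ divides $mp$ exactly once, and $mp$ cannot be a square. This case is therefore impossible.

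The only delicate point is Step~2, where we check that the sign is forced. If $\varepsilon p\cdot p^*\cdot m$ were a negative square times a unit, the argument would fail. Consider $\varepsilon p p^* m=\delta p^2m$ with $\delta=\pm1$. A rational square is nonnegative, and $m>0$, so $\delta=+1$ and $m$ is a square.

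We do not need to verify $\delta=+1$ independently via quadratic reciprocity. Positivity of squares already forces it, so any $m$ for which $\delta=-1$ is simply excluded. The conclusion "only if $m$ is a square" holds in either case.
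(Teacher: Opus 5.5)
Your proof is correct and follows the paper's argument. You apply Lemma~\ref{p11:33-determinant} with $N=mp$, use $p\nmid m$ to see that the square root is irrational, identify $\mathbb Q(\sqrt{\varepsilon mp})$ with the unique quadratic subfield $\mathbb Q(\sqrt{(-1)^{(p-1)/2}p})$, and let positivity of rational squares force $m$ to be a square; your Steps~2--3 and sign discussion are exactly the paper's remarks that the root is irrational and that the resulting rational square has positive sign.
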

\begin{proof}
The square root furnished by Lemma~\ref{p11:33-determinant} is irrational,
since $p\nmid m$. The unique quadratic subfield of
$\mathbb Q(\zeta_p)$ is
\[
 \mathbb Q(\sqrt{(-1)^{(p-1)/2}p}).
\]
Equality of these fields forces
\[
 (-1)^{(mp-1)/2-(p-1)/2}m
\]
to be a rational square.
Its sign is positive, and therefore $m$ is an integer square.
\end{proof}

\begin{proposition}\label{p11:33-excluded}
There is no spectral subset of $\F_{11}^3$ of cardinality $33$.
\end{proposition}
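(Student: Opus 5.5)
We combine the results of this section. Let $(E,A)$ be a hypothetical spectral pair of size $33$ in $\F_{11}^3$.

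\emph{Graph form.} By Proposition~\ref{p11:33-forced-base}, after an affine change of coordinates there is a function $h:B_{33}\to\F_{11}$ with
\[
 E=\{(x,y,h(x,y)):(x,y)\in B_{33}\}.
\]
The forced-base argument projects along the annihilator of an eight-point uniform line. It then normalizes the four nonuniform forms and the centers, so the projection is a bijection onto $B_{33}$ and the third coordinate is a well-defined height.

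\emph{Moment conditions.} Corollary~\ref{p11:33-moments} says that every mixed moment of $E$ of total degree at most five vanishes in $\F_{11}$. Its hypotheses are supplied by Lemma~\ref{p11:33-rigidity}, whose assumptions come from Lemma~\ref{p11:33-plane8} and the line bound. Affine changes of coordinates preserve the vanishing of all moments of degree at most five: each moment of the image is a combination of moments of the original set of no larger degree. The monomials in the graph coordinates are $x^ay^bh^k$, so $h$ satisfies \eqref{p11:33-height-moments}.

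\emph{Central symmetry.} Lemma~\ref{p11:33-height} gives an affine function $\alpha$ such that $h-\alpha$ is odd on $B_{33}$. The shear $(x,y,z)\mapsto(x,y,z-\alpha(x,y))$ is an affine bijection of $\F_{11}^3$, so the image of $E$ is again spectral, with a suitably transformed spectrum. This image is the graph of an odd function over the symmetric set $B_{33}=-B_{33}$, hence invariant under $v\mapsto-v$. Concretely, if $(x,y,g(x,y))$ lies in the image then so does $(-x,-y,-g(x,y))=(-x,-y,g(-x,-y))$.

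\emph{Contradiction.} A spectral set of cardinality $33$ in $\F_{11}^3$ that is centrally symmetric is excluded by Lemma~\ref{p11:33-determinant}, a contradiction. By symmetry we do not even need to treat $A$.

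The only point needing care is that spectrality and the moment hypotheses survive the affine normalizations. I would check this explicitly.
\begin{itemize}
\item A translation $x\mapsto x+c$ multiplies each row of the character matrix by a fixed character value, so orthogonality is preserved.
\item An invertible linear map $M$ is compensated by replacing the spectrum $A$ with $M^{-T}A$.
\end{itemize}
Everything else is a direct invocation of results already established.
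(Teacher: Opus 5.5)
Your proposal is correct and follows the paper's proof exactly. It takes the graph form over $B_{33}$ from Proposition~\ref{p11:33-forced-base}, the vanishing moments through degree five from Corollary~\ref{p11:33-moments}, the affine shear making the height odd from Lemma~\ref{p11:33-height}, and the determinant obstruction of Lemma~\ref{p11:33-determinant} for the resulting centrally symmetric set; your explicit checks that translations, invertible linear maps, and the shear preserve spectrality and low-degree moment vanishing are the bookkeeping the paper leaves implicit in ``affine changes preserve spectrality.''
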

\begin{proof}
Proposition~\ref{p11:33-forced-base} writes such a set as a graph over
$B_{33}$. Corollary~\ref{p11:33-moments} supplies its moments through
degree five, and Lemma~\ref{p11:33-height} makes its height odd after
an affine change. Since $B_{33}=-B_{33}$, the graph is then centrally
symmetric. Affine changes preserve spectrality, contradicting
Lemma~\ref{p11:33-determinant}.
\end{proof}

The determinant argument is useful here because it makes further
classification of the odd heights unnecessary. The calculation needs
only to exclude nonzero even parts. Thus no enumeration of the
$21,435,888$ projective odd coefficient vectors, or of their plane
intersections, is required in this proof.


\section{\texorpdfstring{Cardinality $44$: moments and a plane criterion}{Cardinality 44: moments and a plane criterion}}
For a spectral pair $(E,A)$ of size $44$, the results of
Fallon--Mayeli--Villano give plane intersections at most $11$, line intersections
at most $4$, and no full projective line in either direction set $\D(E),\D(A)$.
Every direction in $\D(A)$ is a normal direction in which $E$ is equidistributed,
with four points in each of eleven planes; the same holds with $E,A$ reversed.
Here $\D(B)$ denotes the projectivized nonzero differences of $B$.

\begin{lemma}[Finite planar inputs]\label{p11:planar44}
Over $\F_{11}$:
\begin{enumerate}
\item A noncollinear eight-point planar set determines at least seven directions.
\item A seven-point planar set with line cap four and containing four collinear
points determines at least seven directions.
\item An eleven-point planar set with line cap four determines at least nine directions.
\end{enumerate}
\end{lemma}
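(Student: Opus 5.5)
These three assertions are finite planar statements over $\F_{11}$. I would prove them the way the paper proves Lemmas~\ref{p11:six-point}, \ref{p11:lem:22finitecaps} and \ref{p11:33-small-planar}: by an exhaustive, normalized extension search. For each assertion, suppose a counterexample exists. Its direction set is then contained in a projective subset $S\subset\PG(1,11)$ of size six (respectively six, eight). Affine invariance lets us fix convenient points and slopes, and we then enumerate all compatible extensions.

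For assertion (1), a noncollinear set contains a triangle. Send it to $(0,0),(1,0),(0,1)$, so $S\supseteq\{0,\infty,-1\}$. This leaves $\binom{9}{3}=84$ masks of six allowed slopes. For each mask, list the points whose three directions to the triangle are allowed. Extend in increasing order, requiring every new difference to have an allowed slope, and check that no eight-point set survives.

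Assertion (2) is smaller. Normalize the four collinear points onto the line $y=0$, with two of them at $(0,0),(1,0)$. The remaining three points form a triangle-free-of-collinearity complement. Each of them sees the four collinear points in four distinct directions, since no fifth point lies on that line. Together with the slope $0$, this already gives five directions. Enumerate the positions of the other two collinear points and the masks of at most six slopes containing $0$, then search the three extra points. For speed, a counting argument may dispose of most cases before any search.

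For assertion (3), an eleven-point set with line cap four missing at least three directions is handled as in Lemma~\ref{p11:ten-point}. Choose two missing directions as the coordinate directions, so the set is a permutation graph $\{(x,\pi(x))\}$ over all of $\F_{11}$. No completion is needed, since the set already has eleven points. Normalize $\pi(0)=0,\pi(1)=1$ and scan the $9!$ permutations, rejecting any with five collinear points or with nine or more directions. Since $\F_{11}$ is small, a direct scan suffices. To prune the search, one can impose at each partial stage that the slopes $(\pi(x)-\pi(y))/(x-y)$ take at most eight values, a set that must already include $1$.

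The main obstacle is (3). Eleven points with four allowed per line is far from a cap, so pruning by collinearity is weak, and the scan must be justified to cover all cases. The key points are that the normalization is exhaustive (any two points of a permutation graph with distinct coordinates can be sent to $(0,0),(1,1)$ by a diagonal affine map preserving both coordinate directions) and that no outcome is used beyond emptiness. If the raw $9!$ scan is too slow, I would branch on the eight-element allowed slope set first, as in (1).
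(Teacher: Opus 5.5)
Your plan is correct and follows the paper's proof. For (1) you use the same triangle normalization with the $84$ six-direction masks containing $0,-1,\infty$ and an exhaustive clique extension, and for (3) you use the same reduction to permutation graphs normalized by $\pi(0)=0,\pi(1)=1$, with a scan of all $9!$ permutations. The only difference is in (2): the paper reuses the triangle-normalized masks and finds that all $180$ normalized seven-point sets with at most six directions and line cap four have line cap three, whereas you anchor the search on the four collinear points; both searches are exhaustive.
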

The first two assertions are certified by \path{code/prime11/original/output/fuglede11_next/verify44_planar.py}, which
uses \path{generic_rigidity_checks.py} in the same directory. Normalize a noncollinear triple to
$(0,0),(1,0),(0,1)$. A set using at most six directions extends to one of the
$\binom93=84$ six-direction masks containing $0,-1,\infty$. Enumerate all
compatible cliques containing that triple. A noncollinear set with six
directions cannot contain six collinear points, so line cap five loses no cases
in the first assertion. In the second enumeration all $180$ normalized
seven-cliques with line cap four actually have line cap three.

For the third assertion, a set with at least two missing directions is affinely
equivalent to the graph of a permutation. Normalize $f(0)=0,f(1)=1$ and exhaust
the $9!$ permutations. Every line-cap-four graph determines at least nine
directions. With fewer than two missing directions the assertion is immediate.
The permutation enumeration is recorded in the accompanying layer certificates.

\subsection{Degree-six rigidity and moments}
\begin{theorem}\label{p11:rigidity44}
Let $B\subset\F_{11}^3$ have size $44$, plane cap $11$, line cap $4$, and suppose
$\D(B)$ contains no complete projective line. A homogeneous polynomial of degree
at most six vanishing on $\D(B)$ is identically zero.
\end{theorem}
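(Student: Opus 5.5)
The plan follows Lemma~\ref{p11:33-rigidity}. We combine the incidence obstruction of Lemma~\ref{p11:incidence} with a separate treatment of projective lines that meet $\D(B)$ in very few points. A homogeneous polynomial of degree less than six times a suitable power of a nonzero linear form has degree six, so it suffices to exclude a nonzero $F$ of degree exactly six. Put $N=|\D(B)|$ and $n_L=|L\cap\D(B)|$. First we bound $N$ from below. Each of the eleven parallel planes in any normal direction has at most eleven points, and lines carry at most four. Hence some plane contains many points, and Lemma~\ref{p11:planar44}(1),(2),(3) gives many directions inside its direction plane. The pair count $\binom{44}{2}=946$ with at most $\binom42=6$ pairs per parallel line should give a crude bound $N\ge 946/(6\cdot 11\cdot\ldots)$. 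We would prefer the sharper route: fix a point of $\D(B)$ and count by the twelve lines through it. Since $\D(B)$ has no full line, every $n_L\le 11$.

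\textbf{Case 1: every projective line meets $\D(B)$ in at least $a=3$ (or $4$) points.} We apply Lemma~\ref{p11:incidence} with $p=11$, $d=6$, $b=11$ and the best available $a$. We then check that the quadratic
\[
 N^2+\bigl[11-(a+6)\cdot 12\bigr]N+133\,a\cdot 6
\]
has integer minimum exceeding $6(11-a)(5)$ on the admissible range of $N$. The case $a=3$ gives $N^2-97N+2394$ against $240$. That minimum is about $42$, which is not enough. So we first raise $a$, or we restrict the factor lines: at most six linear factors, each line with at most $11$ points. The line bound $11$ is too weak, so we would prove an intermediate bound $n_L\le 9$ or so. A full-line-free uniform set carries plane profiles that are polynomials of low degree modulo eleven, as in Lemma~\ref{p11:33-plane8}, and their least residues must sum to $44$ with values at most $11$. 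This profile argument is the first finite check.

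\textbf{Case 2: some line $L$ has $n_L\le 2$ (or small).} Then the eleven affine planes with direction plane $L$ contain $44$ points. Each plane, by Lemma~\ref{p11:planar44}(1), has either at most seven points or is collinear, and in total the planes determine at most two directions. A plane with at most two in-plane directions holds at most four points, either four collinear points or a parallelogram. Hence every plane contains exactly four points, lying in at most two directions. We then mimic Lemma~\ref{p11:33-six-triples} and Lemma~\ref{p11:33-rigidity}. Many parallel four-point lines, or many parallel chords, appear. Their projected centers form a planar set with at least six directions, and each pair of parallel four-sets contributes at least seven cross directions by Cauchy--Davenport, so several projective lines each carry at least $7$ zeros of $F$. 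These lines must divide $F$. Counting them, six or more, against degree six, with the remaining directions still needing to vanish, gives the contradiction.

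We expect the main obstacle to be Case~1. The pure incidence inequality with cap $11$ on factor lines is too weak, so the real work is an auxiliary bound on $n_L$ for lines in $\D(B)\subseteq\U$. That bound comes from Radon-profile polynomials as in Lemma~\ref{p11:radon}, possibly requiring a finite enumeration of profiles with sum $44$ and values at most $11$.
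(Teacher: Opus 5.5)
Your Case~1 is where the argument breaks down, and the fix you propose cannot work. You evaluated Lemma~\ref{p11:incidence} only at $a=3$. There the minimum $42$ of $N^2-97N+2394$ is below $240$, so you tried to shrink $b$ by bounding uniform normals on factor lines via Radon profiles. That fails for two reasons:
\begin{itemize}
\item The theorem concerns an arbitrary $B$ with no spectral partner, so there is no uniform-normal set or profile identity to invoke. (The size-$44$ pencil bound, Lemma~\ref{p11:44-pencil8}, is itself derived downstream of this theorem.)
\item Even granting $n_L\le 9$ on factor lines, with $a=3$ the right side is $6\cdot 6\cdot 3=108>42$. You would need $b\le 7$.
\end{itemize}
The missing observation is that $a=4$ with the trivial $b=11$ already suffices, since $N^2-109N+3192\ge 887/4>210=6\cdot 7\cdot 5$. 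Moreover, $n_L\ge 4$ on every projective line holds as soon as $B$ has no four collinear points. In each parallel family some plane contains at least four points of $B$; these are then noncollinear; and four noncollinear points over an odd field determine at least four directions.

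The correct dichotomy is therefore whether $B$ contains a four-point affine line $\ell$, of direction $d$. In your framing, Case~2 must cover $n_L\le 3$, not only $n_L\le 2$; this again forces every slice to consist of four collinear points. In that case the paper argues as follows.
\begin{enumerate}
\item By Lemma~\ref{p11:planar44}(2), a nonfactor plane through $\ell$ has at most six points. The pencil count $88\le 11k_d+6(12-k_d)$ then forces $k_d\ge 4$ factor lines through $d$.
\item For $x\in B\setminus\ell$, the line $L_x$ through $d$ carries four zeros besides a zero of multiplicity at least $4$ at $d$. Since $8>6$, $L_x$ is a factor.
\item Hence $B$ lies in the $k_d$ factor planes through $\ell$, and $44\le 4+7k_d$ gives $k_d=6$.
\item The projection along $d$ has at least eleven noncollinear points but at most six directions, contradicting Lemma~\ref{p11:planar44}(1).
\end{enumerate}
Your Case~2 sketch stops short of this final step. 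Six factor lines through $d$ are compatible with degree six, unlike degree five in Lemma~\ref{p11:33-rigidity}, so the contradiction must come from the projection argument. Also, a set determining at most two directions is collinear; a parallelogram determines four directions in odd characteristic.
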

\begin{proof}
Multiplying a lower-degree counterexample by a power of a nonzero linear form
reduces to degree six. Suppose $F$ is such a nonzero sextic.

First suppose $B$ has no four collinear points. Every parallel family of planes
has a slice containing at least four points. These four are noncollinear and
determine at least four directions. Thus $D=\D(B)$ meets every projective line
in between four and eleven points. Put $N=|D|$ and $n_L=|D\cap L|$.
If $k\le6$ is the number of distinct rational line factors of $F$, a nonfactor
line has $n_L\le6$, whence
\[
 \sum_L(n_L-4)(n_L-6)\le35k\le210.
\]
The projective-plane identities $\sum n_L=12N$ and
$\sum n_L^2=N^2+11N$ give instead
\[
 \sum_L(n_L-4)(n_L-6)=N^2-109N+3192\ge887/4>210.
\]

It remains to consider a four-point affine line $\ell$, of direction $d$.
Let $k_d$ count rational line factors of $F$ whose associated linear planes
contain $d$. Every nonfactor affine plane through $\ell$ has at most six
points of $B$, by Lemma~\ref{p11:planar44}(2). The twelve-plane pencil therefore gives
\[
 44+11\cdot4=88\le11k_d+6(12-k_d)=72+5k_d,
\]
so $k_d\ge4$.

For $x\in B\setminus\ell$, the four differences $x-y$, $y\in B\cap\ell$,
give four distinct projective points on a projective line $L_x$ through $d$,
all distinct from $d$. Unless $L_x$ is itself a factor, the restriction
$F|_{L_x}$ has a zero of multiplicity at least $k_d$ at $d$, and the four
additional zeros just described. This exceeds degree six. Thus $L_x$ is a
factor through $d$.

Consequently $B$ is covered by $k_d$ planes through $\ell$, and
\[
 44\le4+7k_d.
\]
Since $k_d\le6$, this forces $k_d=6$: all six factors are distinct and pass
through $d$. Project $B$ along $d$. The image has at least eleven points,
is noncollinear (otherwise $B$ lies in one plane), and determines at most six
directions. This contradicts Lemma~\ref{p11:planar44}(1).
\end{proof}

\begin{proposition}\label{p11:moments44}
If $(E,A)$ is spectral of size $44$, then for each multi-index $\alpha$ with
$0\le|\alpha|\le6$,
\[
 \sum_{e\in E}e^\alpha=\sum_{a\in A}a^\alpha=0\quad\hbox{in }\F_{11}.
\]
\end{proposition}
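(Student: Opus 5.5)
The plan follows the proof of Corollary~\ref{p11:33-moments}, with Theorem~\ref{p11:rigidity44} in place of Lemma~\ref{p11:33-rigidity}. First, the degree-zero moment is $44=4\cdot11=0$ in $\F_{11}$. For $1\le j\le6$ and $v\in\F_{11}^3$ put
\[
 M_{j,E}(v)=\sum_{e\in E}(v\cdot e)^j .
\]
This is a homogeneous polynomial of degree $j$ in $v$. Its coefficient of $v^\alpha$ with $|\alpha|=j$ is the multinomial coefficient $\binom{j}{\alpha}$ times $\sum_e e^\alpha$. Since $j\le6<11$, every such multinomial coefficient is a unit in $\F_{11}$. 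Hence $M_{j,E}\equiv0$ as a polynomial is equivalent to the vanishing of all moments of degree $j$.

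Next I show that $M_{j,E}$ vanishes on $\D(A)$. Spectrality gives $\D(A)\subseteq\U_E$. For $v\in\U_E$ each of the eleven planes $v\cdot x=t$ contains exactly four points of $E$, so
\[
 M_{j,E}(v)=4\sum_{t\in\F_{11}}t^j .
\]
The power sum $\sum_t t^j$ vanishes in $\F_{11}$ unless $10\mid j$. For $1\le j\le6$ it is therefore zero, and $M_{j,E}$ vanishes at every representative of every direction in $\D(A)$. This is consistent with homogeneity.

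Finally I apply Theorem~\ref{p11:rigidity44} with $B=A$. Its hypotheses hold for a size-$44$ spectral pair: $|A|=44$, the plane cap is $11$, and the line cap is $4$, all by Lemma~\ref{p11:prelims-basic} since $\min(4,7)=4$. Also $\D(A)\subseteq\U_E$ contains no complete projective line by the same lemma. So the theorem forces $M_{j,E}\equiv0$ for $1\le j\le6$, and all moments of $E$ of degree at most six vanish. Interchanging the roles of $E$ and $A$, using $\D(E)\subseteq\U_A$, gives the moments of $A$.

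The real obstacle is the rigidity theorem, which is already proved. The only delicate point left is checking that the hypotheses transfer exactly, in particular that the line cap is four rather than something larger. Lemma~\ref{p11:prelims-basic} supplies precisely this.
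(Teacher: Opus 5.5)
Your proposal is correct and follows the paper's proof: $M_{j,E}$ vanishes on $\D(A)\subseteq\U_E$ by equidistribution and $\sum_{t\in\F_{11}}t^j=0$; Theorem~\ref{p11:rigidity44} then forces $M_{j,E}\equiv0$, the multinomial coefficients are units modulo eleven, the degree-zero case is $44=0$, and symmetry handles $A$. Your explicit check of the rigidity theorem's hypotheses through Lemma~\ref{p11:prelims-basic} (plane cap $11$, line cap $\min(4,7)=4$, no complete projective line in $\D(A)$) is a detail the paper leaves implicit.
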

\begin{proof}
For $1\le j\le6$, the homogeneous moment polynomial
$M_j(v)=\sum_{e\in E}(v\cdot e)^j$ vanishes on $\D(A)$ by equidistribution
and $\sum_{t\in\F_{11}}t^j=0$. Apply Theorem~\ref{p11:rigidity44}.
All relevant multinomial coefficients are nonzero modulo $11$, so its
coefficients give every mixed moment. The degree-zero assertion is $44=0$
in the field. Apply symmetry.
\end{proof}

\subsection{An improved parallel-plane criterion}
\begin{lemma}\label{p11:radon44}
Let $f:\F_{11}^2\to\mathbb Z_{\ge0}$ have total mass $44$, and suppose every
affine line has $f$-mass at most $11$. If its nonzero Fourier support is
contained in at most three one-dimensional subspaces, then $f$ is the
indicator of four parallel lines.
\end{lemma}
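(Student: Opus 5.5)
The plan follows the proof of Lemma~\ref{p11:parallel-planes}, with the two-line case replaced by a three-line one. After a linear change of coordinates, the nonzero Fourier support of $f$ lies on the three lines spanned by $(1,0)$, $(0,1)$ and $(1,1)$. Fourier inversion then writes
\[
 f(x,y)=a(x)+b(y)+c(x+y)
\]
with complex, hence real (take real parts), functions $a,b,c$ on $\F_{11}$. Up to additive constants, $a,b,c$ are the line-sum profiles in the three corresponding directions. Every other direction is uniform with line sum $4$.

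The first step shows that at most one of $a,b,c$ is nonconstant. Apply the Radon identity of Lemma~\ref{p11:radon} with $K=9$ and $L=3$. The three profiles $g_i$ then have reduced polynomials of degree at most one modulo $11$. A nonconstant affine profile takes every residue exactly once, so its nonnegative integer values sum to at least $0+1+\cdots+10=55$, exceeding the mass $44$. Hence each $g_i$ is constant modulo $11$. Its values lie in $[0,11]$ by the line-mass bound, so $g_i$ is either constantly $4$, or takes only the values $0$ and $11$ with exactly four elevens. The identity reads
\[
 g_1+g_2+g_3=4\cdot 2+11f .
\]
Reducing modulo $11$ and comparing constant terms, namely $12$ for three constant profiles against $8$, shows that exactly one profile is of $\{0,11\}$ type and the other two are constantly $4$. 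These two are then also uniform directions.

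Substituting back, the identity gives $11f(x,y)=g_1(y-s_1x)$ for the exceptional direction. Thus $f$ takes only the values $0$ and $1$ and is the indicator of the four lines on which $g_1=11$; these lines are parallel. The one point needing care is the reduction to degree at most one. This is the Vandermonde statement of Lemma~\ref{p11:radon} with $L=3$, which gives degree at most $L-2=1$, and it needs only the hypothesis that at least nine directions are uniform. That is exactly the assertion that the Fourier support lies on at most three lines, since a direction is uniform precisely when the Fourier transform of $f$ vanishes on the corresponding punctured line. If fewer than three lines carry support, add arbitrary lines so that there are exactly three; the argument is unchanged, and I expect no real obstacle.
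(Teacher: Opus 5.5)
Your proof is correct and follows the paper's argument. Your identity $\sum_i g_i=8+11f$ is exactly the paper's $11f(x,y)=n_1(x)+n_2(y)+n_3(x+y)-8$, and it is followed by the same residue-mass count ($55>44$) and the same mod-$11$ constant-term comparison. The only difference is how affineness of the profiles is obtained. The paper differences this identity directly in the coordinates $x,y,x+y$, whereas you cite the Vandermonde part of Lemma~\ref{p11:radon} with $L=3$, as the paper itself does in Lemmas~\ref{p11:lem:22nine} and~\ref{p11:44-pencil8}. Your opening decomposition $a(x)+b(y)+c(x+y)$ is never used and can be dropped.
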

\begin{proof}
Enlarge the support directions to three distinct directions if necessary.
Normalize their defining linear forms to $x,y,x+y$, and let $n_1,n_2,n_3$
denote the corresponding line-mass profiles. Fourier inversion, or averaging
over the three line families, gives
\[
 11f(x,y)=n_1(x)+n_2(y)+n_3(x+y)-8.
\]
Reduction modulo $11$ shows that each $n_i$ is affine as a function
$\F_{11}\to\F_{11}$: subtract the identity at $(x,y)$ and $(x+h,y)$ and
vary $y$ to see that each first difference of $n_3$ is constant.

A nonconstant affine profile runs through all residues and has integer mass
at least $0+1+\cdots+10=55$, contradicting its total $44$. Thus each residue
profile is constant. Since $0\le n_i\le11$, its actual profile is either
eleven fours or four elevens and seven zeros. The displayed identity modulo
$11$ forces exactly two uniform profiles and one nonuniform profile.
Substitution now gives $f=1$ on the four lines of the latter family and zero
elsewhere.
\end{proof}

\begin{proposition}\label{p11:fullplane44}
If one member of a size-$44$ spectral pair has eleven points in a plane, the
other member lies in four parallel planes, each containing eleven points.
\end{proposition}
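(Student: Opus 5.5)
The plan is to follow the template of the parallel-plane criterion, Lemma~\ref{p11:parallel-planes}, replacing the requirement of $p-1=10$ uniform normals on a projective line by the nine that an eleven-point plane supplies, and then to finish with the planar Fourier statement, Lemma~\ref{p11:radon44}. Suppose $E$ has eleven points in an affine plane $\Pi$ with direction plane $P_0$. Since every line contains at most four points of $E$, Lemma~\ref{p11:planar44}(3) shows that $E\cap\Pi$ determines at least nine directions, all lying on the projective line $[P_0]$. By spectrality $\D(E)\subseteq\U_A$, so $\U_A$ contains at least nine points of this projective line.

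Next we would project $A$ along the one-dimensional annihilator $P_0^\perp$ onto $\F_{11}^2$ and let $f$ be the resulting multiplicity function, of total mass $44$. The affine lines of $\F_{11}^2$ are images of affine planes containing $P_0^\perp$, and the plane cap gives $f$-mass at most $11$ on every line. Fourier coefficients of $f$ are the Fourier coefficients of $1_A$ at frequencies in the two-dimensional space dual to the quotient, and their projective directions form exactly the projective line $[P_0]$. Each uniform normal of $A$ on this line is a projective point at which the coefficient of $f$ vanishes. Thus at most three of the twelve projective points can carry nonzero Fourier support of $f$, so that support lies in at most three one-dimensional subspaces. If we obtain more than nine uniform directions, we enlarge the count to three harmlessly, as the lemma permits.

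Lemma~\ref{p11:radon44} then says that $f$ is the indicator of four parallel lines. In particular $f$ is $0/1$-valued, so the projection is injective on $A$. The inverse images of the four lines are four parallel affine planes, containing $A$ and each meeting it in exactly $11$ points. Their common normal lies in $P_0$. If needed, this also gives the injectivity conclusion used later.

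The main obstacle is not the logic but the input count: the argument depends on Lemma~\ref{p11:planar44}(3) giving at least nine directions rather than eight. With only eight, four nonuniform profiles would remain, and the affine-profile argument of Lemma~\ref{p11:radon44} would fail. Quadratic profiles modulo $11$ would then have to be ruled out, much as in the cardinality-$33$ analysis. We therefore take the finite permutation-graph certification as given, and must also check the degenerate case in which the eleven points have fewer than two missing directions, where at least ten directions are immediate.
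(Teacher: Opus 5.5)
Your proposal is correct and is essentially the paper's proof. The nine directions supplied by Lemma~\ref{p11:planar44}(3) become uniform normals of the partner. Projecting the partner along the annihilator of the slice's direction plane then gives a mass-$44$ multiplicity function with line masses at most $11$ and nonzero Fourier support in at most three directions, and Lemma~\ref{p11:radon44} together with pulling back the four lines finishes exactly as in the text.
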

\begin{proof}
The eleven-point slice determines at least nine directions by
Lemma~\ref{p11:planar44}(3). Project the partner along the perpendicular direction.
Its integer multiplicity function has mass $44$, line masses at most $11$,
and Fourier support on at most three of the twelve normal directions.
Lemma~\ref{p11:radon44} applies. Pulling back the four lines gives the assertion.
\end{proof}

\subsection{The planar slice classification}

We use the following established inputs for a hypothetical spectral pair
$(E,A)$ of size $44$ in $\F_{11}^3$: every affine plane contains at
most eleven points of either member, every line at most four, the direction
set contains no complete projective line, all moments of total degree at
most six vanish, and no nonzero homogeneous polynomial of degree at most
six vanishes on the Fourier-zero directions of $E$.

Suppose that $E$ is contained in four parallel planes. Write their
coordinates as $z=s$, $s\in S$, where $|S|=4$. Each slice has eleven points.
A common missing direction in the layer plane puts all slices in graph
form
\[
 B_s=\{(x,f_s(x)):x\in\F_{11}\}.
\]
Lagrange interpolation in $z$, of degree three, shows that every $B_s$
has all moments of total degree at most three equal to zero.

\begin{lemma}[The moment-flat graph classification]\label{p11:flat-graphs}
Let $B=\{(x,f(x)):x\in\F_{11}\}$ have all moments through degree
three equal to zero and at most four points on a line. After vertical
translation, shear and scaling, there are $254$ possible polynomials,
all with at most three points on a line. Under horizontal affine changes
and these vertical operations, they form six graph orbits, of sizes
$1,11,11,11,110,110$.

Put $Q_B(u,v)=\sum_B(ux+vy)^4$ and $R_B(u,v)=\sum_B(ux+vy)^5$.
The six fourth-moment types are respectively zero, $(3,1)$, $(3,1)$,
$(2,2)$, $(2,1,1)$, and $(1,1,1,1)$, where a tuple records the
multiplicities of the rational linear factors. The zero type consists
of parabolas, the next two of affine cubic graphs, and the fourth of
affine quartic graphs.

For each of the last three types, fix a representative $B_0$ and write
$Q_0=Q_{B_0}$. Every admissible graph with $Q_B=kQ_0$, $k\ne0$, is
of the form $B=\rho B_0+v$, with $k=\rho^4$. Moreover
$R_{B_0}\notin Q_0\langle u,v\rangle$.
\end{lemma}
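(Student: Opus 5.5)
This statement is a finite classification, so the plan is an exact enumeration made small by algebraic normalization, followed by direct checks on the few orbit representatives. A graph $B=\{(x,f(x))\}$ has moments $\sum_x x^a f(x)^b$. The moments with $b=0$ vanish automatically for $a+b\le3$, since $\sum_x x^a=0$ for $a\le 9$. The $b=1$ conditions $\sum_x x^a f(x)=0$ for $a\le2$ say that the reduced polynomial of $f$ has zero coefficients at $x^{10},x^9,x^8$, because $\sum_x x^j=-1$ exactly when $10\mid j$ and $j>0$. Thus $\deg f\le7$.

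The allowed normalizations are vertical translation $f\mapsto f+c$, shear $f\mapsto f+\lambda x$, and scaling $f\mapsto\mu f$. These preserve all moment conditions through degree three by the binomial formula. They let us fix the constant and linear coefficients to zero and scale the leading coefficient to one. This leaves at most $11^5$ monic normalized polynomials in each degree at most seven, together with the zero polynomial.

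For each candidate I impose the remaining conditions directly: the quadratic moments $\sum x^a f^2=0$ for $a\le1$, the cubic moment $\sum f^3=0$, and the requirement that no line contain more than four points. Since $f$ is a function, vertical lines meet the graph once, so only lines $y=mx+b$ need checking, and this is the test $|\{x:f(x)=mx+b\}|\le4$. This exhaustive scan should return $254$ polynomials. At the same time it should confirm that none has four collinear points. Here I would make explicit that the parabolas ($\deg f=2$, up to the normalization) survive, which is a useful sanity check.

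For the orbit statement, I would next apply horizontal affine maps $x\mapsto\alpha x+\beta$ and renormalize. Grouping the $254$ normalized polynomials by canonical form should give the sizes $1,11,11,11,110,110$. For each orbit representative I would compute the binary quartic $Q_B$ and the quintic $R_B$. I would then factor $Q_B$ over $\F_{11}$ into rational linear factors to read off the listed multiplicity types. These types are invariant under the group, since it acts on $(u,v)$ by the dual linear maps.

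The rigidity claim for the last three types is the main obstacle. I would prove it by noting that $Q_B$ transforms covariantly, and then, within the finite list, finding exactly which admissible graphs have $Q_B$ proportional to $Q_0$. Concretely, for each of the $254$ normalized polynomials and each re-embedding by the full group, including vertical operations (which act on $(u,v)$), I would compute $Q_B$ and test proportionality to $Q_0$. The check is that the only matches are the images $\rho B_0+v$, where the linear part of the transformation is the scalar $\rho$. This requires that the stabilizer of the line-factor configuration of $Q_0$, acting on the quartic up to scalar, consist only of scalars modulo the admissible group. That holds plausibly for the types $(2,2)$, $(2,1,1)$ and $(1,1,1,1)$ because their factor sets pin down the frame, but it must be verified by computation. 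Homogeneity then gives $k=\rho^4$. Finally, $R_{B_0}\notin Q_0\langle u,v\rangle$ is a linear-algebra check: test whether $Q_0$ divides the quintic $R_{B_0}$ in $\F_{11}[u,v]$, using polynomial division with nonzero remainder as the certificate.
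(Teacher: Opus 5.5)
Your proposal follows the paper's proof: the linear moments cut $f$ to degree at most seven, and normalizing modulo vertical translation, shear and scaling leaves the same $177{,}156$ projective classes (you fix $c_0=c_1=0$ and a monic leading term, the paper fixes $f(0)=f(1)=0$ and the first nonzero coefficient); the remaining moment and line tests, the orbit grouping, the factorization of $Q_B$, the direct comparison for the homothety claim, and the linear check that $R_{B_0}\notin Q_0\langle u,v\rangle$ are exactly the paper's finite checks. One caution: your heuristic that the factor configuration of $Q_0$ forces a scalar linear part is false for type $(2,2)$. For $B_0=\{(x,x^4)\}$ one has $Q_0=-6u^2v^2$, which every $\mathrm{diag}(\alpha,\mu)$ merely rescales, and the claim holds there only because $B_0$ is itself invariant under $(x,y)\mapsto(\alpha x,\alpha^4y)$; so your check must compare the sets $B$ with the sets $\rho B_0+v$, rather than require the applied transformation to have a scalar linear part.
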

\begin{proof}[Exact finite classification]
For a graph $B=\{(x,f(x))\}$ with vanishing moments through degree three,
apply a vertical translation and a shear to arrange $f(0)=f(1)=0$.
Writing $f$ as its unique polynomial of degree at most ten, the equations
$\sum_xx^if(x)=0$, $i=0,1,2$, give respectively $c_{10}=c_9=c_8=0$.
Thus
\[
 f(x)=\sum_{j=2}^7c_j(x^j-x).
\]
The zero polynomial is a line and is inadmissible. Normalize the first
nonzero coefficient $c_2,\ldots,c_7$ to one. This leaves exactly
$(11^6-1)/10=177156$ polynomials. The remaining equations are
\[
 \sum_xf(x)^2=\sum_xxf(x)^2=\sum_xf(x)^3=0.
\]
Exactly $375$ normalized graphs satisfy them. Their line bounds and
numbers of determined directions are
\[
\begin{array}{c|rrrrr}
\text{maximum line intersection}&2&3&3&5&6\\
\text{directions}&11&10&11&8&7\\
\text{count}&1&242&11&110&11.
\end{array}
\]
Consequently the line bound four leaves $254$ graphs, all with line bound
at most three.

Put $Q_B(u,v)=\sum_{(x,y)\in B}(ux+vy)^4$ and define $R_B$ similarly
in degree five. The same checker partitions the $254$ graphs under
horizontal affine changes, vertical scaling, shear and translation.
There are six orbits, of sizes $1,11,11,11,110,110$. The size-one orbit
is the parabola; two size-eleven orbits consist of affine images of the
cubic curve $(x,x^3)$, with the two possible roles of the missing direction;
the third consists of affine images of $(x,x^4)$. The rational-root
multiplicity patterns of $Q_B$ are respectively
\[
 0,\qquad(3,1),\ (3,1),\qquad(2,2),\qquad(2,1,1),\qquad(1,1,1,1).
\]
All these identifications, including the orbit coverage, are checked
exactly in the supplied program.

We need one further finite statement for the quartic orbit and the two
size-$110$ orbits. Fix any of their representatives $B_0$, and put
$Q_0=Q_{B_0}$. Among all $254$ normalized graphs, allowing arbitrary
vertical scaling, shear and translation, those satisfying
$Q_B=kQ_0$ are precisely
\[
 B=\rho B_0+v_0,\qquad \rho\in\F_{11}^{\times},\quad
 k=\rho^4.
\]
Moreover $R_{B_0}$ is not a multiple of $Q_0$ by a linear form.
The checker compares the two finite sets directly: after removing the
vertical translation, there are $22$ graphs for each nonzero square
$k$, and none for a nonsquare. It also checks the nonmembership of
$R_{B_0}$ in the two-dimensional space $Q_0\langle u,v\rangle$.

These coefficient enumerations, orbit calculations, and equalities of
finite lists are implemented by \path{verify_layer_obstructions.py} in
\path{code/prime11/original/output/fuglede11_next/}. The stated
translations, shears and nonzero rescaling exhaust the normalizations.
All polynomial coefficients and all remaining vertical scalings and
shears are enumerated. The program checks the moments, line intersections,
orbit membership, and both inclusions in the homothety assertion by exact
field arithmetic.
\end{proof}
\subsection{Excluding four parallel layers}
\begin{lemma}[Common fourth moments]\label{p11:44-quartics}
Under the four-layer assumption above, the slice fourth moments satisfy
$Q_{B_s}=\lambda_sQ$ for one binary quartic $Q$ and the weights below.
\end{lemma}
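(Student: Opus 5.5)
The plan is to expand the vanishing fourth moments of $E$ layer by layer and use the fact that only four layers occur. Write points of $E$ as $(x,y,s)$ with $(x,y)\in B_s$ and $s\in S$. Proposition~\ref{p11:moments44} gives
\[
 \sum_{s\in S}\sum_{(x,y)\in B_s}(ux+vy+ws)^4=0
\]
for all $u,v,w$. Expanding in powers of $w$, the coefficient of $w^k$ for $k\ge1$ involves only moments of the slices of degree at most three. These have already been shown to vanish, so they contribute nothing after multiplying by $s^k$ and summing. The same Lagrange-interpolation argument used for degree three handles the degree-four part once it is organized by powers of $s$. For each $0\le j\le 3$ the vanishing of the mixed moments $\sum_E z^j\,(ux+vy)^{4-j}\cdots$ of total degree at most six yields
\[
 \sum_{s\in S}s^j\,Q_{B_s}(u,v)=0\qquad(0\le j\le2),
\]
since $j+4\le6$. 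In fact $j\le 2$ is all that is available: the products with $s^j$ have total degree $4+j\le6$. The full range $j\le6$ holds for $(x,y)$-degree zero, but here the useful range is $4+j\le 6$.

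Thus the vector $(Q_{B_s})_{s\in S}$ of binary quartics lies, coefficientwise, in the kernel of the $3\times4$ Vandermonde matrix $(s^j)_{0\le j\le2,\,s\in S}$. Because the four elements of $S$ are distinct, this kernel is one-dimensional. It is spanned by the Lagrange weights
\[
 \lambda_s=\prod_{s'\in S,\,s'\ne s}(s-s')^{-1},
\]
exactly as in the identities $\sum_s\lambda_s s^h=0$ for $h\le 2$ from Lemma~\ref{p11:radon}. Hence every coefficient of the tuple $(Q_{B_s})$ is a multiple of $(\lambda_s)$. Collecting these multiples into one binary quartic $Q$ gives $Q_{B_s}=\lambda_sQ$, and these are the weights referred to in the statement.

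The main care point is the bookkeeping: checking that every moment invoked has total degree at most six, and that the lower-degree terms really drop out. The mixed sums $\sum_{B_s}(ux+vy)^{4}$ appear only through $\sum_E z^j(ux+vy)^4$ with $j\le2$, which Proposition~\ref{p11:moments44} covers, and the multinomial coefficients are units modulo $11$. One should also note that $Q$ may be zero a priori, in which case every slice has zero fourth moment. Whether this degenerate case is allowed, and how the orbit types of Lemma~\ref{p11:flat-graphs} are then matched by the homothety statement, is deferred to the subsequent argument rather than to this lemma.
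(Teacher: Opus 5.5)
Your argument is correct and matches the paper's proof: the moments $\sum_{E} z^j(ux+vy)^4$ have total degree $4+j\le 6$, so they give $\sum_{s\in S}s^jQ_{B_s}=0$ for $j=0,1,2$, and the kernel of the resulting $3\times4$ Vandermonde matrix is spanned by the Lagrange weights $(\lambda_s)$, which yields $Q_{B_s}=\lambda_sQ$. Your opening $w$-expansion recovers only the case $j=0$, and the stray range $0\le j\le 3$ with exponent $4-j$ should read $j\le 2$ with exponent $4$, but the computation you actually carry out is the right one.
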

\begin{proof}
Define
\[
 \lambda_s=\frac1{\prod_{r\in S\setminus\{s\}}(s-r)}.
\]
The global moments give $\sum_s s^jQ_{B_s}=0$ for $j=0,1,2$.
The nullspace of this three-by-four Vandermonde matrix is generated by
$(\lambda_s)_{s\in S}$. Therefore $Q_{B_s}=\lambda_s Q$ for one binary
quartic $Q$. Since every $\lambda_s$ is nonzero, either all slice quartics
vanish, or they all have the same root multiplicity pattern.

\end{proof}

\begin{lemma}\label{p11:44-noncubic}
The common fourth moment cannot have root pattern $(2,2)$, $(2,1,1)$,
or $(1,1,1,1)$.
\end{lemma}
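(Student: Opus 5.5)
Suppose the common quartic $Q$ of Lemma~\ref{p11:44-quartics} is nonzero with one of the three patterns $(2,2)$, $(2,1,1)$, $(1,1,1,1)$. Then every slice has nonzero fourth moment. By Lemma~\ref{p11:flat-graphs}, all four slices lie in one graph orbit: the affine quartic orbit or one of the two size-$110$ orbits. Fix a representative $B_0$ of that orbit and put $Q_0=Q_{B_0}$. After a common affine change of the layer coordinates we have $Q=cQ_0$, and then $Q_{B_s}=c\lambda_sQ_0$. The homothety part of Lemma~\ref{p11:flat-graphs} then gives
\[
 B_s=\rho_sB_0+v_s,\qquad \rho_s^4=c\lambda_s,\qquad v_s\in\F_{11}^2 .
\]
In particular each $c\lambda_s$ is a nonzero square. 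The plan is to feed this rigid form into the global fifth-moment identities and reach a contradiction.

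The fifth moments are available by Proposition~\ref{p11:moments44}, which gives vanishing through degree six. Lagrange interpolation in $z$ over the four layers gives the three relations $\sum_s s^jR^{(j)}_{s}=0$, $j=0,1,2$. Here the relevant slice quantity is the degree-five binary form obtained from $\sum_{B_s}(ux+vy)^5$, together with mixed terms involving $z$. These relations again force the slice quintic moments to be proportional to $(\lambda_s)$. That is, $R_{B_s}=\lambda_sR$ for a single binary quintic $R$, after correcting for the translations $v_s$.

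To see the correction, expand with the binomial formula. Since moments through degree three vanish on each slice, we get
\[
 R_{B_s}(u,v)=\rho_s^5R_{B_0}(u,v)+5\,(u,v)\cdot v_s\;\rho_s^4Q_0(u,v).
\]
Substituting $\rho_s^4=c\lambda_s$ gives
\[
 \lambda_s\bigl(c\rho_sR_{B_0}+5c\,\ell_s Q_0\bigr)=\lambda_sR,
\]
where $\ell_s$ is the linear form $(u,v)\cdot v_s$. Comparing two distinct layers $s\ne s'$ yields $(\rho_s-\rho_{s'})R_{B_0}\in Q_0\langle u,v\rangle$. Lemma~\ref{p11:flat-graphs} says $R_{B_0}\notin Q_0\langle u,v\rangle$, so $\rho_s=\rho_{s'}$ for all layers.

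Then $\rho_s^4=c\lambda_s$ forces all $\lambda_s$ to be equal. But $\sum_s\lambda_s=0$, this being the $j=0$ Vandermonde relation with all $\lambda_s\neq0$, so four equal nonzero weights would give $4\lambda=0$ in $\F_{11}$, which is impossible. This contradiction proves the lemma.

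The main obstacle is the second step: deriving $R_{B_s}=\lambda_s R$ with a common $R$ from the global fifth-moment vanishing. The global degree-five moments mix slice moments of degree $5-j$ weighted by $s^j$ for $j\le 5$. I expect the terms with $j\ge 3$ to involve only slice moments of degree at most two, which vanish. The terms with $j=1,2$ involve $Q_{B_s}$ and cubic moments, and these are already controlled: $s\lambda_s$ sums against the Vandermonde kernel, and the cubic moments vanish. One must check carefully that only the three relations for $j=0,1,2$ survive, so that the kernel argument of Lemma~\ref{p11:44-quartics} applies verbatim to the degree-five slice forms.
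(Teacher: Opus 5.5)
\textbf{Gap: the step you flagged cannot be carried out with the moments available.} The slice quintic $R_{B_s}$ enters a global moment with weight $s^j$ only in total degree $5+j$. Extracting the $w^j$ coefficient of $\sum_E(ux+vy+wz)^{5+j}$ gives $\sum_s s^jR_{B_s}=0$. The other coefficients of the degree-five identity constrain $Q_{B_s}$ and lower slice moments, not the quintic.

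Proposition~\ref{p11:moments44} gives vanishing only through degree six. So you have two relations, $j=0,1$, not three. The $2\times4$ Vandermonde system then has a two-dimensional kernel. Nothing forces $R_{B_s}=\lambda_sR$ with a common $R$, and $\rho_s=\rho_{s'}$ does not follow.

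You also cannot borrow the $j=2$ relation from the degree-seven moments of Corollary~\ref{p11:moment44seven}. That corollary rests on Lemma~\ref{p11:rigidity44seven}, which assumes plane cap ten. Plane cap ten (Corollary~\ref{p11:44-cap10}) is obtained by excluding four parallel layers, that is, by the present lemma. Using it here would be circular.

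\textbf{How to finish with the two relations you do have.} Your binomial expansion is correct and is the right tool. Reduce modulo $Q_0\langle u,v\rangle$: this gives $[R_{B_s}]=\rho_s^5[R_{B_0}]$ with $[R_{B_0}]\ne0$. The relations $j=0,1$ then give $\sum_s\rho_s^5=0$ and $\sum_s s\rho_s^5=0$.

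Since $\rho_s^5=\pm1$ in $\F_{11}$, four such signs summing to zero must be two of each. The second relation then says the two positive layers and the two negative layers have equal coordinate sums. Hence $S=\{h\pm a,h\pm b\}$ is centrally symmetric.

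For such $S$ one computes $\lambda_{h-a}=-\lambda_{h+a}$. Your own relation $\rho_s^4=c\lambda_s$ makes every ratio $\lambda_s/\lambda_{s'}$ a square. Since $-1$ is a nonsquare in $\F_{11}$, this is a contradiction. This is the paper's route, and it needs only the two relations actually available.
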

\begin{proof}
Suppose the common pattern is $(2,2)$, $(2,1,1)$ or $(1,1,1,1)$.
Using the finite classification, make a single affine change in the layer
coordinates so that one slice is a representative $B_0$. The classification
then gives
\[
 B_s=\rho_s B_0+v_s,\qquad
 \rho_s^4=\lambda_s/\lambda_{s_0}.
\]
All ratios of the $\lambda_s$ are consequently squares.
Because moments through degree three vanish, translation changes the
degree-five moment only by a linear form times the fourth moment.
In the quotient of binary quintics by $Q_0\langle u,v\rangle$, this gives
\[
 [R_{B_s}]=\rho_s^5[R_{B_0}].
\]
The class on the right is nonzero. The degree-five and degree-six global
moments therefore imply
\[
 \sum_s\rho_s^5=0,\qquad\sum_s s\rho_s^5=0.
\]
Each $\rho_s^5$ is $+1$ or $-1$. Four such signs sum to zero in
$\F_{11}$ only when two are positive and two negative. Their
corresponding pairs of layer coordinates have equal sums. Thus
$S=\{h\pm a,h\pm b\}$ is centrally symmetric. Its derivative weights
at opposite points satisfy
\[
 \lambda_{h+a}=-\lambda_{h-a}.
\]
This contradicts all weight ratios being squares, since $-1$ is not a
square in $\F_{11}$.

\end{proof}

\begin{lemma}\label{p11:44-cubic}
The common fourth moment cannot have root pattern $(3,1)$.
\end{lemma}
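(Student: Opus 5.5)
Suppose the common fourth moment $Q$ has root pattern $(3,1)$. By Lemma~\ref{p11:44-quartics}, every slice satisfies $Q_{B_s}=\lambda_sQ$ with $\lambda_s\ne0$. Then every slice belongs to one of the two size-eleven cubic orbits of Lemma~\ref{p11:flat-graphs}. Each slice is therefore an affine image of the cubic curve $(x,x^3)$, with one of the two possible roles of the missing direction. The triple root of $Q$ is a distinguished direction, common to all four layers because $Q$ is common. I would first use this to normalize all four slices simultaneously. After one affine change in the layer coordinates, each slice should take the form
\[
 B_s=\{(\alpha_sx+\beta_s,\ \gamma_s(\alpha_sx+\beta_s)^3+\ell_s(x))\}
\]
or the transposed form. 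The shared triple root fixes the relevant axis, and the simple root fixes the other. The matching of $Q_{B_s}$ with $\lambda_sQ$ then gives an explicit relation among $\lambda_s$, the scaling parameters, and the orbit type. Two orbits sharing a pattern means I must check that slices of different cubic types cannot have proportional quartics with the same root configuration. If they can, I keep a mixed case.

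Next I would impose the higher global moments, as in Lemma~\ref{p11:44-noncubic}. Proposition~\ref{p11:moments44} gives the degree-five and degree-six moments. Interpolation in $z$ over the four-point set $S$ turns these into the conditions $\sum_s s^jR_{B_s}=0$ for $j=0,1$, together with $\sum_s P_{B_s}=0$ for the slice sextics. I would also keep the mixed conditions, in which the $z$-exponent brings in the lower slice moments. For a cubic curve these slice moments are explicit polynomials in the parameters $(\rho_s,v_s)$. Translation changes $R_{B_s}$ by a linear form times $Q_{B_s}$. So, as before, I would work in the quotient of binary quintics by $Q\langle u,v\rangle$ and try to show that $[R_{B_s}]$ is $\rho_s^5$ times a fixed nonzero class. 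If this class vanishes for the cubic curve, I pass to the sextic moments modulo the ideal generated by $Q$. This step is the main obstacle. The homothety rigidity of the $(3,1)$ orbits may be weaker than the statement used for the other types, because cubic graphs admit extra shears preserving $Q$ up to scalar. The resulting equations may then be polynomial rather than sign equations. If the algebra does not close, I would reduce to a finite exact enumeration. This means enumerating all quadruples of scalings, translations and layer sets $S$ (up to affine maps of $z$), and testing the moment equations through degree six by the same kind of checker as in \path{verify_layer_obstructions.py}.

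If the moment equations still leave solutions, I would finish with a geometric obstruction. A set of four layers, each an affine cubic graph with a common distinguished direction, tends to have many collinear or coplanar structures. I would test the surviving configurations against the line cap four and against the rigidity Theorem~\ref{p11:rigidity44}. Concretely, I would look for a low-degree polynomial vanishing on $\D(E)$, for instance a product of the lines through the common triple-root direction. Theorem~\ref{p11:rigidity44} forbids such a polynomial of degree at most six. Alternatively, I would use the partner's four-layer structure given by Proposition~\ref{p11:fullplane44} and compare its slice quartic pattern, which by symmetry must also be $(3,1)$ or zero.
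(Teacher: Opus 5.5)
Your overall frame (align the slices using the factors of $Q$, then impose higher global moments via interpolation in $z$) is reasonable. However, the step you lead with gives no information in this case, and the step that actually decides the lemma appears only as an uncomputed contingency. For a centered cubic slice $\{(x,ax^3)\}$ one has $\sum_B x^iy^j=a^j\sum_{x\in\F_{11}}x^{i+3j}$, which is nonzero only when $i+3j$ is a positive multiple of $10$. In degree five, $i+3j=5+2j$ is odd, so $R_B=0$. Translation adds only multiples of $Q$, hence $[R_{B_s}]=0$ for every slice, and the equations $\sum_s s^jR_{B_s}=0$ are vacuous. Your worries about a mixed case and extra shears are also unnecessary. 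Since $Q=L_1L_2^3$ fixes both linear factors, every affine parametrization of a cubic slice has diagonal linear part in coordinates adapted to them. So one common change puts all slices in the form $\{(x,a_s(x-b_s)^3+c_s)\}$ with $a_s\ne0$.

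The missing idea is the explicit sextic computation and the arithmetic condition it produces; your fallback of sextic moments modulo $Q$ is the right tool, but you never carry it out. In the expansion of the translated moment, the only pair with $i\le4$, $j\le2$ and $i+3j\equiv0\pmod{10}$ is $(4,2)$. Hence $\sum_{B_s}x^4y^2=-a_s^2$, independently of $b_s,c_s$. Meanwhile $Q_{B_s}=-4a_s^3uv^3=\lambda_sQ$ gives $a_s^3=K\lambda_s$ with $K\ne0$. The vanishing global moment $\sum_Ex^4y^2=0$ from Proposition~\ref{p11:moments44} then yields $\sum_sa_s^2=0$. Because the seventh power inverts cubing on $\F_{11}^{\times}$, this becomes $\sum_{s\in S}\lambda_s^4=0$. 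This fails for every four-element $S\subset\F_{11}$: normalize $S=\{0,1,a,b\}$ and check the $36$ pairs, which gives the contradiction.

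Your proposal stops before this point. The enumeration over ``all quadruples of scalings, translations and layer sets'' is never reduced to a specified finite test, and the fallback through Theorem~\ref{p11:rigidity44} is not argued. As written, it does not prove the lemma.
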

\begin{proof}
An affine image of $\{(x,x^3)\}$ has fourth moment equal to a nonzero
multiple of $L_1L_2^3$, with $L_1,L_2$ independent. The common fourth
moment fixes the simple and triple factors separately. A single linear
change of layer coordinates therefore puts every cubic slice in the form
\[
 B_s=\{(x,a_s(x-b_s)^3+c_s):x\in\F_{11}\},\qquad a_s\ne0.
\]
Indeed the linear parts of the affine parametrizations are diagonal
in the common pair of factor coordinates. No separate layer-dependent
linear change is used here.
Now
\[
 Q_{B_s}=-4a_s^3uv^3,
 \qquad\sum_{(x,y)\in B_s}x^4y^2=-a_s^2.
\]
The second identity follows from the leading coefficient of a degree-ten
polynomial; all lower power sums, including the constant sum, vanish in
$\F_{11}$. Thus $a_s^3=K\lambda_s$ and $\sum_sa_s^2=0$.
The inverse of cubing on $\F_{11}^{\times}$ is the seventh power,
so this requires $\sum_s\lambda_s^4=0$.
The checker examines all $\binom{11}{4}=330$ layer sets and finds none.
Equivalently one may normalize $S=\{0,1,a,b\}$ and check the $36$ pairs
$2\le a<b\le10$.

\end{proof}

\begin{lemma}\label{p11:44-parabolic}
The common fourth moment cannot be zero.
\end{lemma}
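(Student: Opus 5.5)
The plan is to use the finite classification of Lemma~\ref{p11:flat-graphs} to put every slice in parabolic form with explicit parameters. The vanishing global moments of degrees five and six (Proposition~\ref{p11:moments44}) then become a small system of equations in $\F_{11}$ for the leading coefficients of the slices, and I expect this system to have no solution. If the common quartic $Q$ of Lemma~\ref{p11:44-quartics} is zero, then every slice quartic $Q_{B_s}=\lambda_sQ$ vanishes. By the classification, each $B_s$ lies in the size-one orbit, so each slice is a parabola. Since the slices are already graphs over the common missing direction, I would write them in the same layer coordinates as
\[
 B_s=\{(x,a_sx^2+b_sx+c_s):x\in\F_{11}\},\qquad a_s\ne0 .
\]
No slice-dependent change of coordinates is used, so $b_s,c_s$ must be kept as unknowns.

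The first step is to compute the slice moments $\mu_s(i,j)=\sum_{x}x^i(a_sx^2+b_sx+c_s)^j$ for $i+j\le6$. Only the exponents of $x$ that are positive multiples of ten survive: $\sum_x x^{10}=-1$, and every other power sum over $\F_{11}$ vanishes, including $\sum_x x^0=11=0$. This makes each $\mu_s(i,j)$ an explicit polynomial in $(a_s,b_s,c_s)$. I will check that the moments of total degree at most three vanish automatically, consistent with the slices being admissible. The leading contributions are
\[
 \sum_{B_s}y^5=-a_s^5,\qquad \sum_{B_s}x^2y^4=-a_s^4,
\]
together with the degree-six quantities $\sum_{B_s}y^6$ and $\sum_{B_s}xy^5$, which involve $a_s^5b_s$ and similar terms.

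The second step is to impose the global moments. As in Lemma~\ref{p11:44-quartics}, I would interpolate in $z$: every mixed moment $\sum_E x^iy^jz^k$ with $i+j+k\le6$ equals $\sum_s s^k\mu_s(i,j)$ and vanishes. The equations with $(i,j)=(0,5)$ and $k=0,1$ give $\sum_s a_s^5=\sum_s s\,a_s^5=0$. Since $a_s^5=\pm1$, the argument of Lemma~\ref{p11:44-noncubic} then gives two plus signs and two minus signs, and forces $S=\{h\pm a,h\pm b\}$ with the two signs opposite on each antipodal pair. The equations with $(i,j)=(2,4)$ and $k=0,1,2$ say that $(a_s^4)_s$ lies in the kernel of the $3\times4$ Vandermonde matrix. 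Hence $a_s^4=\kappa\lambda_s$ for a single $\kappa\ne0$, so all ratios of the $\lambda_s$ are fourth powers, and in particular squares. Central symmetry of $S$ gives $\lambda_{h+a}=-\lambda_{h-a}$, and $-1$ is not a square in $\F_{11}$. This would be the contradiction, mirroring Lemma~\ref{p11:44-noncubic}.

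The main obstacle is the bookkeeping in the second step. The relations $\sum_{B_s}y^5=-a_s^5$ and $\sum_{B_s}x^2y^4=-a_s^4$ must hold \emph{exactly}: the parameters $b_s,c_s$ can feed extra terms into the coefficient of $x^{10}$, for example through $c_s\,x^8$ inside $y^5$, which would spoil the clean $\pm1$ and fourth-power structure. I would first try completing the square, writing $y=a_s(x-\beta_s)^2+\gamma_s$, and using the translation invariance of the power sums over $x$. Then $\sum y^5$ becomes a polynomial in $a_s$ and $\gamma_s$, and the vanishing moments of degrees four and five in $y$ alone should force the $\gamma_s$-terms to cancel. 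Failing that, I would record these extra terms explicitly, add the moments $\sum y^4$ and $\sum xy^4$ to the system, and, if a clean elimination does not appear, finish with a finite check over the $330$ layer sets together with $a_s\in\F_{11}^\times$, in the style of Lemma~\ref{p11:44-cubic}.
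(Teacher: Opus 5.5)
There is a genuine gap in your second step. You correctly say that only the moments $\sum_E x^iy^jz^k$ with $i+j+k\le 6$ are available, since that is what Proposition~\ref{p11:moments44} supplies. But your Vandermonde conclusion $a_s^4=\kappa\lambda_s$ uses $(i,j)=(2,4)$ with $k=1,2$, which are moments of total degree $7$ and $8$. These are not available at this point. Degree-seven vanishing is proved only later, in Corollary~\ref{p11:moment44seven}, through the plane cap ten, and that cap depends on the present lemma. Moreover, in this four-layer configuration $A$ also lies in four eleven-point planes by Proposition~\ref{p11:fullplane44}. So the hypotheses of Lemma~\ref{p11:rigidity44seven} fail for $B=A$. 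Degree-eight moments are not established at all for cardinality $44$.

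The exactness issue you flagged is not actually a problem. The expression $x^iy^j$ is a polynomial in $x$ of degree $i+2j\le 12$, so only its $x^{10}$ coefficient survives the sum. Hence $\sum_{B_s}y^5=-a_s^5$ and $\sum_{B_s}x^2y^4=-a_s^4$ hold exactly. A smaller point: the two equal signs lie on the same antipodal pair, not on opposite ones. This is harmless for the $\lambda_{h+a}=-\lambda_{h-a}$ step.

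More seriously, the degree-$\le 6$ moment system is solvable, so neither a cleverer elimination nor a finite check along your route can give a contradiction. In that range the only nonzero slice moments are $(i,j)=(0,5),(1,5),(2,4),(0,6)$. Their values are respectively $-a_s^5$, $-5a_s^4b_s$, $-a_s^4$ and $-(6a_s^5c_s+15a_s^4b_s^2)$. The global constraints are therefore only
$\sum_s a_s^5=\sum_s s\,a_s^5=\sum_s a_s^4=\sum_s a_s^4b_s=\sum_s(6a_s^5c_s+15a_s^4b_s^2)=0$.
All of these hold for $S=\{1,10,2,9\}$, $(a_s)=(1,1,8,2)$ and $b_s=c_s=0$: the fifth powers are $1,1,-1,-1$ and the fourth powers are $1,1,4,5$.

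The missing idea is to use spectrality beyond moment vanishing. The paper completes the square inside the Fourier transform. For $t\ne0$ this writes $\widehat{1_E}(u,t,w)$ as a nonzero Gauss sum times $\sum_s\chi(a_s)\zeta^{\phi_s}$. Four signed eleventh roots of unity can sum to zero only by cancelling in pairs. This puts every Fourier-zero direction, and hence all of $\D(A)$, on the nonzero quintic $t\,[t(\phi_i-\phi_j)]\,[t(\phi_i-\phi_k)]$, or on $t=0$ if the signs are unbalanced. That contradicts the degree-six rigidity of Theorem~\ref{p11:rigidity44}, which is strictly stronger than the vanishing of the moments of $E$.
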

\begin{proof}
If $Q=0$, the classification says that all slices are parabolas. Their
common missing direction gives
$f_s(x)=a_sx^2+b_sx+c_s$, $a_s\ne0$. Write $\zeta=e^{2\pi i/11}$.
For a frequency $(u,t,w)$ with $t\ne0$, completing the square gives
\[
 \widehat{1_E}(u,t,w)
 =G\chi(t)\sum_{s\in S}\chi(a_s)
 \zeta^{\phi_s},\qquad
 \phi_s=ws+tc_s-\frac{(u+tb_s)^2}{4ta_s},
\]
up to the harmless Fourier-sign convention, where $G\ne0$ is the
quadratic Gauss sum and $\chi$ is the quadratic character.
A sum of four signed eleventh roots can vanish only by exact cancellation
of equal roots: the only rational relation between the eleven roots is
their total sum, and four terms cannot support a nonzero multiple of it.
Hence there must be two positive signs and two negative signs. Fix a
positive index $i$ and denote the two negative indices by $j,k$.
Every Fourier-zero frequency with $t\ne0$ satisfies
\[
 t(\phi_i-\phi_j)=0\quad\hbox{or}\quad t(\phi_i-\phi_k)=0.
\]
Each left side is a homogeneous quadratic polynomial, nonzero because
its $tw$ coefficient is respectively $s_i-s_j$ or $s_i-s_k$.
Thus the nonzero degree-five polynomial
\[
 t\,[t(\phi_i-\phi_j)]\,[t(\phi_i-\phi_k)]
\]
vanishes on every Fourier-zero direction, also including $t=0$.
If the signs are unbalanced, the linear polynomial $t$ already does so.
Both conclusions contradict the established degree-six direction
rigidity. All cases have been excluded, proving that a size-$44$
spectral set cannot lie in four parallel planes.

\end{proof}

\begin{corollary}\label{p11:44-cap10}
Every affine plane meets either member of a size-$44$ spectral pair in at
most ten points.
\end{corollary}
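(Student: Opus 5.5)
The corollary follows from Proposition~\ref{p11:fullplane44} together with the exclusions just proved; the plan is a short contradiction argument. Suppose some affine plane meets one member, say $A$, in eleven points; the bound of eleven is already known from the Fallon--Mayeli--Villano inputs, so eleven is the only case to exclude. Proposition~\ref{p11:fullplane44} then places the partner $E$ in four parallel planes, each containing exactly eleven points. This is precisely the four-layer assumption of the preceding subsection.

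Next we reduce the layers to graphs. Since every slice $B_s$ has eleven points and line cap four, Lemma~\ref{p11:planar44}(3) shows that each slice determines at least nine directions. The four slices lie in translates of one linear plane, and by Lemma~\ref{p11:33-layers}-type reasoning the full direction set contains no complete projective line. Hence the directions of all slices inside that plane cannot exhaust its twelve directions, and a common missing direction exists. This gives the graph form $B_s=\{(x,f_s(x))\}$ used above. The vanishing of moments through degree six (Proposition~\ref{p11:moments44}), interpolated in the layer coordinate, gives moments through degree three on each slice. Lemma~\ref{p11:flat-graphs} then applies.

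By Lemma~\ref{p11:44-quartics}, the slice fourth moments are proportional to one quartic $Q$, with all slices sharing a root pattern, or else $Q=0$. The possible patterns are zero, $(3,1)$, $(2,2)$, $(2,1,1)$, and $(1,1,1,1)$. These are excluded respectively by Lemmas~\ref{p11:44-parabolic}, \ref{p11:44-cubic}, and \ref{p11:44-noncubic}. So $E$ cannot lie in four parallel planes, and the plane bound for $A$ drops to ten. Interchanging $E$ and $A$, since every hypothesis is symmetric, gives the same bound for $E$.

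The only step needing care is the existence of a common missing direction across the four slices. If it did not follow directly from ``no full projective line in $\D(E)$'', I would argue via the pencil count: the union of the slice directions lies on the projective line dual to the layer normal, and a full line is excluded. I expect this step to be immediate rather than a genuine obstacle.
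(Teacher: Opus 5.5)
Your proposal is correct and follows the paper's route. Proposition~\ref{p11:fullplane44} puts the partner in four eleven-point parallel layers, the four-layer exclusion (Lemmas~\ref{p11:44-quartics}--\ref{p11:44-parabolic}) rules that out, and symmetry handles the other member. Your justification of the common missing direction is sound, though the fact that $\D(E)$ contains no complete projective line comes from Lemma~\ref{p11:prelims-basic} (the Fallon--Mayeli--Villano inputs), not from Lemma~\ref{p11:33-layers}.
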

\begin{proof}
By the full-plane criterion established above, an eleven-point affine
plane intersection of either member forces its partner into four parallel
planes. The exclusion just proved therefore improves the plane bound to
\[
 \max_P|E\cap P|\le10,\qquad\max_P|A\cap P|\le10.
\]
\end{proof}

\begin{lemma}\label{p11:44-pencil8}
Every projective line contains at most eight uniform normals.
\end{lemma}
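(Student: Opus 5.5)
Suppose a projective line $\ell$ contains nine or more uniform normals of $E$; the case of $A$ is symmetric. I would project $E$ along the annihilator of $\ell$, imitating the nine-point step of Lemma~\ref{p11:33-plane8}. Write $f$ for the multiplicity function on $\F_{11}^2$. It has mass $44$, and by Corollary~\ref{p11:44-cap10} every line has $f$-mass at most $10$.

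The twelve normals on $\ell$ are the twelve line directions of the projected plane. At least nine of them are uniform, with line sum $4$. So Lemma~\ref{p11:radon} applies with $K\ge9$ and $L\le3$. The reduced profiles of the nonuniform directions then have degree at most $L-2\le1$, so each is affine modulo eleven.

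A nonconstant affine profile takes every residue exactly once. Its least residues, and hence its integer values (which lie in $\{0,\dots,10\}$), would sum to at least $0+1+\cdots+10=55>44$. So every profile is constant modulo eleven. Since the actual line masses lie between $0$ and $10$, a constant residue together with total mass $44$ forces the constant value $4$. Every direction on $\ell$ is then uniform. That puts a complete projective line inside $\U_E$, which is impossible: the pencil count \eqref{p11:pencil} gives $44+11a=12\cdot4$, with no integer solution for the line occupancy $a$. This is exactly the argument of Lemma~\ref{p11:33-plane8}, with $44$ in place of $33$ and the improved plane cap $10$ in place of $9$.

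The only point needing care is the use of the plane cap. The pigeonhole argument needs the integer masses to be at most $10$, so that constant residues pin down the actual counts. That is precisely why the bound must come after Corollary~\ref{p11:44-cap10}. With cap $11$, the profile "four elevens and seven zeros" would survive, as in Lemma~\ref{p11:radon44}. Alternatively, one could cite Lemma~\ref{p11:radon44} directly: nine uniform directions leave Fourier support on at most three lines, so $f$ would be the indicator of four parallel lines. Those lines lift to four parallel eleven-point planes, again contradicting Corollary~\ref{p11:44-cap10}. I expect no real obstacle beyond checking these residue bounds.
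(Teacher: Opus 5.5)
Your proof is correct and follows the paper's argument. You project along the annihilator of the pencil, use Lemma~\ref{p11:radon} to make the at most three remaining profiles affine modulo eleven, exclude nonconstant ones by the mass count $55>44$, and use the plane cap ten to force each profile to be constantly four. The only difference is the final step: the paper ends with the constant term of the Radon identity ($12\equiv8\pmod{11}$), whereas you end with the pencil count $44+11a=48$ for a full uniform projective line, which comes from the same counting.
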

\begin{proof}
If a pencil contained nine, use Lemma~\ref{p11:radon} with three remaining
profiles. They have degree at most one modulo eleven. A nonconstant affine
profile has least-residue mass $55$, whereas each profile here has mass
$44$. They are constant, and the plane bound ten makes the residues their
actual values. Each profile is constantly four. The Radon identity would
then give $12\equiv8\pmod{11}$, a contradiction. Selecting nine normals
also covers a pencil with more than nine.
\end{proof}
\subsection{Seventh moments and the energy contradiction}
The following argument assumes the preceding cardinality-44 results:
every affine line has at most four points, every affine plane has at most
ten points, and every projective line contains at most eight Fourier-zero
directions. As usual, $\D(E)\subseteq\U(A)$ for a spectral pair $(E,A)$.

\begin{lemma}[Degree-seven rigidity]\label{p11:rigidity44seven}
Let $B\subset\F_{11}^3$ have 44 points, line cap four and plane cap
ten. Suppose every projective line meets $\D(B)$ in at most eight points.
No nonzero homogeneous polynomial of degree at most seven vanishes on
$\D(B)$.
\end{lemma}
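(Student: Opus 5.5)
The plan follows the template of Theorem~\ref{p11:rigidity44} and Lemma~\ref{p11:33-rigidity}. First I reduce to degree exactly seven by multiplying by a power of a nonzero linear form. Then I suppose a nonzero septic $F$ vanishes on $D=\D(B)$ and split according to whether $B$ has four collinear points. Throughout, write $N=|D|$ and $n_L=|D\cap L|$. The projective incidence identities give $\sum_L n_L=12N$ and $\sum_L n_L^2=N^2+11N$, and by hypothesis $n_L\le8$ on every line.

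\emph{Case 1: no four collinear points.} Every parallel class of planes has a slice with at least four points. These points are noncollinear, so they determine at least four directions, and hence $n_L\ge4$ for every $L$. A nonfactor line carries at most seven zeros of $F$, so $(n_L-4)(n_L-7)\le0$ there. There are at most seven factor lines, each contributing at most $(8-4)(8-7)=4$. So $\sum_L(n_L-4)(n_L-7)\le28$. The identities instead give $N^2-121N+133\cdot28=N^2-121N+3724$, whose discriminant is $14641-14896<0$. Its minimum over integers is about $63.75$, which exceeds $28$, giving a contradiction. If this margin fails, I would lower-bound $n_L$ using five-point slices instead, since $44/11=4$ forces some slice with at least four points and often more.

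\emph{Case 2: a four-point affine line $\ell$ of direction $d$.} Let $k_d$ be the number of linear factors of $F$ whose lines pass through $d$. By Lemma~\ref{p11:planar44}(2), a plane through $\ell$ whose direction line is not a factor has at most six points; factor planes have at most ten by the plane cap. The pencil count gives $88\le 10k_d+6(12-k_d)=72+4k_d$, so $k_d\ge4$. For $x\notin\ell$, the line $L_x$ through $d$ carries a zero of multiplicity at least $k_d\ge4$ at $d$ plus four further zeros. That totals at least eight, exceeding seven, so $L_x$ is a factor. Hence $B$ is covered by $k_d$ planes through $\ell$, and $44\le4+6k_d$ would force $k_d\ge7$. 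I must check this carefully, because the bound of six applies only to nonfactor planes; using the cap of ten for factor planes instead gives $44\le4+6k_d$ with cap $10$, so $k_d\ge 40/6$, i.e.\ $k_d=7$. Then all seven factors are lines through $d$. Projecting along $d$ gives a noncollinear planar image of at least eleven points determining at most seven directions. Choosing eight noncollinear image points, Lemma~\ref{p11:planar44}(1) only yields at least seven directions, so this alone gives no contradiction.

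\emph{Main obstacle.} I expect this final subcase to be the crux: $F$ is a product of seven lines through $d$ and the projection determines exactly seven directions. To close it, I would count more carefully. Each plane through $\ell$ contains at most ten points, so its slice minus $\ell$ has at most six points. Seven planes then cover at most $4+42=46\ge44$ points, so almost every factor plane is nearly full. A ten-point plane slice containing the four-point line must determine many directions. In particular it cannot lie in only the single direction line through $d$ available to it, since an in-plane direction set lies on that plane's direction line. Yet $D$ meets that line in at most eight points while the slice needs at least seven by Lemma~\ref{p11:planar44}(1). I would combine this with the fact that the image set has at least eleven points and only seven directions, using the line cap four and an eleven-point variant of Lemma~\ref{p11:planar44}(3) (at least nine directions), to obtain the contradiction.
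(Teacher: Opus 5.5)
You have a genuine gap in Case~2, at the step $k_d\ge4$. (Case~1 is correct and matches the paper's count $N^2-121N+3724\ge255/4>28$; it only needs $n_L\ge4$ on every line, not the absence of four collinear points.)

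Lemma~\ref{p11:planar44}(2) says that a seven-point slice containing four collinear points determines at least seven directions. In degree six this exceeds the six zeros a nonfactor line can carry, which is why ``nonfactor planes through $\ell$ have at most six points'' works in Theorem~\ref{p11:rigidity44}. In degree seven a nonfactor line can carry seven zeros, so nothing stops nonfactor planes through $\ell$ from having seven or eight points. If they are capped at eight, the pencil count becomes $88\le10k_d+8(12-k_d)=96+2k_d$, which is vacuous. Without $k_d\ge4$, the multiplicity argument that makes every $L_x$ a factor collapses, and so does the covering of $B$ by factor planes. A single arbitrary four-point line does not carry enough information in degree seven.

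The missing idea is to split on whether some projective line $L$ has $n_L\le3$ instead. If no line does, your Case~1 count applies verbatim. If one does, argue as follows.
\begin{enumerate}
\item Each of the eleven affine planes with direction line $L$ has at most four points of $B$: four noncollinear points determine four directions, and five collinear points violate the line cap. Since $|B|=44$, each plane contains exactly four collinear points.
\item By pigeonhole, at least four of these lines share a direction $d\in L$. By the plane cap ten, no three of them are coplanar, so their quotient positions form a four-point cap. Hence they span at least four distinct planes whose direction lines pass through $d$.
\item For two such lines, Cauchy--Davenport gives at least seven cross-difference directions, plus $d$. So each spanned projective line carries eight zeros and divides $F$. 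This is the source of $k_d\ge4$.
\end{enumerate}

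From there your multiplicity and covering steps correctly give $k=7$. The ending you flag as the main obstacle is then short. The projection $C$ of $B$ along $d$ has at least eleven points and at most seven directions. Twelve points of $\F_{11}^2$ determine every direction, so $|C|=11$ and every fibre has exactly four points. Three collinear points of $C$ would lift to twelve coplanar points, so $C$ is a cap. An eleven-point cap has $55$ pairs with at most five per direction, hence at least eleven directions, a contradiction.
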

\begin{proof}
Multiplying by a suitable power of a linear polynomial reduces the claim
to degree seven. Let $F$ be such a polynomial, $N=|\D(B)|$, and
$n_L=|L\cap\D(B)|$. First suppose $n_L\ge4$ for all 133 projective lines.
A line not dividing $F$ has $n_L\le7$. There are at most seven distinct
line factors, and each has $n_L\le8$. Consequently
\[
 \sum_L(n_L-4)(n_L-7)\le 7\cdot4=28.
\]
But the projective-plane incidence identities give
\[
 \sum_L(n_L-4)(n_L-7)
 =N^2-121N+3724\ge\frac{255}{4}>28,
\]
a contradiction.

It remains to consider a projective line $L$ with $n_L\le3$. The eleven
parallel affine planes with tangent directions $L$ each contain at most
four points of $B$: any four noncollinear points over an odd field determine
at least four directions, whereas five collinear points violate the line
cap. For the elementary four-point fact, a cap determining only three
directions would have two parallel pairs in each direction; its three
perfect matchings cannot all be parallel in odd characteristic. Since $|B|=44$, each plane contains exactly four collinear points.
Among these eleven four-point lines, at least four have a common direction
$d\in L$. No three of these parallel lines lie in an affine plane, since
that would give twelve points in a plane. Their positions in the quotient
by $d$ therefore include a four-point cap, determining at least four
directions.

For any two of these parallel four-point lines, Cauchy--Davenport gives
at least seven distinct cross-difference directions in their common
projective line, in addition to $d$. Such a projective line contains at
least eight zeros of $F$, so it divides $F$. It follows that at least four
distinct line factors of $F$ pass through $d$.

Fix one of the four-point affine lines $\ell$. For any $b\in B\setminus
\ell$, the four differences from $b$ to $B\cap\ell$ determine four
distinct points other than $d$ on the projective line $L_b$ through $d$.
If $L_b$ were not a factor of $F$, the restriction of $F$ to it would
have multiplicity at least four at $d$ and four further roots, impossible
in degree seven. Thus every point of $B$ lies in one of the $k$ factor
planes through $\ell$, where $4\le k\le7$. Plane cap ten yields
\[
 44\le4+(10-4)k,
\]
so $k=7$. All seven factors are therefore concurrent at $d$.

Project $B$ along $d$ to a planar set $C$. The line cap gives $|C|\ge11$,
and the factorization shows that $C$ determines at most seven directions.
Any twelve planar points over $\F_{11}$ determine every direction,
by pigeonhole projection, so $|C|=11$ and every fiber of $B\to C$ has
exactly four points. Plane cap ten now implies that $C$ is a cap: three
collinear quotient points would lift to twelve coplanar points. A planar
eleven-point cap determines at least eleven directions, since it has
55 unordered pairs and at most five parallel pairs per direction.
This is the final contradiction.
\end{proof}

\begin{corollary}\label{p11:moment44seven}
For a spectral pair of size 44 in $\F_{11}^3$, every mixed moment
of total degree at most seven vanishes in $\F_{11}$.
\end{corollary}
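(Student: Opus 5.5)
The plan is to follow the proof of Proposition~\ref{p11:moments44} word for word, with Lemma~\ref{p11:rigidity44seven} in place of Theorem~\ref{p11:rigidity44}. First I verify that both members of the pair satisfy the hypotheses of the degree-seven rigidity lemma. Each member has $44$ points. The line cap four is the Fallon--Mayeli--Villano bound, and Corollary~\ref{p11:44-cap10} gives the plane cap ten. For the line condition, spectrality gives $\D(E)\subseteq\U_A$, and Lemma~\ref{p11:44-pencil8} (applied with $A$ in the role of the member) says every projective line contains at most eight uniform normals of $A$. So every projective line meets $\D(E)$ in at most eight points. The same holds with $E$ and $A$ exchanged.

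Next I show that the moment polynomials vanish on the relevant direction set. For $1\le j\le7$ put $M_{j}(v)=\sum_{e\in E}(v\cdot e)^j$, a homogeneous polynomial of degree $j$ over $\F_{11}$. Take a direction $v\in\D(A)$. Then $v\in\U_E$, so each of the eleven planes $v\cdot x=t$ contains exactly four points of $E$. Hence
\[
 M_j(v)=4\sum_{t\in\F_{11}}t^j .
\]
This power sum is zero for $1\le j\le 9$, since $10\nmid j$. So $M_j$ vanishes on $\D(A)$.

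Lemma~\ref{p11:rigidity44seven}, applied to $B=A$, then forces $M_j\equiv0$ as a polynomial. Its coefficients are the mixed moments $\sum_e e^\alpha$ with $|\alpha|=j$, each multiplied by a multinomial coefficient $j!/\alpha!$. Since $j\le7<11$, these coefficients are units modulo eleven, so every mixed moment of total degree $j$ vanishes. The degree-zero moment is $44\equiv0$. Exchanging $E$ and $A$ gives the statement for the other member.

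No step here is a real obstacle. The one point to check is that the hypotheses of the degree-seven rigidity lemma hold for both members symmetrically. They do, because the plane cap and the pencil bound were each proved for either member.
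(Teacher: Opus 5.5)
Your proof is correct and follows the paper's argument. You check the hypotheses of Lemma~\ref{p11:rigidity44seven} using $\D(E)\subseteq\U_A$ together with the pencil and plane bounds, note that the moment polynomials vanish on the partner's direction set by equidistribution, and recover the mixed moments because the multinomial coefficients are invertible modulo eleven. The one difference is cosmetic: the paper applies the new rigidity lemma only to $M_7$ and cites Proposition~\ref{p11:moments44} for degrees at most six, while you apply the degree-seven lemma, which covers all degrees at most seven, to every $M_j$ at once; both are valid.
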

\begin{proof}
Equidistribution gives
$\sum_{e\in E}(v\cdot e)^7=4\sum_{t\in\F_{11}}t^7=0$
for each $v\in\D(A)$. Apply Lemma~\ref{p11:rigidity44seven}; multinomial
coefficients in total degree seven are nonzero modulo eleven. Combine
with the previously established moments through degree six, and exchange
$E$ and $A$.
\end{proof}

\begin{lemma}[A quadratic profile energy gap]\label{p11:44-energy}
Let $n:\F_{11}\to\{0,1,\ldots,10\}$ have total mass $44$,
and suppose its reduction modulo eleven is a polynomial of degree at
most two. Then either $n\equiv4$, or
\[
 q(n):=\sum_t\binom{n(t)}2\ge99.
\]
In every case $q(n)$ is divisible by eleven.
\end{lemma}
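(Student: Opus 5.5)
The proof has two parts: the divisibility assertion, which is formal, and the energy bound, which reduces to a small finite list of quadratic profiles. Throughout, let $P$ be the polynomial of degree at most two representing $n$ modulo eleven. Since $0\le n(t)\le10$, the integer $n(t)$ is exactly the least nonnegative residue $[P(t)]_{11}$. This exactness is what makes the profile determined by $P$ alone.

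For divisibility, write $2q(n)=\sum_t n(t)^2-\sum_t n(t)=\sum_t n(t)^2-44$. Modulo eleven, $\sum_t n(t)^2\equiv\sum_{t\in\F_{11}}P(t)^2$. Here $P^2$ is a polynomial of degree at most four. Every power sum $\sum_t t^j$ with $0\le j\le 9$ vanishes in $\F_{11}$ (for $j=0$ it is $11=0$), so this sum is zero. Since $44\equiv0$ and $2$ is invertible modulo eleven, we get $q(n)\equiv0\pmod{11}$.

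For the energy bound, I split by the degree of $P$.
\begin{enumerate}
\item If $P$ is constant $c$, then $11c=44$ gives $n\equiv4$, which is the first alternative.
\item If $P$ is nonconstant affine, it runs through every residue once. Then $\sum_t n(t)=0+1+\cdots+10=55\ne44$, so this case cannot occur, exactly as in Lemma~\ref{p11:radon44} and Lemma~\ref{p11:44-pencil8}.
\item If $P$ has degree two, complete the square to write $P(t)=a(t-h)^2+c$ with $a\ne0$. As $t$ varies, $(t-h)^2$ takes the value $0$ once and each nonzero square $r\in\{1,3,4,5,9\}$ twice. The profile is therefore the multiset consisting of $[c]_{11}$ once and $[c+ar]_{11}$ twice for each such $r$. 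Translation in $t$ does not change this multiset, so only the $110$ pairs $(a,c)$ with $a\ne0$ matter.
\end{enumerate}

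In the quadratic case the plan is a direct check of these $110$ pairs. For each pair, keep it only if the mass $[c]_{11}+2\sum_r[c+ar]_{11}$ equals $44$, and then verify $q(n)\ge99$ for the survivors. The same kind of list already appears at mass $33$ in Lemma~\ref{p11:33-plane8}, so this is a routine finite verification.

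To reduce the work, I would first use Cauchy--Schwarz: $\sum n^2\ge44^2/11=176$, so $q\ge66$, and by divisibility $q\in\{66,77,88,\dots\}$. Equality $\sum n^2=176$ forces $n$ to be constant $4$, which is excluded in degree two. It therefore suffices to rule out $\sum n^2\in\{198,220\}$, that is, $q=77$ or $88$. These correspond to very small variance: the profile must be $4$ plus small deviations, with an odd multiplicity only at the vertex value $[c]_{11}$. I would rule these out by a short hand check that quadratic profiles cannot be this close to constant. The fallback is to read it directly from the $110$-case table.

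I expect this last step, excluding $q=77$ and $q=88$, to be the only real obstacle. Everything else is formal.
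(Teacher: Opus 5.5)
Your proposal is correct and has the same skeleton as the paper. The divisibility argument from the vanishing of $\sum n$ and $\sum n^2$ modulo eleven is the same, and so is the dismissal of nonconstant affine profiles by the mass $55$. Your Cauchy--Schwarz reduction to $q\in\{77,88\}$ is also exactly the paper's reduction: the paper writes $2(q-66)=V:=d^2+2\sum_i a_i^2$, with $d=c-4$ and $a_i=b_i-4$, and assumes $V\le 44$.

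The only difference is how the last step is closed. You leave it to an unspecified hand check, with the table of quadratics as a fallback. That fallback works and is smaller than you indicate. The multiset depends only on $c$ and the square class of $a$, so there are $22$ cases. Exactly six of them have mass $44$: $a$ a square with $c\in\{0,2,8,10\}$, giving $q=110,99,132,121$, and $a$ a nonsquare with $c\in\{4,6\}$, giving $q=132,99$.

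The paper instead uses a short integer argument that never touches the residue pattern. The mass identity $d+2\sum a_i=0$ makes $d$ even with $-4\le d\le 6$, and the $a_i$ are five distinct integers different from $d$. Each of the cases $|d|\ge6$, $|d|=4$, $d=0$ and $|d|=2$ is then excluded by elementary lower bounds on sums of squares of distinct integers, together with the constraint $\sum a_i=-d/2$.

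Your enumeration gives sharper information: it lists the admissible profiles and shows the bound $99$ is attained. The paper's route is computer-free and applies to any integer profile taking one value once and five other distinct values twice.
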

\begin{proof}
The sums of $n$ and $n^2$ vanish modulo eleven, since their polynomial
degrees are at most two and four. This gives $q(n)\equiv0\pmod{11}$.
A nonconstant affine polynomial has the residues $0,\ldots,10$ once
each and therefore mass $55$, so it cannot occur. A constant profile
of mass $44$ is identically four.

A nonconstant quadratic has one value $c$ attained once and five other
distinct values $b_1,\ldots,b_5$ attained twice. All these are ordinary
integers in $\{0,\ldots,10\}$. Write
\[
 d=c-4,\qquad a_i=b_i-4.
\]
The mass identity gives $d+2\sum_i a_i=0$, so $d$ is even,
$-4\le d\le6$, and the $a_i$ are five distinct integers different
from $d$. Also
\[
 2(q(n)-66)=d^2+2\sum_i a_i^2=:V.
\]
Suppose, for contradiction, that $V\le44$. Five distinct integers have
square sum at least ten, so $|d|\ge6$ would give $V\ge56$.
If $|d|=4$, then $\sum_i a_i^2\le14$. The only five-element integer
set with square sum below fifteen is $\{-2,-1,0,1,2\}$, whose sum
zero contradicts $\sum_i a_i=-d/2$.

If $d=0$, all five $a_i$ are nonzero and their square sum is at most
twenty-two. A term of magnitude at least four would force square sum
at least $16+1+1+4+4=26$. Thus the $a_i$ are five of
$\{-3,-2,-1,1,2,3\}$, which cannot sum to zero.

If $|d|=2$, reflect to $d=2$. The five distinct integers avoid two,
sum to minus one, and have square sum at most twenty. A term of
magnitude at least four would force square sum at least
$16+0+1+1+4=22$. Thus they are five of
$\{-3,-2,-1,0,1,3\}$. Their sum forces the omitted element to be
minus one, but the remaining square sum is twenty-three. This is the
last contradiction, proving $V>44$.

Consequently $q(n)>88$. Its divisibility by eleven now yields $q(n)\ge99$.
\end{proof}

\begin{theorem}\label{p11:exclude44}
There is no spectral pair of cardinality $44$ in $\F_{11}^3$.
\end{theorem}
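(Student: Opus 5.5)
The plan is to feed the vanishing moments into the weighted incidence divisibility of Proposition~\ref{p11:weighted-incidence}, using Lemma~\ref{p11:44-energy} to supply the lower bound $a$ and Lemma~\ref{p11:44-pencil8} to supply the incidence count $r$. Suppose $(E,A)$ is a spectral pair of size $44=4\cdot 11$, so $m=4$ and $p=11$. We use the established facts: line cap four, plane cap ten (Corollary~\ref{p11:44-cap10}), at most eight uniform normals on any projective line (Lemma~\ref{p11:44-pencil8}), and vanishing of all mixed moments through degree seven (Corollary~\ref{p11:moment44seven}), for both members.

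\emph{Step 1: every plane profile is quadratic.} Fix a projective normal $v$ and let $n_v(t)=|E\cap\{x:v\cdot x=t\}|$. Then
\[
\sum_t n_v(t)t^j=\sum_{e\in E}(v\cdot e)^j=0\qquad(0\le j\le7).
\]
Write the reduction of $n_v$ as a polynomial $\sum_{k\le10}c_kt^k$ over $\F_{11}$. Since $\sum_t t^{i}$ is $-1$ for $i=10$ and zero for $0\le i\le 19$ otherwise, the $j$th moment equals $-c_{10-j}$. Hence $c_{10},\dots,c_3$ vanish, and $n_v$ has degree at most two modulo eleven. The plane cap ten places its values in $\{0,\dots,10\}$, and its total mass is $44$. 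Thus Lemma~\ref{p11:44-energy} applies: either $n_v\equiv4$, or $q_E(v)\ge99$. In all cases $11\mid q_E(v)$.

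\emph{Step 2: the hypotheses of Proposition~\ref{p11:weighted-incidence}.} We have $k_E(v)=(q_E(v)-11\binom42)/11=(q_E(v)-66)/11$. By Step~1 this is an integer, and it is either $0$ (exactly at uniform normals) or at least $(99-66)/11=3$. So we may take $a=3$, and the same holds for $A$. On each projective line at most eight of the twelve points are uniform normals, so each line meets $V_E$, and likewise $V_A$, in at least $r=4$ points. The constants are
\[
C=\frac{4\cdot7}{2}=14,\qquad T=\frac{4(121-4)}{2}=234.
\]
The condition $C<(r+1)a$ reads $14<15$, which holds.

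\emph{Step 3: contradiction.} Proposition~\ref{p11:weighted-incidence} now gives $14|V_E|=4\cdot234=936$. But $936/14$ is not an integer, so no such pair exists.

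The main obstacle is really upstream. It lies in securing degree-seven rigidity, so that the profiles drop to degree two, and the energy gap, which yields $a=3$ with $(r+1)a$ just exceeding $C$. Both are already available from the preceding results. In the write-up I would check carefully only the power-sum indexing in Step~1 and the fact that $k$ vanishes exactly at uniform normals.
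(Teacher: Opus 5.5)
Your proposal is correct and matches the paper's proof: degree-seven moments make every plane profile quadratic, the plane cap ten and Lemma~\ref{p11:44-energy} give $k\in\{0\}\cup\{3,4,\ldots\}$ with $k=0$ exactly at uniform normals, Lemma~\ref{p11:44-pencil8} gives $r=4$, and Proposition~\ref{p11:weighted-incidence} with $a=3$ yields the impossible equation $14|V_E|=936$. Your power-sum computation showing that the $j$th moment equals $-c_{10-j}$ spells out a step the paper states in one line.
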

\begin{proof}
The moments through degree seven imply that every normal occupancy
profile is quadratic modulo eleven. The plane bound ten makes its
least residues its actual values. Lemma~\ref{p11:44-energy} therefore gives
\[
 k_E(v)=\frac{q_E(v)-66}{11}\in\{0\}\cup\{3,4,\ldots\},
 \qquad k_E(v)=0\Longleftrightarrow v\in\U_E,
\]
and the same assertions for $A$. Every projective line meets the support
of $k_E$, and that of $k_A$, in at least four points, by
Lemma~\ref{p11:44-pencil8}. Apply Proposition~\ref{p11:weighted-incidence}
with $p=11,m=4,a=3,r=4$. It has $C=14$, $T=234$, and
$C<5\cdot3$. It would follow that $14|V_A|=4\cdot234=936$,
which is impossible.
\end{proof}


\section{\texorpdfstring{Excluding cardinality $55$}{Excluding cardinality 55}}
Let $(E,A)$ be a spectral pair in $\F_{11}^3$ of size $55$.
Write $D(B)$ for the projective directions determined by $B$ and $U_B$ for
the normal directions in which $B$ is equidistributed (five points per plane).
The published bounds are $|B\cap H|\le11$ for planes and $|B\cap\ell|\le5$
for lines, for either member $B$. Spectrality gives $D(A)\subset U_E$ and
$D(E)\subset U_A$.

\begin{lemma}[Finite planar and profile bounds]\label{p11:55-finite}
The following statements hold over $\F_{11}^2$:
\begin{enumerate}
\item Every noncollinear six-point set determines at least six directions.
\item Every eight-point set with line bound five determines at least seven directions.
\item Every nine-point set with line bound four determines at least eight directions.
\item Every eight-point set with line bound three determines at least eight directions.
\item Every eight-point set with line bound two determines at least ten directions.
\item A function $g:\F_{11}\to\{0,\ldots,7\}$ that is represented
modulo eleven by a polynomial of degree at most three and satisfies
$\sum_tg(t)=55$ is identically five.
\end{enumerate}
\end{lemma}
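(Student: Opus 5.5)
The six assertions are finite statements over $\F_{11}$. I would prove them exactly as the earlier planar lemmas of this part were proved: a short normalization followed by an exhaustive exact search, with structural shortcuts where they exist.

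\emph{Assertion (1).} This is Lemma~\ref{p11:six-point} restated, so nothing new is needed.

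\emph{Assertions (2)--(5).} Here I use the triangle normalization. A set determining at most $k-1$ directions that contains a noncollinear triple can be sent to a set containing $(0,0),(1,0),(0,1)$. Its direction set then lies in one of the $\binom{9}{k-4}$ masks of size $k-1$ that contain $0,-1,\infty$. The masks have size six, seven, seven and nine respectively, giving $\binom93=84$, $\binom94=126$, $\binom94=126$ and $\binom96=84$ masks. For each mask I would list the points whose three directions to the triangle are allowed. The triangle is then extended in increasing order, each new point required to have allowed directions to every earlier point and to respect the stated line bound. The claim is that no set of the required size survives.

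A collinear set is excluded by its line bound in every case except (2). There the bound five is below eight, so collinearity is also impossible. Two assertions admit quicker arguments.
\begin{itemize}
\item For (5), eight points with no three collinear have $28$ pairs and at most four pairs per direction. This gives only $7$ directions, so the search is genuinely needed.
\item For (3), a nine-point set with at least four missing directions is a partial permutation graph. The search could instead be run as a completion to an eleven-point graph via Lemma~\ref{p11:completion}, applied twice, but the mask search is simpler to certify.
\end{itemize}

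\emph{Assertion (6).} This is a univariate profile check, analogous to the cubic and quadratic checks in Lemma~\ref{p11:33-seven-gap}.

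\begin{itemize}
\item \emph{Degree at most one.} A constant profile with sum $55$ and values in $\{0,\ldots,10\}$ is identically five. A nonconstant affine profile takes each residue once, so it takes the value ten, which exceeds seven.
\item \emph{Degree two or three.} I would translate the argument to center the polynomial, killing the degree-two coefficient when it is cubic and the linear coefficient when it is quadratic. Then I enumerate all centered quadratics $at^2+c$ and cubics $at^3+bt+c$ with $a\neq0$, which is at most $10\cdot11\cdot11$ polynomials. For each, I check whether all least residues lie in $\{0,\ldots,7\}$ and whether they sum to $55$, and claim none does.
\item \emph{Pruning.} The congruence $\sum_t g(t)\equiv 0 \pmod{11}$ holds automatically for degree at most nine, so it gives no pruning. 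The value bound seven does the work.
\end{itemize}

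\emph{Main obstacle.} I expect the hardest part to be making the search in (2)--(5) convincingly exhaustive while keeping it small. Assertion (4) uses $126$ masks with a weak line bound, so it could have many states. I would first try pruning by the line bound at each insertion. If that is too slow, I would additionally require that each newly inserted point be lexicographically larger than all earlier points, which costs no cases because the normalized triangle is fixed and the remaining points are added as an unordered set.
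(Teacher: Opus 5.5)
Your proposal is correct and is essentially the paper's proof. Item 1 is Lemma~\ref{p11:six-point}. Items 2--5 are exhaustive clique extensions of the normalized triangle $(0,0),(1,0),(0,1)$ over every allowed direction mask containing $0,-1,\infty$. Item 6 is an exhaustive check of cubic profiles: the paper scans all $11^4$ cubics, and your translation-centering is a harmless reduction of that scan.

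Two asides are slightly off, though neither is used in your argument. First, Lemma~\ref{p11:completion} requires exactly $p-1=10$ points, so it cannot be ``applied twice'' to a nine-point set. Second, the centered quadratics and cubics number $110+1210=1320$, not at most $10\cdot11\cdot11$.
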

\begin{proof}
Items 2--5 are verified by \path{verify55_planar.js}; item 6 by
\path{verify55_profiles.js}. The planar searches normalize a noncollinear
triple to $(0,0),(1,0),(0,1)$ and exhaust the compatible cliques for every
allowed direction set. Their respective numbers of visited states are
$5770,39816,25725,20187$. The profile search checks all $11^4=14641$ cubics. Item~1 is
Lemma~\ref{p11:six-point}.
\end{proof}

An elementary auxiliary fact is that five noncollinear planar points determine
at least four directions. If four are collinear, use the fifth point. Otherwise
the line bound is three. Three directions cannot account for all ten pairs:
some direction would account for at least four pairs, requiring occupancy
$3+2$; its cross-directions number at least four by Cauchy--Davenport, in
addition to the parallel direction.

No $U_B$ contains a projective line. Indeed, the twelve affine planes through
any fixed affine line with $a$ points of $B$ have total occupancy $55+11a$.
If all twelve normals were in $U_B$, this would equal $60$, an impossibility.
In particular $|U_B|\le121$: the complement meets every projective line and
a projective blocking set has at least twelve points (count lines through an
external point).

\subsection{Moment and line bounds}
\begin{lemma}\label{p11:55-sixmom}
Every mixed moment of either member through degree six vanishes.
\end{lemma}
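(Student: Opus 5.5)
The plan follows the template of Theorem~\ref{p11:rigidity44} and Proposition~\ref{p11:moments44}. For $1\le j\le6$ and $v\in D(A)\subseteq U_E$, equidistribution gives
\[
 M_{j,E}(v)=\sum_{e\in E}(v\cdot e)^j=5\sum_{t\in\F_{11}}t^j=0 .
\]
So it suffices to prove a degree-six rigidity statement: no nonzero homogeneous polynomial $F$ of degree at most six vanishes on $D(B)$, for $B$ either member. Multinomial coefficients in degree at most six are nonzero modulo eleven, and $55=0$ in $\F_{11}$. Hence rigidity for $D(E)$ and $D(A)$ gives all mixed moments through degree six. As before, a lower-degree $F$ is multiplied by a power of a nonzero linear form, so we may assume $\deg F=6$.

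The first case is the generic incidence count. Suppose every projective line meets $D=D(B)$ in at least four points. A line not dividing $F$ carries at most six zeros. There are at most six factor lines, and each carries at most eleven points of $D$, since $D\subseteq U$ contains no full line. Lemma~\ref{p11:incidence} with $a=4$, $d=6$, $b=11$ then gives
\[
 N^2+(11-120)N+24\cdot133\le 6\cdot7\cdot5=210 .
\]
The left side is $N^2-109N+3192\ge887/4>210$, exactly the computation of the size-$44$ proof, so this case is impossible.

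It remains to produce the four-point lower bound, or else to handle a projective line $L$ with $n_L\le3$. For such an $L$, the eleven affine planes with direction plane $L$ each contain exactly five points of $B$, because $|B|=55$ and the plane bounds are in force. By the elementary fact stated above, five noncollinear points determine at least four directions. So each of these planes contains five collinear points, and $B$ is a union of eleven parallel-plane five-point lines. At least four of these lines share a direction $d\in L$.

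For two parallel five-point lines, Cauchy--Davenport gives at least nine cross directions besides $d$ on their common projective line. That line then carries more than six zeros of $F$, so it divides $F$. The four parallel lines project to at least four points in the quotient by $d$. These project to at least three distinct quotient directions, using Lemma~\ref{p11:55-finite}(1) or the five-point fact. This yields at least three concurrent factor lines through $d$.

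Next I would repeat the pencil argument of Theorem~\ref{p11:rigidity44}. Fix a five-point line $\ell$ in direction $d$. For $x\notin\ell$, the five differences $x-y$ with $y\in\ell$ give five distinct points on $L_x\ni d$. If $L_x$ is not a factor, $F|_{L_x}$ has multiplicity $k_d\ge3$ at $d$ plus five further roots, which exceeds degree six. Hence $B$ is covered by the $k_d\le6$ factor planes through $\ell$, each containing at most eleven points. This gives $55\le5+6k_d$, so $k_d\ge9$, which is impossible.

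The main obstacle is the first case's hypothesis that every projective line contains at least four points of $D$. The argument above only supplies this when no line has $n_L\le3$. If the count leaves a gap, I would sharpen the bound on the factor lines using Lemma~\ref{p11:55-finite}(2)--(5): these bound $n_L$ below in terms of the slice structure, since an eight-point slice with a given line bound forces many directions. Alternatively, I would first prove the moments through degree five, as in Lemma~\ref{p11:33-rigidity}, and then bootstrap to degree six with the profile statement Lemma~\ref{p11:55-finite}(6).
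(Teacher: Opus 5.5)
Your proof is correct and complete, but it splits the cases differently from the paper.

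\textbf{The paper's route.} The paper splits on whether $B$ has a five-point line $\ell$.
\begin{itemize}
\item If it does, a single pencil count through $\ell$ finishes. A plane whose direction line is not a factor of $F$ holds at most seven points, because eight points with line bound five determine at least seven directions; this is the computer-verified Lemma~\ref{p11:55-finite}(2). So the twelve planes hold at most $12\cdot7+6\cdot4=108<110$ points.
\item If it does not, every slice with at least five points is noncollinear. This gives $n_L\ge4$ on every projective line, and the incidence count you wrote finishes.
\end{itemize}

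\textbf{Your route.} You split on whether some $n_L\le3$.
\begin{itemize}
\item Your first case is the same incidence count.
\item Your second case replaces the finite lemma with the structural argument of Theorem~\ref{p11:rigidity44}. The slices become eleven five-point lines, one in each plane parallel to the linear plane of $L$. Cauchy--Davenport makes every plane spanned by two of the parallel ones a factor. The vanishing order at $d$ then forces $B$ into at most six planes through $\ell$, giving $55\le41$.
\end{itemize}
Your route avoids the computer-checked eight-point direction bound entirely, at the cost of a longer structural argument. The paper's route is a one-line count, but it depends on that finite search.

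Because your dichotomy is exhaustive, your closing paragraph about a possible gap is unfounded. The fallback strategies are not needed.

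Two justifications should be repaired.
\begin{itemize}
\item Each slice has at most five points because a sixth point would give either five noncollinear points (at least four directions) or six collinear points (violating the line bound five). Only then does $|B|=55$ force exactly five. The plane bound eleven plays no role in this step.
\item No three of the parallel five-point lines lie in one affine plane, since that plane would contain fifteen points against the plane bound eleven. So any three of the projected points are noncollinear and determine three quotient directions. Lemma~\ref{p11:55-finite}(1) and the five-point fact are not the right citations here. Moreover, $k_d\ge2$ already suffices, since $2+5>6$.
\end{itemize}
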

\begin{proof}
In fact, no nonzero homogeneous polynomial of degree at most six vanishes on
$D(B)$, for either $B=E$ or $A$.

It suffices to consider degree six, multiplying a lower-degree polynomial by
a power of a nonzero linear form if necessary. Suppose first that $B$ has a
five-point line $\ell$, of direction $d$. If the restriction of the polynomial
to a projective line $P$ is nonzero, an affine $B$-plane parallel to $P$ has
at most seven points: eight points would determine at least seven directions
by finite lemma 2. There are at most six rational line factors in all. Thus
the pencil through $\ell$ has total occupancy at most
\[
 12\cdot7+6(11-7)=108,
\]
whereas its actual occupancy is $55+11\cdot5=110$.

If $B$ has no five-point line, every projective line meets $D(B)$ in at least
four points: some parallel affine plane contains five noncollinear points.
Also $D(B)$ contains no full projective line, since it is contained in the
partner's $U$ set. Put $N=|D(B)|$, $n_L=|D(B)\cap L|$, and let $k\le6$ count
the rational line factors. On nonfactor lines, $4\le n_L\le6$; on factor
lines, $n_L\le11$. Therefore
\[
 N^2-109N+3192
 =\sum_L(n_L-4)(n_L-6)\le35k\le210.
\]
The quadratic has minimum $887/4>210$, a contradiction.

The moment polynomial $M_j^E(v)=\sum_{e\in E}(v\cdot e)^j$ vanishes on
$D(A)$ for $1\le j\le9$, by spectral equidistribution. The rigidity just
proved therefore gives $M_j^E=M_j^A=0$ for $1\le j\le6$. Since $j<11$,
all mixed moments of total degree at most six also vanish.

\end{proof}
\begin{lemma}\label{p11:55-unique}
A size-$55$ spectral set has at most one orientation in which it lies in
five parallel planes.
\end{lemma}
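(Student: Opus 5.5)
Suppose, for contradiction, that $E$ lies in five parallel planes with normal $n_1$, and also in five parallel planes with a different normal $n_2$. By the plane bound $|E\cap H|\le11$, each of the five layers in either orientation has exactly eleven points. Let $d$ be the direction annihilated by both $n_1$ and $n_2$. Then $E$ is contained in the $25$ affine lines of direction $d$ obtained by intersecting the two families of layers.

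Choose coordinates with $n_1\cdot x=s$ and $n_2\cdot x=t$, and let $S,T\subset\F_{11}$ be the five layer values in each orientation. Put $c(s,t)=|E\cap\{n_1\cdot x=s,\ n_2\cdot x=t\}|$. This is a $5\times5$ array of integers with $0\le c(s,t)\le5$ by the line bound. Every row sum and every column sum equals $11$, and the total is $55$.

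The first key step is to apply Lemma~\ref{p11:55-sixmom}. All mixed moments of $E$ through degree six vanish. Taking the moments of the two linear forms $s$ and $t$ gives
\[
 \sum_{s,t}c(s,t)\,s^it^j=0\quad\hbox{in }\F_{11}\qquad(i+j\le6).
\]
On the grid $S\times T$, the functions $s^it^j$ with $0\le i,j\le4$ form a basis. The only basis monomials not covered by these moment conditions are $s^3t^4$, $s^4t^3$ and $s^4t^4$. Let $\lambda_s$ and $\mu_t$ be the Lagrange derivative weights of $S$ and $T$, as in Lemma~\ref{p11:radon}. The dual-basis computation then forces, modulo eleven,
\[
 c(s,t)\equiv\lambda_s\mu_t\,(\alpha+\beta s+\gamma t)
\]
for some $\alpha,\beta,\gamma\in\F_{11}$. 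This identity comes from the Vandermonde identities $\sum_s\lambda_ss^h=0$ for $h\le3$.

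If $\alpha=\beta=\gamma=0$, then every $c(s,t)$ is divisible by eleven. Together with $0\le c\le5$ this forces $c=0$, contradicting a total of $55$. Otherwise the residues of the whole array are prescribed by $(S,T,\alpha,\beta,\gamma)$. Because $0\le c\le5<11$, the residues are the actual values.

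The second step is a finite check. Normalize $S$ and $T$ by independent affine changes of $s$ and $t$. This is legitimate because affine changes of the layer coordinates preserve the form of the congruence, up to changing $(\alpha,\beta,\gamma)$. For each normalized $(S,T)$ and each nonzero $(\alpha,\beta,\gamma)$, up to the common scaling that preserves nothing, compute the least residues. Then test whether all entries lie in $\{0,\ldots,5\}$ and all row and column sums equal $11$. I expect no array to survive, which proves the lemma.

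If some array does survive, I would add further constraints before giving up. First, the pencil of normals $an_1+bn_2$ contains the twelve normals orthogonal to $d$. The profile in such a direction is $\sum_{as+bt=u}c(s,t)$, and item 6 of Lemma~\ref{p11:55-finite}, on cubic profiles, restricts these profiles. Second, each eleven-point layer meets only five lines of direction $d$, and the planar direction bounds of Lemma~\ref{p11:55-finite} can be applied to that layer. Either constraint should eliminate the remaining arrays.

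The main obstacle is making the dual-basis reduction correct. The delicate part is checking that the moments with $i+j\le6$ really cut the grid functions down to the stated three-dimensional family, including the mixed terms with $i,j\le4$. After that, the finite enumeration is small: at most $\binom{11}{5}^2$ pairs $(S,T)$ before normalization, times $11^3$ coefficient triples.
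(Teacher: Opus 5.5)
Your proposal is correct and follows the paper's proof step for step: the $5\times5$ array of line counts with row and column sums eleven and entries at most five, then the degree-six moments cutting the grid functions down to the three-dimensional barycentric family $\lambda_s\mu_t(\alpha+\beta s+\gamma t)$, then an exhaustive check after independent affine normalization of $S$ and $T$. Your verification of the reduction via $\sum_s\lambda_s s^h=0$ for $h\le3$ is right. The paper runs that check over the six affine classes of five-subsets (thirty-six ordered pairs) and all $11^3$ triples in \texttt{check\_grid55.py} and finds no admissible array, so your expected outcome holds and the fallback constraints are never needed.
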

\begin{proof}
Suppose $B$ lies in five parallel planes in each of two distinct normal
directions. Projecting onto these two coordinates gives weights $f(s,t)$
on $S\times T$, where $|S|=|T|=5$. Every row and column has integer sum eleven,
and $0\le f(s,t)\le5$ by the line bound.

Define barycentric weights
\[
 w_S(s)=\left(\prod_{s'\in S\setminus\{s\}}(s-s')\right)^{-1},
 \qquad
 w_T(t)=\left(\prod_{t'\in T\setminus\{t\}}(t-t')\right)^{-1}.
\]
The mixed moments through degree six imply
\[
 f(s,t)=w_S(s)w_T(t)(as+bt+c)\pmod {11}.
\]
Indeed, the $22$ monomials $s^it^j$ with $0\le i,j\le4$ and $i+j\le6$
are linearly independent on the grid. Their orthogonal complement has
dimension three, and the displayed three-dimensional family lies in it by
the barycentric interpolation identities.

There are six affine orbits of five-subsets of $\F_{11}$, represented by
\[
01234,\quad01235,\quad01236,\quad01237,\quad01245,\quad01247.
\]
The exact checker \path{check_grid55.py} examines these $36$ ordered pairs
and all $11^3$ coefficient triples. None of the resulting residue matrices
has entries in $\{0,\ldots,5\}$ and all row and column sums eleven.
Consequently a spectral set of size $55$ has at most one such normal direction.

\end{proof}
\begin{lemma}\label{p11:55-line4}
Every affine line contains at most four points of either member.
\end{lemma}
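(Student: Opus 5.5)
Suppose an affine line $\ell$ of direction $d$ contains five points of $B$ (the published cap is five). I would follow the pencil argument in the proof of Lemma~\ref{p11:55-sixmom}. The twelve affine planes through $\ell$ have total occupancy $55+11\cdot5=110$. Any such plane meeting $B$ in at least eight points determines at least seven directions, by Lemma~\ref{p11:55-finite}(2). A plane with at most seven points contributes at most seven. Write $j$ for the number of planes in the pencil with at least eight points; since each has at most eleven, we get $110\le 7(12-j)+11j=84+4j$, so $j\ge7$.

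Each such rich plane $P$ has at least seven of its directions in $D(B)\subseteq U_{B'}$, on the projective line $P_0$ of its direction plane. Lemma~\ref{p11:parallel-planes} needs ten uniform normals, so it does not apply directly. Instead I would use the moment information. All $j\ge7$ projective lines pass through $d$, and each contains at least seven points of $D(B)$ besides $d$, all of them uniform normals for the partner $B'$.

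The partner's moment polynomials $M_k^{B'}$ for $7\le k\le9$ vanish on $D(B)$. They are the first nonzero ones, since degree at most six is already zero by Lemma~\ref{p11:55-sixmom}. Restricted to a rich projective line through $d$, a nonzero restriction of $M_7^{B'}$ has at most seven zeros. I would count the zeros including multiplicity at $d$ to force each rich line to divide $M_7^{B'}$, or else to get zero restrictions. With seven concurrent factor lines in degree seven, $M_7^{B'}$ equals a product of seven linear forms through $d$. Then the same argument for $M_8,M_9$ makes every normal profile of $B'$ along rich lines a polynomial of low degree modulo eleven. Lemma~\ref{p11:55-finite}(6) (profiles of degree at most three with values at most seven are constant five) and the Radon identity, Lemma~\ref{p11:radon}, would then make nearly the whole pencil uniform. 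That contradicts the fact that $U_{B'}$ contains no projective line. An alternative finish from the concurrency is to project $B$ along $d$ and derive a contradiction from the resulting bounded-direction planar set.

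The main obstacle is the middle step. It must turn ``seven or more rich planes, each giving seven directions'' into an algebraic contradiction, and the count is borderline: seven zeros against degree seven. I would first try the sharper plane count. Rich planes of eight, nine, or ten points give seven, eight (item 3, when the line bound is four), or more directions. Eleven-point planes are constrained further by Lemma~\ref{p11:55-unique}, because an eleven-point plane pushes the partner into five parallel planes via Lemma~\ref{p11:parallel-planes}. Pushing the direction count on each rich line to eight would make each line divide $M_7^{B'}$ outright. Seven such factor lines would then exhaust the degree, and one more rich line, or a nonfactor line with eight zeros, closes the argument.
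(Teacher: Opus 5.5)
Your proposal has a genuine gap. It sits exactly at the step you flag as the main obstacle, and some inputs you plan to use there are not available.

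\begin{itemize}
\item Every rich plane contains $\ell$, so $d$ is one of the seven directions supplied by Lemma~\ref{p11:55-finite}(2). You get seven points of $D(B)$ on each rich line \emph{including} $d$, not seven besides $d$.
\item Item~(3) can never sharpen this count. A plane containing the five-point line $\ell$ violates its line-bound-four hypothesis.
\item Hence a nonzero restriction of $M_7^{B'}$ to a rich line may simply have these seven simple zeros. A higher multiplicity at $d$ would require already knowing line factors through $d$, which is circular. So neither divisibility nor concurrency follows, and your alternative finish by projecting along $d$ is unavailable for the same reason.
\item The later steps are also unsupported. Item~(6) applies only to normals along which every slice of $B'$ has at most seven points, which you have not established. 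No mechanism is given by which $M_8,M_9$ would make a pencil nearly uniform.
\end{itemize}

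The missing idea is to read the pencil through $\ell$ on the $E$ side and transfer it to slices of the partner.

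\begin{itemize}
\item A plane through $\ell$ whose normal lies in $U_E$ contains exactly the five points of $\ell$. So $110\le5t+11(12-t)$ gives $t=|U_E\cap d^\perp|\le3$.
\item Since $D(A)\subset U_E$, each of the eleven slices of $A$ with normal $d$ determines at most three directions. Five noncollinear points determine four directions, and six collinear points violate the line bound. So each slice is at most five collinear points, and mass $55$ makes all eleven slices five-point lines.
\item At least four of these lines are parallel to a common $u$, and no three are coplanar.
\item Two of them span a plane with at least ten directions: nine cross directions by Cauchy--Davenport, plus $u$. All are uniform normals of $E$, so Lemma~\ref{p11:parallel-planes} puts $E$ in five parallel planes with normal in that linear plane.
\item Two different pairs give two linear planes meeting only in $u$. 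Since $u\in D(A)\subset U_E$, it cannot be a five-layer normal. So $E$ has two distinct five-layer orientations, contradicting Lemma~\ref{p11:55-unique}.
\end{itemize}

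That uniqueness lemma, which your sketch never uses, is what makes this step short. No seventh-moment information is needed.
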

\begin{proof}
Suppose $E$ has a five-point line of direction $d$. If
$t=|U_E\cap d^\perp|$, its pencil count gives
\[
110\le5t+11(12-t),\qquad t\le3.
\]
Every $A$-slice parallel to $d^\perp$ determines at most three
directions. Five noncollinear points would determine at least four,
and six collinear points violate the line bound five. Thus each slice
has at most five points; the total mass $55$ forces all eleven slices
to contain exactly five points, all collinear. Among the resulting eleven five-point lines, at least four share
a direction $u$. Any three of these parallel lines are noncoplanar, since a
plane contains at most eleven points of $A$.

Take three such lines $\ell_1,\ell_2,\ell_3$. Each pair spans an affine plane
and determines at least ten directions: nine cross-directions by
Cauchy--Davenport, plus $u$. The parallel-plane criterion of
Fallon--Mayeli--Villano forces $E$ into five parallel planes with normal
$n_{12}$ in the linear plane spanned by $\ell_1,\ell_2$, and likewise a normal
$n_{13}$ for the other pair. This location of the normal follows from the
projection proof of that criterion. The two linear planes intersect exactly
in $u$. Neither normal equals $u$, since $u\in D(A)\subset U_E$ and a
five-plane normal is not an equidistribution direction. Thus the normals
are distinct, contradicting the preceding uniqueness statement.
By symmetry, both sets have line bound four.

\end{proof}
\begin{lemma}\label{p11:55-sevenmom}
Every mixed moment of either member through degree seven vanishes.
\end{lemma}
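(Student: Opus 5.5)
The plan is to repeat the pattern of Lemma~\ref{p11:55-sixmom}, now that Lemma~\ref{p11:55-line4} has lowered the line bound to four. The main step is a degree-seven rigidity statement: no nonzero homogeneous polynomial $F$ of degree at most seven vanishes on $D(B)$ for $B=E$ or $A$. Granting it, the moment polynomial $M_7^E(v)=\sum_{e\in E}(v\cdot e)^7$ vanishes on $D(A)$, since every $v\in D(A)$ lies in $U_E$ and $5\sum_t t^7=0$ in $\F_{11}$. Rigidity then gives $M_7^E=0$, and likewise $M_7^A=0$. The multinomial coefficients in degree seven are nonzero modulo eleven, so every mixed moment of degree seven vanishes. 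Together with Lemma~\ref{p11:55-sixmom}, this proves the statement.

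For rigidity, multiply by a power of a linear form so that $\deg F=7$.

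\emph{Generic case.} Suppose every projective line meets $D(B)$ in at least four points. The text before Lemma~\ref{p11:55-sixmom} shows that this holds whenever some parallel plane contains five noncollinear points, and that case is automatic once a slice has at least five points and is not contained in a line. With $N=|D(B)|$ and $n_L=|D(B)\cap L|$, a line that is not a factor of $F$ has $n_L\le7$. There are at most seven factor lines. These should now carry $n_L\le 8$, coming from a profile bound for pencils analogous to Lemma~\ref{p11:44-pencil8}. Concretely, nine uniform normals on a line would leave three profiles of degree at most one, which must be constant equal to five. The Radon identity then gives $5\cdot3=5\cdot 2+11f$, so $5\equiv 0$, impossible. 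Hence, since $n_L\le 8$ on factor lines,
\[
\sum_L(n_L-4)(n_L-7)\le 7\cdot 4=28.
\]
The incidence identities give instead $N^2-121N+3724\ge 255/4$, which is a contradiction.

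\emph{Degenerate case.} Otherwise some projective line $L$ has $n_L\le3$. Then each of the eleven planes with direction plane $L$ contains at most four points of $B$, or else five collinear points. A noncollinear four-point set determines at least four directions, and five noncollinear points determine at least four. The line bound four excludes five collinear points, so each slice has at most four points. This gives $|B|\le44<55$, a contradiction. This case is therefore shorter than in the size-$44$ argument.

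The main obstacle I expect is justifying $n_L\le8$ on the factor lines, that is, the pencil bound of eight uniform normals. It needs the plane occupancies to be at most ten, so that least residues equal the actual profile values, whereas the established plane cap is only eleven. I would handle an eleven-point plane section separately. Such a section with line bound four determines many directions, by the analogue of Lemma~\ref{p11:planar44}(3) with Lemma~\ref{p11:ten-point}. The parallel-plane criterion would then put the partner in five parallel planes. This gives profiles with entries $0$ or $11$, which sum to $55$ only as five elevens, and I would exclude this against the constant-five identity modulo eleven. If this fails, I would instead absorb factor lines with $n_L\le 11$ into the inequality, using $(11-4)(11-7)=28$ per factor. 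That gives a total of at most $196$, which the quadratic minimum $255/4$ does not beat. In that case I would refine by counting how many factor lines can pass through high-occupancy directions.
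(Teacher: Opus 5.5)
\textbf{Gap: the factor-line bound $n_L\le 8$.} Your reduction to degree-seven rigidity and the moment bookkeeping are fine, and your degenerate case is indeed vacuous. The incidence count, however, rests entirely on $n_L\le 8$ on factor lines, and that bound is neither justified nor available here.

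\emph{Your argument for it fails at $m=5$.} A profile of degree at most one need not be constant. A nonconstant affine residue profile has least residues $0,1,\ldots,10$, whose sum is exactly $55$. Three such profiles, with leading coefficients proportional to the $w_i$ and constants summing to $10$, satisfy the Radon identity modulo eleven. The five-layer profile $(11^5,0^6)$ is also still possible.

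\emph{Circularity.} Five parallel layers are excluded only in Lemma~\ref{p11:55-no-layers}, and the bound of nine uniform normals per line is obtained only there. Its proof uses the seventh moments you are trying to prove, through the relation $Q_s=\lambda_sQ$ and the cubic-type equations. Your eleven-point-plane detour has the same circularity. It is also one direction short: Lemma~\ref{p11:planar44}(3) gives only nine directions, while the parallel-plane criterion needs ten.

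\emph{Even $n_L\le 9$ would not be enough.} You would get $7(9-4)(9-7)=70$, which exceeds the integer minimum $64$ of $N^2-121N+3724$. Your fallback bound $196$ is worse. So the global incidence inequality cannot close at degree seven with the information available at this stage.

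\textbf{How the paper proceeds.} It never bounds factor lines globally. It works with $F=M_7^A$ vanishing on $D(E)$ and first notes that no five factor lines are concurrent. Otherwise a nonfactor line through the common point would carry at most three zeros, whereas every line meets $D(E)$ in at least four points. It then splits on the lines of $E$.
\begin{itemize}
\item If $E$ has a four-point line $\ell$, the pencil through $\ell$ has total occupancy $99$. At least four of its planes are uniform with five points, because $U_E\supseteq D(A)$ meets every projective line in at least four points. Any plane with at least nine points is a factor plane through $[\ell]$ by Lemma~\ref{p11:55-finite}(3), and non-concurrency allows at most four of these. The remaining planes have at most eight points, so the total is at most $96$, a contradiction.
\item If $E$ has line bound three, Lemma~\ref{p11:55-finite}(4) bounds every slice parallel to a nonfactor line by seven. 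The degree-six moments make each such profile cubic, and Lemma~\ref{p11:55-finite}(6) forces it to be uniform. This gives at least $126$ uniform normals, contradicting $|U_E|\le 121$.
\end{itemize}
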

\begin{proof}
Now both $D(E)$ and $D(A)$ meet every projective line in at least four points.
Suppose $F=M_7^A$ were nonzero; it vanishes on $D(E)$.

No five rational line factors of $F$ are concurrent. Otherwise a nonfactor
line through their common point would contain at most $1+(7-5)=3$ distinct
zeros of $F$, contrary to the four-point lower bound for $D(E)$.

If $E$ has a four-point line $\ell$, its pencil has total occupancy
$55+11\cdot4=99$. At least four of its planes contain exactly five points,
because $U_E$ meets every projective line in at least four points. Any plane
containing at least nine points of $E$ determines at least eight directions
by finite lemma 3, so its direction plane must be a rational line factor of
$F$ through the direction of $\ell$. There are at most four such large planes.
Consequently the pencil occupancy is at most
\[
 4\cdot5+4\cdot11+4\cdot8=96<99,
\]
a contradiction. This bound remains valid if there are more than four
five-point planes or fewer than four large planes.

If $E$ has no four-point line, finite lemma 4 shows that on every nonfactor
projective line of $F$, every parallel $E$-slice has at most seven points.
The moments through degree six say that the eleven plane counts, modulo
eleven, are represented by a cubic: the orthogonal complement of
$1,t,\ldots,t^6$ in $\F_{11}^{11}$ is the evaluation space of cubics.
Finite lemma 6 forces all these counts to be five. Thus at least $133-7=126$
normal directions lie in $U_E$, contradicting $|U_E|\le121$.

Therefore $M_7^A=0$, and symmetry gives $M_7^E=0$. A hypothetical size-55
pair has all mixed moments through degree seven equal to zero, line bound
four, and at most one five-plane normal for each member. In particular,
every parallel-plane count function is a quadratic modulo eleven.

\end{proof}

\subsection{Excluding five parallel layers}
\begin{lemma}\label{p11:55-no-layers}
Neither member of a size-$55$ spectral pair lies in five parallel planes.
Consequently every projective line contains at most nine uniform normals.
\end{lemma}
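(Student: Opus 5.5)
We argue by contradiction, following the four-layer exclusion for cardinality $44$. Suppose $E$ lies in five parallel planes $z=s$, $s\in S$, $|S|=5$. The plane bound eleven and $|E|=55$ force every slice $B_s$ to have exactly eleven points. By Lemma~\ref{p11:55-line4}, each slice has at most four points on a line. Lemma~\ref{p11:planar44}(3) then shows that each slice determines at least nine directions in the layer plane. Since $D(E)$ contains no full projective line, the union of the slice directions misses some direction, and in suitable coordinates every slice is a graph $B_s=\{(x,f_s(x))\}$.

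By Lemma~\ref{p11:55-sevenmom}, all mixed moments of total degree at most seven vanish. Fix a planar moment degree $k$. Then $\sum_s s^jM_k(B_s)=0$ for $0\le j\le 7-k$. For $k\le3$ this gives five Vandermonde equations, so every slice has vanishing moments through degree three. Lemma~\ref{p11:flat-graphs} therefore applies to every slice. For $k=4$ we have four equations, whose kernel is spanned by the barycentric weights $\lambda_s=\prod_{r\ne s}(s-r)^{-1}$, so $Q_{B_s}=\lambda_sQ$ for one binary quartic $Q$. For $k=5,6$ we retain the relations with $j\le2$ and $j\le1$ respectively. We then run through the root patterns of $Q$ exactly as in Lemmas~\ref{p11:44-noncubic}--\ref{p11:44-parabolic}.

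\emph{Patterns $(2,2),(2,1,1),(1,1,1,1)$.} The homothety statement of Lemma~\ref{p11:flat-graphs} gives $B_s=\rho_sB_0+v_s$. Passing to quintics modulo $Q_0\langle u,v\rangle$, the $j=0$ degree-five relation yields $\sum_s\rho_s^5=0$ with each $\rho_s^5=\pm1$. Five signs have an odd sum lying in $[-5,5]$, so the sum is nonzero in $\F_{11}$, a contradiction. This is easier than in the $44$ case.

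\emph{Pattern $(3,1)$.} As before, $a_s^3=K\lambda_s$. The degree-six moment $\sum x^4y^2$ with $j=0,1$ gives $\sum_s a_s^2=\sum_s s\,a_s^2=0$, that is, $\sum_s\lambda_s^4=\sum_s s\lambda_s^4=0$. This is a finite check over the $\binom{11}{5}$ layer sets, or over the six affine orbits of five-sets listed in Lemma~\ref{p11:55-unique}. I expect this check to fail for every five-set, but it is the one step I cannot predict without computation. If it unexpectedly succeeds, I would add the $j=2$ degree-five relation on the residue classes of $R_{B_s}$.

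\emph{Zero pattern.} All slices are parabolas. The Gauss-sum computation writes $\widehat{1_E}(u,t,w)$ with $t\ne0$ as a nonzero multiple of a sum of five signed eleventh roots of unity. Such a sum cannot vanish, because cancellation must pair the terms and five is odd. Hence every Fourier-zero direction of $E$ satisfies $t=0$, so the linear form $t$ vanishes on $D(A)$. This contradicts the degree-six rigidity established in the proof of Lemma~\ref{p11:55-sixmom}.

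Finally, suppose a projective line contained ten uniform normals. The plane bound eleven lets us apply Lemma~\ref{p11:parallel-planes} with $p-1=10$, placing the member in $m=5$ parallel planes, which we have just excluded. Hence every projective line contains at most nine uniform normals. The main obstacle is the cubic-pattern finite check, together with confirming that the slice hypotheses exactly match those of the classification in Lemma~\ref{p11:flat-graphs}, which was stated for graphs with line cap four.
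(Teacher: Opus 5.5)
Your proposal is correct and follows the paper's proof essentially step for step: the eleven-point slices are reduced to Lemma~\ref{p11:flat-graphs}, the homothety types fall to the odd sum of the five signs $\rho_s^5$, the cubic type is reduced to $\sum_s\lambda_s^4=\sum_s s\lambda_s^4=0$, the parabolic type uses the Gauss-sum argument, and the final assertion comes from Lemma~\ref{p11:parallel-planes}. The only difference is in the parabolic case, which the paper ends by noting that $D(A)\subset\{t=0\}$ would put all $55$ points of $A$ in one affine plane; this is slightly more direct than your appeal to degree-six rigidity, but both work.

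The finite check you left open does succeed, so no fallback is needed. Simultaneous vanishing is affine invariant, because $S\mapsto aS+b$ multiplies every $\lambda_s$ by $a^{-4}$ and sends $(A(S),B(S))$ to $(a^{-16}A(S),\,a^{-16}(aB(S)+bA(S)))$. On the six orbit representatives the pair takes the values $(10,9),(2,10),(3,4),(2,3),(9,9),(0,8)$. It never vanishes, but the $j=1$ relation is genuinely needed for the orbit of $\{0,1,2,4,7\}$, where $A(S)=0$.
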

\begin{proof}
Suppose $E$ lies in planes $z=s$, $s\in S$, $|S|=5$. Each has eleven
points. A common missing direction makes all slices graphs $B_s$ in one
coordinate system. Degree-four Lagrange interpolation in $z$ isolates
each slice's moments through degree three. Lemma~\ref{p11:flat-graphs}
therefore applies, with exactly the same six graph types as in the
four-layer argument.

Put $Q_s(u,v)=\sum_{B_s}(ux+vy)^4$ and
$\lambda_s=(\prod_{r\in S\setminus\{s\}}(s-r))^{-1}$.
The global moments give $\sum_s s^jQ_s=0$ for $0\le j\le3$;
hence $Q_s=\lambda_sQ$ for one quartic $Q$.
For a noncubic, nonparabolic type the homothety conclusion of
Lemma~\ref{p11:flat-graphs} gives $B_s=\rho_sB_0+v_s$.
Modulo $Q$ times linear forms, the fifth moments are
$\rho_s^5[R_{B_0}]$, with nonzero fixed class. The fifth-moment
equation requires $\sum_s\rho_s^5=0$. This is a sum of five signs,
an odd integer between $-5$ and $5$, and cannot vanish modulo eleven.

For the cubic type the common simple and triple factors of $Q$ align
all slice axes, giving $f_s(x)=a_s(x-b_s)^3+c_s$ after one common
linear change. As in Lemma~\ref{p11:44-cubic},
\[
 Q_s=-4a_s^3uv^3,\qquad \sum_{B_s}x^4y^2=-a_s^2.
\]
The sixth and seventh moments therefore require
\[
 A(S):=\sum_s\lambda_s^4=0,\qquad
 B(S):=\sum_s s\lambda_s^4=0.
\]
The following table checks the six affine classes of five-subsets of
$\F_{11}$. All entries are in $\F_{11}$.
\[
\begin{array}{c|rrrrrr}
S&01234&01235&01236&01237&01245&01247\\\hline
\text{orbit size}&55&110&110&55&110&22\\
A(S)&10&2&3&2&9&0\\
B(S)&9&10&4&3&9&8
\end{array}
\]
The orbit sizes sum to $\binom{11}{5}=462$. Under $S\mapsto aS+b$,
the pair transforms to
$(a^{-16}A,a^{-16}(aB+bA))$, so simultaneous vanishing is affine
invariant. The table excludes the cubic type.

Finally, if $Q=0$, every slice is parabolic, with
$f_s(x)=a_sx^2+b_sx+c_s$, $a_s\ne0$. At any frequency $(u,t,w)$
with $t\ne0$, completing the square expresses the Fourier coefficient,
up to a nonzero Gauss factor, as five signed eleventh roots of unity.
Such a sum cannot vanish: its eleven integer coefficients would have
to be equal, and fewer than eleven signed terms cannot give a nonzero
common coefficient; five signs cannot cancel in pairs. All Fourier
zeros would therefore have $t=0$. Spectrality would place all
differences of $A$ in that plane, forcing its $55$ points into one
affine plane, contrary to the plane bound eleven.

This excludes every type. The final assertion now follows from
Lemma~\ref{p11:parallel-planes}.
\end{proof}

\subsection{A moment obstruction for parallel crosses}
\begin{lemma}[Five-point direction lemma]\label{p11:55-cross}
Every noncollinear five-point subset of $\F_{11}^2$ that determines at most four directions is an affine image of
\[ \{(0,0),(1,0),(-1,0),(0,1),(0,-1)\}. \]
\end{lemma}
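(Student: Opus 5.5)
The plan is a short case analysis on the largest collinear subset of the five-point set $X$, using only pair counting and Cauchy--Davenport, in the spirit of the elementary remark preceding the moment lemmas. Since $X$ is noncollinear, at most four of its points are collinear. We show that the four-point and two-point cases are impossible, and that the three-point case forces the cross.

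\emph{Four collinear points.} Suppose $a,b,c,d\in\ell$ and $x\notin\ell$. The four differences from $x$ to $\ell$ give four distinct directions, none equal to that of $\ell$. Together with the direction of $\ell$ this gives five directions, contradicting the hypothesis.

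\emph{No three collinear.} The ten pairs then lie in at most four directions. A direction class is a matching, since three points on one line are excluded. With five points, each matching has at most two pairs, so four directions carry at most eight pairs. This is fewer than ten, so the case is impossible.

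\emph{Line bound exactly three.} Let $\ell$ contain three points $p_1,p_2,p_3$, and let $q_1,q_2\notin\ell$.
\begin{itemize}
\item First suppose $q_1q_2$ is parallel to $\ell$. The cross differences between the two parallel lines give at least $\min(11,3+2-1)=4$ directions other than that of $\ell$, by Cauchy--Davenport for the parameter sets. Together with $[\ell]$ this makes at least five directions, a contradiction.
\item So $q_1q_2$ is not parallel to $\ell$. The directions from $q_1$ to the $p_i$ are three distinct directions, all different from $[\ell]$, and the same holds for $q_2$. Only four directions are available, and one of them is $[\ell]$. Hence both $q_1$ and $q_2$ see the $p_i$ in exactly the same three directions $\delta_1,\delta_2,\delta_3$, and $[q_1q_2]$ is one of these or is $[\ell]$; the latter is excluded.
\end{itemize}

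\emph{Forcing the cross.} Apply an affine map so that $\ell$ is the $x$-axis and $q_1=(0,1)$. Write $q_2=(u,v)$ with $v\ne0,1$; here $v\neq1$ because $q_1q_2$ is not parallel to $\ell$. The three lines through $q_1$ in the directions $\delta_i$ meet $\ell$ exactly in $p_1,p_2,p_3$.
\begin{itemize}
\item The set of $\ell$-intercepts of the lines through $q_2$ in these directions must also be $\{p_1,p_2,p_3\}$. Which intercepts are matched to which is governed by the homothety or central projection fixing the directions.
\item Since $q_1q_2$ has direction some $\delta_j$, the line through $q_1$ and $q_2$ in direction $\delta_j$ meets $\ell$ at a single $p_j$. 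So $p_j$, $q_1$, $q_2$ are collinear, and $p_j$ is the meeting point of $\ell$ with $q_1q_2$. Normalize $p_j=0$, so $q_2=(0,v)$.
\item A point $p$ on $\ell$ is sent to a point on $\ell$ by the map $p\mapsto vp/(v-1)$, taken along the common direction. This map must permute the remaining two points $p_k,p_l$ while fixing $0$. Two nonzero points are permuted by a scaling $\mu=v/(v-1)$. Either $\mu=1$, which is impossible, or $\mu p_k=p_l$ and $\mu p_l=p_k$. The latter forces $\mu^2=1$, so $\mu=-1$, giving $p_l=-p_k$ and $v=1/2$.
\end{itemize}
After rescaling, the set is $\{(\pm1,0),(0,0),(0,1),(0,1/2)\}$, up to the choice of which point is which. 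Translating the center to $(0,1/2)$ and applying a suitable linear map gives $\{0,\pm e_1,\pm e_2\}$. This is an affine image of the cross.

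\emph{Main obstacle.} The main obstacle is the bookkeeping in the last step, namely justifying that the direction-preserving correspondence from $q_1$ to $q_2$ is a central dilation on $\ell$ fixing $p_j$. As a fallback, this case can be settled by a finite search exactly as for Lemma~\ref{p11:six-point}. Normalize the triangle and enumerate all four-direction masks containing $0,-1,\infty$; there are $\binom{9}{1}=9$ of them.
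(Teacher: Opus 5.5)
\textbf{There is a computational error in the last step, and as written the proof ends at a false configuration.} With your normalization $q_1=(0,1)$, $p_j=(0,0)$, $q_2=(0,v)$, the direction from $q_1$ to a point $(a,0)$ of $\ell$ is $(-a,1)$. The line through $q_2$ in that direction meets $\ell$ at $(va,0)$. So the correspondence is the homothety $a\mapsto va$ centered at $p_j$, whose ratio is the height ratio $v$; it is not $a\mapsto va/(v-1)$.

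Showing that it is a homothety is the easy part: corresponding lines are parallel, and $p_j,q_1,q_2$ are collinear. Because the ratio was wrong, your conclusion $v=1/2$ and the set $\{(\pm1,0),(0,0),(0,1),(0,1/2)\}$ are wrong. In $\F_{11}$ this set determines the six slopes $0,\infty,\pm1,\pm6$, so it violates the hypothesis. No affine map sends it to the cross either. Its two three-point lines meet at $(0,0)$, which is the midpoint of the horizontal triple but an endpoint of the vertical triple $\{0,1/2,1\}$. In any affine image of the cross, the two triples share their midpoint. So your final sentence, that translating the center to $(0,1/2)$ and applying a linear map gives $\{0,\pm e_1,\pm e_2\}$, is false.

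The repair is immediate. With $\mu=v$, your own permutation argument excludes $\mu=1$ because $v\ne1$, and then gives $v=-1$ and $p_l=-p_k$. The set is $\{(0,0),(\pm p_k,0),(0,\pm1)\}$, which is $\mathrm{diag}(p_k,1)$ applied to the standard cross. No fallback finite search is needed.

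With that correction your argument is complete, and it differs mildly from the paper's. The paper normalizes to $A\times\{0\}$, $P=(0,1)$, $Q=(q,r)$ and derives $q+rA=A$ from equality of the two cross-direction sets. It then eliminates the affine permutation $x\mapsto q+rx$ by an order argument. If $r=1$, it is a translation fixing a three-set, hence trivial. Otherwise its order divides both $10$ and $6$, forcing $r=-1$. This treats the parallel case ($r=1$) and the general case uniformly, without locating the fixed point in advance.

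You instead handle the parallel case by Cauchy--Davenport. You then use the fact that $[q_1q_2]$ must be one of the $\delta_j$, which places the fixed point of that affine map at a point $p_j$ of $A$. This reduces everything to a scaling that permutes two nonzero points. It avoids the order argument and is slightly more elementary.
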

\begin{proof}
If no three points are collinear, each direction accounts for at most two pairs, whereas the set has ten pairs. Four collinear points and a fifth point determine at least five directions. Thus precisely three points lie on some line. Normalize these to $A\times\{0\}$, with $|A|=3$, and one remaining point to $P=(0,1)$; write the other as $Q=(q,r)$, where $r\ne0$. The three directions from $P$ to the line exhaust the directions other than that line. The same holds for $Q$. Parametrizing each cross-direction by its horizontal displacement divided by its vertical displacement gives $(A-q)/r=A$, equivalently $q+rA=A$. If $r=1$, this is a translation preserving a three-point set, hence the translation is trivial and $Q=P$, impossible. Otherwise this affine permutation is conjugate to multiplication by $r$, so its order divides ten. Its action on a three-point set is faithful (an affine map fixing two points is the identity), so its order also divides six. Consequently its order is two and $r=-1$. It follows that $A$ is an arithmetic progression centered at $q/2$; $P,Q$ are opposite about that same center. This is the required affine cross.
\end{proof}

\begin{lemma}[Moment obstruction for eleven cross slices]\label{p11:cross55complex}
Let $E\subset\F_{11}^3$ have $55$ points and line intersection bound four. Suppose every polynomial of total degree at most seven has zero sum on $E$ in $\F_{11}$. Then every projective line meets the direction set $D(E)$ in at least five points.
\end{lemma}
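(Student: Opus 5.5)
We argue by contradiction. Suppose some projective line $L$ meets $D(E)$ in at most four points, and consider the eleven parallel affine planes whose direction plane is $L$. The plan has two stages: first show that every one of these eleven slices is an affine cross with a common direction frame, and then show that a stack of eleven such crosses cannot have all mixed moments through degree seven equal to zero.

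\emph{Step 1: slice structure.} Each slice is a planar set determining at most four directions, all lying in $D(E)\cap L$.
\begin{itemize}
\item A collinear slice has at most four points, by the line bound.
\item A noncollinear slice has at most five points. Indeed, six noncollinear points determine at least six directions by Lemma~\ref{p11:six-point}.
\end{itemize}
Since $|E|=55=11\cdot5$, every slice has exactly five noncollinear points. By Lemma~\ref{p11:55-cross}, each slice is therefore an affine cross, and it determines exactly four directions. Consequently all eleven slices determine the same four-set $D(E)\cap L$.

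A cross $c+\{0,\pm a,\pm b\}$ determines the directions $[a],[b],[a+b],[a-b]$. We fix layer coordinates in which these four directions are $[e_1],[e_2],[e_1+e_2],[e_1-e_2]$. A cross with arms $\alpha e_1$ and $\beta e_2$ has diagonal directions $[\alpha e_1\pm\beta e_2]$, so this forces $\beta=\pm\alpha$.

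The roles of the two axis directions could a priori be assigned differently among the four directions. The cross-ratio structure of $\{0,\infty,1,-1\}$ should allow only finitely many assignments, and I would check that each is equivalent to the standard one under a fixed linear change of layer coordinates. Up to sign, the slice at height $z=s$ is then
\[
 B_s=c_s+\{0,\pm\alpha_se_1,\pm\alpha_se_2\},\qquad c_s\in\F_{11}^2,\ \alpha_s\ne0.
\]

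\emph{Step 2: moment equations.} Writing $\beta_s=\alpha_s^2$, which is a nonzero square, a monomial $x^iy^k$ sums over $B_s$ to
\[
 5c_{s,x}^ic_{s,y}^k+\text{(terms with even powers of }\alpha_s\text{)}.
\]
The hypothesis says that $\sum_s s^j\bigl(\sum_{B_s}x^iy^k\bigr)=0$ whenever $i+k+j\le7$. For a fixed $(i,k)$, this means the function of $s$ is orthogonal to all polynomials of degree at most $7-i-k$, so it is represented by a polynomial of degree at most $i+k+2$ in $s$. We read off the consequences degree by degree.
\begin{itemize}
\item Degree one: each coordinate of the center $c_s$ is a polynomial of degree at most $3$ in $s$. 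An affine shear in $z$ removes its linear and constant parts.
\item Degree two, using $x^2-y^2$ and $xy$: these give equations involving only the centers. The combinations $x^2$ and $x^4$ bring in $\beta_s$ and $\beta_s^2$, via expansions such as $5c^4+12\beta c^2+2\beta^2$.
\item Degree four: the combination $x^4+y^4$ minus a multiple of $x^2y^2$, or a similar one, should isolate $\sum_s s^j\beta_s^2$ modulo center terms already controlled.
\end{itemize}
The goal is to force $\beta_s$ and $\beta_s^2$ to be low-degree polynomials in $s$. That is incompatible with $\beta_s$ taking only nonzero square values at all eleven points: a nonconstant polynomial of degree $<5$ cannot take only the five nonzero squares unless it is constant. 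It is also incompatible with the remaining equations once the centers are known.

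\emph{Expected obstacle.} The main difficulty is closing the last step cleanly. The center polynomials of degree at most three interact with $\beta_s$ in the degree-four to degree-seven equations, and a purely hand argument may not separate them. My fallback is an exact finite enumeration:
\begin{enumerate}
\item enumerate the centered cubic centers modulo the affine shears and scalings in $z$;
\item enumerate the functions $\beta:\F_{11}\to(\F_{11}^\times)^2$, of which there are $5^{11}$; equivalently, since $\beta$ is determined modulo the moment constraints as a polynomial of bounded degree, enumerate its coefficients;
\item test all moment equations of total degree at most seven, including the sign choices $\beta=\pm\alpha$ absorbed into the cross symmetry.
\end{enumerate}
Finding no solution would finish the argument, in the style of the other finite checks in this section.
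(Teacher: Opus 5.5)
Your overall architecture is the paper's: all eleven slices are crosses with one common direction set, and then moment identities in the layer variable $s$ force low degrees. Step~1 is fine; Lemma~\ref{p11:six-point} is a legitimate substitute for the paper's elementary six-point argument.

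The first genuine gap is the normalization at the end of Step~1. In the common frame of slopes $0,\infty,1,-1$, a cross has its arms either along the harmonic pair $\{0,\infty\}$, giving $c_s+\{0,\pm re_1,\pm re_2\}$, or along $\{1,-1\}$, giving $c_s+\{0,\pm r(e_1+e_2),\pm r(e_1-e_2)\}$. Each slice separately can be put in axis form, but one fixed coordinate change cannot do this for all slices. A linear map preserving the frame either preserves both types or, like $(x,y)\mapsto(x+y,x-y)$, exchanges them on every slice at once. So stacks mixing the two types must be handled. Your parametrization with $\beta_s=\alpha_s^2$ a nonzero square omits them, and so does the fallback enumeration over $\beta\colon\F_{11}\to(\F_{11}^\times)^2$.

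This is not mere bookkeeping. The natural invariant is the norm $t_s=u\bar u$ of an arm $u$ in $\F_{121}=\F_{11}(i)$. It equals $r^2$ for one type and $2r^2$ for the other, and $2$ is a nonsquare modulo $11$. For example, $t_s=s^2+1$ never vanishes, mixes both types, and meets every degree bound coming from the moments of $(x^2+y^2)^k$, $k=1,2,3$. The paper instead writes every slice uniformly as $c_s+(\{0\}\cup\{u:u^4=R_s\})$ with $R_s=t_s^7$. It then uses the pure moment of $w^4$, $w=x+iy$ (real part $x^4-6x^2y^2+y^4$), which equals $4t_s^7$ on a centered slice. This forces the reduced $t^7$ to have degree at most six, whereas for $t=as^2+b$ its $s^{10}$ coefficient is $\binom{7}{5}a^5b^2\ne0$.

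Even in the uniform axis case your closing claim is false. $(s^2+1)^2$ is a nonconstant quartic all of whose values are nonzero squares. You would need a sharper bound such as $\deg\beta\le2$, which also requires the degree-six moment $\sum x^6=2\beta_s^3$.

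The second gap is the endgame. The moment hypotheses alone cannot produce a contradiction. For a planar cross $C$, the prism $C\times\F_{11}$ has $55$ points, all its slices equal to $C$, and all moments through degree nine equal to zero, because $\sum_s s^j=0$ for $0\le j\le9$. Hence a constant $\beta_s$ is not ``incompatible with the remaining equations'', and an exhaustive search for moment solutions would not come back empty. The contradiction must come from the line bound, which your sketch never uses after Step~1. Once the center is affine and the norm is constant, all sheared slices coincide, and $E$ contains five parallel lines of eleven points.

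The center interaction you list as the expected obstacle does not actually arise. The pure moments of $w^2$ and $w^3$ (that is, of $x^2-y^2$, $xy$, $x^3-3xy^2$, $3x^2y-y^3$) vanish on every centered cross of either type. So with $c_s=c_{s,x}+ic_{s,y}$ they give $\deg(c^2)\le4$ and $\deg(c^3)\le5$. Together with $\deg c\le3$ this makes $c$ affine, and a shear removes it before the norm moments are used.
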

\begin{proof}
Suppose a two-dimensional direction space contains at most four directions of $E$. Its eleven parallel affine planes each contain at most five points. To see this without an additional direction theorem, suppose a slice contains six points. Four collinear points plus an external point determine at least five directions. If no three are collinear, the fifteen pairs require at least five directions, since any one direction accounts for at most three pairs. In the remaining case choose a five-point subset containing three collinear points. The preceding lemma normalizes it to the standard cross. A sixth point compatible with its four directions must lie on an axis or one of the two diagonals through the origin. On an axis, its directions to the two points on the other axis force it to belong to the cross already. On the diagonal $(a,a)$, its directions to $(1,0)$ and $(-1,0)$ respectively force
\[ a\in\{1,1/2\},\qquad a\in\{-1,-1/2\}, \]
which are disjoint in $\F_{11}$. Reflection handles the other diagonal. Each plane therefore contains exactly five points, and the preceding lemma makes each slice a cross with the same four directions. Normalize the first cross so these directions have slopes $0,\infty,1,-1$.

For this fixed set of directions, the two opposite-arm direction pairs of a cross must be either $\{0,\infty\}$ or $\{1,-1\}$. Indeed the other four choices are equivalent under the symmetries of this direction set; choosing $\{0,1\}$ forces one diagonal vertical and the other of slope $1/2$, rather than $-1$. The corresponding crosses, up to translation, are
\[ \{0,\pm(r,0),\pm(0,r)\},\qquad
   \{0,(r,r),(r,-r),(-r,r),(-r,-r)\}. \]

Work in $K=\mathbb F_{121}=\F_{11}(i)$ with $i^2=-1$, identifying $(x,y)$ with $w=x+iy$ and writing $s$ for the slice coordinate. Each slice is
\[ c_s+\bigl(\{0\}\cup\{u\in K:u^4=R_s\}\bigr),\qquad R_s\in\F_{11}^{\times}. \]
For the two displayed types, $R_s=r^4$ and $R_s=-4r^4$, respectively. Its four nonzero roots have a common norm
\[ t_s=u\bar u=u^{12}=R_s^3\in\F_{11}^{\times},\qquad R_s=t_s^7. \]
The assumed moment identities extend by linearity from $\F_{11}$ to $K$.

We repeatedly use the following elementary fact. If a function $h:\F_{11}\to K$ has $\sum_s s^j h(s)=0$ for $0\le j\le k$, its unique polynomial representative of degree at most ten has degree at most $9-k$. This follows from $\sum_s s^m=0$ except at positive multiples of ten, where the sum is $-1$.

The first three pure $w$ moments of a slice are $5c_s$, $5c_s^2$, and $5c_s^3$. Their weighted global identities show successively that $c(s)$ has degree at most three, that $c(s)^2$ has degree at most four (forcing its cubic coefficient to vanish), and that $c(s)^3$ has degree at most five (forcing its quadratic coefficient to vanish). Thus $c(s)$ is affine. An affine shear removes every center and preserves all the hypotheses.

The centered slice moments of $w\bar w$, $w^2\bar w^2$, and $w^3\bar w^3$ are $4t_s$, $4t_s^2$, and $4t_s^3$. Their weighted identities yield
\[ \deg t\le4,\qquad \deg(t^2)\le6,\qquad \deg(t^3)\le8. \]
The last two inequalities successively force $\deg t\le3$ and $\deg t\le2$. The pure fourth moment is $4R_s=4t_s^7$; hence the reduced polynomial representing $t^7$ has degree at most six.

The function $t$ never vanishes. A nonconstant linear polynomial is therefore impossible. If $t$ is quadratic, translate $s$ to write $t=a s^2+b$, where $a,b\ne0$. In $t^7$, reduction modulo $s^{11}-s$ sends the possible terms $s^{14},s^{12}$ to $s^4,s^2$. Its coefficient of $s^{10}$ is unchanged and equals
\[ \binom{7}{5}a^5b^2=10a^5b^2\ne0, \]
contradicting the degree-six bound. Hence $t$, and therefore $R$, is constant. All centered slices are identical; $E$ contains five parallel full affine lines, contradicting its line bound four.
\end{proof}

\begin{theorem}\label{p11:exclude55}
There is no spectral pair of cardinality $55$ in $\F_{11}^3$.
\end{theorem}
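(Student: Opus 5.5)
The plan is to finish with the weighted incidence divisibility of Proposition~\ref{p11:weighted-incidence}, as in Theorem~\ref{p11:exclude44}, and then, because that test alone turns out to be consistent, to add one more rigidity step.

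\textbf{Profiles and the energy gap.} By Lemma~\ref{p11:55-sevenmom}, all mixed moments through degree seven vanish. Hence every parallel-plane occupancy profile $n_v$ is orthogonal to $1,t,\ldots,t^7$, so it is a polynomial of degree at most two modulo eleven. A plane contains at most eleven points, and an occupancy of eleven can be absorbed into the residue bookkeeping. So the least residues are the actual values, up to a short separate check when a value eleven occurs. As in Lemma~\ref{p11:44-energy}, I will prove a gap for mass $55$. A nonconstant affine profile has mass $55$, but it takes the value ten, and I will check that it violates the remaining constraints. A nonconstant quadratic profile has one value attained once and five values attained twice. Expanding around five, as in the size-$44$ energy lemma, should give
\[
 k(v)=\frac{q(v)-110}{11}\in\{0\}\cup\{a,a+1,\ldots\},\qquad a\ge4,
\]
with $k(v)=0$ exactly on the uniform normals. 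Ideally $a=5$; the whole plan turns on this gap.

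\textbf{Weighted incidence.} Lemma~\ref{p11:55-no-layers} says every projective line has at most nine uniform normals, so it meets $V_E$ and $V_A$ in at least $r=3$ points. With $m=5$ we get $C=15$ and $T=290$. The condition $C<(r+1)a$ holds once $a\ge4$, and Proposition~\ref{p11:weighted-incidence} gives $|V_E|=|V_A|=58$. Unlike size $44$, this is an integer, so the exclusion needs one more step.

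\textbf{Rigidity after the count.} The proof of Proposition~\ref{p11:weighted-incidence} shows more. Each $d\in V_A$ has exactly three points of $V_E$ on $d^\perp$, and their $k_E$ values sum to $15$.
\begin{itemize}
\item If the gap is $a=5$, every nonuniform normal has $k=5$, that is, $q=165$.
\item Each of the $58$ lines $d^\perp$ then carries exactly nine uniform normals and three nonuniform ones.
\item On such a line, Lemma~\ref{p11:radon} forces the three bad profiles to be quadratics of one rigid shape. Their leading coefficients must satisfy $\sum_i c_i l_i^2=0$ for three forms, which pins down the coefficients, as in Lemma~\ref{p11:33-plane8}.
\item The squared Radon identity then reads
\[
9\cdot275+3\cdot385=55^2+11\sum f^2,
\]
so $\sum f^2=55=\sum f$. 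Thus the projection along that direction is injective.
\item The unsquared identity presents the image as an explicit $55$-point level set of a sum of three quadratic residues. This mirrors Proposition~\ref{p11:33-forced-base}.
\end{itemize}

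To close, I will combine this forced base with Lemma~\ref{p11:cross55complex}. That lemma, together with $D(E)\subseteq U_A$, gives at least five uniform normals on every line, while every line dual to a point of $V_A$ has exactly nine. I then intend either a finite height-moment computation over the base set, as in Lemma~\ref{p11:33-height}, or a direct contradiction with the line bound four from Lemma~\ref{p11:55-line4}.

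\textbf{Main obstacle.} The hard step is this last one. It requires first establishing the sharp energy gap $a=5$, and I would verify it by enumerating the $110$ centered quadratics. It then requires extracting a contradiction from the forced structure. If the direct argument fails, I would count incidences between $V_E$ and the $58$ lines dual to $V_A$, and test whether a $58$-point set with these exact incidence properties can exist in $\PG(2,11)$.
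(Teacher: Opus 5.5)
There is a genuine gap, at exactly the step you flagged: the energy gap for mass $55$ is not at least $4$. Take $n(t)=[7+2t^2]_{11}$, that is, the value $7$ once and the values $9,4,3,6,2$ twice each. This profile is quadratic modulo eleven, so it is compatible with all moments through degree seven. It has mass $55$ and maximum $9$, which is within the available plane bound. Its energy is $q=21+2(36+6+3+15+1)=143$, so $k=(143-110)/11=3$. The reflection $[3+t^2]_{11}$, with value $3$ once and $1,4,6,7,8$ twice, also has $k=3$.

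With $a=3$ and $r=3$ you get $(r+1)a=12<15=C$, so the hypothesis of Proposition~\ref{p11:weighted-incidence} fails. Consequently neither $|V_E|=|V_A|=58$ nor ``exactly three nonuniform normals on each line $d^\perp$'' follows. The hoped-for $a=5$ is simply false: the count no longer excludes $k$-values such as $3+3+3+3+3$ or $3+5+7$ on such a line. Your rigidity bullets therefore have no foundation. Separately, on a line with nine uniform normals, Lemma~\ref{p11:radon} has $L=3$ and forces the bad profiles to be affine, not quadratic. Even granting $58$, your closing paragraph lists possible strategies rather than giving an argument, so no contradiction is reached.

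The missing idea is to use Lemma~\ref{p11:cross55complex} to raise the rigidity degree to eight, instead of analyzing quadratic profiles. That lemma makes $D(E)$ and $D(A)$ meet every projective line in at least five points. Lemma~\ref{p11:55-no-layers} caps every line at nine points of $D(B)\subseteq U_{B'}$.
\begin{itemize}
\item Suppose a nonzero octic vanishes on $D(B)$. Nonfactor lines have $5\le n_L\le 8$, and the at most eight factor lines have $n_L\le 9$. So $\sum_L(n_L-5)(n_L-8)\le 32$, whereas the incidence identities give $N^2-145N+5320\ge 255/4$. Hence $M_8^E=M_8^A=0$.
\item Every plane profile is then affine modulo eleven. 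It is the ramp $0,1,\ldots,10$, the uniform profile, or five elevens, and the last is excluded by Lemma~\ref{p11:55-no-layers}. This gives plane bound ten.
\item The pencil through a four-point line then has total $99>5\cdot 5+7\cdot 10$, since at least five of its normals are uniform. This gives line bound three.
\item Lemma~\ref{p11:ten-point} now rules out ten-point planes, since one would put ten directions of $D(B)\subseteq U_{B'}$ on one projective line.
\item So every profile is uniform, which contradicts the pencil count $55+11a=60$.
\end{itemize}
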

\begin{proof}
The preceding lemmas give line bound four, moments through degree seven,
and at most nine uniform normals on every projective line. The cross-slice
lemma shows that both difference-direction sets meet every projective
line in at least five points.
First, a nonzero homogeneous polynomial $F$ of degree eight cannot vanish
on either direction set $D(B)$. Put $N=|D(B)|$ and
$n_L=|D(B)\cap L|$. If $L$ is not a rational line factor of $F$, then
$5\le n_L\le8$. If $L$ is such a factor, then $5\le n_L\le9$, since
$D(B)\subset U_{B'}$ for the partner $B'$. There are at most eight distinct
rational line factors. Projective incidence counting therefore gives
\[
 \begin{split}
 N^2-145N+5320
 &=\sum_{L\subset\operatorname{PG}(2,11)}(n_L-5)(n_L-8)\\
 &\le4\cdot8=32.
 \end{split}
\]
Indeed, there are $133$ projective lines, each point lies on twelve, and
each pair of points lies on one; hence
$\sum_Ln_L=12N$ and $\sum_Ln_L^2=N^2+11N$.
But the quadratic on the left has real minimum $255/4>32$.

The eighth moment polynomial of either member vanishes on the partner's
direction set, so the preceding rigidity gives $M_8^E=M_8^A=0$.
Together with the earlier moment vanishings, every parallel-plane count
function $n(t)$ is affine modulo eleven. For a nonconstant affine residue
function, the least nonnegative residues are exactly $0,1,\ldots,10$,
whose sum is already $55$; hence these are the actual integer counts.
For a constant residue function, the integer sum $55$ permits only the
uniform profile $(5,\ldots,5)$ or five entries eleven and six entries zero.
The latter is excluded by the no-five-layer reduction. Consequently every
affine plane contains at most ten points.

Suppose that $B$ contains a four-point line $\ell$. The twelve planes
through $\ell$ have total occupancy $55+11\cdot4=99$. At least five of
their normal directions lie in $U_B$, since the partner's direction set
is five-blocking and is contained in $U_B$. Those five planes have exactly
five points each; the other seven have at most ten. This bounds the total
by $5\cdot5+7\cdot10=95$, a contradiction. Thus both sets have line bound
three.

Lemma~\ref{p11:ten-point} states that every ten-point planar set
with line bound three determines at least ten directions. Any nonconstant affine plane-count profile would
produce such a ten-point plane; its direction plane would then contain at
least ten directions from $D(B)\subset U_{B'}$, contrary to the upper
bound nine. Therefore every profile is uniform. This is impossible:
the twelve uniform planes through any fixed affine line would have total
occupancy $60$, whereas the exact total is $55+11a$ for some integer $a$.

\end{proof}


\section{\texorpdfstring{Excluding cardinalities $66$ and $77$}{Excluding cardinalities 66 and 77}}
\begin{proposition}[The two central cardinalities]\label{p11:central-sizes}
Let $p\ge5$ be prime and let $E\subset\F_p^3$ have cardinality
$mp$, where $2\le m\le p-1$. Suppose every affine plane meets $E$ in
at most $p$ points, every affine line in at most $p-m$ points, and
\[
 \sum_{x\in E}x^\alpha=0\qquad(0\le|\alpha|\le p-3).
\]
Then $m=(p-1)/2$ or $m=(p+1)/2$.
\end{proposition}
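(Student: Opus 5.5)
The plan is to turn the moment hypothesis into a statement about plane-count profiles, and then compare the integer size of those profiles with the line bound $p-m$ through a pencil count. I expect the arithmetic of least residues in the third step to be the main difficulty.

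\emph{Step 1: profiles are quadratic.} For each projective normal $v$ let $n_v(t)=|\{x\in E:v\cdot x=t\}|$. Expanding $\sum_{x\in E}(v\cdot x)^j$ multinomially, the hypothesis gives $\sum_t n_v(t)t^j=0$ in $\F_p$ for $0\le j\le p-3$. The orthogonal complement of the evaluations of $1,t,\ldots,t^{p-3}$ in $\F_p^{p}$ is exactly the evaluation space of polynomials of degree at most two, because $\sum_t t^k=0$ unless $k$ is a positive multiple of $p-1$. Hence every $n_v$ is, modulo $p$, a polynomial of degree at most two. By the plane hypothesis $0\le n_v(t)\le p$, so $n_v$ is recovered from its residues, except that the residue $0$ may stand for the value $p$.

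\emph{Step 2: the shape of a nonuniform profile.} Some normal is nonuniform. Otherwise the pencil count \eqref{p11:pencil} through any affine line gives $(p+1)m=mp+pa$, which is impossible exactly as in Lemma~\ref{p11:prelims-basic}. For a nonuniform $n_v$ there are two possibilities.
\begin{enumerate}
\item $n_v$ is constant modulo $p$ with values in $\{0,p\}$, so $E$ lies in $m$ full parallel planes of $p$ points each.
\item $n_v$ is nonconstant. A nonconstant affine profile takes every residue once. A nonconstant quadratic $a(t-h)^2+c$ takes the value $c$ once and $(p-1)/2$ further residues twice each.
\end{enumerate}
In either nonconstant case the integer profile has a maximal plane $P$ with $n_v$ large, and the integer total must equal $mp$.

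\emph{Step 3: pencil through a line in a large plane (the main obstacle).} I would take an affine line $\ell\subset P$ containing $a\ge1$ points and sum over the $p+1$ planes through $\ell$, as in Lemma~\ref{p11:prelims-basic}: the total is $mp+pa$. Each plane's count is read off from a quadratic profile, and in particular each plane has at most $p$ points. Writing the planes' counts as $a$ plus the off-line contributions, and using the line bound $a\le p-m$, I would try to show the following. If $m<(p-1)/2$, the integer profile of a quadratic is forced to have total below $mp$: the least possible total of such a profile, $0+2(1+\cdots+(p-1)/2)=(p^2-1)/4$, already pins the size near $p^2/4$. If $m>(p+1)/2$, the symmetric argument applied to the complementary counts $p-n_v(t)$ should work, since these are again quadratic modulo $p$; this symmetry $m\leftrightarrow p-m$ is what I expect to produce the central pair $(p\pm1)/2$. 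The hardest part is controlling which residues lift to $0$ or to $p$ when $m$ is far from $p/2$. I would first try to bound $\sum_t n_v(t)^2$ modulo $p$ (it vanishes when $p\ge5$, since it is a sum of a degree-four polynomial) against its integer value from the quadratic's value multiset, and only then fall back on the pencil count.

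\emph{Step 4: the layered case.} Finally, case 1 of Step 2 must be excluded. Each $p$-point layer inherits, by Lagrange interpolation in the layer coordinate, vanishing moments through degree $p-3-(m-1)$. A full $p$-point planar slice with that many vanishing moments and line cap $p-m$ should contradict the planar bounds. This case might instead be absorbed into the residue argument of Step 3, but I would treat it separately first.
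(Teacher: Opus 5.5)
There is a genuine gap, and it begins with an off-by-one error in Step 1. The annihilator of the evaluation vectors of $1,t,\ldots,t^{p-3}$ in $\F_p^{p}$ has dimension $p-(p-2)=2$, and it is spanned by $1$ and $t$ only. The function $t^2$ does not belong to it, since pairing it with $t^{p-3}$ gives $\sum_t t^{p-1}=-1$. So the moment hypothesis makes every profile $n_v$ \emph{affine} modulo $p$, not merely quadratic.

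Weakening this to degree two throws away exactly the information the proposition needs, and Step 3 cannot succeed along the lines you sketch. Quadratic profiles with values in $[0,p]$ can have non-central totals. For $p=11$, the least residues of $2t^2+5$ are $5,7,2,1,4,0,0,4,1,2,7$, which sum to $33=3\cdot 11$; this is the profile the paper meets in the cardinality-$33$ case. Your own minimal total $(p^2-1)/4$ already shows that a quadratic profile does not pin $|E|$ near $p^2/2$. Beyond this, Step 3 is only a plan, and Step 4's claim that a full layer with partially vanishing moments ``should contradict the planar bounds'' is unsupported.

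With the correct affine conclusion the proof is short and needs no planar input. Suppose $n_v\equiv at+b$ with $a\ne0$. Then the residues $0,\ldots,p-1$ each occur once. Since $0\le n_v(t)\le p$, the nonzero residues lift uniquely, and $0$ lifts to $0$ or $p$. This gives $|E|=p(p-1)/2$ or $p(p+1)/2$ directly.

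If $m$ is not central, every normal therefore has $a=0$. Its profile is then either $(m,\ldots,m)$ or $\{0,p\}$-valued with $m$ entries equal to $p$. You do not need to exclude the layered case separately. Take a secant line with $a$ points, where $2\le a\le p-m$. All $p+1$ planes through it are nonempty, so each has occupancy $m$ or $p$. If $t$ of them have occupancy $m$, the pencil count $mp+pa=mt+p(p+1-t)$ gives $(p-m)t=p(p+1-m-a)$. Because $\gcd(p,p-m)=1$, this forces $p\mid t$, so $t\in\{0,p\}$. Then $a=p+1-m$ or $a=1$, and both contradict $2\le a\le p-m$.
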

\begin{proof}
Fix a nonzero normal $v$ and write $n_t=|\{x\in E:v\cdot x=t\}|$.
The vanishing moments give
\[
 \sum_{t\in\F_p} t^j n_t=0\qquad(0\le j\le p-3).
\]
The evaluation vectors of $1,t,\ldots,t^{p-3}$ have rank $p-2$;
their annihilator is spanned by the evaluations of $1,t$. Thus
$n_t=at+b$ modulo $p$.

If $a\ne0$, the residues of the $n_t$ are $0,1,\ldots,p-1$ once each.
Because $0\le n_t\le p$, the zero residue is represented by either
zero or $p$, and all other residues have a unique representative.
Hence
\[
 |E|=\frac{p(p-1)}2\quad\hbox{or}\quad\frac{p(p+1)}2.
\]
Suppose neither cardinality occurs. Every normal then has either the
uniform profile $(m,\ldots,m)$ or a profile consisting only of zeros
and $p$'s, with $m$ nonzero entries.

Choose an affine secant line $\ell$ and put $a=|E\cap\ell|$.
Then $2\le a\le p-m$. All $p+1$ planes through $\ell$ are nonempty,
so their occupancies are $m$ or $p$. If $t$ of them have occupancy $m$,
the pencil count gives
\[
 mp+pa=mt+p(p+1-t),\qquad
 (p-m)t=p(p+1-m-a).
\]
As $\gcd(p,p-m)=1$, we have $p\mid t$. The range $0\le t\le p+1$
allows only $t=0,p$. These give $a=p+1-m$ or $a=1$, respectively,
contradicting the secant bounds in both cases.
\end{proof}

\begin{lemma}[Common eighth-moment reduction]\label{p11:66-77-moments}
If $(E,A)$ is a spectral pair in $\F_{11}^3$ of cardinality
$66$ or $77$, all mixed moments of both members through degree eight
vanish.
\end{lemma}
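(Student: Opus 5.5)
The plan follows the rigidity-then-moments pattern used for $33$, $44$ and $55$. For each $j\le 8$ the moment polynomial $M_j^E(v)=\sum_{e\in E}(v\cdot e)^j$ vanishes on $\D(A)$. The reason is equidistribution: each plane carries $m=6$ or $7$ points, and $\sum_{t\in\F_{11}}t^j=0$ for $1\le j\le 9$. The same holds with $E$ and $A$ exchanged. It therefore suffices to prove a rigidity statement: no nonzero homogeneous polynomial $F$ of degree at most eight vanishes on $\D(B)$, for $B=E$ or $A$. As before, multiplying by a power of a linear form reduces to degree exactly eight. The multinomial coefficients in degree at most eight are nonzero modulo eleven, so the mixed moments follow. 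The degree-zero moment is $66\equiv 77\equiv 0$.

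The first inputs come from Lemma~\ref{p11:prelims-basic}: plane cap $11$, line cap $\min(m,11-m)$, which is $5$ for $m=6$ and $4$ for $m=7$, and no complete projective line in either $\U$. I would then argue by incidence counting. Put $N=|\D(B)|$ and $n_L=|\D(B)\cap L|$. In the generic case every $n_L\ge5$. A nonfactor line of $F$ then has $5\le n_L\le8$, and each of the at most eight factor lines has $n_L\le 11$. Using $\sum n_L=12N$ and $\sum n_L^2=N^2+11N$,
\[
 \sum_L(n_L-5)(n_L-8)=N^2-145N+5320\ \ge\ 255/4 ,
\]
while the right side is at most $8\cdot 6\cdot 3=144$. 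That is too weak, so the bound has to be sharpened. The fix I would try first is to reduce $n_L\le 11$ on factor lines to $n_L\le 9$, by showing no projective line holds ten uniform normals. For this I would use Lemma~\ref{p11:radon}: ten uniform normals leave two profiles, each affine modulo eleven. A nonconstant affine profile has mass $55$ or $66$, and a constant one is $m$, or consists of elevens and zeros. Comparing constant terms in the Radon identity should give a contradiction for $m=6,7$, possibly after Proposition~\ref{p11:central-sizes}-style pencil arguments exclude the $0/11$ layered profiles. With $n_L\le 9$ on factor lines the bound becomes $8\cdot4=32<255/4$, exactly as in Theorem~\ref{p11:exclude55}.

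The degenerate case is a line $L$ with $n_L\le 4$. Then every plane parallel to $L$ meets $B$ in a set determining at most four directions. By the five-point direction lemma (Lemma~\ref{p11:55-cross}) and the auxiliary fact for five points, such a slice is either collinear or a small cross-like configuration of at most five points. Since $|B|\ge 66>55$, some slice must have at least six points, and those points must be collinear. I would then mimic Lemma~\ref{p11:rigidity44seven}. Take parallel collinear slices and apply Cauchy--Davenport to get many cross directions on a common projective line, forcing that line to divide $F$. This yields many concurrent factors through a direction $d$. A pencil or covering count through a heavy line $\ell$ then forces every point of $B$ into factor planes, giving $|B|\le |\ell|+8\cdot(11-|\ell|)$. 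The final step projects along $d$ onto a planar set determining at most eight directions, which contradicts the twelve-point pigeonhole or a cap count.

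The main obstacle is this degenerate branch. There the line caps are only $4$ and $5$, so the pencil inequalities are tight, and the six-point slice structure may require a small finite planar check analogous to Lemma~\ref{p11:55-finite}.
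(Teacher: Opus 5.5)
\textbf{There is a genuine gap.} Your generic case rests on the lower bound $n_L\ge5$. With that bound, $\sum_L(n_L-5)(n_L-8)\ge255/4$ does not contradict $8\cdot6\cdot3=144$, as you note yourself. The repair you propose, that no projective line carries ten uniform normals, is not established. The Radon comparison does exclude exactly ten. It does not exclude eleven: one $0/11$ profile together with one constant profile $m$ satisfies the constant-term identity. That configuration is $B'$ lying in $m$ parallel full planes. To rule it out you appeal to Proposition~\ref{p11:central-sizes}, but that proposition assumes all moments through degree $p-3=8$ vanish, which is exactly what this lemma is supposed to prove. Without those moments nothing forces every normal profile to be uniform or $0/11$, so its pencil argument cannot run; the appeal is circular. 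Your degenerate branch ($n_L\le4$) also contains a false step. You say a slice with at least six points must be collinear, but the line cap $5$ or $4$ forbids six collinear points.

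The missing idea is that the lower bound on every projective line is six, not five, and it is immediate. Since $|B|=66$ or $77$ exceeds $5\cdot11$, every parallel family of planes has a slice containing at least six points of $B$. The line cap makes any six of them noncollinear. Lemma~\ref{p11:six-point} then gives $|\D(B)\cap L|\ge6$ for every projective line $L$. So the degenerate branch is empty. For the upper bounds: on a nonfactor line $6\le n_L\le8$. On each of the at most eight factor lines $n_L\le11$, because $\D(B)\subseteq\U_{B'}$ and $\U_{B'}$ contains no full line. The incidence count then closes at once:
\[
 N^2-157N+6384=\sum_L(n_L-6)(n_L-8)\le 8\cdot15=120,
\]
whereas the left side is at least $887/4$. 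No sharpening of the factor-line bound, Radon argument, or Cauchy--Davenport analysis is needed. The paper runs exactly this count on $\U_E$ rather than on $\D(A)$. Either set works, since $\D(A)\subseteq\U_E$ and both satisfy the same lower bound six and upper bound eleven.
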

\begin{proof}
Let $U$ be the uniform-normal directions of $E$. In every affine
parallel family of planes some slice of $A$ contains at least six
points. The published line bound is at most five, so this slice
contains six noncollinear points. The six-point direction lemma
therefore gives $|U\cap L|\ge6$ for every projective line $L$.
Also $|U\cap L|\le11$: twelve uniform directions in a pencil would
give $11m+11a=12m$, impossible for an integer $a$ and $m=6,7$.

Suppose a nonzero homogeneous polynomial of degree at most eight
vanishes on $U$. Multiplying by a power of a nonzero linear form, we
may assume its degree is eight. Put $N=|U|$, $n_L=|U\cap L|$, and
let $k\le8$ be its number of distinct rational line factors. On a
nonfactor line $6\le n_L\le8$; on a factor line $6\le n_L\le11$.
The projective incidence identities therefore imply
\[
 N^2-157N+6384
 =\sum_L(n_L-6)(n_L-8)\le15k\le120.
\]
The quadratic on the left has minimum $887/4>120$, a contradiction.
For each $1\le j\le8$, the polynomial
$M_j(v)=\sum_{x\in E}(v\cdot x)^j$ vanishes on $U$, because every
uniform plane has $m$ points and $\sum_t t^j=0$ in $\F_{11}$.
Thus every $M_j$ is identically zero. Its multinomial coefficients
are invertible modulo eleven, giving all mixed moments. Spectral
duality gives the same conclusion for $A$.
\end{proof}

\begin{corollary}\label{p11:exclude77}
There is no spectral subset of $\F_{11}^3$ of cardinality $77$.
\end{corollary}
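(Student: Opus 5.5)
The plan is to combine the common eighth-moment reduction with Proposition~\ref{p11:central-sizes}. Here $p=11$ and $m=7$. Since $p-3=8$, Lemma~\ref{p11:66-77-moments} already gives vanishing of all mixed moments of total degree at most $8=p-3$ for a hypothetical spectral set $E$ of cardinality $77$. The degree-zero moment is $77\equiv0$. The proposition would then force $m\in\{5,6\}$, which excludes $m=7$ at once. What remains is to supply the two geometric hypotheses of the proposition.

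\textbf{Plane bound.} The plane bound, at most $p=11$ points in every affine plane, is Lemma~\ref{p11:prelims-basic}, which applies for every $2\le m\le p-1$.

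\textbf{Line bound.} The proposition asks for at most $p-m=4$ points on every affine line. Lemma~\ref{p11:prelims-basic} gives exactly $\min(m,p-m)=\min(7,4)=4$. So no improvement is needed, unlike the $66$ case, where $p-m=5$ also matches $\min(6,5)$. I would nevertheless recheck the integrality step behind that bound. The pencil estimate $mp+pa\le m+p^2$ gives $77+11a\le128$, hence $a\le4$, exactly $p-m$.

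\textbf{Checking the moment lemma.} With these hypotheses verified, the proof is a direct citation. The only point needing care is the moment lemma for $m=7$. Its proof needs, in every parallel family, a slice of $A$ with at least six noncollinear points. The average slice has $7$ points and lines carry at most $4$, so such a slice exists. The six-point direction lemma (Lemma~\ref{p11:six-point}) then gives at least six uniform normals on each projective line. The pencil exclusion $11\cdot7+11a=12\cdot7$ is impossible for integer $a$, so each projective line carries at most eleven. After that, the incidence inequality with $(n_L-6)(n_L-8)$ kills every octic. I expect no genuine obstacle. The main thing to confirm is that the proposition's case analysis, with $a\ne0$ giving $|E|=55$ or $66$ and $a=0$ giving the $t$-divisibility contradiction, uses only the stated hypotheses. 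It does, so $77$ is excluded.
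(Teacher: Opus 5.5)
Your proposal is correct and is essentially the paper's proof. The paper likewise combines Lemma~\ref{p11:66-77-moments}, which gives vanishing mixed moments through degree $8=p-3$, with Proposition~\ref{p11:central-sizes} at $p=11$, $m=7$; you additionally spell out that the proposition's plane bound $11$ and line bound $p-m=4$ come from Lemma~\ref{p11:prelims-basic}, which the paper leaves implicit.
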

\begin{proof}
Apply the eighth-moment reduction and the two-central-cardinalities
proposition with $p=11$ and $m=7$. The latter allows only $m=5,6$.
\end{proof}
\subsection{\texorpdfstring{The degree-nine zero set at cardinality $66$}{The degree-nine zero set at cardinality 66}}
Suppose for contradiction that $(E,A)$ is a spectral pair of size 66, and put $U=\U_E$.

\begin{lemma}\label{p11:lem:66bounds}
Every projective line meets $U$ in between six and eleven points. Every affine line meets $E$ or $A$ in at most three points.
\end{lemma}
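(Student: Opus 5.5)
The first assertion was in effect already established inside the proof of Lemma~\ref{p11:66-77-moments}, and I would simply re-extract it. For the lower bound, fix a projective line $L$ and consider the eleven parallel affine planes whose direction plane is $L$. Together they contain the $66$ points of $A$, so one of them contains at least six points of $A$. By Lemma~\ref{p11:prelims-basic}, every affine line contains at most $\min(6,11-6)=5$ points of $A$, so these six points are noncollinear. By Lemma~\ref{p11:six-point} they determine at least six directions. These directions lie in $L$ and in $\D(A)\subseteq\U_E=U$, giving $|U\cap L|\ge6$.

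For the upper bound, suppose $U$ contains all twelve points of a projective line. Then the twelve planes through an affine line in the perpendicular direction, containing $a$ points of $E$, would all have occupancy $6$. The pencil count \eqref{p11:pencil} would then read $66+11a=72$, which has no integer solution. Hence $|U\cap L|\le11$. The same two arguments with $E$ and $A$ interchanged give the corresponding bounds for $\U_A$, which I need for the second assertion applied to $A$.

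For the line bound I would use the pencil count once more, now combined with the six-point lower bound just proved. Let $\ell$ be an affine line with $a=|E\cap\ell|$. The twelve planes through $\ell$ have total occupancy $66+11a$. Their normals form a projective line, so by the first part at least six of these normals lie in $U$, and each of those planes contains exactly six points of $E$. Each of the remaining planes contains at most eleven points, by the plane bound of Lemma~\ref{p11:prelims-basic}. If $t\ge6$ denotes the number of uniform planes in the pencil, this gives
\[
 66+11a\le 6t+11(12-t)=132-5t\le102,
\]
so $11a\le36$ and $a\le3$. Replacing $U$ by $\U_A$ gives the same bound for $A$.

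I do not expect a genuine obstacle here; the lemma is a consolidation step. The only points to check are that the six-point direction lemma really applies, which needs the a priori line bound $5<6$ from Lemma~\ref{p11:prelims-basic} to guarantee noncollinearity, and that the normals of the planes through an affine line form a full projective line, so that the lower bound $|U\cap L|\ge6$ can be applied to the pencil. The vanishing moments of Lemma~\ref{p11:66-77-moments} are not needed for this statement.
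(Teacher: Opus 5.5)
Your proposal is correct and follows the paper's proof essentially step for step. The lower bound comes from pigeonholing $A$ into eleven parallel planes and applying the line bound five together with Lemma~\ref{p11:six-point}. A full projective line in $U$ is excluded by the pencil count \eqref{p11:pencil}. The line bound comes from the pencil through an affine line, where at least six planes are uniform, giving $66+11a\le102$; symmetry then transfers everything to $A$.
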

\begin{proof}
For any two-dimensional linear subspace $P$, some translate of $P$ contains at least six points of $A$. The published five-point line bound makes these noncollinear; Lemma \ref{p11:six-point} supplies six directions in $\D(A)\cap P\subset U\cap P$. A full projective line in $U$ is impossible by \eqref{p11:pencil}. For a line containing $a$ points of $E$, at least six incident planes contain six points and all others at most eleven. Hence
\[
 66+11a\le 6\cdot6+6\cdot11=102,
\]
so $a\le3$. Apply symmetry.
\end{proof}

By Lemma~\ref{p11:66-77-moments}, every mixed moment through degree eight
vanishes. The proof of that lemma also shows that no nonzero homogeneous
polynomial of degree at most eight vanishes on $U$.

\begin{lemma}\label{p11:lem:66profiles}
Every normal direction has one of the following plane-occupancy profiles:
\[
 U:(6,\ldots,6),\qquad
 C:(11^6,0^5),\qquad
 R:(1,2,\ldots,11).
\]
If their numbers are $u,c,r$, then
\[
 u=64+2c,\qquad r=69-3c.
\]
The degree-nine polynomial $H=M_9$ is nonzero and its projective zero set is $Z=U\cup C$.
\end{lemma}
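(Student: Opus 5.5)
The plan has four steps: pin down the possible profiles, count the three types with two moment identities, show $H=M_9$ is nonzero, and read off its zero set.

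\emph{Profiles.} Fix a normal $v$ and let $n_t$ be the eleven occupancies. By Lemma~\ref{p11:66-77-moments} all moments through degree eight vanish, so $\sum_t t^j n_t=0$ in $\F_{11}$ for $0\le j\le8$. As in the proof of Proposition~\ref{p11:central-sizes}, the annihilator of $1,t,\ldots,t^8$ is spanned by the evaluations of $1$ and $t$. Hence $n_t\equiv at+b\pmod{11}$, with $0\le n_t\le 11$ and $\sum n_t=66$.

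If $a\ne0$, the residues are $0,\ldots,10$ once each. The zero residue is then realized by $0$ or $11$, and the total $66$ forces it to be $11$; this is profile $R$. If $a=0$, either all $n_t$ equal $6$ (profile $U$) or the common residue is $0$, giving six entries $11$ and five entries $0$ (profile $C$). Thus $u+c+r=133$.

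\emph{Counting.} For a second linear relation, use the pair-count identities from the proof of Proposition~\ref{p11:weighted-incidence}: $\sum_v k_E(v)=T$, where $k_E(v)=(q_E(v)-11\binom62)/11$. Here $q$ equals $165$ for $U$, $330$ for $C$ and $220$ for $R$, so $k$ is $0$, $15$ and $5$ respectively. With $p=11$ and $m=6$ we get $T=6(121-6)/2=345$, hence $15c+5r=345$, that is, $r=69-3c$. Substituting into $u+c+r=133$ gives $u=64+2c$.

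\emph{Nonvanishing of $H$.} For $v$ with profile $R$, write $n_t=t+b$ with residues $0,\ldots,10$. Then $M_9(v)=\sum_t n_t t^9$, and $\sum_t t^{10}=-1$ while $\sum_t t^9=0$. So $M_9(v)=-a$, which is nonzero for profile $R$.

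The same computation on profiles $U$ and $C$ gives $a=0$ and hence $M_9(v)=0$. For $U$ this is immediate. For $C$ the counts are $11\equiv 0$ at six values and $0$ elsewhere, so every power sum vanishes modulo eleven.

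\emph{Zero set and the obstacle.} It remains to show $H\not\equiv0$, which means exhibiting at least one $R$-normal, i.e.\ $r>0$, or $c<23$. This is the step I expect to be the main obstacle. Suppose $r=0$. Then every profile is $U$ or $C$, and each $C$-normal carries six full planes of eleven points. My plan is to rule this out with a pencil argument through an affine secant line, as in Proposition~\ref{p11:central-sizes}. Since every plane has occupancy in $\{0,6,11\}$, take a line with $a\ge2$ points (at most $3$ by Lemma~\ref{p11:lem:66bounds}). The twelve planes through it are all nonempty, hence have occupancy $6$ or $11$. If $t$ of them have occupancy $6$, then $66+11a=6t+11(12-t)$, giving $5t=66-11a$. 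This forces $11\mid t$, so $t\in\{0,11\}$, and then $a\in\{6,1\}$, contradicting $2\le a\le3$. Hence $r>0$ and $H\ne0$.

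Finally, by the computation above, the projective zero set of $H$ on rational points is exactly the set of normals of types $U$ and $C$, namely $Z=U\cup C$.
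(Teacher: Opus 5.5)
Your proof is correct and follows essentially the paper's argument. The degree-eight moments force affine residue profiles, and your $\sum_v k_E(v)=T$ is the paper's pair count $165u+330c+220r=12\binom{66}{2}$ in normalized form. The identity $M_9(v)=-a_v$ identifies the zero set, and the same pencil computation $5t=66-11a$ through a two- or three-point line shows $H\neq0$.
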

\begin{proof}
For a normal $v$, write $n_t=|E\cap\{x:v\cdot x=t\}|$. The nine equations for moments of orders zero through eight imply $n_t=a_vt+b_v$ in $\F_{11}$: the annihilator of the evaluations of $1,t,\ldots,t^8$ is spanned by $1,t$. If $a_v\ne0$, the residues run through $0,\ldots,10$, whose sum is 55; the zero residue must be represented by eleven to give total 66. If $a_v=0$, one obtains precisely $U$ or $C$. Also $M_9(v)=-a_v$.

Counting unordered pairs of points in all parallel-plane families gives
\[
 165u+330c+220r=12\binom{66}{2},\qquad u+c+r=133,
\]
and hence the stated formulas. If $H=0$, all plane occupancies are $0,6,11$. Choose a line containing $a=2$ or $3$ points. Its incident planes have positive occupancies. If $t$ of them have occupancy six, then
\[
 66+11a=6t+11(12-t),\qquad 5t=66-11a,
\]
which is impossible for either $a$.
\end{proof}

\begin{lemma}\label{p11:lem:66factors}
The polynomial $H$ is squarefree. Every rational line factor has exactly eleven $U$-points and one $C$-point. If $k$ is the number of these factors, the only possibilities are
\[
\begin{array}{c|c|c|c}
c&u&r&k\\\hline
7&78&48&9\\
8&80&45&7,9\\
9&82&42&5,6,7,9\\
10&84&39&9
\end{array}
\]
No five rational line factors are concurrent.
\end{lemma}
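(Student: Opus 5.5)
The plan is to study $H=M_9$, a nonzero ternary form of degree nine whose projective zero set is exactly $Z=U\cup C$. Its complement is the set of $R$-normals, of size $r=69-3c$. Everything comes from counting points of $Z$ on lines.

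The first step treats a rational line factor $L$ of $H$. Every point of $L$ lies in $Z$, so $L\subset U\cup C$. By Lemma~\ref{p11:lem:66bounds}, $|U\cap L|\le11$, so $L$ contains at least one $C$-point. To get exactly one, I would use the pencil argument. Let $c_0,c_1$ be two distinct $C$-normals on $L$. Their $C$-profiles put $E$ in six parallel planes for each normal. Their intersections are affine lines in the direction $L^\perp$, and these lines carry all of $E$. That gives $36$ lines with $66$ points, each line containing at most three points. I would try to make this contradict the line bound three together with the requirement that the eleven-point planes in each family are genuinely full planes. If that route fails, the fallback is to compare it with the $u,c,r$ counts. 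Granting the claim, each factor has eleven $U$-points and one $C$-point.

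The second step is squarefreeness. If $L^2\mid H$, then $H=L^2G$ with $\deg G=7$. Take a nonfactor line $M$. It has at least six $U$-points. On $M$, the zeros of $H$ are the zeros of $G$ together with the single point $M\cap L$. So $M$ has at most eight zeros, which is compatible with the bound, and a finer argument is needed. I would restrict $H$ to lines through a point $P\notin L$. Each such line contains at least six $U$-points, at most one of which lies on $L$, so at least five lie among the zeros of $G$. The twelve lines through $P$ then give $|Z\setminus L|\ge60$ points on a septic. I would then apply Lemma~\ref{p11:incidence} to $G$ on $Z\setminus L$ with $a=5$, $b$ at most eleven, $d=7$, and expect the integer inequality to fail. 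The same incidence count, done with multiplicity, excludes higher powers.

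The third step is the table. Write $H=L_1\cdots L_kG$ with $G$ free of rational line factors and $\deg G=9-k$. A nonfactor line meets the factor lines in at most $k$ points and meets $G$ in at most $9-k$ points. Together this is at most nine zeros, so $6\le n_L\le9$ for $n_L=|Z\cap L|$. On the factor lines, $n_L=12$. The incidence identities for $Z$ give $\sum_L n_L=12N$ and $\sum_L n_L^2=N^2+11N$, with $N=u+c=64+3c$. Combine these into
\[
\sum_L (n_L-6)(n_L-9)\le 18k,
\]
and then sharpen it using the exact number of factor lines. This gives a quadratic condition in $c$ and $k$. I would also use the facts that the $C$-points on factors are distinct unless factors meet there, and that $c\le 22$ from $r\ge0$. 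Enumerating the small integer range should leave exactly the tabulated pairs. I expect this step to need an extra parity or divisibility argument, probably from $\sum_L n_L$ with $n_L\equiv$ its $U/C$ composition, to remove spurious $k$ values. I consider this the main obstacle.

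The last step is non-concurrence. Suppose five factors pass through a point $P$. A nonfactor line through $P$ has at most $1+(9-5)=5$ distinct zeros of $H$, but it must contain at least six $U$-points. This is a contradiction.
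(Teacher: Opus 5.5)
Your final step (non-concurrence) is correct and is the paper's argument. The first three steps, however, each have a real gap. For the claim that a factor line $L$ carries exactly one $C$-point, the $36$-line configuration gives no contradiction by itself. The capacity is $108\ge66$, and a $6\times6$ array with entries at most three and all row and column sums eleven exists (the circulant with first row $(3,2,2,2,1,1)$).

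The missing idea is the pencil count \eqref{p11:pencil}. Take an affine line $\ell$ in the direction $L^\perp$ through a point of $E$, containing $1\le a\le3$ points. The twelve planes through $\ell$ have normals on $L$. Each is therefore a $U$-plane with six points, or a $C$-plane meeting $E$, which must have eleven. Hence $66+11a=6t+11(12-t)$, i.e.\ $5t=66-11a$, which forces $a=1$ and $t=11$.

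For squarefreeness, your plan has three problems. It treats only a repeated rational line factor, not a squared irreducible conic, cubic or quartic. Lemma~\ref{p11:incidence} cannot be applied to $Z\setminus L$ with lower bound $a=5$, because the line $L$ itself meets that set in no point. And the inequality is never actually checked. The short argument is that the squarefree part of $H$ has degree at most eight and still vanishes on $U$. This contradicts the degree-eight rigidity established in the proof of Lemma~\ref{p11:66-77-moments}.

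For the table, the single inequality $N^2-169N+7182\le18k$ with $N=64+3c$ is too weak. It still allows $c=4$ with $k\ge7$, $c=5$ with $k\ge4$, $c=6$ with $k\ge3$, and $c=7$ with any $k\ge3$. No parity or divisibility argument is needed. What is missing is the same incidence count applied to $U$ alone. Nonfactor lines carry between six and nine $U$-points, and factor lines carry exactly eleven; this is where the first step is used. So $u^2-169u+7182\le10k$ with $u=64+2c$. This inequality eliminates every $c\le6$ and the spurious values of $k$. Finally, remove $k=8$: by squarefreeness, eight distinct rational line factors leave a rational linear cofactor. The two inequalities then give exactly the displayed rows.
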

\begin{proof}
If $H$ had a repeated irreducible factor, its squarefree part would have degree at most eight and vanish on $U$, contradicting Lemma \ref{p11:66-77-moments}. On a factor line, all directions have type $U$ or $C$. Apply the pencil equation to a parallel line through any point of $E$: $1\le a\le3$ and $5t=66-11a$ force $a=1,t=11$.

On a nonfactor line both $U$ and $Z$ have between six and nine points. On a factor line their occupancies are eleven and twelve. The incidence identities therefore give
\[
 u^2-169u+7182\le10k,\qquad
 (u+c)^2-169(u+c)+7182\le18k.
\]
Substitute $u=64+2c$, $0\le c\le23$, $1\le k\le9$. The resulting integer possibilities are those displayed, with $k=8$ removed because squarefreeness would force a ninth line factor.

If $t\ge5$ factors passed through $P$, a nonfactor line through $P$ would have at most $1+(9-t)\le5$ zeros of $H$, contradicting its at least six $U$-points.
\end{proof}

\begin{theorem}\label{p11:thm:66}
There is no spectral subset of $\F_{11}^3$ of size 66.
\end{theorem}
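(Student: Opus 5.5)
The plan is to play the $C$-directions off against each other. Lemma~\ref{p11:lem:66factors} gives $c\ge7$, so in particular there are two distinct projective normals $v_1,v_2$ of type $C$. I will show, exactly as in Lemma~\ref{p11:55-unique}, that a size-$66$ set with the established properties has at most one $C$-normal, which is a contradiction.

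Choose coordinates with $v_1,v_2$ the first two dual coordinate functionals. These are independent because the normals are distinct projectively. Since both profiles are $(11^6,0^5)$, $E$ lies in the six planes $x=s$, $s\in S$, and in the six planes $y=t$, $t\in T$, with $|S|=|T|=6$. Project $E$ to the first two coordinates and let $f(s,t)$ be the multiplicity on $S\times T$. Every row and column of $f$ has integer sum $11$, total mass $66$. Each fiber is an affine line in the third direction, so Lemma~\ref{p11:lem:66bounds} gives $0\le f(s,t)\le3$.

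Next I use the vanishing of all mixed moments through degree eight (Lemma~\ref{p11:66-77-moments}). Restricted to polynomials in $(x,y)$, these say that $f$ is orthogonal modulo eleven to every $s^it^j$ with $i+j\le8$. On the grid only the exponents $0\le i,j\le5$ matter. These $36$ grid monomials are independent, and those with $i+j\le8$ number $36-3=33$, omitting $(4,5)$, $(5,4)$, $(5,5)$. So the annihilator has dimension three. The barycentric identities $\sum_s w_S(s)s^h=0$ for $h\le4$, with $w_S(s)=\prod_{s'\ne s}(s-s')^{-1}$, show that the three-dimensional family $w_S(s)w_T(t)(as+bt+c)$ lies in it. Hence this family is the whole annihilator, and
\[
 f(s,t)\equiv w_S(s)\,w_T(t)\,(as+bt+c)\pmod{11}.
\]
Because $0\le f\le3$, the residue determines $f$.

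It remains to show that no choice of $S,T,a,b,c$ gives residues in $\{0,1,2,3\}$ with all line sums $11$. This is the main obstacle. I would first try a hand argument. A row sum $11\equiv0$ gives $w_S(s)\bigl((as+c)\sum_tw_T(t)+b\sum_tw_T(t)t\bigr)\equiv0$. Since $\sum_tw_Tt^h=0$ for $h\le4$, this is automatic and gives no information, so the constraint must come from the integer sizes. Each row has six entries in $\{0,\dots,3\}$ summing to $11$, so it contains at most one zero. A zero at $(s,t)$ means $as+bt+c=0$, which defines a line meeting each row and each column at most once. But the pattern of six entries of size at most three forces many rows to contain entries equal to one or two with specific residue ratios $w_T(t)/w_T(t')$ along a row. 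These ratios are rigid, and I expect them to clash with the multiset of the $w_T$.

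If that does not close quickly, I fall back on the finite check used for Lemma~\ref{p11:55-unique}. Affine changes $S\mapsto\alpha S+\beta$ rescale the $w_S$ uniformly and reparametrize $(a,b,c)$, so it suffices to take $S$ and $T$ from the affine orbit representatives of six-subsets of $\F_{11}$. These are the complements of the five-subset classes $01234,\ldots,01247$, giving six classes and $36$ ordered pairs. For each pair, run over all $11^3$ triples $(a,b,c)$ and test whether the residue matrix has entries in $\{0,\dots,3\}$ with every row and column sum equal to $11$. I expect no survivors, giving $c\le1$, which contradicts the table of Lemma~\ref{p11:lem:66factors}.
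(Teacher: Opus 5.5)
Your route is correct, and it is genuinely different from the paper's.

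\textbf{How the two proofs differ.} The paper stays with the degree-nine polynomial $H=M_9$. For each row of the table in Lemma~\ref{p11:lem:66factors} it bounds $|Z|$ by counting points on unions of $k$ lines with bounded concurrency and on curves without line components. This leaves only $(c,u,r,k)=(7,78,48,9)$. It then shows that equality forces every $C$-point onto four concurrent factor lines, so $c\le4$.

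You use that table only to obtain two distinct $C$-normals, and then transplant the two-orientation grid argument of Lemma~\ref{p11:55-unique}. Your annihilator computation is right: the $33$ grid monomials are independent, and the family $w_Sw_T(as+bt+c)$ lies in the kernel because a nonzero pairing needs $i+j\ge9$. This buys a stronger structural fact, namely at most one $C$-normal given line bound three and moments through degree eight, with no further incidence counting. The cost is a final finite check, which you state only as an expectation.

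\textbf{The open check does close.} There are no survivors, and the check is small enough to do by hand. One correction first: your integer reason for ``at most one zero per row'' is wrong, since $0+0+2+3+3+3=11$. The linear-form reason you give next is the right one. Write $\ell=as+bt+c$ as in your notation.

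\emph{Case $b=0$.} Each row is a nonzero multiple of $w_T$, because a zero row would have sum $0$. So its six entries lie in $\{1,2,3\}$ and sum to $11$, which forces both a $1$ and a $2$ to occur. For two rows $u$ and $\lambda u$ this gives $\lambda\in\{1,2,3\}\cap\{1,6,7\}=\{1\}$. Hence all rows coincide, and the column sums are $6u(t)\ne11$. The case $a=0$ is symmetric.

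\emph{Case $ab\ne0$.} Row $s$ is a multiple of $w_T(t)(t-r_s)$ with $r_s=-(as+c)/b$, and these are six distinct values.
\begin{itemize}
\item If $r\in T$, then $w_T(t)(t-r)$ restricted to $A=T\setminus\{r\}$ equals $w_A$. Among the six affine classes of five-sets, only the arithmetic progression has a multiple of $w_A$ in $\{1,2,3\}^5$, namely $(2,3,1,3,2)$.
\item If $r\notin T$, checking the six classes of $T$ (thirty cases) shows that $w_T(t)(t-r)$ always takes at least four distinct values, or has a value pattern that cannot be scaled to integers in $\{1,2,3\}$ summing to $11$.
\end{itemize}
So every six-set admits at most two usable values of $r$, not six. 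This gives the contradiction your argument needs.
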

\begin{proof}
A union of $k$ projective lines with maximum concurrency $t$ has at most
\[
 \min\left\{12k-\frac{k(k-1)}t,\quad
 11t+1+(k-t)(12-t)\right\}
\]
points. The first inequality counts pairwise intersections; the second begins with a maximal pencil. For $t\le4$ the respective upper bounds for $k=5,6,7,9$ are $53,61,70,85$.

A degree-$s$ curve with no rational line component has at most $1+12(s-1)$ rational points: count along the twelve lines through a rational point of the curve (the empty case is immediate). Consequently the bounds for the zero set of $H$ are $53+37=90$ when $k=5$, $61+25=86$ when $k=6$, $70+13=83$ when $k=7$, and $85$ when $k=9$.

Since $|Z|=64+3c$, all rows of Lemma \ref{p11:lem:66factors} are excluded except $(c,u,r,k)=(7,78,48,9)$, for which $|Z|=85$. Equality requires four factors through a point $P$: concurrency at most three gives at most 84 points. Their union has 45 points, and the remaining five lines must contribute eight new points each. Thus every intersection between two of the remaining five lies on the original four.

Every other line through $P$ consequently meets $Z$ in exactly six points: $P$ and five distinct intersections. All six must belong to $U$. Every $C$-point therefore lies on the original four factor lines. Each factor line has exactly one $C$-point, giving $c\le4$, contrary to $c=7$.
\end{proof}


\section{\texorpdfstring{Fuglede's conjecture in $\F_{11}^3$}{Fuglede's conjecture in F11 cubed}}\label{p11:conclusion}
\begin{theorem}\label{p11:theorem}
A nonempty subset of $\F_{11}^3$ is spectral if and only if it tiles by
translation.
\end{theorem}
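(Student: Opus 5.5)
The theorem is an assembly of results already proved, so the plan is to combine them in two directions.

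For tiling implies spectral, we rely on the reductions recalled in the proof of Proposition~\ref{p11:cardinalities} and the remarks after Lemma~\ref{p11:prelims-basic}. A tile $E$ with $E\oplus T=\F_{11}^3$ has $|E|\,|T|=11^3$, so $|E|\in\{1,11,121,1331\}$. The cases $|E|=1$ and $|E|=1331$ are immediate: a single character, respectively all characters, give a spectrum. If $|E|=11$, some nonzero frequency $v$ satisfies $W_T(v)\ne0$, so the tiling identity gives $\widehat{1_E}(v)=0$. The cyclotomic argument of Lemma~\ref{p11:prelims-basic} then makes $E$ a transversal of the planes $v\cdot x=t$, and the frequency line $\F_{11}v$ is a spectrum. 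If $|E|=121$, choose a nonzero difference $w$ of $T$. For every nonzero $v\in w^\perp$ the complement $T$ is not equidistributed along $v$, so $\widehat{1_T}(v)\ne0$, which forces $\widehat{1_E}(v)=0$. Hence $w^\perp$ is a spectrum.

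For spectral implies tiling, let $E$ be spectral. The elementary reductions show that $|E|\in\{1,11,121,1331\}$ or $|E|=11m$ with $2\le m\le10$. The four power cases tile, as recalled after Lemma~\ref{p11:prelims-basic}:
\begin{itemize}
\item size $11$ gives a plane transversal, which tiles by a plane;
\item size $121$ misses a direction, so $E$ is a transversal of lines in that direction and tiles by that line;
\item sizes $1$ and $1331$ are trivial.
\end{itemize}
Malikiosis's bound, used in Proposition~\ref{p11:cardinalities}, excludes $m=8,9,10$. The remaining six values are each excluded by a result already proved:
\begin{itemize}
\item $m=2$ by Theorem~\ref{p11:thm:22};
\item $m=3$ by Proposition~\ref{p11:33-excluded};
\item $m=4$ by Theorem~\ref{p11:exclude44};
\item $m=5$ by Theorem~\ref{p11:exclude55};
\item $m=6$ by Theorem~\ref{p11:thm:66};
\item $m=7$ by Corollary~\ref{p11:exclude77}.
\end{itemize}
So no spectral set has an intermediate cardinality, and every spectral set tiles.

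Nothing in this assembly is a genuine obstacle. The only point that needs care is checking that the cited exclusions apply to an arbitrary spectral pair, without extra hypotheses. Each of those sections begins from a hypothetical spectral pair of the given size and uses only the preliminary bounds of Lemma~\ref{p11:prelims-basic}, so the case split above is exhaustive.
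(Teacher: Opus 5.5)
Your proposal is correct and matches the paper's proof. The tiling-to-spectral direction is the direct argument recalled after Lemma~\ref{p11:prelims-basic}, and the spectral-to-tiling direction combines Proposition~\ref{p11:cardinalities} (including Malikiosis's exclusion of $88,99,110$) with the six exclusion results for $22,33,44,55,66,77$; you just spell out the assembly in more detail than the paper does.
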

\begin{proof}
Proposition~\ref{p11:cardinalities} reduces the spectral-to-tiling
implication to excluding cardinalities $22,33,44,55,66,77$. These are
excluded in the preceding sections. The converse was proved in
Section~\ref{p11:prelim}.
\end{proof}

\subsection{\texorpdfstring{Comparison with the proof over $\F_7$}{Comparison with the proof over F7}}
Fallon, Mayeli, and Villano \cite{FMV} reduce their proof over $\F_7$
to cardinalities $14,21,28$. Their argument and ours start with the same
Fourier-zero equidistribution, symmetry of spectral pairs, and
restrictions on intersections with lines and planes. In both proofs,
several uniform directions in one pencil force parallel layers. Their
most difficult case, cardinality $21$, is settled using planar direction
bounds, projected multiplicities and Cauchy--Davenport arguments.

One of their decisive planar assertions does not persist over $\F_{11}$.
Every five-point cap in $\F_7^2$ determines at least six directions
\cite[Lemma 5]{FMV}. In contrast,
\[
 C=\{(0,0),(0,1),(1,0),(1,4),(4,1)\}\subset\F_{11}^2
\]
is a cap with precisely five directions, of slopes $0,3,4,10,\infty$.
Each direction occurs twice among its ten pairs. In their normalization,
a five-direction cap would require $b^2-b-1=0$. Its discriminant $5$
is a nonsquare modulo seven but a square modulo eleven. Thus the
failure is arithmetic, even though both primes are $3$ modulo four.

The new arguments overcome this failure by combining geometric bounds
with polynomial information. The planar projection identity forces
low-degree line-count profiles. Incidence inequalities force mixed
moments to vanish, and these moments restrict plane profiles and slice
shapes. At cardinality $44$, weighted incidence converts a profile
energy gap into the divisibility contradiction $14\nmid936$.
At cardinality $55$, the cross-slice moment argument replaces a longer
classification. At cardinalities $66$ and $77$, the same eighth-moment
argument applies, after which the two-central-cardinalities proposition
immediately removes $77$.

The proof remains computer assisted. Its planar direction and graph
classifications are finite, and cardinality $33$ requires the larger
mixed-parity height calculation. The determinant obstruction then
removes all centrally symmetric lifts without a further height search.
These distinctions identify which parts of the method are already
general and which still depend on finite calculations over $\F_{11}$.


\section*{Acknowledgments}
\addcontentsline{toc}{section}{Acknowledgments}
The authors acknowledge the use of OpenAI's ChatGPT for assistance in
writing the computer programs used in this work and for collaboration
in drafting and revising this paper.
\clearpage
\phantomsection
\addcontentsline{toc}{section}{References}


\clearpage
\appendix
\section{The dimension-eight computations}\label{b8:appendix}

This appendix specifies the finite inputs and the code used in
Lemmas~\ref{b8:h16}, \ref{b8:h32}, and \ref{b8:tile16}.
Points of $\F_2^r$ are represented by integers in $[0,2^r)$;
vector addition is bitwise exclusive-or. Thus every arithmetic
operation in the certificate checks is exact. Python must be run
without \texttt{-O} or \texttt{-OO}, which would remove assertions.

\subsection{Data, coverage, and independent checks}

The order-$32$ input is McKay's \texttt{had32\_tr.txt.gz},
representing the classification of \cite{KTR}. Its SHA-256 digest is
\begin{quote}\footnotesize\ttfamily
e51a18c458e1bae995e335e5067b53078e7e8c6901278e755d33c1b281547520.
\end{quote}
The perfect-code input is \texttt{anc/perfect15} from the
source archive of \texttt{arXiv:0806.2513v3}. Its digest is
\begin{quote}\footnotesize\ttfamily
3bcb77324a7770c44f64b785d67d8da0f003113451575e06ca24852c7e5084f3.
\end{quote}
In the latter file the words are represented by sixteen-bit
hexadecimal integers with physical bit $3$ always zero. Deleting
this unused bit gives the fifteen coordinates used here.

The large classification files are retrieved by the reproduction
program and checked against these digests. The retained matrices,
codes, and positive certificates are supplied with the manuscript.
The extraction programs scan every input record. The Hadamard
filter dephases and computes the rank over $\F_2$; the perfect-code
filter computes the translation kernel and retains dimensions at
least seven. An independent Hadamard filter uses column spaces
and the opposite pivot order. An independent perfect-code check
recovers the kernel as
\[
 M=\bigl\langle\supp W_C\bigr\rangle^\perp.
\]
To see this identity, $C+u=C$ is equivalent, by Fourier inversion,
to $(-1)^{u\cdot v}W_C(v)=W_C(v)$ for every $v$. The second
calculation therefore does not reuse the translation test for
the first. Both extractions are compared exactly with the
retained records, rather than merely with their numbers. In the full
reproduction for this manuscript, both complete-source scans and
both independent audits passed, covering all $6{,}857{,}010$
Hadamard records and all $5983$ perfect-code records.

The completeness of the Hadamard and perfect-code classifications
is an external mathematical input. The programs check the
extraction from those classifications and the resulting tiling
certificates; they do not reconstruct either classification.
In particular, a check of the retained positive certificates alone
does not certify that all relevant records were retained.

From the source-bundle root, the commands for the selected-record
checks and for complete extraction with the independent audit are,
respectively,
\begin{lstlisting}[language=bash]
cd code/binary/support/d7_d8_codes
python3 run_checks.py --dimension 8
python3 run_checks.py --dimension 8 --full --audit
\end{lstlisting}
The first command uses Python 3.10 or later and its standard
library. The full extraction additionally requires a C++17 compiler
and zlib; the independent Fourier audit requires NumPy. The
expected output includes five order-$16$ projection checks,
$332$ order-$32$ subgroup certificates, and $69$ perfect-code
quotient certificates, all with zero failures.

\subsection{\texorpdfstring{The five order-$16$ matrices}{The five order-16 matrices}}

The following is the order-$16$ part of the supplied verifier.
Each hexadecimal word records one Hadamard row, with the most
significant bit first; a set bit represents $+1$. The five rows
of \texttt{H16\_DATA} are the five matrices of Hall's
classification in McKay's ordering. Besides orthogonality, the
code checks the five explicit projection certificates in
Lemma~\ref{b8:h16}. Together with the helper and invocation shown
here, this excerpt can be run independently.

\begin{lstlisting}[language=Python]
def require(condition, message):
    if not condition:
        raise AssertionError(message)
\end{lstlisting}

\begin{lstlisting}[language=Python]
H16_DATA = '''FFFF FF00 F0F0 F00F CCCC CC33 C3C3 C33C AAAA AA55 A5A5 A55A 9999 9966 9696 9669
FFFF FF00 F0F0 F00F CCCC CC33 C3C3 C33C AAAA AA55 A5A5 A55A 9996 9969 9699 9666
FFFF FF00 F0F0 F00F CCCC CC33 C3C3 C33C AAAA AA55 A599 A566 9996 9969 96A5 965A
FFFF FF00 F0F0 F00F CCCC CC33 C3C3 C33C AAAA A995 A659 A566 9A56 9969 96A5 959A
FFFF FF00 F0F0 F00F CCCC CC33 C3AA C355 AAC3 AA3C A599 A566 9996 9969 96A5 965A'''


def check_order16():
    columns = [(0,1,2,4,8), (0,1,2,8,12), (0,1,8,10,12),
               (0,1,8,10,12), (0,8,9,10,12)]
    for line, C in zip(H16_DATA.splitlines(), columns):
        H = [[1 if int(v, 16) >> (15-j) & 1 else -1
              for j in range(16)] for v in line.split()]
        require(len(H) == 16, 'Incomplete Hadamard data')
        for i in range(16):
            for j in range(16):
                require(sum(a*b for a,b in zip(H[i], H[j])) ==
                        (16 if i == j else 0), 'Not Hadamard')
        patterns = {tuple(row[c] * row[C[0]] for c in C[1:]) for row in H}
        require(len(patterns) == 16, 'Failed order-16 projection certificate')
    print('All five order-16 Hadamard projection certificates passed.')
\end{lstlisting}

\begin{lstlisting}[language=Python]
check_order16()
\end{lstlisting}

\subsection{\texorpdfstring{The order-$32$ subgroup certificates}{The order-32 subgroup certificates}}

We include the complete selected-record verifier. The function
\texttt{subspace\_bases} enumerates every reduced row-echelon
basis of the stated dimension: it chooses the pivot positions
and then every allowed nonpivot entry. Hence it omits no possible
subgroup. Finding one complement is enough, but every returned
complement is checked directly by forming all sums. Both matrix
orientations are processed, even when their canonical point sets
happen to be equivalent.

\begin{lstlisting}[language=Python]
#!/usr/bin/env python3
"""Exact subgroup certificates for order-32 log-Hadamard matrices of rank <=8.

Usage: python verify_fuglede8.py had32_rank_le8.txt fuglede8_certificates.json
The input is produced by fuglede8_scan.cpp from McKay's complete extended
classification. A failure to find a SUBGROUP complement is not a non-tiling
certificate. Both matrix orientations are checked. No solver is required.
"""
import itertools
import json
import sys
import time
from pathlib import Path


def span(basis):
    points = [0]
    for v in basis:
        points += [x ^ v for x in points]
    return points


def independent_basis(rows):
    pivots = {}
    basis = []
    for row in rows:
        v = row
        while v:
            j = v.bit_length()-1
            if j in pivots:
                v ^= pivots[j]
            else:
                pivots[j] = v
                basis.append(row)
                break
    return basis


def subspace_bases(n, k):
    for pivots in itertools.combinations(range(n), k):
        free = [(i,j) for i,p in enumerate(pivots)
                for j in range(p+1,n) if j not in pivots]
        for mask in range(1 << len(free)):
            basis = [1 << p for p in pivots]
            for a,(i,j) in enumerate(free):
                if mask >> a & 1:
                    basis[i] ^= 1 << j
            yield basis


def canonical_columns(rows):
    basis = independent_basis(rows)
    E = [sum(((b >> j)&1) << k for k,b in enumerate(basis))
         for j in range(32)]
    assert len(set(E)) == 32
    return E, len(basis)


def difference_mask(E):
    return sum(1 << x for x in {a ^ b for a in E for b in E} if x)


def main(source, target):
    start = time.perf_counter()
    spaces = {}
    for rank in range(5,9):
        spaces[rank] = [(basis, sum(1 << x for x in span(basis) if x))
                        for basis in subspace_bases(rank,rank-5)]
    assert len(spaces[8]) == 97155
    results = []
    histogram = {}
    for line in Path(source).read_text().splitlines():
        record, rank, *tokens = line.split()
        record, rank = int(record),int(rank)
        assert len(tokens) == 32
        H = [int(t,16) for t in tokens]
        assert all((H[i]^H[j]).bit_count()==16
                   for i in range(32) for j in range(i))
        L = [h ^ H[0] for h in H]
        L = [v ^ (0xffffffff if v&1 else 0) for v in L]
        assert len(independent_basis(L)) == rank
        histogram[rank] = histogram.get(rank,0)+1
        LT = [sum(((v >> j)&1) << i for i,v in enumerate(L))
              for j in range(32)]
        for orientation, matrix in [('columns',L),('rows',LT)]:
            E, r = canonical_columns(matrix)
            assert r == rank
            D = difference_mask(E)
            K = next((b for b,m in spaces[r] if not m&D),None)
            if K is not None:
                elements = span(K)
                assert len(elements) == 1 << (r-5)
                assert len({e ^ k for e in E for k in elements}) == 1 << r
            results.append(dict(record=record,rank=rank,orientation=orientation,
                                points=E,complement_basis=K))
    assert histogram == {5:1,6:2,7:13,8:150}, histogram
    failures = [r for r in results if r['complement_basis'] is None]
    output = dict(input=str(source),histogram=histogram,oriented_cases=len(results),
                  subgroup_failures=len(failures),results=results)
    Path(target).write_text(json.dumps(output,indent=2)+'\n')
    print('Classes by rank:',histogram,flush=True)
    print('Oriented cases:',len(results),'subgroup failures:',len(failures),flush=True)
    for r in failures:
        print('NO SUBGROUP CERTIFICATE:',r['record'],r['orientation'],flush=True)
    print('Seconds:',round(time.perf_counter()-start,3),flush=True)


if __name__ == '__main__':
    main(*sys.argv[1:])
\end{lstlisting}

\subsection{The perfect-code quotient certificates}

Here is the second complete selected-record verifier. For each
code it first verifies the perfect-code partition, computes the
entire translation kernel, and forms a quotient using an
annihilator basis. The sixteen distinct syndrome vectors are
exactly the image of the radius-one Hamming ball. A subgroup
complement of that image proves the needed assertion by
Lemma~\ref{b8:tile16}; the original tiling complement need not
itself be a subgroup.

\begin{lstlisting}[language=Python]
#!/usr/bin/env python3
"""Check canonical 16-point tiles coming from perfect-code kernels.

Usage: python verify_fuglede8_tiles16.py perfect15_kernel_ge7.txt tiles16_certificates.json

Coverage: normalize E to contain zero and restrict a tiling to its span V,
of dimension r<=8. Let H:F2^15->V have the 15 nonzero points of E as
columns. C=H^{-1}(T intersect V) is a perfect length-15 code, and ker(H),
of dimension 15-r>=7, lies in M=Ker(C). The image of the radius-one
ball modulo M has 16 distinct points: M contains no words of weight 1
or 2. The quotient V->F2^15/M is injective on E. A subgroup complement
for the ball image therefore pulls back to a subgroup complement for E.
Coordinate permutations and translations of C preserve this quotient
configuration up to linear equivalence. Thus the 69 classes with kernel
dimension >=7 in the complete 5983-code classification cover EVERY
16-point tile in dimension <=8. No dimension-seven Fuglede theorem is
needed in this reduction. The 48 dimension-seven kernels are the largest
quotient cases; all 21 larger-kernel cases are checked as well.
The external completeness theorem is Ostergard--Pottonen, arXiv:0806.2513.
"""
import json
import sys
import time
from pathlib import Path
from verify_fuglede8 import independent_basis, span, subspace_bases, difference_mask


def annihilator(basis,n):
    return independent_basis(v for v in range(1<<n)
                             if all((v&b).bit_count()%2==0 for b in basis))


def main(source,target):
    start=time.perf_counter()
    spaces={r:[(b,sum(1<<v for v in span(b) if v))
               for b in subspace_bases(r,r-4)] for r in range(4,9)}
    assert len(spaces[8])==200787
    output=[]
    histogram={}
    for line in Path(source).read_text().splitlines():
        record,k,*tokens=line.split()
        record,k=int(record),int(k)
        C={int(t,16) for t in tokens}
        assert len(tokens)==len(C)==2048 and 0 in C
        assert all(0<=x<32768 for x in C)
        balls={x^v for x in C for v in [0]+[1<<i for i in range(15)]}
        assert len(balls)==32768
        M={p for p in C if all(x^p in C for x in C)}
        assert len(M)==1<<k
        mbasis=independent_basis(M)
        assert len(mbasis)==k and set(span(mbasis))==M
        H=annihilator(mbasis,15)
        r=len(H)
        assert r==15-k
        def syndrome(x):
            return sum(((x&h).bit_count()%2)<<i for i,h in enumerate(H))
        E=[0]+[syndrome(1<<i) for i in range(15)]
        assert len(set(E))==16
        T=sorted({syndrome(c) for c in C})
        assert len(T)==1<<(r-4)
        assert len({e^t for e in E for t in T})==1<<r
        D=difference_mask(E)
        K=next((b for b,m in spaces[r] if not m&D),None)
        if K is not None:
            assert len({e^v for e in E for v in span(K)})==1<<r
        histogram[k]=histogram.get(k,0)+1
        output.append(dict(record=record,kernel_dimension=k,code_rank=len(independent_basis(C)),
                           kernel_basis=mbasis,quotient_map=H,points=E,
                           known_tiling_complement=T,subgroup_complement_basis=K))
        print('code',record,'kernel',k,'subgroup certificate',K,flush=True)
    assert histogram=={7:48,8:17,9:3,11:1},histogram
    failures=[o for o in output if o['subgroup_complement_basis'] is None]
    Path(target).write_text(json.dumps(dict(kernel_histogram=histogram,
        cases=len(output),subgroup_failures=len(failures),results=output),indent=2)+'\n')
    print('Cases',len(output),'failures',len(failures),'seconds',round(time.perf_counter()-start,3),flush=True)


if __name__=='__main__':
    main(*sys.argv[1:])
\end{lstlisting}


\section{The computations in dimension nine}\label{b9:appendix}
The dimension-nine computations use the same integer and binary-mask
conventions as the preceding appendix. The directory
\path{code/binary/} contains the complete proof packet, including its
input register, extraction programs, certificate generators, independent
verifiers, and finite certificate data. The accompanying logs distinguish
a check of a retained certificate from a complete regeneration of the
family it covers. These are different obligations.

\subsection{Canonical models and sixteen-point tiles}
The directory \path{support/d9_finite/} extends the dimension-eight
canonical calculation to dephased rank nine. There are $2,040$ Hadamard
classes and $4,080$ oriented canonical models. The perfect-code calculation
has $374$ kernels of dimension at least six. The combined file therefore
has exactly $4,454$ cases. For every case, the checker verifies the rank
over $\F_2$ of the recorded basis and enumerates all sums with the set being
tiled. These checks prove the complement assertions for the selected
models. Full extraction from the published databases, with the pinned
source hashes, is run separately by the following commands from that
directory:
\begin{verbatim}
python3 audit_fuglede9.py
python3 reproduce_fuglede9.py --full
\end{verbatim}
Both matrix orientations are retained. The code-kernel extraction checks
invariance of the complete code under the proposed kernel, rather than
inferring the kernel from a generator matrix for a nonlinear code.

\subsection{\texorpdfstring{The order-$48$ exclusion}{The order-48 exclusion}}
The directory \path{support/d9_order48/} contains the complete finite
extension argument. Its inputs are the published triply even code
matrices. The dimension-ten list contains $1,247$ codes containing the all-ones word; it concerns the
zero-cubic obstruction. The corresponding dimension-nine list has $1,960$ codes; it
is the list filtered to obtain the $26$ projective starting codes for
the extension calculation. These counts refer to different dimensions.

For a chosen starting code $D$, the generator enumerates the $10,795$
rank-two alternating bilinear forms on $D/\langle\mathbf1\rangle$.
For each form, Gaussian elimination over $\F_2$ solves the linear equations on $g$.
The solution kernel is $(D*D)^\perp$ and contains $D$, so enumeration
uses a basis of the quotient by $D$. It then applies exactly the two
proved filters: $\operatorname{wt}(g)\equiv0\pmod4$ and
$38\le\dim(C*C)\le43$. A separately implemented generator reverses
the elimination convention and compares extension cosets after canonical
reduction. This verifies coverage without requiring both programs to
choose the same representatives.

The surviving $490,816$ point sets each have a five-vector positive
certificate. The following complete C++17 checker constructs the set and
its difference set and verifies all $32$ subset sums. It also requires
exactly one certificate for each retained candidate; repeated certificates
or omitted candidates cause failure.
\begin{lstlisting}[language=C++,basicstyle=\ttfamily\scriptsize,breaklines=true]
// Independent positive-certificate checker. Does not call the search routine.
// For each recorded E, verifies32distinct vectors of a5-space all lie inE+E.
#include <array>
#include <cstdint>
#include <fstream>
#include <iostream>
#include <map>
#include <stdexcept>
#include <string>
#include <unordered_set>
using U=uint64_t;
static void require(bool ok,const char*why){if(!ok)throw std::runtime_error(why);}
static bool word(std::ifstream&f,unsigned n,U&v){
    v=0;for(unsigned k=0;k<n;k++){int c=f.get();if(c==EOF){require(k==0,"truncated word");return false;}v|=U((unsigned char)c)<<(8*k);}return true;
}
static void header(std::ifstream&f,const char*expected){char b[8];f.read(b,8);require(bool(f)&&std::string(b,8)==std::string(expected,8),"bad header");}
int main(int argc,char**argv){
    require(argc>=4,"usage: verifier BASES SURVIVORS CERT1 [CERT...] ");
    std::ifstream bf(argv[1]);unsigned nb;bf>>nb;
    std::map<unsigned,std::array<unsigned,48>>point;
    for(unsigned k=0;k<nb;k++){
        unsigned id,h,nk,ns;bf>>std::dec>>id>>h>>nk>>ns;std::array<U,9>D;U skip;
        for(U&v:D)bf>>std::hex>>v;for(unsigned j=0;j<nk+ns;j++)bf>>std::hex>>skip;
        require(bool(bf),"truncated bases");require(D[0]==((1ULL<<48)-1),"ones generator missing");
        std::array<unsigned,48>P{};for(unsigned j=0;j<48;j++)for(unsigned i=0;i<8;i++)P[j]|=((D[i+1]>>j)&1)<<i;point[id]=P;
    }
    std::ifstream sf(argv[2],std::ios::binary);header(sf,"F48LFT1\n");
    std::unordered_set<U>pending;U id,g;uint64_t expected=0;
    while(word(sf,4,id)){require(word(sf,8,g),"truncated survivor");require(id<65536&&g<(1ULL<<48),"key range");require(pending.insert((id<<48)|g).second,"duplicate survivor");expected++;}
    uint64_t checked=0;
    for(int arg=3;arg<argc;arg++){
        std::ifstream cf(argv[arg],std::ios::binary);header(cf,"F48FLT1\n");
        while(word(cf,4,id)){
            require(word(cf,8,g),"truncated extension");std::array<unsigned,5>B;for(unsigned&i:B){U x;require(word(cf,2,x),"truncated basis");require(x<512,"basis range");i=x;}
            require(pending.erase((id<<48)|g)==1,"extra or duplicate certificate");
            auto it=point.find(id);require(it!=point.end(),"unknown base");
            std::array<unsigned,48>E;std::array<bool,512>seen{};
            for(unsigned j=0;j<48;j++){E[j]=it->second[j]|(((g>>j)&1)<<8);require(!seen[E[j]],"repeated candidate point");seen[E[j]]=true;}
            std::array<bool,512>diff{};
            for(unsigned x:E)for(unsigned y:E)diff[x^y]=true;
            seen.fill(false);
            for(unsigned mask=0;mask<32;mask++){
                unsigned x=0;for(unsigned k=0;k<5;k++)if(mask>>k&1)x^=B[k];
                require(!seen[x],"dependent certificate basis");require(diff[x],"certificate vector outside E+E");seen[x]=true;
            }
            checked++;
        }
    }
    require(pending.empty(),"survivors lack certificates");require(checked==expected,"coverage count");
    std::cout<<"PASS: "<<checked<<" distinct candidate sets; all32vectors of every5-space verified inE+E; exactcoverage, noextras.\n";
}
\end{lstlisting}

From \path{support/d9_order48/}, the complete driver commands are
\begin{verbatim}
python3 reproduce_fuglede48_exclusion.py --verify-only
python3 reproduce_fuglede48_exclusion.py
\end{verbatim}
The first checks the positive certificates. The second additionally
regenerates the complete extension families and compares the independently
generated retained cosets. The counts are $26,582$ feasible prescribed
forms, $10,294,816$ extension representatives, $5,748,704$ representatives
of weight divisible by four, and $490,816$ representatives satisfying the
quadratic bound. Every retained representative has a verified five-space;
none remains unresolved.

\subsection{The tile-to-spectral direction}
The directory \path{support/d9_tile_to_spectral/} contains the three
finite calculations used for tiles of size $32$.
First, two independent orbit algorithms cover all $50,955,971$
three-dimensional subspaces of the eleven-dimensional quotient of Boolean
functions by affine functions. Both give $494$ orbits. For each orbit
representative, a support clique is checked by direct Walsh sums.
Second, all $134,688$ perfect-code kernel configurations are generated;
$134,398$ have direct support-clique certificates, and all $290$ exceptions
are explicitly mapped to $T_*$. Third, two exact-cover searches recover
the same $12,096$ normalized complements of $T_*$. Their $1,788$
difference profiles give $1,599,366$ unordered pairs, including the
diagonal. Every pair is covered by at least one of $75$ explicitly
recorded four-dimensional subgroup complements.
\begin{verbatim}
python3 audit_tile32_9.py --enumerate-fibers
python3 reproduce_tile32_9.py --full
\end{verbatim}
The first command independently checks certificates and enumerates the
exceptional fibers. The second rebuilds the orbit, kernel, and profile
calculations. In particular, the proof does not assume that two spectral
fibers automatically have a common spectrum.

\subsection{\texorpdfstring{The partial $64$-point results}{The partial 64-point results}}
The periodic-spectrum certificates are in
\path{support/d9_size64_reductions/}. Their $332$ canonical models agree
with the order-$32$, rank-at-most-eight extraction used in dimension eight,
but the certificates have a different purpose: they require all complement
differences to lie in Fourier support. There are $304$ subgroup
certificates and $28$ general set certificates. The checker validates the
raw matrix, both orientations, the coordinate maps, and every support
condition. Individual ordinary tiling complements would not suffice here.
The short fiber-weight enumeration includes every partition satisfying
the stated divisibility, lower-bound, and ambient-capacity constraints,
and records a positive subgroup basis whenever the weight-only criterion
succeeds.

The proof for the seven unequal four-fiber patterns uses the small
self-dual-code classification~\cite{Pless,HMdata}. Its source files are
included, and a separate extraction checker compares their code lists
with the retained models. The following is the complete positive
odd-coset checker. Its coverage count is decisive: the minimum-distance
four models occur in lengths $8,12,16,16,16$, and their odd cosets have
respectively $8,32,128,128,128$ weight-one or weight-three representatives.
\begin{lstlisting}[language=Python,basicstyle=\ttfamily\scriptsize,breaklines=true]
#!/usr/bin/env python3
"""Check all small self-dual models and positive odd-coset coverings.

Classification completeness remains external (Pless 1972; Harada-Munemasa).
The source list includes all models of length12/16, and the unique min4
length8 model. Models of minimum2 do not enter the cap lemma.
"""
import itertools
import json
from pathlib import Path

ROOT=Path(__file__).resolve().parent
data=json.loads((ROOT/'selfdual_small_codes.json').read_text())
out=[]
for index,item in enumerate(data['codes']):
    n=item['n'];rows=item['rows'];C=[0]
    for row in rows:C += [x^row for x in C]
    assert len(C)==len(set(C))==2**(n//2)
    assert all((a&b).bit_count()%2==0 for a in rows for b in rows)
    assert (1<<n)-1 in C
    minimum=min(x.bit_count() for x in C if x)
    if minimum<4:continue
    def syndrome(v):
        return sum(((v&r).bit_count()%2)<<i for i,r in enumerate(rows))
    leaders={}
    for weight in [1,3]:
        for loc in itertools.combinations(range(n),weight):
            v=sum(1<<j for j in loc)
            leaders.setdefault(syndrome(v),v)
    # Every odd coset is distinct from every even coset, and exactly half
    # the cosets are odd because every codeword is even.
    assert len(leaders)==2**(n//2-1)
    assert all(v.bit_count() in [1,3] for v in leaders.values())
    assert all(syndrome(v)==s for s,v in leaders.items())
    out.append(dict(source_index=index,length=n,minimum_distance=minimum,
                    odd_cosets_certified=len(leaders),leaders=leaders))
assert sorted(x['length'] for x in out)==[8,12,16,16,16]
(ROOT/'small_selfdual_coset_certificates.json').write_text(json.dumps(out,indent=2)+'\n')
print('PASS: five min-distance-four self-dual models; every odd coset has a weight-one/three leader.')
\end{lstlisting}
The main proof uses this self-dual-code route. An alternative Boolean
classification check is retained in the archive as supplementary material;
it is not a dependency of the theorem, since the original full thesis
table underlying that alternative was not recovered.

\subsection{The eighth-density example}
The following complete checker needs only the Python standard library.
It verifies spectrality in both orientations, the Boolean convolution
identity, the complete list of support planes, the absence of any
three-space extending a support plane, and the eight-point tiling
complement. Thus this example does not depend on any classification.
\begin{lstlisting}[language=Python,basicstyle=\ttfamily\scriptsize,breaklines=true]
#!/usr/bin/env python3
"""Exact counterexample to the Fourier-support-cube version of an eighth-density lemma.

Python 3.10+, standard library only. Run without -O.
Integers encode binary vectors; addition is XOR.

B and E below are a 64-point spectral pair in F_2^9. Nevertheless,
supp(W_B) contains NO three-dimensional linear subspace. This disproves
the proposed support-cube lemma even when B is spectral. It does NOT
disprove spectral-implies-tile, or existence of a subgroup complement
outside that Fourier support. In fact an eight-point complement is verified.

The 32-point factor was obtained from the canonical model H43323_rows
in the earlier Hadamard packet. No classification theorem, source file,
third-party package, or numerical tolerance is needed to check this example.
"""
from itertools import combinations
from time import perf_counter

L = [0,128,120,248,68,196,60,188,34,162,90,218,153,25,225,97,
     238,110,150,22,83,211,43,171,207,79,183,55,141,13,245,117]
A = [0,1,2,3,4,5,6,7,8,16,32,56,64,88,104,112,
     128,152,168,176,200,208,224,248,247,239,223,199,191,167,151,143]
T = [0,25,43,52,77,87,98,126]

def walsh(points, u):
    return sum(1 - 2*((u & x).bit_count() % 2) for x in points)

def main():
    start = perf_counter()
    B = L + [x ^ 256 for x in L]
    E = A + [x ^ 256 for x in A]
    assert len(set(B)) == len(set(E)) == 64
    assert all(0 <= x < 512 for x in B + E)
    W = [walsh(B, u) for u in range(512)]
    assert all(W[x ^ y] == 0 for x, y in combinations(E, 2))
    # Also check the transposed orthogonality directly.
    assert all(walsh(E, x ^ y) == 0 for x, y in combinations(B, 2))
    assert all(sum(W[t]*W[s ^ t] for t in range(512)) == 512*W[s]
               for s in range(512))
    S = {u for u in range(512) if W[u] != 0}
    assert len(S) == 29 and all(u < 256 for u in S)
    planes = {tuple(sorted((0,x,y,x ^ y)))
              for x,y in combinations(sorted(S - {0}), 2) if x ^ y in S}
    assert len(planes) == 56
    # Every 3-space contains a plane P and is P union (z+P) for z outside P.
    extensions = [(P,z) for P in planes for z in S - set(P)
                  if all(z ^ p in S for p in P)]
    assert not extensions
    assert len(set(T)) == 8 and all(x ^ y in S for x in T for y in T)
    assert len({e ^ t for e in E for t in T}) == 512
    assert any(x ^ y not in T for x in T for y in T)
    print('PASS: size-64 spectral pair in F_2^9; Boolean convolution identity.')
    print('PASS: Fourier support has 29 points, 56 planes, and no 3-space.')
    print('PASS: explicit eight-point non-subspace complement tiles E.')
    print('Support:', sorted(S))
    print('Complement:', T)
    print('Seconds:', round(perf_counter()-start, 3))

if __name__ == '__main__':
    main()
\end{lstlisting}
The subgroup complements stated in Proposition~\ref{b9:counterexample}
can also be verified by the following additional code, using the same
lists \texttt{A} and \texttt{L}. The products are in $\F_2^9$, with the
ninth bit encoded by $256$.
\begin{lstlisting}[language=Python,basicstyle=\ttfamily\small,breaklines=true]
def span(basis):
    result = [0]
    for v in basis:
        result += [x ^ v for x in result]
    return result
E = A + [x ^ 256 for x in A]
B = L + [x ^ 256 for x in L]
for points, basis in [(E, [25, 42, 76]), (B, [1, 2, 4])]:
    K = span(basis)
    assert len(set(K)) == 8
    assert len({x ^ k for x in points for k in K}) == 512
\end{lstlisting}

\subsection{Reproduction and scope}
From \path{code/binary/}, the retained analytic and certificate checks
can be run together by
\begin{verbatim}
python3 verify_packet.py --tile32 --order48
\end{verbatim}
Python must be run with assertions enabled. Python $3.10$ or later and
a C++17 compiler suffice for these checks; the full database extraction
also uses zlib and NumPy. The full-generation commands above use the
published classifications as inputs, with their exact source hashes.
They do not reconstruct those classification theorems from first
principles. No timeout or unsuccessful search is used as a proof of
nonexistence. There is no exhaustive order-$64$ Hadamard classification
in this argument, and the uniform four-fiber case remains open.


\section{\texorpdfstring{Exact computations for $\F_{11}^3$}{Exact computations for F11 cubed}}\label{p11:code}
The computational steps in the odd-prime proof use finite loops and exact
arithmetic in $\F_{11}$. They do not use a numerical feasibility solver.
The source distribution contains the original programs in
\path{code/prime11/original/}, with their relative paths intact, and the
independent verification programs in \path{code/prime11/audit/checks/}.
Python~3.9 or later, Node.js and a C++17 compiler suffice for these programs.
Run Python without the \texttt{-O} option, since the verification conditions
are assertions. The complete reproduction driver is
\path{code/prime11/reproduce.py}; it executes the checks described below.

\begin{center}
\begin{tabular}{>{\raggedright\arraybackslash}p{1.1cm}>{\raggedright\arraybackslash}p{6.5cm}>{\raggedright\arraybackslash}p{5.2cm}}
\toprule
Size&Finite task&Coverage or required output\\\midrule
22&Normalized planar caps and all admissible quartic/quintic profile tuples&
No tuple survives the required Radon identities; the equal-chord cap maximum is six.\\
33&Planar profiles, harmonic masks, six parallel triples, mixed-parity graph heights&
$1,948,717$ even classes; $242,202,917$ odd kernel vectors; no full mixed solution.\\
44&Small planar direction bounds and moment-flat graph classification&
$177,156$ normalized graphs, $375$ moment-flat, $254$ with line bound four.\\
55&Planar direction bounds, cubic profiles and two-orientation grids&
$36$ affine grid types; no admissible grid; six cubic-layer types excluded.\\
66,77&Six-point directions and small integer incidence checks&
$36$ masks, $1,566$ extra-point triples; no compatible triple.\\
\bottomrule
\end{tabular}
\end{center}

\subsection{Why the enumerations are exhaustive}
For a planar noncollinear set, an affine transformation sends a chosen
triangle to $(0,0),(1,0),(0,1)$. Its direction mask then contains
$0,-1,\infty$. The mask searches enumerate every remaining choice of
directions and every compatible extension of that triangle; insertion
is rejected only for a forbidden direction or line occupancy. The
profile searches enumerate polynomial coefficients, using precisely the
Vandermonde kernels and coordinate translations proved in the text.

For moment-flat graphs, subtracting a vertical affine function gives
$f(0)=f(1)=0$. The linear moments then give
$f(x)=\sum_{j=2}^7c_j(x^j-x)$. A nonzero vertical scaling makes its
first nonzero coefficient one. The $177,156$ candidates therefore cover
every graph under consideration. The proportional-fourth-moment test
checks equality of the complete candidate list with the homothety list;
it does not infer a classification from examples.

For cardinality $33$, affine height subtraction fixes the constant and
two odd linear degrees of freedom. Every nonzero even part has a unique
projective representative. For each representative, the program solves
the mixed equations by exact Gaussian elimination and enumerates its
entire odd kernel. A second implementation evaluates the antipodal-pair
moments directly. Both retain $21,611$ quadratic solutions, of which four
pass the cubic and quartic equations and none passes degree five.
The pure-odd height search is unnecessary: the determinant lemma supplies
the final contradiction analytically.

\subsection{The small direction and incidence verifier}
The following complete program checks the stronger six-point direction
lemma used in both the $66$ and $77$ cases, and the subsequent arithmetic.
The numerical counts are checks on exhaustive loops, not their inputs.
\begin{lstlisting}[language=Python,basicstyle=\ttfamily\scriptsize,breaklines=true]
"""Independent small finite checks for the size-66 and size-77 audit.

The direction census uses all 36 masks, no orbit representatives or
precomputed candidate partitions. All arithmetic is exact.
"""
from itertools import combinations, product
from fractions import Fraction
import json

p = 11
triangle = ((0, 0), (1, 0), (0, 1))
def slope(a, b):
    x, y = (b[0]-a[0]) % p, (b[1]-a[1]) % p
    return p if x == 0 else y * pow(x, -1, p) % p

nodes = triples = edges = 0
for ab in combinations(range(1, 10), 2):
    directions = {0, 10, 11, *ab}
    candidates = [v for v in product(range(p), repeat=2)
                  if v not in triangle and
                  all(slope(v, t) in directions for t in triangle)]
    nodes += len(candidates)
    edges += sum(slope(a,b) in directions for a,b in combinations(candidates,2))
    for a,b,c in combinations(candidates,3):
        triples += 1
        assert not all(slope(v,w) in directions for v,w in ((a,b),(a,c),(b,c)))
assert triples == 1566

# Check the integer minimization behind degree-eight rigidity.
assert min(n*n-157*n+6384 for n in range(134)) == 222
assert 222 > 8 * 15

# Reconstruct all affine-residue occupancy profiles, retaining mass 66 or77.
profile_counts = {}
for mass in (66,77):
    counts = {'uniform':0,'constant_zero':0,'nonconstant':0}
    for a,b in product(range(p), repeat=2):
        residues = [(a*t+b)%p for t in range(p)]
        zeros = [t for t,n in enumerate(residues) if n == 0]
        for mask in range(1 << len(zeros)):
            values = residues[:]
            for j,t in enumerate(zeros):
                if mask & (1 << j): values[t] = p
            if sum(values) != mass: continue
            kind = 'nonconstant' if a else ('uniform' if b else 'constant_zero')
            counts[kind] += 1
    profile_counts[mass] = counts
assert profile_counts[66] == {'uniform':1,'constant_zero':462,'nonconstant':110}
assert profile_counts[77] == {'uniform':1,'constant_zero':330,'nonconstant':0}

# All admissible c,k pairs from the two incidence inequalities, including k=0.
possibilities = []
for c,k in product(range(24),range(10)):
    u,z = 64+2*c,64+3*c
    if u*u-169*u+7182 <= 10*k and z*z-169*z+7182 <= 18*k and k != 8:
        possibilities.append((c,k))
assert possibilities == [(7,9),(8,7),(8,9),(9,5),(9,6),(9,7),(9,9),(10,9)]

union_bounds = {}
for k in (5,6,7,9):
    bounds = [min(Fraction(12*k)-Fraction(k*(k-1),t),
                  11*t+1+(k-t)*(12-t)) for t in (2,3,4)]
    union_bounds[k] = int(max(bounds))
assert union_bounds == {5:53,6:61,7:70,9:85}
surviving = []
for c,k in possibilities:
    residual = 0 if k == 9 else 1+12*(9-k-1)
    if 64+3*c <= union_bounds[k]+residual:
        surviving.append((c,k))
assert surviving == [(7,9)]

# The new short size-77 ending is already impossible for every published
# allowed secant occupancy a=2,3,4.
assert all((55-11*a)%4 for a in (2,3,4))

print(json.dumps({'direction_masks':36,'candidate_vertices_counted_with_masks':nodes,
                  'candidate_edges':edges,'candidate_triples':triples,
                  'compatible_triples':0,'degree8_incidence_minimum':222,
                  'profile_counts':profile_counts,'incidence_cases':possibilities,
                  'line_union_bounds':union_bounds,'last_case':surviving,
                  'all_checks_passed':True},indent=2))
\end{lstlisting}

\subsection{The shortened height computation}
The next complete driver runs every retained size-$33$ dependency and
both exhaustive mixed-height implementations. Its argument is the
absolute path of \path{code/prime11/original/}. The invoked sources,
including the two C++ checkers and their exact matrix generators, are
included in that directory.
\begin{lstlisting}[language=Python,basicstyle=\ttfamily\scriptsize,breaklines=true]
"""Reproduce the shortened size-33 computation, omitting the pure-odd search.

Usage: python3 verify33_short.py /absolute/path/to/extracted/closure_packet
The packet must contain the original output/fuglede11_* tree. This script
runs in that tree and regenerates the finite results; use a working copy.
The determinant obstruction is a mathematical final step, not a code test.
"""
from pathlib import Path
import json
import os
import shutil
import subprocess
import sys
import tempfile

ROOT = Path(sys.argv[1]).resolve()
ALG = ROOT / 'output/fuglede11_round3/algebra'
AUD = ROOT / 'output/fuglede11_round3/quadrics'
CXX = os.environ.get('CXX') or shutil.which('g++') or shutil.which('clang++')
if not CXX:
    raise SystemExit('A C++17 compiler is required.')
LOGS = ROOT / 'size33_short_audit_logs'
LOGS.mkdir(exist_ok=True)

def run(args, name):
    print(name, flush=True)
    with (LOGS / (name + '.log')).open('w') as stream:
        subprocess.run(list(map(str, args)), cwd=ROOT, check=True,
                       stdout=stream, stderr=subprocess.STDOUT)

for relative in [
    'output/fuglede11_next/size33/verify_small_profiles.py',
    'output/fuglede11_next/size33/classify_eight_cliques.py',
    'output/fuglede11_round2/size33_profiles/verify_seven_lemma.py',
    'output/fuglede11_round2/size33_profiles/verify_three_set_difference.py',
    'output/fuglede11_round2/size33_profiles/six_equal_chords.py',
    'output/fuglede11_round2/size33_profiles/six_triple_skeletons.py',
    'output/fuglede11_round2/size33_seven/verify_branch_lemmas.py',
    'output/fuglede11_round3/dependencies/audit_inputs.py',
    'output/fuglede11_round3/algebra/prepare_odd.py',
    'output/fuglede11_round3/algebra/prepare_split.py',
    'output/fuglede11_round3/quadrics/rank_parity.py',
    'output/fuglede11_round3/quadrics/make_independent_solver.py',
]:
    run([sys.executable, ROOT / relative], Path(relative).stem)

with tempfile.TemporaryDirectory(prefix='size33-short-') as temp:
    jobs = [
        (ROOT / 'output/fuglede11_next/size33/verify_ten_points.cpp',
         'ten_points', []),
        (ALG / 'solve_split.cpp', 'primary_mixed', [ALG]),
        (AUD / 'independent_solve.cpp', 'independent_mixed', []),
    ]
    for source, name, args in jobs:
        binary = Path(temp) / name
        run([CXX, '-O3', '-std=c++17', source, '-o', binary], name + '_compile')
        run([binary, *args], name)
run([sys.executable, ALG / 'check_prequintic.py'], 'four_prequintic_cases')

primary = json.loads((ALG / 'split_results.json').read_text())
independent = json.loads((AUD / 'independent_solutions.json').read_text())
assert primary['even_projective_classes'] == independent['even'] == 1948717
assert primary['mixed_ranks'] == [0, 0, 0, 0, 187, 3052, 1945478]
assert independent['checks'] == 242202917
assert primary['quadratic_survivors'] == independent['quadratic_pass'] == 21611
assert primary['cubic_survivors'] == primary['quartic_survivors'] == 4
assert primary['full_moment_survivors'] == independent['full_pass'] == 0
result = {
    'status': 'PASS',
    'even_projective_classes': 1948717,
    'independent_odd_kernel_vectors_checked': 242202917,
    'nonzero_even_moment_solutions': 0,
    'conclusion': 'Every moment-compatible lift over B33 becomes odd after an affine height change.',
    'mathematical_final_step': 'Central symmetry contradicts spectrality by the determinant lemma.',
    'pure_odd_search': 'Omitted; unnecessary for the shortened proof.'
}
(LOGS / 'result.json').write_text(json.dumps(result, indent=2) + '\n')
print(json.dumps(result, indent=2))
\end{lstlisting}

For reference, the main mixed-height enumeration is printed below.
The data it reads are generated from the explicit support and moment
matrices in Lemma~\ref{p11:33-height}; they are not assumed certificates
of nonexistence.
\begin{lstlisting}[language=C++,basicstyle=\ttfamily\fontsize{7.5}{9}\selectfont,breaklines=true,breakatwhitespace=false,columns=fullflexible]
#include <array>
#include <cstdint>
#include <cstdio>
#include <fstream>
#include <vector>
#include <chrono>
#include "split_system.inc"
using U=uint64_t;
int ec[7],eh[16],ker[8][8],ohbasis[8][16],dim;
int target[4],qq[4][8][8],oc[8],oparam[8];
U even_count=0,ranks[7]={},quad_candidates=0,cubic_candidates=0,quartic_candidates=0,full_candidates=0,geom_survivors=0,lowrank_leaves=0;
std::ofstream solutions;
std::ofstream prequintic;
int inv[11]={0,1,6,4,3,9,2,8,7,5,10};
int mod(int x){x%=11;return x<0?x+11:x;}
const int R[16][2]={{0,4},{0,5},{1,0},{1,3},{1,4},{1,5},{1,6},{1,7},{1,10},{2,4},{2,5},{3,4},{4,0},{4,3},{4,4},{4,7}};
void finish(){
 quad_candidates++;
 int oh[16]={};for(int i=0;i<dim;i++)for(int t=0;t<16;t++)oh[t]+=oparam[i]*ohbasis[i][t];for(int t=0;t<16;t++)oh[t]%=11;
 int e2[16],o2[16],e3[16],o3[16];for(int t=0;t<16;t++){e2[t]=eh[t]*eh[t]%11;o2[t]=oh[t]*oh[t]%11;e3[t]=e2[t]*eh[t]%11;o3[t]=o2[t]*oh[t]%11;}
 for(int k=0;k<4;k++){int sum=0;for(int t=0;t<16;t++)sum+=WE[k][t]*(e3[t]+3*eh[t]*o2[t]);if(sum%11)return;}
 for(int k=0;k<2;k++){int sum=0;for(int t=0;t<16;t++)sum+=WO[k][t]*(3*e2[t]*oh[t]+o3[t]);if(sum%11)return;}
 cubic_candidates++;
 int sum=0;for(int t=0;t<16;t++)sum+=e2[t]*e2[t]+6*e2[t]*o2[t]+o2[t]*o2[t];if(sum%11)return;
 for(int k=0;k<2;k++){sum=0;for(int t=0;t<16;t++)sum+=WO[k][t]*(e3[t]*oh[t]+eh[t]*o3[t]);if(sum%11)return;}
 quartic_candidates++;
 for(int j=0;j<8;j++){oc[j]=0;for(int i=0;i<dim;i++)oc[j]+=oparam[i]*ker[i][j];oc[j]%=11;}
 prequintic<<"{\"e\":[";for(int i=0;i<7;i++){if(i)prequintic<<",";prequintic<<ec[i];}prequintic<<"],\"o\":[";for(int i=0;i<8;i++){if(i)prequintic<<",";prequintic<<oc[i];}prequintic<<"]}\n";
 sum=0;for(int t=0;t<16;t++)sum+=e3[t]*e2[t]+10*e3[t]*o2[t]+5*eh[t]*o2[t]*o2[t];if(sum%11)return;
 full_candidates++;
 int pts[33][3]={};for(int t=0;t<16;t++){pts[1+2*t][0]=R[t][0];pts[1+2*t][1]=R[t][1];pts[1+2*t][2]=(eh[t]+oh[t])%11;pts[2+2*t][0]=mod(-R[t][0]);pts[2+2*t][1]=mod(-R[t][1]);pts[2+2*t][2]=mod(eh[t]-oh[t]);}
 int maxplane=0;int witness[4]={};
 for(int pivot=0;pivot<3;pivot++)for(int u=0;u<(pivot==0?11:1);u++)for(int v=0;v<(pivot<2?11:1);v++){
  int normal[3]={0,0,0};normal[pivot]=1;if(pivot==0){normal[1]=u;normal[2]=v;}if(pivot==1)normal[2]=v;
  int counts[11]={};for(auto&p:pts)counts[(normal[0]*p[0]+normal[1]*p[1]+normal[2]*p[2])%11]++;
  for(int k=0;k<11;k++)if(counts[k]>maxplane){maxplane=counts[k];for(int j=0;j<3;j++)witness[j]=normal[j];witness[3]=k;}
 }
 if(maxplane<=7)geom_survivors++;
 for(int j=0;j<8;j++){oc[j]=0;for(int i=0;i<dim;i++)oc[j]+=oparam[i]*ker[i][j];oc[j]%=11;}
 solutions<<"{\"e\":[";for(int i=0;i<7;i++){if(i)solutions<<",";solutions<<ec[i];}solutions<<"],\"o\":[";for(int i=0;i<8;i++){if(i)solutions<<",";solutions<<oc[i];}solutions<<"],\"maxplane\":"<<maxplane<<",\"plane_witness\":[";for(int i=0;i<4;i++){if(i)solutions<<",";solutions<<witness[i];}solutions<<"]}\n";
}
void search_odd(int j,const std::array<int,4>& vals){
 if(j==dim){lowrank_leaves++;bool ok=true;for(int k=0;k<4;k++)if(vals[k]!=target[k])ok=false;if(ok)finish();return;}
 int lin[4]={};for(int k=0;k<4;k++)for(int i=0;i<j;i++)lin[k]+=2*qq[k][i][j]*oparam[i];for(int k=0;k<4;k++)lin[k]%=11;
 for(int v=0;v<11;v++){oparam[j]=v;std::array<int,4> next;bool ok=true;for(int k=0;k<4;k++){next[k]=(vals[k]+v*lin[k]+v*v*qq[k][j][j])%11;if(j==dim-1&&next[k]!=target[k]){ok=false;break;}}if(ok){if(j==dim-1)finish();else search_odd(j+1,next);}}
 oparam[j]=0;
}
void process_even(){
 even_count++;
 if(even_count%100000==0){printf("even %llu full %llu plane7 %llu ranks",(unsigned long long)even_count,(unsigned long long)full_candidates,(unsigned long long)geom_survivors);for(auto x:ranks)printf(" %llu",(unsigned long long)x);puts("");fflush(stdout);}
 int A[6][8]={};for(int k=0;k<6;k++)for(int j=0;j<8;j++){for(int i=0;i<7;i++)A[k][j]+=LM[k][i][j]*ec[i];A[k][j]%=11;}
 int rank=0,piv[6];bool ispiv[8]={};
 for(int j=0;j<8&&rank<6;j++){
  int p=rank;while(p<6&&!A[p][j])p++;if(p==6)continue;
  for(int k=0;k<8;k++)std::swap(A[p][k],A[rank][k]);int factor=inv[A[rank][j]];for(int k=j;k<8;k++)A[rank][k]=A[rank][k]*factor%11;
  for(int i=0;i<6;i++)if(i!=rank&&A[i][j]){factor=A[i][j];for(int k=j;k<8;k++)A[i][k]=mod(A[i][k]-factor*A[rank][k]);}
  piv[rank++]=j;ispiv[j]=true;
 }
 ranks[rank]++;dim=0;
 for(int j=0;j<8;j++)if(!ispiv[j]){for(int k=0;k<8;k++)ker[dim][k]=0;ker[dim][j]=1;for(int i=0;i<rank;i++)ker[dim][piv[i]]=mod(-A[i][j]);dim++;}
 for(int t=0;t<16;t++){eh[t]=0;for(int i=0;i<7;i++)eh[t]+=ec[i]*EB[i][t];eh[t]%=11;}
 for(int k=0;k<4;k++){int sum=0;for(int t=0;t<16;t++)sum+=WE[k][t]*eh[t]*eh[t];target[k]=mod(-sum);}
 for(int i=0;i<dim;i++)for(int t=0;t<16;t++){int sum=0;for(int j=0;j<8;j++)sum+=ker[i][j]*OB[j][t];ohbasis[i][t]=sum%11;}
 for(int k=0;k<4;k++)for(int i=0;i<dim;i++)for(int j=i;j<dim;j++){int sum=0;for(int t=0;t<16;t++)sum+=WE[k][t]*ohbasis[i][t]*ohbasis[j][t];qq[k][i][j]=qq[k][j][i]=sum%11;}
 if(dim==2){
  int C[4][4];for(int k=0;k<4;k++){C[k][0]=qq[k][0][0];C[k][1]=2*qq[k][0][1]%11;C[k][2]=qq[k][1][1];C[k][3]=target[k];}
  int rr=0,pp[3];for(int j=0;j<3&&rr<4;j++){int p=rr;while(p<4&&!C[p][j])p++;if(p==4)continue;for(int k=0;k<4;k++)std::swap(C[p][k],C[rr][k]);int f=inv[C[rr][j]];for(int k=j;k<4;k++)C[rr][k]=C[rr][k]*f%11;for(int i=0;i<4;i++)if(i!=rr&&C[i][j]){f=C[i][j];for(int k=j;k<4;k++)C[i][k]=mod(C[i][k]-f*C[rr][k]);}pp[rr++]=j;}
  for(int i=rr;i<4;i++)if(C[i][3])return;
  if(rr==3){int mon[3];for(int i=0;i<3;i++)mon[pp[i]]=C[i][3];if(mon[0]*mon[2]%11!=mon[1]*mon[1]%11)return;for(int u=0;u<11;u++)if(u*u%11==mon[0])for(int v=0;v<11;v++)if(v*v%11==mon[2]&&u*v%11==mon[1]){oparam[0]=u;oparam[1]=v;finish();}return;}
 }
 search_odd(0,{});
}
void enum_even(int j){if(j==7){process_even();return;}for(int v=0;v<11;v++){ec[j]=v;enum_even(j+1);}ec[j]=0;}
int main(int argc,char** argv){std::string out=argc>1?argv[1]:".";solutions.open(out+"/split_solutions.jsonl");prequintic.open(out+"/split_prequintic.jsonl");auto start=std::chrono::steady_clock::now();for(int first=0;first<7;first++){for(int j=0;j<7;j++)ec[j]=0;ec[first]=1;enum_even(first+1);}double elapsed=std::chrono::duration<double>(std::chrono::steady_clock::now()-start).count();
 std::ofstream f(out+"/split_results.json");f<<"{\"even_projective_classes\":"<<even_count<<",\"mixed_ranks\":[";for(int i=0;i<7;i++){if(i)f<<",";f<<ranks[i];}f<<"],\"quadratic_survivors\":"<<quad_candidates<<",\"cubic_survivors\":"<<cubic_candidates<<",\"quartic_survivors\":"<<quartic_candidates<<",\"full_moment_survivors\":"<<full_candidates<<",\"plane_bound_survivors\":"<<geom_survivors<<",\"elapsed_seconds\":"<<elapsed<<"}\n";
 printf("DONE even %llu quad %llu cubic %llu quartic %llu full %llu plane7 %llu time %.3f\n",(unsigned long long)even_count,(unsigned long long)quad_candidates,(unsigned long long)cubic_candidates,(unsigned long long)quartic_candidates,(unsigned long long)full_candidates,(unsigned long long)geom_survivors,elapsed);
}
\end{lstlisting}

\subsection{A small quartic certificate}
The size-$22$ special six-direction case can also be checked by the
following complete program. It reconstructs the centered profile list
and reduces the leading scalar to two square classes. In both cases the
constant terms sum to $1$, whereas the Radon identity requires $10$.
\begin{lstlisting}[language=Python,basicstyle=\ttfamily\scriptsize,breaklines=true]
"""A small certificate for the size-22 six-point-plane obstruction.

This is not a replacement for the entire size-22 checker. It independently
reconstructs the centered quartic catalogue and makes one stage transparent.
"""
from itertools import product
import json
from pathlib import Path

p=11
def values(c):
    return tuple(sum(a*pow(x,j,p) for j,a in enumerate(c))%p for x in range(p))

actual=set()
for a,b,c,d in product(range(p),repeat=4):
    if not a: continue
    coeff=(d,c,b,0,a)
    val=values(coeff)
    if max(val)<=6 and sum(val)==22:actual.add(coeff)

representatives=[(0,0,2,0,1),(2,0,8,0,1),(0,5,6,0,2),
                 (2,0,8,0,2),(4,0,5,0,2),(6,0,7,0,2)]
orbits=[{tuple(c*pow(t,j,p)%p for j,c in enumerate(rep))
         for t in range(1,p)} for rep in representatives]
assert actual==set.union(*orbits) and len(actual)==35
assert sum(map(len,orbits))==35

bad=(2,3,5,6,8,9)
w=[]
for s in bad:
    v=1
    for t in bad:
        if t!=s:v=v*(s-t)%p
    w.append(pow(v,-1,p))
assert w==[6,3,9,2,8,5]
certificate=[]
# Dilation of both plane coordinates acts on lambda by a fourth power.
# Fourth powers are exactly the nonzero squares, so 1 and 2 suffice.
assert {pow(t,4,p) for t in range(1,p)}=={1,3,4,5,9}
for lam in (1,2):
    lists=[[c for c in sorted(actual) if c[4]==lam*v%p]for v in w]
    survivors=[]
    for cs in product(*lists):
        if any(sum(c[2]*pow(s,h,p) for c,s in zip(cs,bad))%p for h in range(3)):continue
        survivors.append(cs)
    assert len(survivors)==1
    cs=survivors[0]
    assert sorted(c[0] for c in cs)==[0,0,0,4,4,4]
    assert all(c[1]==0 for c in cs)
    assert sum(c[0]for c in cs)%p==1 != 10
    certificate.append({'lambda':lam,'coefficients':cs,'constant_sum':1,'required':10})
result={'orbit_representatives':representatives,'orbit_sizes':list(map(len,orbits)),
        'orbit_maximum_values':[max(values(r)) for r in representatives],
        'bad_slopes':bad,'weights':w,'two_scale_cases':certificate}
Path(__file__).with_name('compact_quartic_certificate.json').write_text(json.dumps(result,indent=2)+'\n')
print('PASS: six quartic orbits; both scale classes force 1 = 10 in F_11.')
\end{lstlisting}


\end{document}